\documentclass[leqno]{article}
\usepackage{authblk}
\setcounter{Maxaffil}{3}
\usepackage{amsmath,amsthm,amssymb,latexsym,graphicx, mathtools, enumerate,bm,cite,todonotes}

\usepackage[nottoc]{tocbibind}

\usepackage[margin=1in]{geometry}

\usepackage{hyperref}
\hypersetup{
    colorlinks=true,
    allcolors=blue,
}

\newcommand{\EE}{{\mathbb E}}
\newcommand{\NN}{{\mathbb N}}

\newcommand{\RR}{{\mathbb R}}
\newcommand{\ZZ}{{\mathbb Z}}

\newcommand{\cadlag}{}
\def\cadlag/{c\`adl\`ag}

\newcommand{\del}{\partial}

\renewcommand{\div}{\on{div}}
\newcommand{\eps}{\varepsilon}

\newcommand{\Id}{\on{Id}}
\newcommand{\ind}{\mathbf{1}}

\newcommand{\loc}{\mathrm{loc}}
\newcommand{\norm}[1]{ \left\| #1 \right\| }
\newcommand{\nor}[2]{\left\|#1\right\|_{#2}}
\newcommand{\oline}[1]{\overline{#1}}
\newcommand{\oo}{\infty}
\newcommand{\sgn}{\on{sgn}}
\newcommand{\supp}{\on{supp }}
\newcommand{\tr}{\on{tr}}
\newcommand{\uline}[1]{\underline{#1}}
\newcommand{\w}{\mathrm{w}}
\newcommand{\ws}{{\mathrm{w}\text{-}\star}}

\DeclareMathOperator*{\esssup}{ess\,sup}

\newcommand{\mcl}{\mathcal}
\newcommand{\mbb}{\mathbb}
\newcommand{\mbf}{\mathbf}
\newcommand{\on}{\operatorname}

\newtheorem{lemma}{Lemma}
\newtheorem{proposition}{Proposition}
\newtheorem{theorem}{Theorem}
\newtheorem{corollary}{Corollary}
\newtheorem{remark}{Remark}

\theoremstyle{definition}
\newtheorem{definition}{Definition}

\newtheorem{example}{Example}

\numberwithin{equation}{section}
\numberwithin{lemma}{section}
\numberwithin{proposition}{section}
\numberwithin{theorem}{section}
\numberwithin{corollary}{section}
\numberwithin{definition}{section}
\numberwithin{remark}{section}
\numberwithin{example}{section}

\begin{document}
\title{Linear and nonlinear transport equations with coordinate-wise increasing velocity fields}
\author{Pierre-Louis Lions}
\affil{Universit\'e Paris-Dauphine and Coll\`ege de France\\
\href{lions@ceremade.dauphine.fr}{\nolinkurl{lions@ceremade.dauphine.fr}}}
\author{Benjamin Seeger
}
\affil{University of Texas at Austin\\ 
\href{mailto:seeger@math.utexas.edu}{\nolinkurl{seeger@math.utexas.edu}}\\
{\normalsize Partially supported by the National Science Foundation under award number DMS-1840314}
}

\maketitle

\begin{abstract}
	We consider linear and nonlinear transport equations with irregular velocity fields, motivated by models coming from mean field games. The velocity fields are assumed to increase in each coordinate, and the divergence therefore fails to be absolutely continuous with respect to the Lebesgue measure in general. For such velocity fields, the well-posedness of first- and second-order linear transport equations in Lebesgue spaces is established, as well as the existence and uniqueness of regular ODE and SDE Lagrangian flows. These results are then applied to the study of certain nonconservative, nonlinear systems of transport type, which are used to model mean field games in a finite state space. A notion of weak solution is identified for which a unique minimal and maximal solution exist, which do not coincide in general. A selection-by-noise result is established for a relevant example to demonstrate that different types of noise can select any of the admissible solutions in the vanishing noise limit.
\end{abstract}

\tableofcontents

\section{Introduction}

This paper has two main purposes. First, we develop a well-posedness theory for first- and second-order linear transport equations for a new class of irregular velocity fields, as well as the corresponding ODE and SDE regular Lagrangian flows. We then apply the results to the study of certain nonlinear transport systems, motivated in particular by applications to mean field games (MFG) with a finite state space.

For a fixed, finite time horizon $T >0$, we study both the terminal value problem (TVP) for the nonconservative equation
\begin{equation}\label{intro:TE}
	-\del_t u - b(t,x) \cdot \nabla u = 0 \quad \text{in } (0,T) \times \RR^d, \quad u(T,\cdot) = u_0
\end{equation}
as well as the dual, initial value problem (IVP) for the conservative equation
\begin{equation}\label{intro:CE}
	\del_t f + \div(b(t,x) f) = 0 \quad \text{in } (0,T) \times \RR^d, \quad f(0,\cdot) = f_0,
\end{equation}
under the assumption that $b$ is coordinate-by-coordinate (semi)-increasing:
\begin{equation}\label{intro:inc}
	\del_{x_j} b^i(t,\cdot) \ge -C(t) \delta_{ij} \quad \text{for all $t \in [0,T]$, $i,j = 1,2,\ldots, d$, and some } C \in L^1_+([0,T]).
\end{equation}
The divergence of such a vector field is bounded from below, which means that, formally, the flow
\begin{equation}\label{intro:ODE}
	\del_t \phi_{t,s}(x) = b(t, \phi_{t,s}(x)), \quad t \in [s,T], \quad \phi_{s,s}(x) = x
\end{equation}
will not concentrate at null sets. This indicates that the two problems \eqref{intro:TE} and \eqref{intro:CE} are amenable to a solution theory in Lebesgue spaces.

On the other hand, the measure $\div b$ is not in general absolutely continuous with respect to Lebesgue measure, and this leads to the formation of vacuum for $t > s$. It is therefore the case that the existing theory of renormalized solutions, initiated by DiPerna and the first author \cite{DL89} for Sobolev velocity fields and extended to the case $b \in BV_\loc$ and $\div b \in L^\oo$ by Ambrosio \cite{A04}, does not cover our present situation. In particular, the two problems \eqref{intro:TE} and \eqref{intro:CE} cannot be covered with a unified theory, due to the fact that \eqref{intro:TE} cannot be understood in the distributional sense if $\div b$ is not absolutely continuous with respect to Lebesgue measure and $u \in L^1_\loc$. Nevertheless, we exploit the dual relationship between the two problems, and provide a link to the forward, regular Lagrangian flow \eqref{intro:inc}. Analogous results are also proved for the degenerate, second order equations 
\begin{equation}\label{intro:2TE}
	-\del_t u - b(t,x) \cdot \nabla u  - \tr[ a(t,x) \nabla^2 u]= c(t,x) \quad \text{in } (0,T) \times \RR^d, \quad u(T,\cdot) = u_T
\end{equation}
and
\begin{equation}\label{intro:2CE}
	\del_t f + \div(b(t,x) f) - \nabla^2 \cdot(a(t,x)f) = 0 \quad \text{in } (0,T) \times \RR^d, \quad f(0,\cdot) = f_0,
\end{equation}
where $b$ satisfies \eqref{intro:inc},  $a(t,x) = \frac{1}{2} \sigma(t,x)\sigma(t,x)^T$ is a nonnegative, symmetric matrix, and $\sigma \in \RR^{d \times m}$; and the SDE flow
\begin{equation}\label{intro:SDE}
	d_t \Phi_{t,s}(x) = b(t,\Phi_{t,s}(x))dt + \sigma(t,\Phi_{t,s}(x))dW_t,
\end{equation}
where $W$ is an $m$-dimensional Brownian motion. 

We then turn to the study of nonlinear, nonconservative systems of transport type that take the form
\begin{equation}\label{intro:NTE}
	-\del_t u - f(t,x,u) \cdot \nabla u = g(t,x,u) \quad \text{in } (0,T) \times \RR^d, \quad u(T,\cdot) = u_T.
\end{equation}
Here, $u$, $f$, and $g$ are vector-valued, with $f \in \RR^d$ and $g,u \in \RR^m$ for some integers $m,d \ge 1$, $f$ and $g$ are local functions of $(t,x,u) \in [0,T] \times \RR^{d} \times \RR^m$, and the equation reads, for each coordinate $i = 1,2,\ldots, m$,
\[
	-\del_t u^i - \sum_{j=1}^d f^j(t,x,u) \del_{x_j} u^i = g^i(t,x,u).
\]
The primary motivation for the consideration of \eqref{intro:NTE} comes from the study of mean field games (MFG). These are models for large populations of interacting rational agents, which strategize in order to optimize an outcome, based on the collective behavior of the remaining population, while subject to environmental influences. The master equation for mean field games with a (finite) discrete state space takes the general form of the system \eqref{intro:NTE} with $d = m$, as described in \cite{L_lectures}; see also \cite{BLL_19,GMR_13}. Alternatively, systems of the form \eqref{intro:NTE} arise upon exploiting dimension reduction techniques for continuum-state MFG models in which the various data depend on the probability distribution of players through a finite number of observables, i.e.
\[
	\mu \mapsto u\left( t, \int \Phi d\mu \right) \quad \text{for probability measures }\mu \text{ and some given continuous $\RR^d$-valued function }\Phi.
\]
This connection is explored by the authors and Lasry in \cite{LLS_22}. We note also that the special case where $d = m$, $f(t,x,u) = -u$, and $g(t,x,u) = 0$ leads to the system
\begin{equation}\label{intro:mburgers}
	-\del_t u + u \cdot \nabla u = 0,
\end{equation}
which arises in certain models describing the flow of compressible gasses at low density with negligible pressure \cite{LT_02,GS_97}.

The nonlinear equation \eqref{intro:NTE} can formally be connected to a system of characteristic ODEs on $\RR^d \times \RR^m$. In order to draw the analogy to MFG PDE systems and the master equation in a continuum state space (see for instance \cite{CDLL_19}), it is convenient to represent the characteristics as the forward backward system
\begin{equation}\label{intro:chars}
	\begin{dcases}
		-\del_s U_{s,t}(x) = g(s,X_{s,t}(x),U_{s,t}(x)) & U_{T,t}(x) = u_T( X_{T,t}(x)), \\
		\del_s X_{s,t}(x) = f(s,X_{s,t}(x), U_{s,t}(x)) & X_{t,t}(x) = x.
	\end{dcases}
\end{equation}
If $f$, $g$, and $u_T$ are smooth and the interval $[t,T]$ is sufficiently small, then \eqref{intro:chars} can be uniquely solved, and the unique, smooth solution $u$ of \eqref{intro:NTE} is given by $u(t,x) := U_{t,t}(x)$. The argument fails for arbitrarily long time intervals, in view of the coupling between $X$ and the terminal condition for $U$.\footnote{When written in forward form, the breakdown manifests as a failure to invert the characteristics describing the state variable $x \in \RR^d$, which may cross in finite time.} 

The monotone regime is explored in \cite{L_lectures,BLL_19}, i.e., $d = m$ and $(-g,f): \RR^d \times \RR^d \to \RR^d \times \RR^d$ and $u_T: \RR^d \to \RR^d$ are smooth and monotone (one of which is strictly monotone). In that case, \eqref{intro:chars}, and therefore \eqref{intro:NTE}, can be uniquely solved on any time interval. This regime is exactly analogous to the monotonicity condition of Lasry and the first author for MFG systems with continuum state space \cite{LL06cr1,LL06cr2,LLJapan}, and, as in that setting, strong regularity and stability results can be established for \eqref{intro:NTE}. The monotone regime also allows for a well-posed notion of global weak solutions of \eqref{intro:NTE}, even when $f$, $g$, and $u_T$ fail to be smooth \cite{Ber_21}.

If there exist functions $H: \RR^d \times \RR^d \to \RR$ and $v_T: \RR^d \to \RR$ such that $(g,f) = (\nabla_x, \nabla_p) H$ and $u_T = \nabla v_T$ (the so-called ``potential regime''), then, formally, one expects $u(t,\cdot) = \nabla v(t,\cdot)$, where $v$ solves the Hamilton-Jacobi equation
\begin{equation}\label{eq:HJ}
	\del_t v + H(t,x,\nabla_x v) = 0 \quad \text{in }(0,T) \times \RR^d, \quad v(T,\cdot) = v_T,
\end{equation}
for which global, weak solutions can be understood with the theory of viscosity solutions \cite{CIL}. Weak solutions of \eqref{intro:NTE} can then be indirectly understood as the distributional derivative of $v$, an approach which is taken in \cite{CeDe_22}. In the special case where $d = m = 1$, \eqref{intro:NTE} can be studied with the theory of entropy solutions of scalar conservation laws \cite{Kr_70}.

In this paper, using the theory developed here for linear equations, we identify new regimes of assumptions on $f$, $g$, and $u_T$ for which a notion of weak solution can be identified for any dimensions $d,m \ge 1$. Under a certain ordering structure, the existence of a unique maximal and minimal solution is established, which do not coincide in general. This nonuniqueness is further explored from the viewpoint of stochastic selection, and we prove, for a specific but informative example, that \emph{any} of the family of solutions can be distinguished by a certain vanishing noise limit, indicating that the choice of a mean field game equilibrium is very sensitive to the manner in which low-level, systemic noise is introduced to the model.

\subsection{Summary of main results}\label{ss:results}

We list the main results of the paper here, in an informal setting. More precise statements and discussions can be found within the body of the paper.

The assumption that $b$ satisfy the (semi)-increasing condition \eqref{intro:inc} implies that $b \in BV_\loc$. We emphasize again, however, that the measure $\div b$ will in general have a singular part with respect to Lebesgue measure, and we therefore cannot appeal to the existing results on renormalized solutions to transport equations with irregular velocity fields. We do not give a full account of the vast literature for such problems, but refer the reader to the thorough surveys \cite{ACetraro,Am_survey_17,Am_Cr_survey_14,Am_Tr_survey_17} and the references therein.

Our approach to the first-order transport problem is to study the well-posedness of the regular Lagrangian flow for \eqref{intro:ODE} directly, rather than using PDE methods. The assumption \eqref{intro:inc} on $b$ allows for a comparison principle with respect to the partial order
\begin{equation}\label{intro:order}
	x,y \in \RR^d, \; x \le y \quad \Leftrightarrow \quad x_i \le y_i \text{ for all } i = 1,2,\ldots, d.
\end{equation}
A careful regularization procedure then leads to the existence of a minimal and maximal flow, which coincide a.e., and we have the following result (see Section \ref{sec:ODE} below for more precise statements):

\begin{theorem}
	Assume $b$ satisfies \eqref{intro:inc}. Then, for a.e. $x \in \RR^d$, there exists a unique absolutely continuous solution of \eqref{intro:ODE}, and there exists a constant $C > 0$ such that, for all $0 \le s \le t \le T$,
	\[
		\left| \phi^{-1}_{t,s}(A) \right| \le C|A| \quad \text{for all measurable } A \subset \RR^d.
	\]
	If $(b^\eps)_{\eps > 0}$ is a family of smooth approximations of $b$ and $(\phi^\eps)_{\eps > 0}$ are the corresponding flows, then, as $\eps \to 0$, $\phi^\eps \to \phi$ in $C_t L^p_{x,\loc}$ and $L^p_{x,\loc}W^{1,1}_t$ for all $p \in [1,\oo)$.
\end{theorem}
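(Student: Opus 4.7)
The plan is to construct the flow by smooth approximation, use order preservation and a Liouville-type Jacobian bound to produce both the pushforward inequality and the compactness needed, and then identify the limit uniquely via a maximal/minimal flow construction. I would mollify $b$ in $x$ by standard convolution: since \eqref{intro:inc} is preserved by convolution in the spatial variable, each smooth field $b^\eps$ satisfies the same structural bound with the same $C(t)$, and classical Cauchy--Lipschitz theory gives smooth flows $\phi^\eps_{t,s}$. Two uniform estimates then drive the whole argument.

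First, \emph{order preservation}: the nonnegativity of $\partial_{x_j}b^{\eps,i}$ for $i\ne j$ makes the linearization $A(t)=\int_0^1\nabla_x b^\eps(\cdot,\phi^\eps_{\tau,s}(x)+s z)\,ds$ of $z(t)=\phi^\eps_{t,s}(y)-\phi^\eps_{t,s}(x)$ a Metzler matrix with diagonal bounded below by $-C(t)$. The standard Kamke--M\"uller cooperative-ODE argument then forces $\phi^\eps_{t,s}(x)\le\phi^\eps_{t,s}(y)$ whenever $x\le y$. Second, by Liouville's formula,
\[
\det D\phi^\eps_{t,s}(x)=\exp\!\left(\int_s^t\div b^\eps(\tau,\phi^\eps_{\tau,s}(x))\,d\tau\right)\ge\exp\!\left(-d\int_s^tC(\tau)\,d\tau\right)=:c_0>0,
\]
since only diagonal derivatives appear in the divergence and each is bounded below by $-C(t)$. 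Change of variables gives $(\phi^\eps_{t,s})_\#\mathcal{L}^d\le c_0^{-1}\mathcal{L}^d$ uniformly in $\eps$, which is exactly the inequality claimed in the theorem.

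Each coordinate $\phi^{\eps,i}_{t,s}$ is nondecreasing in each $x_j$ (by order preservation applied to one-variable perturbations) and locally uniformly bounded in $(t,x)$ (from local integrability of $b$ and a Gr\"onwall-type estimate), so a Helly-type selection plus a diagonal extraction over a countable dense set of times yields a subsequence $\phi^{\eps_k}_{t,s}\to\phi_{t,s}$ pointwise a.e. in $x$. To pass to the limit in the integral equation, the pushforward bound gives
\[
\int_K\bigl|b^{\eps_k}(\tau,\phi^{\eps_k}_{\tau,s}(x))-b(\tau,\phi^{\eps_k}_{\tau,s}(x))\bigr|\,dx\le c_0^{-1}\|b^{\eps_k}-b\|_{L^1_x(K')}\to 0,
\]
while a.e.\ convergence of $\phi^{\eps_k}$ combined with approximate continuity of $b$ (as a locally $BV$ field) handles the remaining piece. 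The limit $\phi$ is thus an absolutely continuous solution of \eqref{intro:ODE} for a.e.\ $x$ and inherits both order preservation and the pushforward bound.

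The main obstacle is uniqueness of the limit, which is what lets the \emph{whole} family $\phi^\eps$ converge rather than a subsequence. I would address this by a maximal/minimal flow construction: since $\tilde b(t,x)=b(t,x)+C(t)x$ is coordinatewise nondecreasing, its upper and lower semicontinuous envelopes $\tilde b_\pm$ coincide a.e., and one can build monotone smooth approximations $b^\eps_\uparrow\uparrow b$ and $b^\eps_\downarrow\downarrow b$; by the comparison principle their flows are monotone in $\eps$, producing limits $\underline\phi\le\overline\phi$ that sandwich every other smoothing. To show $\underline\phi=\overline\phi$ a.e., observe that both push $\mathcal{L}^d$ forward to distributional solutions of the conservative equation \eqref{intro:CE} with initial datum $\mathcal{L}^d$, dominated by $c_0^{-1}\mathcal{L}^d$; invoking the uniqueness theory for \eqref{intro:CE} developed in parallel in the paper, the two pushforwards agree, and then the inequality $\underline\phi\le\overline\phi$ tested on product sets $A=\prod_i(-\infty,a_i]$ over a countable dense family of $a$ forces $\underline\phi=\overline\phi$ a.e. The full family then converges a.e.\ to a common limit $\phi$. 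Upgrading to $C_tL^p_{x,\loc}$ follows from Vitali's theorem using equi-integrability from the uniform pushforward bound, and the $L^p_{x,\loc}W^{1,1}_t$ convergence follows since $\partial_t\phi^\eps=b^\eps(\cdot,\phi^\eps)$ is uniformly bounded in $L^1_{t,\loc}L^p_{x,\loc}$ by the same change of variables.
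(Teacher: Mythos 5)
Your construction, order-preservation argument, Jacobian/Liouville bound, and Helly compactness step all track the paper's Proposition~\ref{P:maxmin} and Theorem~\ref{T:regLag} quite closely, and the stability step via the triangle inequality through a Lipschitz $b^\delta$ is exactly the argument used in Theorem~\ref{T:ODE:beps}. Where you genuinely diverge is the key step of identifying the limit uniquely, i.e.\ showing $\underline\phi=\overline\phi$ a.e.: you propose to push $\mathcal{L}^d$ forward by both and invoke uniqueness of nonnegative distributional solutions of the continuity equation \eqref{intro:CE}. This is a valid implication once one has that uniqueness in hand (and your post-processing step, testing the ordered inclusion $\{\overline\phi_{t,s}\le a\}\cap B_R\subset\{\underline\phi_{t,s}\le a\}\cap B_R$ with equal finite measure over a countable dense set of $a$ and $R$, is correct). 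The problem is one of logical ordering: in the paper the uniqueness of nonnegative solutions of \eqref{intro:CE} (Theorem~\ref{T:CEunique}) is established \emph{after}, and relies on, the flow theory and the solution operator $S(s,t)$ built from it via Theorems~\ref{T:TEform} and~\ref{T:TE:comparison}; invoking it here would be circular unless you first reprove it independently (which requires some care, since the existence and finite-speed-of-propagation ingredients in Theorem~\ref{T:TE:comparison} would then have to come from regularized PDEs alone). The paper instead closes the loop entirely at the ODE level with a ``shift trick'': using the one-sided mollifiers $\rho^\eps$, $\rho_\eps$ of \eqref{onesidedmollifier} with the exact identity $b*\rho^\eps(\cdot-2\eps\mbf 1)=b*\rho_\eps(\cdot)$ (Lemma~\ref{L:approxid}\eqref{moll:shift}), one shows directly via the comparison Lemma~\ref{L:comparison} that $\phi^{+,\eps}_{t,s}(x-2\delta\mbf 1)+2\delta\mbf 1\le\phi^{-,\eps}_{t,s}(x)$, and sending $\eps\to0$ then $\delta\to0$ yields $(\phi^+_{t,s})_*\le\phi^-_{t,s}$, hence $(\phi^+_{t,s})_*=\phi^-_{t,s}$ (Proposition~\ref{P:shiftover}). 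That argument is self-contained and much shorter; I'd recommend replacing your continuity-equation detour with it. A smaller issue: your ``monotone smooth approximations'' $b^\eps_\uparrow$, $b^\eps_\downarrow$ need to be specified concretely --- standard mollification is not monotone in $\eps$; the paper uses $\sup$-/$\inf$-convolutions \eqref{infsup} for the sandwich and the one-sided mollifiers for the shift step, and these are the natural choices to make your lattice argument run.
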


We also obtain analogous results for the SDE \eqref{intro:SDE}.Degenerate linear parabolic equations and SDEs with irregular data have been studied in a number of works that generalize the DiPerna-Lions theory and the Ambrosio superposition principle; see \cite{ChJ_SDE_18, Fig,LbL_08,LbL_19,Trevisan_16}. A common source of difficulty involves the dependence of $\sigma$ on the spatial variable, even if it is smooth. This is the case, for instance, when $b \in BV_\loc$ and $\div b \in L^\oo$ treated by Figalli \cite{Fig}, or when $b$ satisfied a one-sided Lipschitz condition from below, as considered by the authors in \cite{LSother}; in both settings, the results are constrained to $\sigma(t,x) = \sigma(t)$ constant in $\RR^d$. In our present setting, we can relax the spatial dependence, and we assume that $\sigma$ is Lipschitz and satisfies
\begin{equation}\label{intro:sigma}
	\sigma^{ik}(t,x) = \sigma^{ik}(t, x_i), \quad \text{for all } (t,x) \in [0,T] \times \RR^d, \quad i = 1,2,\ldots, d, \; k = 1,\ldots,m.
\end{equation}

Then, in Section \ref{sec:TE}, we turn to the study of the linear transport equations \eqref{intro:TE}-\eqref{intro:CE}, as well as the second-order equations \eqref{intro:2TE}-\eqref{intro:2CE}, which can be related to the ODE and SDE flows in Section \ref{sec:ODE}.

\begin{theorem}
	The flow \eqref{intro:ODE} (resp. \eqref{intro:SDE}) gives rise to continuous solution operators on $L^p_\loc$ (resp. $L^p$), $p \in (1,\oo)$ for the dual problems \eqref{intro:TE}-\eqref{intro:CE} (resp. \eqref{intro:2TE}-\eqref{intro:2CE}). The resulting solutions are stable under regularizations of $b$ or vanishing viscosity limits in the spaces $C_t L^p_{x}$, with the convergence being strong for the nonconservative equations \eqref{intro:TE}/\eqref{intro:2TE} and weak for the conservative equations \eqref{intro:CE}/\eqref{intro:2CE}.
\end{theorem}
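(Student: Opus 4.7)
The plan is to build the solution operators directly from the flow $\phi$ (resp.\ $\Phi$) furnished by the first theorem, avoiding any direct PDE argument. For the nonconservative terminal-value problem \eqref{intro:TE}, the natural candidate is the composition $u(t,x) := u_T(\phi_{T,t}(x))$, which is constant along characteristics; for the conservative problem \eqref{intro:CE}, the candidate is the pushforward $f(t,\cdot) := (\phi_{t,0})_\# f_0$, defined by duality via
\begin{equation*}
    \int f(t,x)\varphi(x)\,dx := \int f_0(x)\, \varphi(\phi_{t,0}(x))\,dx \quad \text{for } \varphi \in C_c(\RR^d).
\end{equation*}
The compression bound $|\phi_{t,s}^{-1}(A)| \le C|A|$ is equivalent to $(\phi_{t,s})_\# \mathcal{L}^d \le C\mathcal{L}^d$, which, via the layer-cake identity, yields $\nor{g\circ \phi_{t,s}}{L^p} \le C^{1/p}\nor{g}{L^p}$ for every $p \in [1,\oo]$. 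Applied pointwise in $t$, this gives $\nor{u(t,\cdot)}{L^p} \le C^{1/p}\nor{u_T}{L^p}$ directly, and, by duality in $L^{p'}$, $\nor{f(t,\cdot)}{L^p} \le C^{1/p'}\nor{f_0}{L^p}$, establishing $L^p_\loc$-boundedness of both operators.

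To verify that these candidates actually solve the respective equations, and to obtain the stability assertions simultaneously, I would proceed by regularization. Let $b^\eps$ be smooth mollifications, with smooth flows $\phi^\eps$ and corresponding $u^\eps := u_T\circ \phi^\eps_{T,\cdot}$ and $f^\eps := (\phi^\eps_{\cdot,0})_\# f_0$; for smooth $b^\eps$ these solve \eqref{intro:TE}-\eqref{intro:CE} classically. Since $\phi^\eps \to \phi$ in $C_tL^p_{x,\loc}$ by the first theorem, strong convergence $u^\eps \to u$ in $C_tL^p_{x,\loc}$ follows by first approximating $u_T$ by $\tilde u_T \in C_c(\RR^d)$ (for which convergence is immediate from uniform continuity), then controlling the remainder uniformly by the compression constant. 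This strong convergence lets us pass to the limit in a Lagrangian or mollified-distributional formulation of \eqref{intro:TE} and identify $u$ as the (unique) solution in the appropriate weak sense. For the conservative equation, weak convergence $f^\eps \rightharpoonup f$ in $C_tL^p_x$ follows from the dual identity: given $\varphi \in C_c^\oo$, $\int f^\eps(t)\varphi = \int f_0 \cdot (\varphi\circ \phi^\eps_{t,0}) \to \int f_0 \cdot (\varphi\circ \phi_{t,0})$ by dominated convergence combined with a.e.\ convergence of $\phi^\eps_{t,0}$ along a subsequence. Strong convergence genuinely fails here: the densities $f^\eps(t,\cdot)$ concentrate along the lower-dimensional sets carrying the singular part of $\div b$, which is the ``vacuum formation'' already flagged in the introduction.

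For the second-order problems, the same scheme runs with the SDE flow $\Phi_{t,s}$ in place of $\phi_{t,s}$, using the Feynman-Kac formulas $u(t,x) = \EE\bigl[u_T(\Phi_{T,t}(x)) + \int_t^T c(s,\Phi_{s,t}(x))\,ds\bigr]$ and $f(t,\cdot) = \EE\bigl[(\Phi_{t,0})_\# f_0\bigr]$; the structural hypothesis \eqref{intro:sigma} is exactly what preserves the monotone/compression structure under the Itô correction, so the same $L^p$ estimates and duality apply, and the vanishing-viscosity limit reduces to the stability of the SDE flow proved in Section \ref{sec:ODE}. The principal obstacle is conceptual rather than computational: because $\div b$ is not absolutely continuous, the nonconservative equation has no a priori distributional meaning when $u \in L^p_\loc$, so one must identify the correct weak/Lagrangian formulation, prove that it is uniquely characterized by the dual pairing with \eqref{intro:CE}, and ensure consistency with the a.e.-unique flow. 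Once this duality is set up cleanly, the contrast between strong convergence (pullback under the flow) and weak convergence (pushforward, which can concentrate) follows naturally from the one-sided nature of the compression bound.
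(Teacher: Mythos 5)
Your construction reproduces the paper's argument: the flow-based representations $u(t,\cdot)=u_T\circ\phi_{T,t}$ and $f(t,\cdot)=(\phi_{t,0})_\# f_0$, the regular-Lagrangian (compression) bound for $L^p$-continuity, and the flow-stability results (Theorems \ref{T:ODE:beps}, \ref{T:ODE:epsW}, \ref{T:SDE}) to pass to the limit under both mollification and vanishing viscosity, with the strong/weak dichotomy arising from the pullback/pushforward duality exactly as you outline. One small caution: the paper (Remark \ref{R:strong?}) leaves open whether the conservative convergence can be upgraded to strong, and your heuristic that $f^\eps$ ``concentrates'' on the singular set of $\div b$ has the geometry reversed --- the lower bound on $\div b$ makes the forward flow expansive, so pushforwards form \emph{vacuum} there rather than concentrating mass, and any failure of strong convergence would have to be an oscillation rather than a concentration phenomenon.
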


The solution operator for the nonconservative equation \eqref{intro:TE} cannot be made sense of in the sense of distributions, because the measure $\div b$ can have a singular part. We nevertheless provide a PDE-based characterization for solutions that are \emph{increasing or decreasing} with respect to the partial order \eqref{intro:order}. In order to give meaning to the ill-defined product $b \cdot \nabla u$ in this context, we introduce mollifications of a one-sided nature that lead to commutator errors which are shown to possess a sign; this is to be compared with the renormalization theory initiated in \cite{DL89}, in which the commutator errors are shown to converge to zero with the convolution parameter. This leads to a notion of sub and supersolution for \eqref{intro:TE}, which are proved to satisfy a comparison principle. The solution operator for \eqref{intro:TE} can thus be alternatively characterized in terms of these regularizations, and, moreover, the regular Lagrangian flow \eqref{intro:ODE} can be recovered as the (vector)-valued solution of the terminal value problem \eqref{intro:TE} with $u_T(x) = x$. These two viewpoints on the transport equation and ODE flow are instrumental in our understanding of the nonlinear equations to follow.

The continuity equation \eqref{intro:CE} (or the Fokker-Planck equation \eqref{intro:2CE} in the second-order case) can then be related to the nonconservative equation through duality. Importantly, arbitrary distributional solutions of \eqref{intro:CE} are not unique in general. We prove that, if $f_0 \ge 0$, then there exist unique distributional solutions of \eqref{intro:CE} and \eqref{intro:2CE} which coincide with the duality solution. Moreover, this result is proved independently of the superposition principle; instead, we use the duality with the nonconservative equation, and the characterization of its solutions in terms of one-sided regularizations.

A consequence of the uniqueness of nonnegative distributional solutions of \eqref{intro:CE} is that, if $f$ and $|f|$ satisfy \eqref{intro:CE} in the sense of distributions, then $f$ is the ``good'' (duality) solution (see Corollary \ref{C:criterion} below). We do not know whether this property characterizes the duality solution, or, in other words, whether the duality solution satisfies the renormalization property in general. This should be compared with \cite{LSother}, where the authors resolve the same questions for half-Lipschitz velocity fields.

The paper concludes in Section \ref{sec:NTE} with the study of the nonlinear equation \eqref{intro:NTE} and the associated system \eqref{intro:chars}. We operate under the assumption that the discontinuous nonlinearities $f$ and $g$ satisfy, for some $C \in L^1_+([0,T])$,
\begin{equation}\label{intro:fg}
	\left\{
	\begin{split}
	&\text{for all } i,j = 1,2,\ldots, d \text{ and } k,\ell = 1,2,\ldots, m,\\
	&\del_{x_i} f^j(t,\cdot,\cdot) \ge - C(t) \delta_{ij},\quad \del_{u_k} g^\ell(t,\cdot,\cdot) \ge -C(t) \delta_{k\ell}, \quad
	\del_{x_i} g^\ell \le 0, \quad \text{and} \quad \del_{u_k} f^j \le 0.
	\end{split}
	\right.
\end{equation}
Observe that \eqref{intro:fg} is satisfied with $C \equiv 0$ by the particular example of the Burgers-like equation \eqref{intro:mburgers}.

We develop a theory for solutions of \eqref{intro:NTE} that are \emph{decreasing} with respect to the partial order \eqref{intro:order}. The first observation is that the decreasing property is propagated, formally, by the solution operator. On the other hand, shocks form in finite time, and so, even if $u_T$, $f$, and $g$ are smooth, $u(t,\cdot)$ will develop discontinuities for some $t < T$ in general, requiring a notion of weak solution. We next note that, under the above assumptions, if $u$ is decreasing, then the velocity field $b(t,x) := f(t,x,u(t,x))$ satisfies \eqref{intro:inc}. Solutions of the nonlinear equation \eqref{intro:NTE} can then be understood as fixed points for the linear problem \eqref{intro:TE}, and at the same time through the system of forward-backward characteristics \eqref{intro:chars}, using the theory for the regular Lagrangian flows in Section \ref{sec:ODE}.

\begin{theorem}
	Assume $f$ and $g$ satisfy \eqref{intro:fg} and $u_T$ is decreasing. Then there exist a maximal and minimal decreasing solution $u^+$ and $u^-$ of \eqref{intro:NTE} in the fixed point sense. If $u$ is any other such solution, then $u^- \le u \le u^+$. 
\end{theorem}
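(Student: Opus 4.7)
The plan is to build $u^\pm$ as monotone limits of a Picard iteration associated with a natural fixed-point map $\mcl{T}$ on decreasing functions, using the linear theory of Sections \ref{sec:ODE} and \ref{sec:TE}.

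\textbf{Step 1 (the fixed-point map).} For a bounded decreasing function $v : [0,T] \times \RR^d \to \RR^m$, the frozen velocity field $b^v(t,x) := f(t,x, v(t,x))$ satisfies \eqref{intro:inc}; indeed, by \eqref{intro:fg} and the decreasing property of $v$,
\[
	\del_{x_j} b^{v,i} = \del_{x_j} f^i(t,x,v(t,x)) + \sum_k \del_{u_k} f^i(t,x,v(t,x))\, \del_{x_j} v^k(t,x) \ge -C(t) \delta_{ij},
\]
with the first term bounded below by \eqref{intro:fg} and each product in the sum nonnegative. The linear theory of Section \ref{sec:TE} then lets me define $\mcl{T}[v]$ as the duality solution of
\[
	-\del_t u - b^v \cdot \nabla u = g(t,x,v(t,x)), \qquad u(T,\cdot) = u_T.
\]
Solutions of \eqref{intro:NTE} in the sense of the theorem are exactly the decreasing fixed points $u = \mcl{T}[u]$.

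\textbf{Step 2 (preservation of the decreasing class, and monotonicity of $\mcl{T}$).} From the representation along characteristics,
\[
	\mcl{T}[v](t,x) = u_T(\phi^v_{T,t}(x)) + \int_t^T g(s, \phi^v_{s,t}(x), v(s, \phi^v_{s,t}(x)))\, ds,
\]
and the monotonicity of the Lagrangian flow $x \le y \Rightarrow \phi^v_{s,t}(x) \le \phi^v_{s,t}(y)$ inherited from \eqref{intro:inc} and proved in Section \ref{sec:ODE}, the first term is decreasing in $x$. For the integrand, the signs $\del_{x_i} g^\ell \le 0$ and $\del_{u_k} g^\ell \ge 0$ for $k \ne \ell$ from \eqref{intro:fg} give the correct monotonicity for all contributions except the diagonal term $\del_{u_\ell} g^\ell \ge -C(t)$, which I absorb by working with the rescaled unknowns $w^\ell = e^{\int_t^T C(r)\,dr} u^\ell$ for which the modified source is a fully coordinate-wise increasing function of $u$. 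Monotonicity of $\mcl{T}$ is then obtained as follows: given decreasing $v_1 \le v_2$, the sign $\del_{u_k} f^j \le 0$ yields $b^{v_1} \ge b^{v_2}$, whence $\phi^{v_1} \ge \phi^{v_2}$ coordinate-wise by the ODE comparison principle for velocity fields obeying \eqref{intro:inc} (a direct consequence of the results of Section \ref{sec:ODE}); composing with the decreasing $u_T$ and with the rescaled source then yields $\mcl{T}[v_1] \le \mcl{T}[v_2]$.

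\textbf{Step 3 (monotone iteration and comparison).} Take the constant vectors $v^+_0 \equiv M$ and $v^-_0 \equiv -M$ with $M$ large enough that the boundedness of $u_T$ and of $g$ on bounded sets forces $\mcl{T}[v^+_0] \le v^+_0$ and $\mcl{T}[v^-_0] \ge v^-_0$. Iterate $v^\pm_{n+1} = \mcl{T}[v^\pm_n]$. By Steps 1--2, the sequences remain decreasing in $x$, stay bounded by $\pm M$, and are monotone in $n$ (nonincreasing for $+$, nondecreasing for $-$). Their pointwise limits $u^\pm$ exist, and the strong stability of the linear nonconservative equation in $C_t L^p_{x,\loc}$ established in Section \ref{sec:TE} identifies $u^\pm$ as fixed points of $\mcl{T}$. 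If $u$ is any other decreasing fixed point, then $v^-_0 \le u \le v^+_0$, and by monotonicity of $\mcl T$, $v^-_n \le u \le v^+_n$ for every $n$, whence $u^- \le u \le u^+$ in the limit.

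The main obstacle is to make the coordinate-wise monotonicity manipulations of Step 2 rigorous when $f, g, u_T$ are only in the class allowed by \eqref{intro:fg} and $v$ is only almost-everywhere decreasing, since the chain-rule derivations above are a priori formal. I would resolve this by approximating $f, g, u_T$ by smooth functions $(f^\eps, g^\eps, u^\eps_T)$ preserving the sign constraints of \eqref{intro:fg}, running the entire construction at the smooth level where the comparisons are literal, and passing to the limit using the stability of the linear theory from Section \ref{sec:TE} together with the one-sided mollification framework developed earlier in the paper, which gives the correct interpretation of $b\cdot\nabla u$ for decreasing $u$ and the associated sub/supersolution comparison.
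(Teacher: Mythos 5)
The route is genuinely different from the paper's. The paper sets up the complete lattice $\mcl L$ of decreasing functions with a linear-growth bound, shows the fixed-point map $\mcl S$ is monotone, and invokes Tarski's fixed-point theorem, which requires no continuity of $\mcl S$. You instead build $u^\pm$ constructively by a monotone Picard iteration from constant barriers $\pm M$, which is a Knaster--Kleene-type argument: to conclude that the monotone limits are fixed points, you need $\mcl T$ to be order-continuous along the iteration, which you (correctly) propose to derive from the $C_t L^p_{\loc}$-stability of the linear theory. Both approaches hinge on the same monotonicity lemma (Step~2, in essence the paper's monotonicity of $\mcl S$), but what each buys is different: Tarski yields existence of maximal/minimal fixed points with no further work, while the iteration yields a concrete approximation scheme at the price of a stability argument.

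Two points deserve care. First, your constant barriers $v^\pm_0 \equiv \pm M$ are not invariant if $u_T$ has genuine linear growth in $x$ (which the paper allows) or if $g$ has linear growth in $u$ (again allowed by the standing growth bound $\int_0^T \sup_{x,u} |g|/(1+|u|)\,dt < \infty$). In that case $\mcl T[M]$ need not lie below $M$, and one should instead iterate within the decreasing class with a linear-growth bound of the form $\oline C(t)(1+|x|)$, with $\oline C$ solving an appropriate Gr\"onwall inequality, exactly the shape of the paper's lattice $\mcl L$; your argument then goes through unchanged, and moreover the comparison $u^- \le u \le u^+$ applies to any fixed point in that class, which is also what the paper's theorem asserts. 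Second, the order-continuity step is plausible but is doing real work: the frozen coefficients $f(\cdot,\cdot,v_n)$, $g(\cdot,\cdot,v_n)$ are not smooth, so you must apply the stability statement of Theorem~\ref{T:TE:lowerorder}\eqref{lowerorder:smooth} (or a diagonal approximation thereof) to deduce $\mcl T[v_n] \to \mcl T[u^\pm]$ in $C_t L^p_{\loc}$ and then identify the limit with the pointwise monotone limit. Spelling this out would be needed to make the proof watertight. The minor rescaling in Step~2 (rescaling $u$ only, not $x$) is in fact sufficient for the ODE comparison since Lemma~\ref{L:comparison} imposes no condition on $\del_{x_i}b^i$, so that part is fine.
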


Continuous decreasing solutions of \eqref{intro:NTE} satisfy a comparison principle, which in particular implies that $u^+$ is below every continuous supersolution and $u^-$ is above every continuous subsolution. In general, $u^-$ and $u^+$ do not coincide, which must then be a consequence of the formation of discontinuities. This nonuniqueness of solutions is closely related to the question of multiplying distributions of limited regularity. Indeed, in the equation \eqref{intro:NTE}, the product $f(t,x,u) \cdot \nabla u$ cannot be defined in a stable (with respect to regularizations) way in view of the fact that, in general, $u \in BV_\loc$ and $\nabla u$ is a locally finite measure.

The well-posedness of both strong and weak solutions of MFG master equations in the continuum state space setting has been explored under various sets of monotonicity assumptions \cite{Ah_16, BLL_23, Ber_21_cts,CDLL_19,GaMe_22,GaMeMZh_22,CS_22,CD_18,MZ_anti,GM_mono,GM_consv}. The approach in our setting, which involves appealing to Tarski's fixed point theorem for increasing functions on lattices \cite{Tarski}, has also been taken in the continuum state space setting, where maximal and minimal solutions were found under related assumptions; see for instance \cite{DFFN_21,DFFN_22,Di_22,MZ_min}. The partial order used in \cite{MZ_min} comes from the notion of stochastic dominance for probability measures. We note that, for the equation \eqref{intro:NTE} posed on an infinite dimensional Hilbert space of $L^2$ random variables, the partial order \eqref{intro:order} is related to the analogous notion of stochastic dominance for random variables, and we aim to study the infinite dimensional version of \eqref{intro:NTE} in future work.

We explore the nonuniqueness issue in more detail for the Burgers equation \eqref{intro:mburgers} in one dimension, where the decreasing terminal value has a single discontinuity at $0$:
\begin{equation}\label{intro:1burgers}
	-\del_t u + u \del_x u = 0 \quad \text{in }(0,T) \times \RR, \quad u(T,x) = \ind\left\{ x \le 0\right\}.
\end{equation}
It turns out that \eqref{intro:1burgers} admits infinitely many fixed point solutions, consisting of a shock traveling with variable speed between $0$ and $1$. Of course, \eqref{intro:1burgers} can be reframed as a scalar conservation law, whose unique entropy solution is the shock-wave solution with speed $1/2$. We note that the notion of entropy solution does not extend to the nonconservative equations \eqref{intro:NTE} or \eqref{intro:mburgers}.

We characterize the family of fixed point solutions \eqref{intro:1burgers} as limits under distinct types of  regularizations of the equation \eqref{intro:1burgers}.

\begin{theorem}\label{T:intro:selection}
	For any $c \in W^{2,1}([0,T])$ satisfying $c(T) = 0$ and $-c' \in (0,1)$, there exists $\theta_c \in L^1([0,T])$ such that, if $u^\eps_T$ is smooth, $u^\eps_T \xrightarrow{\eps \to 0} \ind_{(-\oo,0)}$ in $L^1_\loc$, and $u^\eps$ is the unique classical solution of 
	\begin{equation}\label{intro:NL:epsburgers}
		-\del_t u^\eps + u^\eps\del_x u^\eps = \eps  \left( \del_x^2 u^\eps + \theta_c(t) |\del_x u^\eps|^2 \right), \quad u^\eps(T,\cdot) = u^\eps_T,
	\end{equation}
	then, for all $1 \le p < \oo$, as $\eps \to 0$, $u^\eps \to \ind\left\{ x \le c(t)\right\}$ strongly in $C([0,T], L^p_\loc)$.
\end{theorem}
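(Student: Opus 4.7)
The plan is to identify $\theta_c$ via a viscous shock-profile analysis in a comoving frame and to conclude via a parabolic sandwich.

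Setting $y := x - c(t)$ and $w^\eps(t,y) := u^\eps(t, y + c(t))$, and using $c(T) = 0$, equation \eqref{intro:NL:epsburgers} transforms into
\[
-\del_t w^\eps + (c'(t) + w^\eps)\del_y w^\eps = \eps \del_y^2 w^\eps + \eps\,\theta_c(t)(\del_y w^\eps)^2,
\]
with $w^\eps(T,\cdot) = u^\eps_T$ and target limit $\ind\{y \le 0\}$. Under the time change $s = T - t$ this becomes, for each fixed $\eps > 0$, a uniformly parabolic forward problem to which classical comparison applies. Next, insert the shock-profile ansatz $w^\eps(t,y) = W(y/\eps; t)$ with $W(-\oo;t) = 1$, $W(+\oo;t) = 0$ and $W'<0$; matching the $\eps^{-1}$ terms gives the inner ODE
\[
(c'(t) + W)W' = W'' + \theta_c(t)(W')^2, \qquad ' = d/dz,
\]
with residual $\eps\,W_t$. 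The phase-plane substitution $V(W) := W'(z)$ converts this into the \emph{linear} first-order ODE $dV/dW + \theta_c V = c' + W$, which integrates explicitly; imposing the asymptotic conditions $V(0) = V(1) = 0$ kills the two constants of integration and yields the selection identity
\[
-c'(t) = f(\theta_c(t)), \qquad f(\theta) := \frac{1}{1 - e^{-\theta}} - \frac{1}{\theta}.
\]
Using $1 - e^{-\theta} = 2e^{-\theta/2}\sinh(\theta/2)$, one checks that $f$ extends analytically across $\theta = 0$ with $f(0) = 1/2$ (the Rankine--Hugoniot speed for standard viscous Burgers), $\lim_{\theta \to -\oo} f = 0$, $\lim_{\theta \to +\oo} f = 1$, and $f' > 0$ on $\RR$ (reducing to the elementary inequality $|\sinh(\theta/2)| > |\theta|/2$ for $\theta \ne 0$). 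Hence $f : \RR \to (0,1)$ is a smooth strictly increasing bijection, and I define $\theta_c(t) := f^{-1}(-c'(t))$; the $W^{2,1}$ regularity of $c$ together with $-c' \in (0,1)$ gives $\theta_c \in L^1([0,T])$.

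For convergence, form the sandwich family
\[
w^\eps_\pm(t, y) := W\!\left(\frac{y \mp \delta^\eps_\pm(t)}{\eps};\, t\right) \pm \eta^\eps(t),
\]
where $\delta^\eps_\pm(T) > 0$ is small (vanishing with $\eps$) but large enough that $w^\eps_-(T,\cdot) \le u^\eps_T \le w^\eps_+(T,\cdot)$---possible since both $W(\cdot/\eps;T)$ and $u^\eps_T$ approximate $\ind\{y \le 0\}$ in $L^1_\loc$---and where the shifts $\dot\delta^\eps_\pm$ and lifts $\eta^\eps$ are tuned to dominate the residual $\eps W_t$. A calculation that uses the profile ODE to cancel the principal $\eps^{-1}$ terms gives
\[
L[w^\eps_\pm] = \pm(\dot\delta^\eps_\pm + \eta^\eps)\,\frac{W'}{\eps} - W_t \mp \dot\eta^\eps,
\]
where $L$ denotes the moving-frame operator; since $W' < 0$, the signs of the parameters can be chosen so that $L[w^\eps_+] \ge 0 \ge L[w^\eps_-]$. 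Parabolic comparison then yields $w^\eps_- \le w^\eps \le w^\eps_+$ on $[0,T] \times \RR$, and since $W(\cdot/\eps; t) \to \ind\{y \le 0\}$ in $C_t L^p_{y,\loc}$ as $\eps \to 0$, the squeeze gives $w^\eps \to \ind\{y \le 0\}$ in the same topology; translating back produces the claimed convergence $u^\eps \to \ind\{x \le c(t)\}$.

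The main obstacle is the quantitative construction of the sandwich when $\theta_c$ is only integrable in time. The shift $\delta^\eps_\pm$ must simultaneously absorb the terminal-data mismatch, dominate the accumulated residual $\eps \int_t^T \|\del_t W(\cdot;s)\|_\oo\,ds$, and vanish as $\eps \to 0$, while the factor $W'/\eps$ in the supersolution inequality has pointwise size $\eps^{-1}$. A choice such as $\delta^\eps_\pm(t) = \omega(\eps) + \eps\int_t^T \|\del_t W(\cdot;s)\|_\oo\,ds$ for an appropriate modulus $\omega$, together with a suitable lift $\eta^\eps$, should succeed; additional care is needed near times at which $-c'(t)$ approaches $0$ or $1$, where $|\theta_c(t)|$ blows up and the profile degenerates, but the $L^1$ control of $\theta_c$ coupled with the monotonicity of $w^\eps$ in $y$ (propagated from the decreasing terminal trace) keeps the error terms integrable in $t$.
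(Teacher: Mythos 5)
Your phase-plane derivation of the selection rule is correct and arrives at the same formula as the paper, by a genuinely different route. You set up the inner traveling-wave ODE and reduce it via $V(W)=W'(z)$ to $dV/dW+\theta_c V=c'+W$, imposing $V(0)=V(1)=0$, which yields
\[
	-c'(t)=f(\theta_c(t)),\qquad f(\theta)=\frac{1}{1-e^{-\theta}}-\frac{1}{\theta}=\frac{\theta e^{\theta}-e^{\theta}+1}{\theta(e^{\theta}-1)},
\]
and this is exactly the function obtained in the paper. There the mechanism is different: the substitution $u^\eps=\theta_c(t)^{-1}\log(\theta_c(t)v^\eps+1)$ (a Cole--Hopf-type change of unknown) absorbs the $\eps\theta_c|\del_x u^\eps|^2$ term and turns \eqref{intro:NL:epsburgers} into a viscous scalar conservation law with a lower-order source, after which the shock speed is read off from the Rankine--Hugoniot condition. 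Two further remarks on your derivation. First, the monotonicity proof $f'>0$ via $|\sinh(\theta/2)|>|\theta|/2$ is correct. Second, your worry about times where $\theta_c$ blows up is moot: since $c\in W^{2,1}$, $c'$ is continuous on the compact interval $[0,T]$ and takes values in a compact subset of $(-1,0)$, so $\theta_c=f^{-1}(-c')$ is bounded, and in fact $\theta_c\in W^{1,1}$.

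The genuine gap is in the convergence step. Your sandwich requires a pointwise terminal ordering $w^\eps_-(T,\cdot)\le u^\eps_T\le w^\eps_+(T,\cdot)$ for shifts $\delta^\eps_\pm$ and lifts $\eta^\eps$ that vanish as $\eps\to 0$, but the hypothesis only gives $u^\eps_T\to\ind_{(-\oo,0)}$ in $L^1_\loc$, and $L^1_\loc$ convergence does not produce such an envelope. A concrete obstruction: let $u^\eps_T$ be a smooth decreasing approximation of $\ind_{(-\oo,0)}$ plus a smooth bump of height $1$ and width $\eps$ centered at $x=1$; then $u^\eps_T\to\ind_{(-\oo,0)}$ in $L^1_\loc$, yet $u^\eps_T(1)\approx 1$ while $w^\eps_+(T,1)=W\bigl((1-\delta^\eps_+)/\eps;T\bigr)+\eta^\eps\to 0$ whenever $\delta^\eps_+\to 0$ and $\eta^\eps\to 0$, so the upper barrier fails at $t=T$ and the parabolic comparison never starts. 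This is not a detail that can be patched by a finer choice of $\delta^\eps_\pm,\eta^\eps$; it is structural to pointwise comparison. The paper sidesteps it because, after the change of unknown, the regularized problem is a Kruzhkov conservation law, and its unique entropy solutions satisfy $L^1$-contraction and $L^1_\loc$-stability in the data, so $L^1_\loc$ convergence of the terminal values is exactly the right hypothesis. To close your proof along your lines you would have to replace the pointwise sandwich by an $L^1$-stability estimate for the $\eps$-problem (e.g.\ a doubling-of-variables/Kruzhkov-type argument), which in effect recovers the conservation-law formulation. The remaining incompleteness you flag yourself (tuning $\delta^\eps_\pm$, $\eta^\eps$ to dominate the residual $\eps W_t$) is in principle fillable once $\theta_c$ is known to be bounded, but it is secondary to the terminal-data issue.
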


We interpret Theorem \ref{T:intro:selection} as a selection-by-noise result for the nonunique problem \eqref{intro:1burgers}. Indeed, the result can be reformulated on the level of the system \eqref{intro:chars}, which, for \eqref{intro:NL:epsburgers}, becomes the forward-backward system of SDEs
\begin{equation}\label{intro:FBSDE}
	\begin{dcases}
	- d_s U^\eps_{s,t}(x) = Z^\eps_{s,t}(x)dW_s - \frac{1}{2} \theta_c(s) Z^\eps_{s,t}(x)^2 ds, & U^\eps_{T,t}(x) = u^\eps_T(X^\eps_{T,t}(x)), \\
	d_s X^\eps_{s,t}(x) = - U^\eps_{s,t}(x)ds + \sqrt{2\eps} dW_s, & X^\eps_{t,t}(x) = x.
	\end{dcases}
\end{equation}

Various selection methods have been proposed to study mean field games models that do not admit unique solutions, and we refer in particular to \cite{DF_20,CeDe_22} for problems involving stochastic selection. Our result is distinguished by the consideration of several different descriptions of small noise, each of which selects a different solution for the deterministic problem in the vanishing noise limit. 

\subsection{A note on velocity fields with a one-sided Lipschitz condition}

Let us remark that the regime in which $b$ satisfies \eqref{intro:inc} shares many similarities with the setting in which $b$ is half-Lipschitz from below\footnote{Indeed, \eqref{intro:inc} and \eqref{b:halfLip} are identical in one spatial dimension.}, that is,
\begin{equation}\label{b:halfLip}
	(b(t,x) - b(t,y)) \cdot (x-y) \ge -C(t)|x-y|^2 \quad \text{for } (t,x,y) \in [0,T] \times \RR^{2d}  \text{ and some } C \in L^1_+([0,T]).
\end{equation}
A key commonality in both settings is that $\div b$ is bounded from below, but not necessarily absolutely continuous with respect to Lebesgue measure. Transport equations and flows for velocity fields satisfying \eqref{b:halfLip} have been studied from a variety of different viewpoints \cite{Conway_67,BJ,BJM,CSmfg,Pe_Po_1d_99,Pe_Po_01,PR_97}, and in \cite{LSother}, the authors obtain very similar results to those described above regarding the existence, uniqueness, and stability of the regular Lagrangian flow forward in time, as well as proving well-posedness and studying properties and characterizations of solutions to the problems \eqref{intro:TE} and \eqref{intro:CE} in Lebesgue spaces.

A key difference between the two regimes is the behavior of the flow for \eqref{intro:ODE} in the compressive direction, that is, backward in time. For velocity fields satisfying the half-Lipschitz condition \eqref{b:halfLip}, the backward ODE is uniquely solvable for all $x \in \RR^d$. Moreover, the resulting backward flow is Lipschitz continuous, and it can be identified as the left-inverse to the forward, regular Lagrangian flow; see \cite{LSother} for more precise statements, as well as new characterizations of the time-reversed versions of \eqref{intro:TE} and \eqref{intro:CE}.

On the other hand, when $b$ satisfies \eqref{intro:inc}, the backward problem \eqref{intro:ODE} is not in general unique for every $x \in \RR^d$, nor is it true that a globally Lipschitz flow can always be found. We note that, even for examples where the backward flow has a unique solution for Lebesgue-a.e. $x \in \RR^d$, neither the stability nor the solvability of the time-reversed versions of \eqref{intro:TE}-\eqref{intro:CE} in Lebesgue spaces can be expected to hold, because, in general, any backward flow solution to \eqref{intro:ODE} will concentrate on sets of Lebesgue-measure zero. For a detailed discussion and examples, see subsections \ref{ss:reverse?} and \ref{ss:timereversed?} below.

\subsection{Notation}

Given a bounded function $\phi:\RR^d \to \RR$, $\phi_*$ and $\phi^*$ denote the lower and uppersemicontinuous envelopes, and, if $\phi$ is $\RR^m$, the same notation is used coordinate-by-coordinate.

We often denote arbitrary functions spaces on $\RR^d$ as $X(\RR^d) = X$ when there is no ambiguity over the domain. For $p \in [1,\oo]$, $L^p_\w$ and $L^p_\ws$ denote the space of $p$-integrable functions with respectively the weak and weak-$\star$ topologies. $L^p_\loc$ denotes the space of of locally $p$-integrable functions with the topology of local $L^p$-convergence, and $L^p_{\loc,\w}$ and $L^p_{\loc,\ws}$ are understood accordingly.

The notation $\mbf 1$ denotes the vector $(1,1,\ldots, 1)$ in Euclidean space, the dimension being clear from context. Given two sets $A$ and $B$, $A \triangle B := (A\backslash B) \cup (B \backslash A)$.

\section{Preliminary results}

This section contains a collection of results regarding vector-valued notions of increasing/decreasing, as well as a vector-valued maximum principle.

\subsection{Properties of increasing functions} 

We first introduce a partial order on $\RR^d$ that is used throughout the paper.

\begin{definition}\label{D:order}
For $x,y \in \RR^d$, we will write
\begin{equation}\label{order}
	x \le y \quad \text{if} \quad x_i \le y_i \text{ for all } i = 1,2,\ldots, d.
\end{equation}
Given $a,b \in \RR^d$, $a \le b$, we denote by $[a,b]$ the cube $\prod_{i=1}^d [a_i, b_i]$. A function $\phi: \RR^d \to \RR^m$, $d,m \in \NN$, is said to be increasing if $\phi(x) \le \phi(y)$ whenever $x \le y$. Equivalently, $\phi$ is increasing if, for each $i = 1,2,\ldots,m$, $\phi^i$ is increasing in the $x_j$-coordinate for each $j = 1,2,\ldots, d$.
\end{definition}

\begin{lemma}\label{L:increasing}
	Assume that $\phi: \RR^d \to \RR^m$ is increasing. Then $\phi \in BV_\loc(\RR^d)$, and, for each $i = 1,2\ldots, m$,
	\[
		\liminf_{y \to x} \phi^i(y) = \liminf_{y \to x, \, y \gneq x} \phi^i(y)
	\quad \text{and} \quad
		\limsup_{y \to x} \phi^i(y) = \limsup_{y \to x, \, y \lneq x} \phi^i(y).
	\]
\end{lemma}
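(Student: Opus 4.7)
My plan is to handle the two assertions separately: first the $BV_\loc$ regularity via Fubini and the one-dimensional theory of monotone functions, then the two one-sided limit identities via a lattice-based sequence replacement keyed to the partial order \eqref{order}.

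For $\phi \in BV_\loc$: I would fix an output index $i$ and an input direction $j$ and view the slice $x_j \mapsto \phi^i(x)$ (with the remaining variables frozen) as a monotone function on $\RR$. By the classical one-dimensional theory, this slice has bounded variation on every compact interval, with nonnegative distributional derivative of mass $\phi^i(\dots,b_j,\dots)-\phi^i(\dots,a_j,\dots)$. Integrating this increment over the orthogonal face of any bounded cube $[a,b]$ via Fubini, and using that $\phi^i$ is locally bounded by monotonicity (sandwiched by its corner values), produces a finite nonnegative Radon measure $\partial_{x_j}\phi^i$. Assembling over all $(i,j)$ yields $\phi \in BV_\loc(\RR^d)$.

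For the two one-sided identities I would argue by a common template. In each case one inequality is immediate by set inclusion: restricting the class of admissible sequences can only raise a $\liminf$ or lower a $\limsup$. For the remaining inequality, take an arbitrary sequence $y_n \to x$ with $y_n \ne x$ along which $\phi^i(y_n)$ converges to the unrestricted $\liminf$ (resp.\ $\limsup$), and construct a surrogate sequence $z_n$ in the prescribed restricted set that approximates the same value. Matching the restriction as written in the statement, I would take $z_n := (y_n \vee x) + \delta_n \mbf 1$ for the $\liminf$ identity, which places $z_n \gneq x$ and $z_n \to x$ for $\delta_n \downarrow 0$, and symmetrically $z_n := (y_n \wedge x) - \delta_n \mbf 1$ for the $\limsup$ identity, which places $z_n \lneq x$ and $z_n \to x$. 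The monotonicity of $\phi$ then provides a definite comparison between $\phi^i(z_n)$ and $\phi^i(y_n)$ through the componentwise ordering built into the lattice operation.

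The main obstacle I anticipate is reconciling the direction of the induced monotonicity inequality $\phi^i(z_n) \gtreqless \phi^i(y_n)$ with the direction of the $\liminf$/$\limsup$ inequality needed to close each identity. When $y_n$ is comparable to $x$ in the partial order the matching is automatic, but the generic case in $d \ge 2$ is that $y_n$ is incomparable to $x$, and the lattice replacement then induces a strict shift in the value of $\phi^i$. Controlling this shift in the limit, by tuning $\delta_n$ and exploiting the $BV_\loc$ regularity established in the first part (which localizes the jump set of $\phi^i$), is the step where the monotone-lattice structure of the range of $\phi$ must be used most essentially, and is what I expect to require the most care.
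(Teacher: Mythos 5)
The $BV_\loc$ half of your plan is fine (the paper gets there even faster: each distribution $\del_{x_j}\phi^i$ is nonnegative, hence a locally finite measure, and local boundedness of $\phi$ gives $L^1_\loc$; your slicing/Fubini argument reaches the same conclusion). The genuine gap is in the limit identities, and it is not the deferred technicality you describe at the end: the directional mismatch you flag is fatal to the statement read literally, because the displayed identities are false as written. Take $d=m=1$ and $\phi=\ind\{x\ge 0\}$ at $x=0$: then $\liminf_{y\to 0}\phi(y)=0$ while $\liminf_{y\to 0,\,y\gneq 0}\phi(y)=1$, and similarly the limsup restricted to $y\lneq 0$ is $0$ while the full limsup is $1$. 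The lemma's display simply has the two restrictions interchanged (a typo): the intended statement, the one consistent with the paper's own replacement argument and with the later definition of c\`adl\`ag increasing functions via $\limsup_{y\to x,\,y\gneq x}$, is $\liminf_{y\to x}\phi^i(y)=\liminf_{y\to x,\,y\lneq x}\phi^i(y)$ and $\limsup_{y\to x}\phi^i(y)=\limsup_{y\to x,\,y\gneq x}\phi^i(y)$. Because you keyed your surrogates to the literal restrictions, monotonicity hands you $\phi^i(z_n)\ge\phi^i(y_n)$ precisely where you need an upper bound (and conversely for the limsup), and no tuning of $\delta_n$ nor any use of the $BV_\loc$ structure of the jump set can reverse that inequality; note also that your claim that the comparable case is ``automatic'' already fails for $y_n\lneq x$ in the liminf identity, by the same Heaviside example.

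Once the restrictions are swapped, your lattice-replacement template is exactly the paper's proof and closes immediately, with no recourse to the first part. For the liminf: let $y_n\to x$, $y_n\ne x$, realize the unrestricted liminf $L$. If $y_n\ngeq x$, set $y_n':=y_n\wedge x$, so that $y_n'\lneq x$, $|y_n'-x|\le |y_n-x|$, and $\phi^i(y_n')\le\phi^i(y_n)$; if instead $y_n\gneq x$, set $y_n':=x-\tfrac1n\mbf 1$, and then $\phi^i(y_n')\le\phi^i(x)\le\phi^i(y_n)$. In either case $y_n'\lneq x$, $y_n'\to x$, and $\phi^i(y_n')\le\phi^i(y_n)$, whence $\liminf_{y\to x,\,y\lneq x}\phi^i(y)\le L$; the reverse inequality is the set-inclusion one you already noted. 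The limsup identity is symmetric, using $y_n\vee x$ and $x+\tfrac1n\mbf 1$. So the fix is not more care with $\delta_n$ or the jump set, but correcting the statement and pairing the meet with the liminf and the join with the limsup.
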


\begin{proof}
	For all $j = 1,2,\ldots, d$ and $i = 1,2,\ldots, m$, the distribution $\del_j \phi^i$ is nonnegative, and is therefore a locally finite measure. 
	
	For any sequence $y_n \to x$, if $y_n \nleq x$, then $y_n$ may be replaced with a value $y_n'$ such that $y_n' \le x$, $y_n' \le y_n$, and $\phi^i(y_n') \le \phi^i(x)$. A similar argument holds if $y_n \ngeq x$ for all $n$, and the claim follows.
\end{proof}

Lemma \ref{L:increasing} implies that each component of an increasing function $\phi: \RR^d \to \RR^m$ has limits from both the ``left'' and ``right.'' We will call an increasing function $\phi$ \cadlag/ if, for $i = 1,2,\ldots, m$,
\[
	\phi^i(x) = \limsup_{y \to x} \phi^i(y) = \limsup_{y \to x, \, y \gneq x} \phi^i(y).
\]

\begin{remark}\label{R:incex}
Given a nonnegative measure $\mu$, the repartition function
\[
	\phi(x) := \int_{(-\oo,x_1)} \int_{(-\oo,x_2)} \cdots \int_{(-\oo,x_d)} d\mu
\]
is an example of a \cadlag/ increasing function, but such functions do not cover the full range of increasing functions if $d \ge 2$; indeed, they are distinguished by the fact that mixed derivatives $\prod_{j=1}^k \del_{x_{\ell_j}} \phi$ for any distinct set $(\ell_j)_{j=1}^k \subset \{1,2,\ldots, d\}$ are still measures. 

Consider a smooth surface $\Gamma \subset \RR^d$ that partitions $\RR^d$ into two open sets, that is, $\RR^d = U_- \cup \Gamma \cup U_+$, $\Gamma = \del U_+ = \del U_-$, $U_+ \cap U_- = \emptyset$. Let $n$ be the normal vector to $\Gamma$ that, at all points of $\Gamma$, points inward to $U_+$. If $n \ge 0$ everywhere on $\Gamma$, in the sense of \eqref{order}, then $\phi = \ind_{U_+ \cup \Gamma}$ is a \cadlag/ increasing function that is not a repartition function.
\end{remark}

\subsection{$ABV$ functions} In one dimension, functions of bounded variation can be written as a difference of non-decreasing functions. With respect to the partial order \eqref{order}, the generalization of this notion is a strict subspace of $BV$. 

\begin{definition}\label{D:ABV}
Given $-\oo < a_i \le b_i < \oo$ for $i = 1,2,\ldots d$ and a function $\phi: Q := \prod_{i=1}^d [a_i,b_i] \to \RR$, we say $\phi \in ABV(Q)$ if
\[
	\norm{\phi}_{ABV(Q)} := \sup_{\gamma} \norm{\phi \circ \gamma}_{BV([0,1])} < \oo,
\]
where the supremum is taken over all curves $\gamma: [0,1]^d \to Q$ such that $\gamma_i: [0,1] \to [a_i,b_i]$ is increasing for all $i = 1,2,\ldots, d$. We will say $\phi \in ABV = ABV(\RR^d)$ if $\norm{\phi}_{ABV(Q)} < \oo$ for all boxes $Q \subset \RR^d$. 
\end{definition}

For example, $C^{0,1} \subset ABV$. It is straightforward to see that $ABV = BV$ when $d = 1$.

\begin{remark}\label{R:ABV}
	Several generalizations of the notion of finite variation to multiple dimensions, besides the space $BV$, exist in the literature, and the one in Definition \ref{D:ABV} is due to Arzel\`a \cite{ABV}. It is a strictly smaller subspace than $BV$ (as we show below). This notion of variation, along with several others, seems not to have had the same ubiquity in the theory of PDEs as the usual notion of $BV$, but is particularly relevant in this paper. More details about $ABV$ functions, and many other notions of variation in multiple dimension, can be found in \cite{Breneis}.
\end{remark}

\begin{lemma}\label{L:ABV}
	Let $\phi: \RR^d \to \RR$. Then $\phi \in ABV$ if and only if $\phi = \phi_1 - \phi_2$ for two increasing functions $\phi_1,\phi_2:\RR^d \to \RR$.
\end{lemma}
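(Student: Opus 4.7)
The plan is to prove the two implications separately. For the easy direction $(\Leftarrow)$, suppose $\phi = \phi_1 - \phi_2$ with $\phi_1, \phi_2 : \RR^d \to \RR$ increasing. Given a box $Q = [a,b]$ and any curve $\gamma : [0,1] \to Q$ whose components $\gamma_i$ are each non-decreasing, $\gamma$ is monotone in the partial order \eqref{order}, so each $\phi_i \circ \gamma : [0,1] \to \RR$ is monotone with $\|\phi_i \circ \gamma\|_{BV([0,1])} = \phi_i(\gamma(1)) - \phi_i(\gamma(0)) \le \phi_i(b) - \phi_i(a)$. The triangle inequality then yields the uniform bound $\|\phi \circ \gamma\|_{BV([0,1])} \le (\phi_1(b) - \phi_1(a)) + (\phi_2(b) - \phi_2(a))$, so $\|\phi\|_{ABV(Q)} < \oo$ for every box $Q$, giving $\phi \in ABV$.

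For the converse $(\Rightarrow)$, I would first construct the decomposition locally on each box $Q_R := [-R\mbf{1}, R\mbf{1}]$ by setting, for $x \in Q_R$,
\[
\phi_2^{(R)}(x) := \sup \left\{ \sum_{i=0}^{k-1} (\phi(z_{i+1}) - \phi(z_i))^- : k \ge 1,\ -R\mbf{1} \le z_0 \le z_1 \le \cdots \le z_k = x \right\},
\]
i.e., the maximum accumulated negative variation over increasing chains in $Q_R$ ending at $x$. Interpolating a chain into a piecewise-linear monotone curve on $[0,1]$ shows $\phi_2^{(R)}(x) \le \|\phi\|_{ABV(Q_R)} < \oo$. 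Appending $y \ge x$ to a chain produces $\phi_2^{(R)}(y) \ge \phi_2^{(R)}(x) + (\phi(y) - \phi(x))^-$, so $\phi_2^{(R)}$ is increasing on $Q_R$. Setting $\phi_1^{(R)} := \phi_2^{(R)} + \phi$ and using the algebraic identity $a^- + a = a^+$, one checks that $\phi_1^{(R)}(y) - \phi_1^{(R)}(x) \ge (\phi(y) - \phi(x))^+ \ge 0$, so $\phi_1^{(R)}$ is also increasing; this gives $\phi = \phi_1^{(R)} - \phi_2^{(R)}$ on $Q_R$ with both pieces increasing.

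To pass from local to global, I would exhaust $\RR^d$ by the nested sequence $Q_R$ and inductively construct $\phi_1, \phi_2 : \RR^d \to \RR$ by extending the decomposition on $Q_R$ across the annulus $Q_{R+1} \setminus Q_R$, exploiting the substantial freedom in the choice of decomposition (adding any common increasing function preserves it). Equivalently, one may seek a globally defined ``dominating'' increasing function $L$ such that both $L + \phi$ and $L - \phi$ are increasing, and then take $\phi_1 := (L + \phi)/2$, $\phi_2 := (L - \phi)/2$. I expect this global extension to be the main obstacle: the naive pointwise limit $\lim_{R \to \oo} \phi_2^{(R)}(x)$ may diverge whenever $\phi$ oscillates strongly along an unbounded, lower-dimensional structure (for instance, a hyperplane on which $\phi$ has unbounded global variation), and so the additive modifications on successive annuli must be chosen with care rather than by a direct limit.
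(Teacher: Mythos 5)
Your argument for the easy direction matches the paper's exactly. For the hard direction, your local construction on a box is a correct and standard variant of the paper's: you take $\phi_2$ to be the accumulated \emph{negative} variation over increasing chains, while the paper sets $\phi_1(x) = \tfrac12\bigl(\norm{\phi}_{ABV([a,x])} + \phi(x)\bigr)$ and $\phi_2(x) = \tfrac12\bigl(\norm{\phi}_{ABV([a,x])} - \phi(x)\bigr)$; both are decompositions of the type ``(monotone dominating function $\pm\phi$)/something,'' and your verification that both pieces are increasing is correct (the paper asserts it without detail). The one substantive point is the ``local to global'' step that you flag as an obstacle: the paper's proof is in fact also local --- it fixes a box $[a,b]$, assumes $\phi\in ABV([a,b])$, and produces the decomposition only on $[a,b]$, with no argument for extending the increasing functions to all of $\RR^d$. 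When $d=1$ the extension is immediate (the negative variation $N_{-R}(x) - N_{-R}(0)$ is independent of $R$, since chains from $-R$ to $x$ can be refined to pass through $0$), but for $d\ge 2$ this fails because increasing chains in a large box need not pass through a fixed intermediate point, so some extra normalization or an inductive extension across annuli is needed. So the gap you identify is genuine, but it is also present in the paper's own write-up; for the way the lemma is actually used (Lemma~\ref{L:ABVcts}, which only needs a decomposition near an arbitrary point to invoke Stepanoff's theorem), the local statement you and the paper both prove is sufficient.
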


\begin{proof}
	Let $a, b \in \RR^d$, $a \le b$. If $\phi: \RR^d \to \RR$ and $\gamma: [0,1] \to [a,b]$ are increasing, then $\phi \circ \gamma$ is increasing, and thus $\norm{ \phi \circ \gamma}_{BV([0,1])} = \phi(b) - \phi(a)$. It follows that $\phi \in ABV$, and, by linearity, differences of increasing functions belong to $ABV$. 

	Now assume $\phi \in ABV([a,b])$, and, for $x \in [a,b]$, set
	\[
		\phi_1(x) = \frac{1}{2} \left( \norm{\phi_1}_{ABV([a,x])} + \phi(x)\right) \quad \text{and} \quad 
		\phi_2(x) = \frac{1}{2} \left( \norm{\phi_1}_{ABV([a,x])} - \phi(x) \right).
	\]
	Then $\phi = \phi_1 - \phi_2$, and $\phi_1$ and $\phi_2$ are increasing.
\end{proof}

\begin{lemma}\label{L:ABVcts}
	If $\phi \in ABV$, then $\phi$ is almost everywhere continuous and differentiable.
\end{lemma}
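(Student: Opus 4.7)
The plan is to reduce to the case of an increasing $\phi$ via Lemma \ref{L:ABV}, since both a.e. continuity and a.e. differentiability are preserved under differences. Assume therefore that $\phi : \RR^d \to \RR$ is coordinate-wise increasing.

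For a.e.\ continuity, the key is the \emph{monotonicity squeeze}: if $\|y-x\|_\oo \le \delta$, then $-\delta \mbf 1 \le y - x \le \delta \mbf 1$, so
\[
\phi(x - \delta \mbf 1) \le \phi(y) \le \phi(x + \delta \mbf 1).
\]
Hence the oscillation of $\phi$ over the $\ell^\oo$-ball of radius $\delta$ centered at $x$ is bounded by $\phi(x + \delta \mbf 1) - \phi(x - \delta \mbf 1)$, and $\phi$ is jointly continuous at $x$ as soon as the one-dimensional diagonal restriction $\psi_x(t) := \phi(x + t \mbf 1)$ is continuous at $t = 0$. Foliating $\RR^d \simeq \mbf 1^\perp \oplus \RR \mbf 1$ by lines in the direction $\mbf 1$, each $\psi_x$ is a 1D monotone function and therefore has at most countably many jump points; Fubini then gives that the set of $x$ where $\psi_x$ is discontinuous at $0$ has $d$-dimensional Lebesgue measure zero, producing a.e.\ joint continuity of $\phi$.

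For a.e.\ differentiability, I would first establish the classical existence of each partial derivative $\del_{x_i} \phi(x)$ at a.e.\ $x$: for each $i$, slicing $\RR^d$ by lines parallel to $e_i$ yields a 1D monotone function on each slice, differentiable a.e.\ by Lebesgue's classical theorem, and Fubini combines these across slices. The main obstacle, which I expect to be the technical heart of the proof, is upgrading the existence of all partials (and of the diagonal derivative) to full Fr\'echet differentiability, since separate differentiability does not in general imply joint differentiability and the coordinate-wise monotonicity must be used in an essential way. I would approach this by restricting to points $x$ that are simultaneously Lebesgue points of each density $g_i$ in the Lebesgue decomposition $\del_{x_i} \phi = g_i \, dx + \mu_i^s$ (viewing $\del_{x_i} \phi$ as a nonnegative locally finite measure by Lemma \ref{L:increasing}) and at which each singular part $\mu_i^s$ has vanishing local density --- a set of full measure by Lebesgue differentiation of measures. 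Decomposing $\phi(x + h) - \phi(x)$ along the rectangular telescoping path $z_0 = x$, $z_i = x + \sum_{j \le i} h_j e_j$, each 1D increment $\phi(z_i) - \phi(z_{i-1})$ in the $e_i$-direction is controlled by the measure $\mu_i$ on a thin slab near $x$; the continuity squeeze then lets one replace the base point $z_{i-1}$ by $x$ up to $o(|h|)$ errors. The dominant contribution at such a good $x$ should be $g_i(x) h_i + o(|h|)$, and summing over $i$ yields Fr\'echet differentiability with $\nabla \phi(x) = (g_1(x), \ldots, g_d(x))$. An alternative to this hands-on density-point argument is to invoke classical results on a.e.\ differentiability of coordinate-wise monotone functions on $\RR^d$, in the spirit of Mignot's theorem for monotone operators applied to the subdifferential structure, or the treatment of $ABV$ functions in \cite{Breneis}.
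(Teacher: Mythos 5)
Your reduction to increasing $\phi$ via Lemma \ref{L:ABV}, and your sandwich $\phi(x-\delta\mbf 1)\le\phi(y)\le\phi(x+\delta\mbf 1)$ for $\|y-x\|_\infty\le\delta$, are exactly right and match the paper's starting point. The continuity half of your argument (reduce to the diagonal restriction $\psi_x(t)=\phi(x+t\mbf 1)$, use that a 1D monotone function has countably many jumps, Fubini over the foliation by lines in direction $\mbf 1$) is correct.

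The differentiability half, however, is where the proposal both diverges from the paper and is genuinely incomplete. You correctly flag that separate a.e.\ existence of the partials $\del_{x_i}\phi$ does not give joint (Fr\'echet) differentiability, and you then sketch a telescoping argument along the rectangular path $z_0=x,\,z_i=x+\sum_{j\le i}h_j e_j$. That step has a real gap: you propose to control $\phi(z_i)-\phi(z_{i-1})$ by the measure $\mu_i=\del_{x_i}\phi$ ``on a thin slab near $x$,'' but $\mu_i$ gives mass to a line segment through $z_{i-1}$, not through $x$, and there is no a priori reason the slice of $\mu_i$ through $z_{i-1}$ resembles the one through $x$; the ``continuity squeeze'' does not let you swap these slices at the required $o(|h|)$ precision, because the singular parts $\mu_i^s$ can concentrate on sets transversal to the path. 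Your fallback citations (Mignot's theorem for monotone operators) do not obviously apply either, since coordinate-wise increasing scalar functions need not be convex, so their gradients need not be monotone operators.

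The paper's proof avoids all of this with one clean observation you already almost made in the continuity step: the same sandwich controls the full difference quotient, not just the oscillation. For increasing $\phi$ and any $y\ne x$, writing $r=\|y-x\|_\infty$,
\[
\frac{\phi(x-r\mbf 1)-\phi(x)}{\sqrt d\,r}\;\le\;\frac{\phi(y)-\phi(x)}{|y-x|}\;\le\;\frac{\phi(x+r\mbf 1)-\phi(x)}{r},
\]
so $\limsup_{y\to x}|\phi(y)-\phi(x)|/|y-x|$ is finite at every $x$ where the one-dimensional monotone function $r\mapsto\phi(x+r\mbf 1)$ is differentiable at $r=0$, hence for a.e.\ $x$ by Fubini and Lebesgue's theorem. \textbf{Stepanoff's theorem} (a function with a.e.\ finite pointwise Lipschitz constant is differentiable a.e.) then gives a.e.\ Fr\'echet differentiability directly, and continuity is automatic. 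The ingredient you were missing is precisely Stepanoff's characterization, which bypasses the separate-vs.-joint differentiability issue entirely and makes the telescoping construction unnecessary.
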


\begin{proof}
	It suffices to prove the claim about differentiability, and, by Lemma \ref{L:ABV}, we may assume without loss of generality that $\phi$ is increasing. 
	
	We argue using the characterization by Stepanoff \cite{St1923} of a.e.-differentiability, that is, we prove that, for almost every $x \in \RR^d$,
	\[
		\limsup_{y \to x} \frac{ |\phi(y) - \phi(x)|}{|y-x|} < \oo,
	\]
	or, equivalently,
	\[
		\limsup_{y \to x} \frac{ \phi(y) - \phi(x)}{|y-x|} < \oo \quad \text{and} \quad
		\liminf_{y \to x} \frac{ \phi(y) - \phi(x)}{|y-x|} > -\oo.
	\]
	Denote by $Q_r := [0,r]^d$, and note that, because $\phi$ is increasing,
	\[
		\limsup_{y \to x} \frac{ \phi(y) - \phi(x)}{|y-x|} = \inf_{r > 0} \sup_{y \in x + Q_r} \frac{ \phi(y) - \phi(x)}{|y-x|} = \inf_{r > 0} \frac{ \phi(x + r \mbf 1) - \phi(x)}{\sqrt{d} r}.
	\]
	The function $[0,\oo) \ni r \to \phi(x+r \mbf 1)$ is increasing, and, thus, is differentiable almost everywhere in $[0,\oo)$. The finiteness of the above expression for almost every $x \in \RR^d$ is then a consequence of Fubini's theorem. The argument for the $\liminf$ is identical.
\end{proof}

\begin{remark}\label{R:cadlag}
	In view of Lemmas \ref{L:increasing} and \ref{L:ABVcts}, an increasing function is equal almost everywhere to a \cadlag/ function, and we assume for the rest of the paper that increasing functions are \cadlag/.
\end{remark}

\subsection{One-sided regularizations} \label{ss:regs}

It will convenient at several times in the paper to specify regularizations of discontinuous functions that enjoy certain ordering properties. We specify two such regularization procedures here, each of which has merit in different situations.

We first discuss the $\inf$- and $\sup$-convolutions given, for some measurable function $\phi$, by
\begin{equation}\label{infsup}
	\phi^\eps(x) = \sup_{y \in \RR^d} \left\{ \phi(x-y) - \frac{|y|}{\eps}  \right\}
 \quad \text{and} \quad
	\phi_\eps(x) = \inf_{y \in \RR^d} \left\{ \phi(x-y) + \frac{|y|}{\eps} \right\}.
\end{equation}
Then the following are either well-known properties or easy to check.

\begin{lemma}\label{L:infsup}
	Assume that $\phi$ is measurable and, for some $M > 0$ and all $x \in \RR^d$, $|\phi(x)| \le M(1 + |x|)$, and let $\phi_\eps$ and $\phi^\eps$ be defined by \eqref{infsup}, which are finite as long as $\eps < M^{-1}$. Then
	\begin{enumerate}[(a)]
	\item\label{smoothorder} For all $\eps \in (0, M^{-1})$, $\phi_\eps$ and $\phi^\eps$ are Lipschitz with constant $\eps^{-1}$, and $\phi_\eps \le \phi \le \phi^\eps$.
	\item\label{updownlimits} For all $x \in \RR^d$, as $\eps \to 0$, $\phi^\eps(x) \searrow \phi^*(x)$ and $\phi_\eps(x) \nearrow \phi_*(x)$
	\item\label{shiftover} If, moreover, $\phi$ is increasing, then so are $\phi_\eps$ and $\phi^\eps$.
	\end{enumerate}
\end{lemma}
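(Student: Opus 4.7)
The plan is to address the three items in sequence, grouping the algebraic observations and isolating the pointwise limit as the sole substantive step. First, for finiteness: the growth bound gives $\phi(x-y) - |y|/\eps \le M(1+|x|) + (M - \eps^{-1})|y|$, which tends to $-\oo$ as $|y| \to \oo$ whenever $\eps < M^{-1}$, so the supremum is attained on a bounded set; the infimum defining $\phi_\eps$ is handled symmetrically.

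For part \eqref{smoothorder}, I would use the standard translation-of-variable trick: given $x, x' \in \RR^d$ and $\delta > 0$, choose $y$ with $\phi^\eps(x) < \phi(x-y) - |y|/\eps + \delta$ and test the supremum defining $\phi^\eps(x')$ with $y' := y + (x'-x)$, noting $x' - y' = x - y$ and $|y'| \le |y| + |x'-x|$. This yields $\phi^\eps(x') \ge \phi^\eps(x) - \eps^{-1}|x-x'| - \delta$; sending $\delta \to 0$ and swapping $x, x'$ gives Lipschitz constant $\eps^{-1}$, and an analogous argument handles $\phi_\eps$. The sandwich $\phi_\eps \le \phi \le \phi^\eps$ comes from testing with $y = 0$ in each definition. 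For part \eqref{shiftover}, the penalty $|y|/\eps$ is independent of $x$, so $x \le x'$ together with $\phi$ increasing gives $\phi(x-y) \le \phi(x'-y)$ pointwise in $y$, and the inequality survives both $\sup$ and $\inf$.

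The substance of the lemma is part \eqref{updownlimits}. Monotonicity in $\eps$ is immediate, since a larger $\eps$ softens the penalty: $\eps \mapsto \phi^\eps(x)$ is nondecreasing and $\eps \mapsto \phi_\eps(x)$ is nonincreasing, so the pointwise limits $L^*(x) := \lim_{\eps \downarrow 0} \phi^\eps(x)$ and $L_*(x) := \lim_{\eps \downarrow 0} \phi_\eps(x)$ exist. The inequality $L^*(x) \ge \phi^*(x)$ is essentially free: by \eqref{smoothorder}, each $\phi^\eps$ is continuous and satisfies $\phi^\eps \ge \phi$, hence $\phi^\eps \ge \phi^*$ because $\phi^*$ is the smallest upper semicontinuous majorant of $\phi$. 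For the reverse inequality, I would argue that near-maximizers concentrate at $x$ as $\eps \to 0$: if $y_\eps$ satisfies $\phi^\eps(x) \le \phi(x - y_\eps) - |y_\eps|/\eps + \eps$, then combining near-optimality with the growth bound and $\phi^\eps(x) \ge \phi(x) \ge -M(1+|x|)$ yields
\[
(\eps^{-1} - M) |y_\eps| \le 2M(1+|x|) + \eps,
\]
so $|y_\eps| = O(\eps) \to 0$. Hence $\phi^\eps(x) \le \phi(x - y_\eps) + \eps$, and $\limsup_{\eps \to 0} \phi^\eps(x) \le \limsup_{y \to x} \phi(y) = \phi^*(x)$. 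The argument for $L_* = \phi_*$ is symmetric.

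The only place requiring real care is the concentration estimate in part \eqref{updownlimits}; this is where the hypothesis $\eps < M^{-1}$ is used, to ensure that the penalty dominates the linear growth of $\phi$ and forces near-optimizers into a shrinking neighborhood of $x$. The remainder of the proof is bookkeeping.
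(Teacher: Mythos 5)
The paper does not supply a proof of this lemma; it states only that the items ``are either well-known properties or easy to check.'' Your proof is correct and follows the standard route for $\sup$/$\inf$-convolutions: the translation-of-variable trick for the Lipschitz bound, the ``smallest u.s.c.\ majorant'' characterization for the easy inequality in part \eqref{updownlimits}, and the concentration estimate $(\eps^{-1}-M)|y_\eps| \le 2M(1+|x|)+\eps$ for the reverse inequality. That last estimate is indeed the only place where $\eps < M^{-1}$ is needed, and you use it correctly.
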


\begin{remark}
	If $\phi$ is increasing, then the $\sup$ or $\inf$ can be restricted to ``one side'' of $x$. More precisely, given $y \in \RR^d$, define
	\[
		\hat y = - (|y_1|, |y_2|, \ldots, |y_d|),
	\]
	Then $\hat y \le y$ and $|\hat y| = |y|$, and so, because $\phi$ is increasing,
	\[
		\phi(x-y) - \frac{|y|}{\eps} \le \phi(x - \hat y) - \frac{|\hat y|}{\eps}.
	\]
	It follows that
	\[
		\phi^\eps(x) = \sup_{y \ge 0 }\left\{ \phi(x +y) - \frac{|y|}{\eps}  \right\},
	\]
	and similarly
	\[
		\phi_\eps(x) = \inf_{y \le 0} \left\{ \phi(x+y) + \frac{|y|}{\eps} \right\}.
	\]
\end{remark}

The second example involves convolving with mollifying functions that are weighted to one side. Let $\rho$ be a smooth, positive function with support contained in $[-1,1]^d$ and $\int \rho = 1$, and define
\begin{equation}\label{onesidedmollifier}
	\rho_\eps(z) = \frac{1}{\eps^d} \rho \left( \frac{z - \eps \mbf 1}{\eps} \right) \quad \text{and} \quad
	\rho^\eps(z) = \frac{1}{\eps^d} \rho \left( \frac{z + \eps \mbf 1}{\eps} \right).
\end{equation}

\begin{lemma}\label{L:approxid}
	Assume $\phi: \RR^d \to \RR$ is locally bounded, and set $\phi^\eps := \rho^\eps * \phi$ and $\phi_\eps := \rho_\eps * \phi$. Then, as $\eps \to 0$, $\phi^\eps$ and $\phi_\eps$ converge to $\phi$ in $L^p_\loc$ for any $p \in [1,\oo)$. Moreover, if $\phi$ is increasing, then
	\begin{enumerate}[(a)]
	\item\label{moll:order} $\phi^\eps$ and $\phi_\eps$ are increasing, and $\phi_\eps \le \phi \le \phi^\eps$.
	\item\label{moll:conf} As $\eps \to 0$, $\phi^\eps \searrow \phi^*$ and $\phi_\eps \nearrow \phi_*$.
	\item\label{moll:shift} For all $\eps > 0$ and $x \in \RR^d$, $\phi^\eps(x - 2\eps \mbf 1) = \phi_\eps(x)$.
	\end{enumerate}
\end{lemma}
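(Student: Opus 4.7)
The plan is to handle each claim in sequence, exploiting the one-sided structure of the mollifiers, namely that $\supp \rho^\eps \subset [-2\eps,0]^d$ and $\supp \rho_\eps \subset [0,2\eps]^d$, and each integrates to one. The $L^p_\loc$ convergence is a purely standard approximate-identity statement: the families $\rho^\eps, \rho_\eps$ are nonnegative with unit mass and their supports shrink to the origin as $\eps \to 0$, while local boundedness puts $\phi$ in $L^p_\loc$, so the usual convolution argument gives $\phi^\eps, \phi_\eps \to \phi$ in $L^p_\loc$ for every $p \in [1,\oo)$. For part (a), monotonicity of $\phi^\eps$ is immediate from $\phi^\eps(y) - \phi^\eps(x) = \int \rho^\eps(z)(\phi(y-z) - \phi(x-z))\,dz$, which is nonnegative when $y \ge x$ by monotonicity of $\phi$ and nonnegativity of $\rho^\eps$; the same works for $\phi_\eps$. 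The pointwise bound $\phi^\eps \ge \phi$ uses that $z \in \supp \rho^\eps$ implies $z \le 0$ coordinate-wise, so $x - z \ge x$ and $\phi(x-z) \ge \phi(x)$; integrating against the unit-mass weight $\rho^\eps$ gives the claim, and $\phi_\eps \le \phi$ is symmetric.

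For part (b), the change of variable $w = (z+\eps\mbf 1)/\eps$ rewrites
\[
	\phi^\eps(x) = \int_{[-1,1]^d} \rho(w)\, \phi(x + \eps(\mbf 1 - w))\,dw.
\]
Since $\mbf 1 - w \ge 0$ for $w \in \supp \rho$, for any $0 < \eps_1 \le \eps_2$ we have $x + \eps_1(\mbf 1-w) \le x + \eps_2(\mbf 1-w)$, so monotonicity of $\phi$ and positivity of $\rho$ yield $\phi^{\eps_1}(x) \le \phi^{\eps_2}(x)$, proving $\phi^\eps$ is decreasing in $\eps$. For the pointwise limit, for every $w$ in the interior of $[-1,1]^d$ the point $x + \eps(\mbf 1-w)$ approaches $x$ from strictly above in the partial order, so by Lemma \ref{L:increasing} together with the cadlag convention of Remark \ref{R:cadlag}, $\phi(x + \eps(\mbf 1-w)) \to \phi^*(x)$ for almost every such $w$. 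Local boundedness of $\phi$ provides an integrable dominant on compact sets, and dominated convergence then gives $\phi^\eps(x) \to \phi^*(x)$, which combined with the monotonicity yields $\phi^\eps \searrow \phi^*$. The analogous argument using $w = (z - \eps \mbf 1)/\eps$ establishes $\phi_\eps \nearrow \phi_*$.

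Part (c) is a direct substitution: in $\phi^\eps(x - 2\eps\mbf 1) = \int \rho^\eps(z) \phi(x - 2\eps\mbf 1 - z)\,dz$, set $z' = z + 2\eps\mbf 1$ and observe
\[
	\rho^\eps(z' - 2\eps\mbf 1) = \eps^{-d} \rho\bigl((z' - \eps\mbf 1)/\eps\bigr) = \rho_\eps(z'),
\]
so the integral becomes $\int \rho_\eps(z') \phi(x - z')\,dz' = \phi_\eps(x)$. I do not anticipate any genuine obstacle; the one delicate step is justifying the dominated convergence used for the $\phi^\eps \searrow \phi^*$ limit, which relies on local boundedness to furnish a dominant and on the cadlag normalization to identify the pointwise limit as $\phi^*$ rather than merely some intermediate cluster value from above.
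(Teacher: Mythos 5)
Your proposal is correct and follows essentially the same route as the paper: the $L^p_\loc$ convergence is the standard approximate-identity argument, part (a) follows from the one-sided support of the mollifiers, part (b) uses a rescaling change of variables to get monotonicity in $\eps$, and part (c) is a direct substitution. You supply somewhat more detail than the paper on the pointwise limit in (b) (the paper disposes of it with ``then easily follow''), in particular by invoking dominated (or, given the monotonicity in $\eps$, monotone) convergence together with the \cadlag/ normalization to identify the limit as $\phi^*$; this is a legitimate and welcome fleshing-out rather than a different approach.
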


\begin{proof}
	The convergence in $L^p_\loc$ is standard. For the rest of the proof, we assume $\phi$ is increasing. We see immediately that $\phi^\eps$ and $\phi_\eps$ are increasing. Observe that
	\[
		\supp \rho_\eps \subset (0,2\eps)^d \quad \text{and} \quad \supp \rho^\eps \subset (-2\eps,0)^d.
	\]
	As a consequence, because $\phi$ is increasing,
	\[
		\phi^\eps(x) = \int_{\RR^d} \phi(x-y) \rho^\eps(y)dy = \int_{(-2\eps,0)^d} \phi(x-y) \rho^\eps(y)dy \ge \phi^*(x),
	\]
	and similarly $\phi_\eps(x) \le \phi_*(x)$, so that part \eqref{moll:order} is established.
	
	Now assume that $0 < \eps < \delta$. Using again that $\phi$ is increasing, we find
	\begin{align*}
		\phi^\delta(x) &= \frac{1}{\delta^d} \int_{(-2\delta,0)^d} \phi(x - y) \rho\left( \frac{y}{\delta} + \mbf 1 \right) dy
		\ge \frac{1}{\delta^d} \int_{(-2\delta,0)^d} \phi\left(x - \frac{\eps}{\delta}y\right) \rho\left( \frac{y}{\delta} + \mbf 1 \right) dy\\
		&= \frac{1}{\eps^d} \int_{(-2\eps,0)^d} \phi(x - y) \rho \left( \frac{y}{\eps} + \mbf 1 \right) dy
		= \phi^\eps(x),
	\end{align*}
	and similarly $\phi_\delta \le \phi_\eps$. The convergence statements in part \eqref{moll:conf} then easily follow.
	
	Finally, part \eqref{moll:shift} is seen upon computing
	\[
		\phi^\eps(x - 2\eps \mbf 1)
		= \frac{1}{\eps^d}\int_{\RR^d} \phi(y) \rho\left( \frac{x  - y - \eps \mbf 1}{\eps} \right) dy
		= \phi_\eps(x).
	\]
\end{proof}

\begin{remark}
	An analogue of part \eqref{moll:shift} in Lemma \ref{L:approxid} can also be seen for the $\sup$- and $\inf$-convolutions $\phi^\eps$ and $\phi_\eps$ in \eqref{infsup}, namely, for all $R > 0$, there exists $C_R > 0$ depending on the linear growth of $\phi$ such that, for all $x \in B_R$,
	\[
		\phi^\eps(x - C_R \mbf 1) \le \phi_\eps(x).
	\]
	The advantage of the one-sided mollifications $\phi * \rho^\eps$ and $\phi* \rho_\eps$ is that the constant $C_R$ can be replaced by a uniform constant that does not depend on the growth of $\phi$, which will be convenient when we consider $\phi$ depending on an additional time parameter in an $L^1$ way. On the other hand, the $\sup$- and $\inf$-convolutions are very flexible one-sided regularizations even when $\phi$ is not itself increasing.
\end{remark}

\subsection{A maximum principle} 

The following multi-dimensional maximum principle is used at various points in the paper.
\begin{lemma}\label{L:particles}
	For some $M \in \NN$, assume that
	\begin{equation}\label{bounded}
		\left\{
		\begin{split}
		&(a^i)_{i = 1}^M \subset C([0,T] \times \RR^d,\mbb S^d), \quad
		(b^i)_{i=1}^M \subset C([0,T] \times \RR^d, \RR^d), \quad \text{and} \\
		&(c^i)_{i=1}^M, \;
		(d^i)_{i=1}^M, \;
		(\lambda^i_j)_{i,j=1}^M, \;
		(\mu_{k \ell}^i)_{i,k,\ell=1}^M,\\
		&\text{are uniformly bounded}
		\end{split}
		\right.
	\end{equation}
	and
	\begin{equation}\label{positive}
		\left\{
		\begin{split}
			&a^i(t,x) \ge 0, \quad d^i(t,x) \ge 0, \quad \lambda_j^i(t,x) \ge 0,  \quad \text{and} \quad \mu^i_{k \ell}(t,x) \le 0\\
			&\text{for all } (t,x) \in [0,T] \times \RR^d \text{ and } i,j,k,\ell \in \{1,2,\ldots,M\}.
		\end{split}
		\right.
	\end{equation}
	Let $(V^i)_{i=1}^M \subset C^1([0,T] \times \RR^d)$ be bounded on $[0,T]  \times \RR^d$ and satisfy the system
	\begin{equation}\label{E:Vsystem}
		\del_t V^i - \tr[ a^i D^2 V^i] + b^i \cdot D V^i + c^i V^i + d^i = \sum_{j\ne i} \lambda^i_j V^j + \sum_{k, \ell \ne i} \mu^i_{k\ell} V^k V^\ell   \quad \text{in } (0,T) \times \RR^d.
	\end{equation}
	Suppose that, for all $x \in \RR^d$ and $i = 1,2,\ldots,M$, $V^i(0,x) \le 0$. Then, for all $(t,x) \in (0,T] \times \RR^d$ and $i = 1,2,\ldots,M$, $V^i(t,x) \le 0$.	
\end{lemma}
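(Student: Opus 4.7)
The plan is a classical barrier / first-order-optimality argument exploiting the quasi-monotone sign conditions in \eqref{positive}. For $\epsilon > 0$ small and $\Lambda > 0$ large (both to be chosen), set $\psi(x) := (1+|x|^2)^{1/2}$ and consider the penalized functions
\[
W^i(t,x) := V^i(t,x) - \epsilon e^{\Lambda t}\psi(x).
\]
Since each $V^i$ is bounded on $[0,T] \times \RR^d$ while $\psi \to \infty$ at spatial infinity, $W^i(t,\cdot)$ is coercive uniformly in $t \in [0,T]$, and $W^i(0,\cdot) \le -\epsilon \psi < 0$. I would assume for contradiction that $W^i > 0$ somewhere, let $t_0 := \inf\{t > 0 : \max_i \sup_x W^i(t,\cdot) > 0\}$, note $t_0 > 0$ by continuity and the initial inequality, and then use coercivity to select $(i_0, x_0)$ with $W^{i_0}(t_0, x_0) = 0 = \max_i \sup_x W^i(t_0, \cdot)$.

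At $(t_0, x_0)$ the classical first-order conditions $\partial_t W^{i_0} \ge 0$, $DW^{i_0} = 0$, $D^2 W^{i_0} \le 0$ translate to
\[
V^{i_0} = \epsilon e^{\Lambda t_0}\psi, \qquad \partial_t V^{i_0} \ge \epsilon \Lambda e^{\Lambda t_0}\psi, \qquad DV^{i_0} = \epsilon e^{\Lambda t_0} D\psi, \qquad D^2 V^{i_0} \le \epsilon e^{\Lambda t_0} D^2\psi,
\]
while the ``competitors'' satisfy $V^j(t_0, x_0) \le \epsilon e^{\Lambda t_0}\psi(x_0)$ for every $j \ne i_0$. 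Plugging these into \eqref{E:Vsystem} and using $a^{i_0} \ge 0$, $d^{i_0} \ge 0$ together with the bounds $|D\psi| \le 1$, $D^2 \psi \le \psi^{-1} I$, the left-hand side is bounded below by $\tfrac12 \Lambda \epsilon e^{\Lambda t_0}\psi(x_0)$ as soon as $\Lambda$ dominates the sup-norms of $a^{i_0}, b^{i_0}, c^{i_0}$ (these being finite on the compact region in which $x_0$ must lie, since $\epsilon e^{\Lambda t_0}\psi(x_0) = V^{i_0}(t_0, x_0) \le \|V^{i_0}\|_\infty$ constrains $x_0$).

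The main obstacle is controlling the quadratic coupling $\sum_{k,\ell\ne i_0}\mu^{i_0}_{k\ell} V^k V^\ell$ from above: since $\mu^{i_0}_{k\ell} \le 0$, this already has the good sign whenever $V^k$ and $V^\ell$ share a sign, but has the \emph{wrong} sign in mixed-sign cases. The saving observation is that the contact-point inequality $V^j \le \epsilon e^{\Lambda t_0}\psi(x_0)$ is available for every $j \ne i_0$, so in the mixed case the positive factor is $O(\epsilon e^{\Lambda t_0}\psi)$ while the negative factor is bounded in absolute value by $\|V\|_\infty$; each such summand then contributes at most $C \epsilon e^{\Lambda t_0}\psi(x_0)$. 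The linear coupling $\sum \lambda^{i_0}_j V^j$ is handled at once by $\lambda^{i_0}_j \ge 0$ and the same upper bound on $V^j$. Thus the right-hand side of \eqref{E:Vsystem} is bounded above by $C_1 \epsilon e^{\Lambda t_0}\psi(x_0)$, where $C_1$ depends only on $M$, $\|\lambda\|_\infty$, $\|\mu\|_\infty$, and $\|V\|_\infty$. Choosing $\Lambda > 2 C_1$ (in addition to what was already required on the left) produces the contradiction $\tfrac12 \Lambda \le C_1$. Hence $W^i \le 0$ for every $i$, i.e.\ $V^i(t,x) \le \epsilon e^{\Lambda t}\psi(x)$, and sending $\epsilon \to 0$ concludes that $V^i \le 0$ on $[0,T] \times \RR^d$.
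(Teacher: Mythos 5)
Your argument is correct and rests on the same barrier/first-touch idea as the paper's proof, but it genuinely streamlines it. The paper runs the contradiction argument in two stages: first under strengthened hypotheses ($\inf d^i > 0$, strict initial negativity, globally attained maxima), where at the contact point $V^{i}(t_0,x_0)=0$ and $V^j\le 0$ make $d^i\le 0$ immediate; then, to handle the general case, it sets $\tilde V^i = V^i - \beta\nu_\beta - \delta e^{Ct}$ and verifies that the modified source $\tilde d^i$ is strictly positive \emph{everywhere}, which forces the auxiliary construction of a truncated coercive weight $\nu_\beta$ with $\beta\|\nu_\beta\|_\infty\to 0$ (the unbounded $\nu=\sqrt{1+|x|^2}$ itself would not do for that pointwise check). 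You bypass that device entirely: with the single penalizer $\epsilon e^{\Lambda t}\psi$, the relevant bounds are only needed at the contact point, where the identity $\epsilon e^{\Lambda t_0}\psi(x_0)=V^{i_0}(t_0,x_0)\le\|V\|_\infty$ automatically controls the size of the penalizer and of the competitors $V^j\le \epsilon e^{\Lambda t_0}\psi(x_0)$, and the sign structure of $\lambda$ and $\mu$ (with the mixed-sign quadratic terms estimated just as you describe) closes the argument in one pass. This buys a shorter proof with no change in generality. One caveat: your parenthetical that $a^{i_0},b^{i_0}$ need only be finite ``on the compact region in which $x_0$ must lie'' does not quite hold up, since that region grows like $\|V\|_\infty/(\epsilon e^{\Lambda t_0})$ as $\epsilon\to 0$ while $\Lambda$ must remain $\epsilon$-independent for the final limit; you should simply assume $a^i,b^i$ globally bounded, which the paper's own proof also uses implicitly even though \eqref{bounded} states uniform boundedness explicitly only for $c,d,\lambda,\mu$.
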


\begin{proof}
	We prove the result first under the additional assumption that
	\[
		\inf_{(t,x) \in [0,T] \times \RR^d} d^i > 0, \quad \sup_{x \in \RR^d} V^i(0,x) < 0,
	\]
	and, for all $t \in [0,T]$ and $i \in \{1,2,\ldots,M\}$, $x \mapsto V^i(t,x)$ attains a global maximum on $\RR^d$. In that case, define
	\[
		t_0 := \inf \left\{ t \in (0,T] : \max_{x \in \RR^d, \; i \in \{1,2,\ldots,M\} } V^i(t,x) = 0 \right\},
	\]
	so that $t_0 > 0$. Let $x_0 \in \RR^d$ and $i \in \{1,2,\ldots,M\}$ be such that the maximum is achieved. Note then that $D V^i(t_0,x_0) = 0$, $D^2 V^i(t_0,x_0) \le 0$, $\del_t V^i(t_0,x_0) \ge 0$, and, for all $j \in \{1,2,\ldots,M\}$ and $x \in \RR^d$, $V^j(t_0,x) \le V^i(t_0,x_0) \le 0$. We thus obtain
	\[
		d^i(t_0,x_0) \le \sum_{j \ne i} \lambda^i_j V^j(t_0,x_0) + \sum_{k,\ell \ne i} \mu^i_{k \ell} V^k(t_0,x_0) V^\ell(t_0,x_0) \le 0,
	\]
	which is a contradiction in view of the assumption on $d^i$. In this case, we indeed conclude that $V^i(t,x) < 0$ for all $(t,x,i) \in [0,T] \times \RR^d \times \{1,2,\ldots,M\}$.
	
	We now turn to the general case. For $x \in \RR^d$, define $\nu(x) := \sqrt{1 + |x|^2}$, and note that $D \nu$ and $D^2 \nu$ are globally bounded on $\RR^d$. Then standard arguments yield that, for all $i \in \{1,2,\ldots,M\}$, $\beta > 0$, and $t \in [0,T]$, if $S_\beta$ is set of maximum points of
	\[
		x \mapsto V^i(t,x) - \beta \nu(x),
	\]
	then $\lim_{\beta \to 0} \beta \sup_{x \in S_\beta} \nu(x) = 0$. It follows that there exists a smooth bounded function $\nu_\beta: \RR^d \to \RR$ such that 
	\[
		\sup_{\beta > 0} \left( \nor{D\nu_\beta}{\oo} + \nor{D^2\nu_\beta}{\oo} \right)< \oo, \quad \lim_{\beta \to 0} \beta \nor{\nu_\beta}{\oo} = 0,
	\]
	and
	\[
		\max_{(t,x,i) \in [0,T] \times \RR^d \times \{1,2,\ldots,M\} } \left( V^i(t,x) - \beta \nu(x)\right)
		= \max_{(t,x,i) \in [0,T] \times \RR^d \times \{1,2,\ldots,M\} } \left( V^i(t,x) - \beta \nu_\beta(x)\right).
	\]
	Fix $\delta > 0$ and a constant $C > 0$ to be determined. For $(t,x) \in [0,T] \times \RR^d$, $i \in \{1,2,\ldots,M\}$, set
	\[
		\tilde V^i(t,x) := V^i(t,x) - \beta v_\beta(x) - \delta e^{Ct}.
	\]
	Then, for all $i \in \{1,2,\ldots,M\}$, $\tilde V^i(0,\cdot) \le -\delta$, and, for all $t \in [0,T]$, $\tilde V^i(t,\cdot)$ attains a global maximum over $\RR^d$. Moreover, in $(0,T) \times \RR^d$, $(\tilde V^i)_{i=1}^M$ solves the system
	\[
		\del_t \tilde V^i - \tr[ a^i D^2 \tilde V^i] + b^i(t,x) \cdot D \tilde V^i(t,x) + c^i \tilde V^i + \tilde d^i = \sum_{j\ne i} \lambda^i_j \tilde V^j + \sum_{k, \ell \ne i} \mu^i_{k\ell} \tilde V^k \tilde V^\ell,
	\]
	where
	\begin{align*}
		\tilde d^i(t,x) &= d^i(t,x) + C\delta e^{Ct} - \beta \tr[a^i D^2\nu_\beta] + \beta b^i \cdot D\nu_\beta + \beta \nu_\beta(x) \left( c^i - \sum_{j \ne i} \lambda^i_j - \sum_{k,\ell \ne i} \mu^i_{k \ell} V^k \right)\\
		& + \delta e^{Ct} \left( c^i - \sum_{j \ne i} \lambda^i_j - \sum_{k,\ell \ne i} \mu^i_{k \ell}V^k\right).
	\end{align*}
	From the boundedness of the coefficients and the $V^i$'s, and from the nonnegativity of $d^i$, it follows that there exists $\tilde C > 0$ depending only on the bounds of the coefficients and the $V^i$ such that, as $\beta \to 0$,
	\[
		\tilde d^i(t,x) \ge (C - \tilde C) \delta e^{Ct} - o(1).
	\]
	Taking $C > \tilde C$ and letting $\beta$ be sufficiently small, in relation to $\delta$, we then see that $\tilde d^i > 0$. From the first step, we conclude that, if $\beta$ is sufficiently small, then, for all $(t,x) \in [0,T] \times \RR^d$ and $i \in \{1,2,\ldots,M\}$,
	\[
		V^i(t,x) < \beta \nu_\beta(x) + \delta e^{Ct}.
	\]
	Sending first $\beta \to 0$ and then $\delta \to 0$ yields the result.
\end{proof}

\section{The regular Lagrangian flow}\label{sec:ODE}

The object of this section is to study the forward-in-time flow
\begin{equation}\label{ODE}
	\del_t \phi_{t,s}(x) = b(t,\phi_{t,s}(x)) \quad \text{for } s \in [0,T], \quad t \in [s,T], \quad \phi_{s,s}(x) = x
\end{equation}
for a vector field $b:[0,T] \times \RR^d \to \RR^d$ satisfying
\begin{equation}\label{A:b}
	\left\{
	\begin{split}
		&\text{for some } C_0, C_1 \in L^1_+([0,T]),\\
		& |b(t,y)| \le C_0(t) (1 + |y|) \quad \text{for a.e. } (t,y) \in [0,T] \times \RR^d,  \text{ and}\\
		&x \mapsto b(t,x) + C_1(t) x \quad \text{is increasing  for a.e. }t \in [0,T].
	\end{split}
	\right.
\end{equation}
The second condition reads equivalently as $\del_{x_j} b^i(t,\cdot) \ge -C_1(t) \delta_{ij}$ for all $i,j = 1,2,\ldots, d$. Moving forward, for convenience, we define the positive, increasing, absolutely continuous functions
\begin{equation}\label{acomega}
	\omega_0(t) = \int_0^t C_0(s)ds, \quad \omega_1(t) = \int_0^t C_1(s)ds, \quad t \in [0,T].
\end{equation}

\begin{remark}
	Implicit in the assumption \eqref{A:b} is a choice of basis on $\RR^d$. Indeed, the results of the paper continue to hold if $b$ is replaced by $A b(t,A^Tx)$ for a $d \times d$ orthogonal matrix $A$.
\end{remark}

The precise interpretation of the problem \eqref{ODE}, wherein $b$ is discontinuous, is made sense of as a differential inclusion. Namely, at every discontinuity $x$ of $b(t,\cdot)$, we have $b_*(t,x) \lneq b^*(t,x)$, and so the natural formulation is
\begin{equation}\label{ODE:inc}
	\del_t \phi_{t,s}(x) \in [b_*(t,\cdot), b^*(t,\cdot)](\phi_{t,s}(x)) \quad \text{for }s \in [0,T], \quad t \ge s, \quad \phi_{s,s}(x) = x.
\end{equation}

\begin{remark}
	In order to make notation less cumbersome, for a function $\phi: [0,T] \times \RR^d \to \RR$, we will always denote by $\phi_*$ and $\phi^*$ the lower and upper semicontinuous envelopes of $\phi$ in the space variable only; that is, for $(t,x) \in [0,T] \times \RR^d$, $\phi_*(t,x) = \phi(t,\cdot)_*(x)$ and $\phi^*(t,x) = \phi(t,\cdot)^*(x)$. 
	
	If $\phi: \RR^d \to \RR^m$ for $m > 1$, then $\phi^*$ and $\phi_*$ are taken to be the coordinate-by-coordinate lower and upper-semicontinuous envelopes, so, for instance, $\phi_* = (\phi^1_*, \phi^2_*, \ldots, \phi^d_*)$; equivalently,
	\[
		\phi_*(x) = \sup_{r > 0} \inf_{|x-y| \le r} \phi(y)
	\]
	with respect to the partial order \eqref{order} on $\RR^m$. 
\end{remark}

\begin{remark}\label{R:F?}
	The differential inclusion \eqref{ODE:inc}, wherein $b(t,x)$ is replaced with the smallest box $[\alpha,\beta] = \prod_{i=1}^d [\alpha_i,\beta_i]$ containing all limit points of $b(t,y)$ as $y \to x$, is a slightly weaker formulation than the standard Filippov regularization \cite{Fil60}, where boxes are replaced with general convex sets.
\end{remark}

Before developing the general theory, it is useful to record the following a priori ODE bounds, which are an easy consequence of Gr\"onwall's lemma.

\begin{lemma}\label{ODE:apriori}
	Assume, for some $C_0 \in L^1_+([0,T])$, that $b: [0,T] \times \RR^d \times \RR^d$ satisfies
	\[
		\esssup_{x \in \RR^d} \frac{|b(t,x)|}{1 + |x|} \le C_0(t) \quad \text{for a.e. } t \in [0,T].
	\]
	Then there exists a constant $C > 0$, depending only on $T > 0$, such that
	\[
		|X(t) - X(s)| \le C(1 + |x|) \int_s^t C_0(r)dr.
	\]
\end{lemma}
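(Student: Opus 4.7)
This is a direct Gr\"onwall-type a priori estimate, and I do not expect any substantive obstacle; the only mild subtlety is that the statement is phrased for absolutely continuous curves $X$ satisfying either $\dot X(t) = b(t,X(t))$ a.e.\ or the differential inclusion \eqref{ODE:inc}, but in either case $|\dot X(t)| \le C_0(t)(1+|X(t)|)$ holds for a.e.\ $t$ (since $|b_*|, |b^*|$ obey the same linear growth bound as $b$), so the integral formulation
\[
	X(t) = X(s) + \int_s^t \dot X(r)\,dr, \qquad |\dot X(r)| \le C_0(r)(1+|X(r)|),
\]
is available. From this, taking norms and using $X(s) = x$ (or, more generally, the initial value whose norm is $|x|$) gives
\[
	1 + |X(t)| \le 1 + |x| + \int_s^t C_0(r)\bigl(1 + |X(r)|\bigr)\,dr.
\]

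Applying Gr\"onwall's lemma to the nonnegative function $t \mapsto 1 + |X(t)|$ yields
\[
	1 + |X(t)| \le (1 + |x|)\exp\!\left(\int_s^t C_0(r)\,dr\right) \le (1+|x|)\,e^{\omega_0(T)},
\]
with $\omega_0$ as defined in \eqref{acomega}. Substituting this bound back into the integral inequality for the displacement gives
\[
	|X(t) - X(s)| \le \int_s^t C_0(r)\bigl(1+|X(r)|\bigr)\,dr \le (1+|x|)\,e^{\omega_0(T)} \int_s^t C_0(r)\,dr,
\]
so the lemma holds with $C = e^{\omega_0(T)}$, which depends only on $T$ and $\|C_0\|_{L^1([0,T])}$. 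This is the entirety of the argument.
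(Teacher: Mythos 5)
Your proof is correct and is exactly the Gr\"onwall argument the paper has in mind (the paper gives no explicit proof, describing the bound only as ``an easy consequence of Gr\"onwall's lemma''). The one minor remark worth making is that, as your last line notes, the constant $C = e^{\omega_0(T)}$ depends on $\|C_0\|_{L^1([0,T])}$ as well as $T$, which is what the paper actually intends despite its slightly loose phrasing.
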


\subsection{A comparison principle} The following comparison result for ODEs is at the heart of much of the analysis of this paper. It leads to the existence and uniqueness of the regular Lagrangian flow, as well as stable notions of solutions to the transport and continuity equations with velocity fields satisfying \eqref{A:b}.

\begin{lemma}\label{L:comparison}
	Assume that $B: [0,T] \times \RR^d \to \RR^d$ satisfies
	\begin{equation}\label{B:localLip}
		\left\{
		\begin{split}
		&\left\{ t \mapsto \norm{\nabla B(t,\cdot)}_{L^\oo(B_R)} \right\} \in L^1([0,T]) \quad \text{for all }R > 0, \text{ and}\\
		&\del_{x_i} B^j \ge 0 \quad \text{for all } i \ne j.
		\end{split}
		\right.
	\end{equation}
	Let $X,Y \in C^{0,1}([0,T], \RR^d)$ be such that, with respect to the partial order \eqref{order}, $X(0) \le Y(0)$ and
	\begin{equation}\label{E:comparison}
		\dot X(t) \le B(t,X(t)) \quad \text{and} \quad \dot Y(t) \ge B(t,Y(t)) \quad \text{for a.e. } t \in [0,T].
	\end{equation}
	Then $X(t) \le Y(t)$ for all $t \in [0,T]$.
\end{lemma}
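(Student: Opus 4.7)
The plan is the classical Kamke--M\"uller quasimonotone comparison argument, adapted to the present absolutely continuous setting. The key feature to exploit is that the off-diagonal sign condition $\del_{x_j} B^i \ge 0$ for $i \ne j$ converts a vector-valued inequality into a one-dimensional Gr\"onwall inequality for the $i$-th coordinate only: along any segment connecting two ordered points $x \le y$ with $x_i = y_i$, the integrand $\sum_j \del_{x_j} B^i (y_j - x_j)$ is nonnegative because the $j \ne i$ terms are products of two nonnegative quantities and the $j = i$ term vanishes.

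First, pick $R > 0$ so large that $X(t), Y(t) \in B_{R/2}$ for all $t \in [0,T]$, set $L(t) := \norm{\nabla B(t,\cdot)}_{L^\oo(B_R)}$ (which lies in $L^1([0,T])$ by \eqref{B:localLip}), and define the strictly positive absolutely continuous function
\[
	\phi(t) := \exp\!\left( t + \sqrt{d}\int_0^t L(s)\,ds \right).
\]
For $\eps > 0$ small, let $\tilde Y(t) := Y(t) + \eps \phi(t) \mbf 1$. A direct calculation, using the local Lipschitz bound $|B(t,\tilde Y) - B(t,Y)| \le \sqrt{d}\, L(t)\, \eps \phi(t)$ and the lower bound $\dot Y \ge B(t,Y)$, gives, coordinate by coordinate,
\[
	\dot{\tilde Y}^i(t) \ge B^i(t, \tilde Y(t)) + \eps \phi(t) \quad \text{for a.e. } t \in [0,T].
\]
Thus $\tilde Y$ satisfies \eqref{E:comparison} with strict slack $\eps\phi(t)$, and $\tilde Y(0) > X(0)$ componentwise.

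Next, let $t^* := \sup\{ t \in [0,T] : X(s) < \tilde Y(s) \text{ componentwise for all } s \in [0,t]\}$. If $t^* = T$, sending $\eps \to 0$ yields $X \le Y$ on $[0,T]$ and we are done. Otherwise, by continuity there exists $i \in \{1,\ldots,d\}$ with $X^i(t^*) = \tilde Y^i(t^*)$ and $X(t) \le \tilde Y(t)$ componentwise on $[0,t^*]$. Set $g(t) := \tilde Y^i(t) - X^i(t) \ge 0$ on $[0,t^*]$, with $g(0) = \eps \phi(0) > 0$ and $g(t^*) = 0$. Using the fundamental theorem of calculus along the segment from $X(t)$ to $\tilde Y(t)$ and the fact that for $j \ne i$ the product $\del_{x_j} B^i (\tilde Y^j - X^j)$ is nonnegative, one obtains
\[
	B^i(t, \tilde Y(t)) - B^i(t, X(t)) \ge -L(t)\, g(t) \quad \text{for a.e. } t \in [0,t^*].
\]
Combining this with $\dot g(t) \ge B^i(t, \tilde Y(t)) - B^i(t,X(t)) + \eps \phi(t)$ gives the scalar differential inequality
\[
	\dot g(t) + L(t)\, g(t) \ge \eps \phi(t) > 0 \quad \text{a.e. on } [0,t^*].
\]

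Finally, applying the integrating factor $\exp(\int_0^t L)$ shows that $t \mapsto g(t)\exp(\int_0^t L)$ is strictly increasing on $[0,t^*]$, hence $g(t^*) > g(0) > 0$, contradicting $g(t^*) = 0$. Therefore $t^* = T$, and letting $\eps \to 0$ concludes the proof. The main technical point is the step that converts the vector inequality $\dot g \ge B^i(t, \tilde Y) - B^i(t,X)$ into the scalar Gr\"onwall inequality via the off-diagonal monotonicity; everything else is bookkeeping with the perturbation $\eps \phi(t) \mbf 1$.
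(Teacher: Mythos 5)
Your proof is correct and follows essentially the same Kamke--M\"uller route as the paper: both rely on the off-diagonal sign condition $\partial_{x_j}B^i \ge 0$ at a first crossing time to reduce the vector inequality to a scalar Gr\"onwall argument, with a small penalization sent to zero at the end. The only difference is packaging: you perturb $Y$ directly by $\eps\phi(t)\mathbf{1}$ to make it a strict supersolution and track the first coordinate-wise touching time, whereas the paper works with $\Delta = X - Y$ and a two-weight exponential penalization of $\Delta^i$; your version collapses more transparently to a single scalar differential inequality at the touching coordinate. (One trivial slip: the integrating-factor step gives $g(t^*) > g(0)e^{-\int_0^{t^*}L}$ rather than $g(t^*) > g(0)$, but the conclusion $g(t^*) > 0$, which is all you need, still holds.)
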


\begin{proof}
	The continuity of $X$ and $Y$ implies that there exists $R > 0$ such that $|X(t,x)| \vee |Y(t,x)| \le R$ for all $(t,x) \in [0,T] \times \RR^d$. We may thus assume without loss of generality that $\int_0^T \norm{\nabla B(t,\cdot)}_\oo dt < \oo$.

	Define $\Delta = X - Y$ and, for $i,j = 1,2,\ldots, d$,
	\[
		a_{ij}(t) = \int_0^1 \del_{x_j} B^i(t, \tau X(t) + (1-\tau) Y(t))d\tau.
	\]
	Observe that $\int_0^T \norm{a_{ij}(t,\cdot)}_\oo dt < \oo$ for all $i,j$, and $a_{ij} \ge 0$ whenever $i \ne j$. Then, for $i = 1,2,\ldots, d$,
	\[
		\Delta^i(0) \le 0 \quad \text{and} \quad \dot \Delta^i(t) \le a_{ii}(t) \Delta^i(t) + \sum_{i\ne j} a_{ij}(t) \Delta^j(t) \quad \text{for }t \in [0,T].
	\]	
	Fix $\delta > 0$, set
	\[
		\psi_1(t) = \int_0^t \max_{i=1,2,\ldots, d} a_{ii}(s)ds \quad \text{and} \quad \psi_2(t) = \int_0^t \max_{i=1,2,\ldots, d} \sum_{j \ne i} a_{ij}(s)ds,
	\]
	and define
	\[
		t_0 := \inf \left\{ t \in (0,T] : \text{there exists } i  \in \{1,2,\ldots ,d\} \text{ such that } e^{-\psi_1(t)} \Delta^i(t) - \delta e^{\psi_2(t)} > 0 \right\}.
	\]
	We have $t_0 > 0$. Assume by contradiction that $t_0 < T$, and let $i$ be such that the maximum is attained. Then, for all $j$ and $s \in [0,t_0]$,
	\begin{equation}\label{maxcond}
		e^{- \psi_1(s)} \Delta^j(s) \le e^{-\psi_2(t_0)} \Delta^i(t_0) = \delta e^{\psi_2(t_0)}.
	\end{equation}
	We compute
	\[
		\frac{d}{dt} e^{- \psi_1(t)} \Delta^i(t)
		\le \sum_{j\ne i} a_{ji}(t) e^{-\psi_1(t)} \Delta^j(t),
	\]
	and so
	\begin{align*}
		\delta e^{\psi_2(t_0)}
		= e^{-\psi_1(t_0)} \Delta^i(t_0)
		\le \int_0^{t_0} \sum_{j\ne i} a_{ji}(s) e^{-\psi_1(s)} \Delta^j(s)ds
		\le \delta \int_0^{t_0} \sum_{j \ne i} a_{ji}(s) e^{\psi_2(s)}ds
		\le \delta ( e^{\psi_2(t_0)} - 1 ),
	\end{align*}
	which is a contradiction. It follows that $\Delta^i(t) < \delta e^{\psi_1(t) + \psi_2(t)}$ for all $t \in [0,T]$ and $i = 1,2,\ldots, d$. Sending $\delta \to 0$ yields the result.
\end{proof}

\subsection{Maximal and minimal flows}

The comparison principle from the previous subsection is now used to establish the existence of maximal and minimal (with respect to the order \eqref{order}) semicontinuous solutions of the differential inclusion \eqref{ODE:inc} for vector fields satisfying $\del_{x_i} b^j \ge 0$ for $i \ne j$.

\begin{proposition}\label{P:maxmin}
	Assume that $b: [0,T] \times \RR^d \to \RR^d$ satisfies
	\[
		\int_0^T \esssup_{y \in \RR^d} \frac{|b(t,y)|}{1 + |y|} dt < \oo \quad \text{and} \quad
		\del_{x_i} b^j \ge 0 \quad \text{for all } i \ne j.
	\]
	Then there exist solutions $\phi^+$ and $\phi^-$ of \eqref{ODE:inc} that are absolutely continuous in time such that
	\begin{enumerate}[(a)]
	\item\label{maxmin:props} For all $0 \le s \le t \le T$, $\phi^+_{t,s}$ and $\phi^-_{t,s}$ are increasing in the sense of \eqref{order}, $\phi^+_{t,s}$ is right-continuous, and $\phi^-_{t,s}$ is left-continuous.
	\item\label{maxmin:sandwich} If $\phi$ is any other solution of \eqref{ODE:inc}, then $\phi^- \le \phi \le \phi^+$.
	\item\label{maxmin:flow} If $0 \le r \le s \le t \le T$, then $\phi^+_{s,t} \circ \phi^+_{r,s} = \phi^+_{r,t}$ and $\phi^-_{s,t} \circ \phi^-_{r,s} = \phi^-_{r,t}$
	\item\label{maxmin:env} For all $0 \le s \le t \le T$, $(\phi^+_{t,s})_* \ge \phi^-_{t,s}$.
	\end{enumerate}
\end{proposition}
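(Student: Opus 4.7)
The strategy is to construct $\phi^+$ and $\phi^-$ as limits of classical flows of appropriately chosen smooth approximants of $b$, leveraging the ODE comparison principle (Lemma \ref{L:comparison}) to encode the monotone structure.

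First I would produce smooth approximants $b^\eps := b \ast_x \rho_\eps$ via a standard nonnegative mollification in $x$, noting that mollification preserves the sign of off-diagonal derivatives, since $\del_{x_i}(b^\eps)^j = (\del_{x_i} b^j) \ast \rho_\eps \ge 0$ whenever $i \ne j$, as well as the linear growth bound. The smooth flow $\phi^\eps_{t,s}(x)$ exists classically, and by Lemma \ref{L:comparison} applied with $B = b^\eps$, the map $x \mapsto \phi^\eps_{t,s}(x)$ is increasing in the partial order \eqref{order}. The a priori bound from Lemma \ref{ODE:apriori} gives equicontinuity in $t$, uniformly in $\eps$, on compact sets of $x$.

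By Arzel\`a--Ascoli and diagonal extraction over a countable dense set $D \subset \RR^d$, pass to a subsequence $\eps_k \to 0$ along which $\phi^{\eps_k}_{\cdot,s}(x)$ converges uniformly on $[s,T]$ to some $\tilde\phi_{\cdot,s}(x)$ for each $x \in D$. The limit is monotone on $D$, so define
\[
\phi^+_{t,s}(x) := \inf\bigl\{ \tilde\phi_{t,s}(y) : y \in D,\; y \gneq x \bigr\}, \qquad
\phi^-_{t,s}(x) := \sup\bigl\{ \tilde\phi_{t,s}(y) : y \in D,\; y \lneq x \bigr\}.
\]
These are increasing with the required right- and left-continuity in $x$ built in, and the equicontinuity in $t$ transfers to the limit, making them absolutely continuous in $t$. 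A standard graph-closure argument for differential inclusions with upper-semicontinuous right-hand side shows that the uniform limits $\tilde\phi_{\cdot,s}(y)$ solve \eqref{ODE:inc}; the one-sided limits in $x$ inherit the inclusion because $y \mapsto [b_*(t,\cdot), b^*(t,\cdot)](y)$ is upper-semicontinuous as a set-valued map.

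The extremality claim (b) is the core of the argument. In parallel I would construct smooth \emph{upper} approximants $b^{\eps,+}$ with $b^{\eps,+} \ge b^*$ pointwise and $b^{\eps,+} \searrow b^*$ a.e., by first taking the coordinate-wise box envelope $\bar b^{\eps}(t,x) := \sup_{y \in x + [0,\eps]^d} b(t,y)$ (which preserves off-diagonal monotonicity because the envelope box is coordinate-aligned) and then mollifying at scale $\delta \ll \eps$. Symmetrically construct $b^{\eps,-} \le b_*$. Now, if $\phi$ is any solution of \eqref{ODE:inc} starting at $x$ at time $s$, then $\dot\phi(t) \le b^*(t,\phi(t)) \le b^{\eps,+}(t,\phi(t))$ a.e., and Lemma \ref{L:comparison} applied with $B = b^{\eps,+}$ yields $\phi(t) \le \phi^{\eps,+}_{t,s}(y)$ for every $y \gneq x$, where $\phi^{\eps,+}$ is the smooth flow of $b^{\eps,+}$. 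Passing to the limits $\eps \to 0$ and $y \searrow x$ (through $D$) gives $\phi(t) \le \phi^+_{t,s}(x)$; the bound $\phi^-_{t,s}(x) \le \phi(t)$ is symmetric. Property (c) then follows by applying extremality to the concatenation $\phi^+_{s,t} \circ \phi^+_{r,s}$ (which solves \eqref{ODE:inc} from $x$ at time $r$, hence is bounded by $\phi^+_{r,t}(x)$) combined with the classical semigroup property of $\phi^{\eps,+}$ passed to the limit in the reverse direction. Property (d) follows from extremality: for any $y \lneq x$ we have $\phi^-_{t,s}(x) \le \phi^+_{t,s}(y)$ since $\phi^-_{\cdot,s}(x)$ can be compared to the DI solution starting at $y$; taking the liminf as $y \nearrow x$ yields $\phi^-_{t,s}(x) \le (\phi^+_{t,s})_*(x)$.

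The main obstacle will be the construction of the one-sided smooth approximants $b^{\eps,\pm}$ in the extremality step: one must simultaneously guarantee a pointwise comparison with $b^*$ or $b_*$, preservation of the off-diagonal quasi-monotonicity required by Lemma \ref{L:comparison}, and smoothness in $x$. The coordinate-aligned box envelopes are compatible with the partial order \eqref{order} precisely because that order is coordinate-wise, but verifying monotonicity survives a subsequent standard mollification, and that the convergence $b^{\eps,\pm} \to b^*, b_*$ is strong enough to justify the graph-closure in the passage to the limit, is the delicate point.
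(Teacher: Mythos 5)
Your overall strategy — regularize $b$, pass to the limit using the comparison Lemma \ref{L:comparison}, and encode maximality through one-sided structure — is right, but the specific split into \emph{symmetric} mollification for the construction and \emph{one-sided} envelopes for the extremality step creates a genuine gap.

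The paper defines $\phi^{+,\eps}$ and $\phi^{-,\eps}$ as the smooth flows of the sup- and inf-convolutions $b^\eps,b_\eps$ from \eqref{infsup}. These satisfy $b_\eps \le b \le b^\eps$ pointwise, preserve off-diagonal positivity (Lemma \ref{L:infsup}\eqref{shiftover}), and — crucially — are \emph{monotone in $\eps$}: $b^\eps \le b^\delta$ and $b_\eps \ge b_\delta$ for $\eps < \delta$. By Lemma \ref{L:comparison}, the flows $\phi^{+,\eps}$ decrease and $\phi^{-,\eps}$ increase as $\eps\to0$, so $\phi^\pm$ are obtained by monotone limits — no Arzel\`a–Ascoli, no diagonal extraction, no subsequence. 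Extremality (b) is then immediate: any solution $\phi$ of \eqref{ODE:inc} satisfies $b_\eps(t,\phi)\le b_*(t,\phi)\le\dot\phi\le b^*(t,\phi)\le b^\eps(t,\phi)$, so Lemma \ref{L:comparison} gives $\phi^{-,\eps}\le\phi\le\phi^{+,\eps}$, and one sends $\eps\to 0$.

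In your version, $\tilde\phi_{\cdot,s}(y)$ is a subsequential limit of the \emph{symmetric}-mollification flows, and you set $\phi^+_{t,s}(x)=\inf\{\tilde\phi_{t,s}(y): y\in D,\ y\gneq x\}$. For extremality you compare an arbitrary solution $\phi$ from $x$ with the flow $\phi^{\eps,+}_{\cdot,s}(y)$ of a separately constructed upper approximant, obtaining $\phi(t)\le\phi^{\eps,+}_{t,s}(y)$. But you then want to send $\eps\to 0$ and $y\searrow x$ to conclude $\phi(t)\le\phi^+_{t,s}(x)$. That last passage tacitly identifies $\lim_{\eps\to0}\phi^{\eps,+}_{\cdot,s}(y)$ with $\tilde\phi_{\cdot,s}(y)$ (or bounds it by $\tilde\phi_{\cdot,s}(y')$ for $y'>y$), and neither is justified: the two are solutions of the inclusion from $y$ built by \emph{different} approximation schemes, and under the hypotheses of Proposition \ref{P:maxmin} (only off-diagonal monotonicity, no lower bound on $\del_{x_i}b^i$) the inclusion can have many solutions from a single initial point. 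The symmetric mollification flow need not converge to the maximal solution — it typically lands somewhere strictly in between — so the infimum over $y\gneq x$ of $\tilde\phi$ is not a priori above every solution from $x$. Relatedly, your $\phi^\pm$ depend on the chosen subsequence $\eps_k$, which would itself need an a priori uniqueness statement that you do not yet have at this stage of the argument.

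The fix is simply to use the one-sided approximants from the outset, as the paper does: define $\phi^{+,\eps}$ and $\phi^{-,\eps}$ as the flows of $b^\eps$ and $b_\eps$. Then the one-sided structure lives in the approximants of $b$, not in one-sided limits in $x$; monotone convergence replaces compactness; and the extremality bound is an immediate consequence of $b_\eps \le b_* \le b^* \le b^\eps$ together with Lemma \ref{L:comparison}. Your box-envelope idea is close in spirit to the paper's one-sided mollifiers (used later in Proposition \ref{P:shiftover}), but it is the additional monotonicity in $\eps$ and the explicit two-sided sandwich $b_\eps\le b\le b^\eps$ that make the construction and extremality dovetail.
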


\begin{remark}
	When $d = 1$, the condition on the derivatives of $b$ becomes vacuous, and we recover the existence of a unique minimal and maximal solution under the sole assumption that $b$ is locally bounded.
\end{remark}

\begin{proof}[Proof of Proposition \ref{P:maxmin}]
	For $\eps > 0$, let $b^\eps$ and $b_\eps$ denote the coordinate-by-coordinate $\sup$- and $\inf$-convolutions as in \eqref{infsup}. By Lemma \ref{L:infsup}, $\nabla b^\eps$ and $\nabla b_\eps$ are bounded on $[0,T] \times \RR^d$, and so, for all $(s,x) \in [0,T] \times \RR^d$, there exist unique solutions $\phi^{\pm,\eps}$ of
	\[
		\del_t \phi^{+,\eps}_{t,s}(x) = b^\eps(t, \phi^{+,\eps}_{t,s}(x)) \quad \text{and} \quad
		\del_t \phi^{-,\eps}_{t,s}(x) = b_\eps(t, \phi^{-,\eps}_{t,s}(x)) \quad \text{in } [s,T], \quad \phi^{+,\eps}_{s,s}(x) = \phi^{-,\eps}_{s,s}(x) = x.
	\]
	By Lemma \ref{ODE:apriori}, for every $(s,x) \in [0,T] \times \RR^d$, $\phi^{+,\eps}_{\cdot,s}(x)$ and $\phi^{-,\eps}_{\cdot,s}$ are bounded and continuous uniformly in $\eps$.
	
	The comparison result, Lemma \ref{L:comparison}, immediately gives $\phi^{-,\eps} \le \phi^{+,\eps}$, and, if $x \le y$, $\phi^{-,\eps}_{t,s}(x) \le \phi^{-,\eps}_{t,s}(y)$ and $\phi^{+,\eps}_{t,s}(x) \le \phi^{+,\eps}_{t,s}(y)$.
	
	Fix $0 < \eps < \delta$. Then Lemma \ref{L:infsup}\eqref{updownlimits} implies that $b^\eps \le b^\delta$ and $b_\eps \ge b_\delta$. It follows that
	\[
		\del_t \phi^{+,\eps}_{t,s} \le b^{\delta}(t, \phi^{+,\eps}_{t,s}),
	\]
	and so Lemma \ref{L:comparison} yields $\phi^{+,\eps} \le \phi^{+,\delta}$. A similar argument gives $\phi^{-,\eps} \ge \phi^{-,\delta}$. It now follows that that there exist bounded and Lipschitz functions $\phi^+_{\cdot,s}(x)$ and $\phi^-_{\cdot,s}(x)$ on $[s,T]$, which are increasing in $x$, such that, as $\eps \to 0$,
	\[
		\phi^{+,\eps}_{t,s}(x) \searrow \phi^+_{t,s}(x) \quad \text{and} \quad \phi^{-,\eps}_{t,s}(x) \nearrow \phi^-_{t,s}(x) \quad \text{uniformly for }t \in [s,T].
	\]
	Fix $0 \le s \le t \le T$ and $x \in \RR^d$. If $h \ge 0$, then, for arbitrary $\eps > 0$,
	\[
		\phi^+_{t,s}(x) \le \phi^+_{t,s}(x + h) \le \phi^{+,\eps}_{t,s}(x+h).
	\]
	Sending first $h \searrow 0$ and then $\eps \to 0$ gives $\lim_{h \searrow 0} \phi^+_{t,s}(x+h) = \phi^+_{t,s}(x)$, so that $\phi^+_{t,s}$ is right-continuous. A similar argument shows that $\phi^-_{t,s}(x)$ is left-continuous, and this completes the proof of part \eqref{maxmin:props}.
	
	We next prove part \eqref{maxmin:sandwich}. If $\phi$ is any other solution of \eqref{ODE:inc}, then, for all $\eps > 0$,
	\[
		b_\eps(t, \phi_{t,s}(x)) \le \del_t \phi_{t,s}(x) \le b^\eps(t,\phi_{t,s}(x)) \quad \text{for } t \in [s,T] \quad \text{and} \quad \phi_{s,s}(x) = x.
	\]
	Appealing once more to Lemma \ref{L:comparison} gives $\phi^{-,\eps}_{t,s}(x) \le \phi_{t,s}(x) \le \phi^{+,\eps}_{t,s}(x)$, and sending $\eps \to 0$ gives the result.
	
	We now prove the flow property stated in part \eqref{maxmin:flow}. Suppose $x \in \RR^d$ and $0 \le r \le s \le t \le T$. Then, for $\eps > 0$, because $\phi^{+,\eps}_{s,t}$ is increasing,
	\[
		\phi^{+,\eps}_{s,t} \left( \phi^+_{r,s}(x) \right) \le \phi^{+,\eps}_{s,t} \left( \phi^{+,\eps}_{r,s}(x) \right)
		= \phi^{+,\eps}_{r,t}(x).
	\]
	Taking $\eps \to 0$ yields $\phi^+_{s,t} \left( \phi^+_{r,s}(x) \right) \le \phi^+_{r,t}(x)$. For the opposite inequality, observe that $[s,T] \ni t \mapsto \phi^+_{r,t}(x)$ solves \eqref{ODE:inc} with value $\phi^+_{r,s}(x)$ at time $t = s$. It follows from part \eqref{maxmin:sandwich} that $\phi^+_{r,t}(x) \le \phi^+_{s,t} \left( \phi^+_{r,s}(x) \right)$. The argument for the flow property for $\phi^-$ is analogous.
	
	Let $x \in \RR^d$. Sending $y \nearrow x$ in the inequality $\phi^-_{t,s}(y) \le \phi^+_{t,s}(y)$, using the fact that $\phi^-$ and $\phi^+$ are increasing and $\phi^-$ is left-continuous, yields $\phi^-_{t,s}(x) \le (\phi^+_{t,s})_*(x)$. This concludes the proof of part \eqref{maxmin:env}.
\end{proof}

\subsection{Uniqueness and stability of the regular Lagrangian flow}

We now use the full assumption \eqref{A:b}, and in particular, the lower bounds on $\del_{x_i} b^i$ for $i = 1,2,\ldots, d$ that were not needed to prove the existence of the maximal and minimal solution of \eqref{ODE:inc}. In particular, we prove that $\phi^+$ and $\phi^-$ are equal almost everywhere, giving rise to a unique regular Lagrangian flow.

\begin{proposition}\label{P:shiftover}
	Assume that $b$ satisfies \eqref{A:b}. If $\phi^+$ and $\phi^-$ are the maximal and minimal flow from Proposition \ref{P:maxmin}, then, for all $0 \le s \le t \le T$, $\left(\phi^+_{t,s}\right)_* = \phi^-_{t,s}$. Moreover, for all $(s,x) \in [0,T] \times \RR^d$ and a.e. $t \in [s,T]$,
	\[
		\del_t \phi^+_{t,s}(x) = b^*(t,\phi^+_{t,s}(x)) \quad \text{and} \quad \del_t \phi^-_{t,s}(x) = b_*(t,\phi^-_{t,s}(x)).
	\]
\end{proposition}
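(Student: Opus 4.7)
The strategy is to regularize $b$ using the one-sided mollifications from \eqref{onesidedmollifier}, which preserve the off-diagonal monotonicity needed to invoke the comparison principle (Lemma \ref{L:comparison}) and, crucially, satisfy the exact shift identity $b^\eps(t, x - 2\eps\mathbf 1) = b_\eps(t, x)$ from Lemma \ref{L:approxid}\eqref{moll:shift}. Set $b^\eps := b(t,\cdot)*\rho^\eps$ and $b_\eps := b(t,\cdot)*\rho_\eps$ (coordinate-wise in $x$); these are smooth in $x$, inherit an $L^1$-in-$t$ Lipschitz constant from $C_0+C_1$, and satisfy $b^\eps \searrow b^*$, $b_\eps \nearrow b_*$, with $\del_{x_j}(b^\eps)^i \ge -C_1(t)\delta_{ij}$. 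Let $\phi^{\pm,\eps}$ denote their unique absolutely continuous flows.

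The first task is to verify that the flows of these regularizations recover $\phi^\pm$ from Proposition \ref{P:maxmin}. Since $b^\eps$ is increasing in $\eps$ (from the proof of Lemma \ref{L:approxid}), Lemma \ref{L:comparison} yields that $\phi^{+,\eps}$ is decreasing in $\eps$, producing a pointwise limit $\tilde\phi^+$. Comparing $\phi^+$ (which satisfies $\del_t\phi^+ \le b^*(\cdot,\phi^+) \le b^\eps(\cdot,\phi^+)$) against $\phi^{+,\eps}$ via Lemma \ref{L:comparison} gives $\phi^+ \le \phi^{+,\eps}$, hence $\phi^+ \le \tilde\phi^+$. Conversely, passing to the limit $\eps \to 0$ in the integral form of the ODE for $\phi^{+,\eps}$ by dominated convergence (using linear growth of $b^\eps$), together with the pointwise identification $b^\eps(\tau, \phi^{+,\eps}_{\tau,s}(x)) \to b^*(\tau,\tilde\phi^+_{\tau,s}(x))$ — obtained from the sandwich $b^*(\tau,y_\eps) \le b^\eps(\tau,y_\eps) \le b^\delta(\tau,y_\eps) \to b^\delta(\tau,y_0)$ followed by $b^\delta(\tau,y_0) \searrow b^*(\tau,y_0)$, relying on the right-continuity of the upper envelope of the increasing map $y \mapsto b(\tau,y) + C_1(\tau)y$ — shows that $\tilde\phi^+$ solves \eqref{ODE:inc} with $\del_t\tilde\phi^+ = b^*(\cdot,\tilde\phi^+)$ a.e. Maximality (Proposition \ref{P:maxmin}\eqref{maxmin:sandwich}) then forces $\tilde\phi^+ = \phi^+$ and yields the identity $\del_t\phi^+ = b^*(t,\phi^+)$ a.e. The analogous argument handles $\phi^-$ and $\del_t\phi^- = b_*(t,\phi^-)$.

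Now I exploit the shift identity. The function $\psi(r) := \phi^{+,\eps}_{r,s}(x - 2\eps\mathbf 1) + 2\eps\mathbf 1$ satisfies $\psi(s) = x$ and, by Lemma \ref{L:approxid}\eqref{moll:shift},
\[
    \del_r\psi(r) = b^\eps(r, \psi(r) - 2\eps\mathbf 1) = b_\eps(r, \psi(r)),
\]
so uniqueness for the Lipschitz flow gives $\psi(r) = \phi^{-,\eps}_{r,s}(x)$, i.e.\
\[
    \phi^{+,\eps}_{r,s}(x - 2\eps\mathbf 1) = \phi^{-,\eps}_{r,s}(x) - 2\eps\mathbf 1.
\]
Combining with $\phi^{+,\eps} \ge \phi^+$ and $\phi^{-,\eps} \le \phi^-$, we get
\[
    \phi^+_{r,s}(x - 2\eps\mathbf 1) \le \phi^-_{r,s}(x) - 2\eps\mathbf 1.
\]
Since $\phi^+_{r,s}$ is increasing and $x - 2\eps\mathbf 1 \nearrow x$ from strictly below in every coordinate, the left-hand side tends to $(\phi^+_{r,s})_*(x)$ as $\eps \to 0$, while the right-hand side tends to $\phi^-_{r,s}(x)$. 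Hence $(\phi^+_{r,s})_*(x) \le \phi^-_{r,s}(x)$, which together with Proposition \ref{P:maxmin}\eqref{maxmin:env} completes the proof.

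The main technical obstacle is the first step: ensuring that the one-sided mollifications — chosen precisely because they deliver the clean shift identity — still recover the same maximal and minimal flows as the sup-/inf-convolutions employed in Proposition \ref{P:maxmin}. This reduces to a careful management of monotone limits of semicontinuous functions but, once in place, the derivative identities $\del_t\phi^\pm = b^{*/}_*(\cdot,\phi^\pm)$ fall out as a by-product of the very same limiting argument, and the semicontinuity identification $(\phi^+_{t,s})_* = \phi^-_{t,s}$ is an immediate consequence of the $2\eps$-shift collapsing to zero.
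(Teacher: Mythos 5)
Your proposal follows the same route as the paper — one-sided mollifications of $b$, the comparison Lemma~\ref{L:comparison}, and the shift identity from Lemma~\ref{L:approxid}\eqref{moll:shift} — and the overall architecture is sound. There is one local improvement: by shifting by exactly $2\eps\mathbf 1$ you obtain the \emph{equality} $\phi^{+,\eps}_{r,s}(x-2\eps\mathbf 1)=\phi^{-,\eps}_{r,s}(x)-2\eps\mathbf 1$ from ODE uniqueness, whereas the paper shifts by $2\delta\mathbf 1$ with $\delta>\eps$ and must invoke the comparison lemma a second time to get the corresponding inequality. That is a genuine, if minor, simplification.

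However, there is a real gap: you never normalize away the $C_1$-term, and without that normalization several statements you rely on are false. Specifically, you assert that $b^\eps=b*\rho^\eps\searrow b^*$ and $b_\eps=b*\rho_\eps\nearrow b_*$ (with $b_\eps\le b\le b^\eps$, monotonicity in $\eps$, and consequently $\phi^{+,\eps}\ge\phi^+$, $\phi^{-,\eps}\le\phi^-$). But Lemma~\ref{L:approxid}\eqref{moll:order}--\eqref{moll:conf} gives these properties only when $b(t,\cdot)$ is itself \emph{increasing}, i.e.\ when $C_1\equiv 0$. Under the general assumption \eqref{A:b}, only $b(t,\cdot)+C_1(t)\,\cdot$ is increasing, and the one-sided convolutions of $b$ are not one-sided approximations of $b$: one easily checks (e.g.\ for $b(x)=-x$) that $b*\rho^\eps< b$ rather than $\ge b^*$, and the family is not monotone in $\eps$. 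As a consequence the first step of your argument — the identification of $\lim_\eps\phi^{+,\eps}$ with $\phi^+$ via the sandwich $b^*\le b^\eps\le b^\delta$, and likewise the inequality chain $\phi^+_{r,s}(x-2\eps\mathbf 1)\le\phi^{+,\eps}_{r,s}(x-2\eps\mathbf 1)\le\phi^-_{r,s}(x)-2\eps\mathbf 1$ — does not go through as written. The fix is exactly the paper's preliminary reduction: replace $\phi^\pm$ by $\tilde\phi^\pm_{t,s}(x)=e^{\omega_1(t)}\phi^\pm_{t,s}(e^{-\omega_1(t)}x)$ and $b$ by $\tilde b(t,x)=e^{\omega_1(t)}b(t,e^{-\omega_1(t)}x)+C_1(t)x$, which satisfies \eqref{A:b} with $C_1\equiv 0$, so that $\tilde b(t,\cdot)$ is increasing and Lemma~\ref{L:approxid} applies in full. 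With that normalization in place, the rest of your argument (including the sandwich argument for $b^\eps(\tau,\phi^{+,\eps}_{\tau,s})\to b^*(\tau,\phi^+_{\tau,s})$ and the $2\eps$-shift) is correct.
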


\begin{proof}
	Let 
	\[
		\tilde \phi^\pm_{t,s}(x) = e^{\omega_1(t)} \phi^\pm_{t,s}\left(e^{-\omega_1(t)} x \right) 
	\]
	and
	\[
		\tilde b(t,x) = e^{\omega_1	(t)} b\left(t, e^{-\omega_1(t)} x \right) + C_1(t) x,
	\]
	where $\omega_1$ is as in \eqref{acomega}. Then $\tilde b$ satisfies \eqref{A:b} with $C_1 = 0$ and with a possibly different $C_0 \in L^1_+$, and $\tilde \phi^{\pm}$ are the corresponding maximal and minimal flows. We may therefore assume without loss of generality that $b(t,\cdot)$ is increasing for $t \in [0,T]$.
	
	We first prove the statement involving the lower-semicontinuous envelope of $\phi^+_{t,s}$, and in view of Proposition \ref{P:maxmin}\eqref{maxmin:env}, we need only prove the opposite inequality $(\phi^+_{t,s})_* \le \phi^-_{t,s}$.  For $t \in [0,T]$, define
	\[
		b^\eps(t,\cdot) = b(t,\cdot) * \rho^\eps \quad \text{and} \quad b_\eps(t,\cdot) = b(t,\cdot) * \rho_\eps,
	\]
	where the one-sided mollifiers $\rho^\eps$ and $\rho_\eps$ are defined in \eqref{onesidedmollifier}, and let $\phi^{+,\eps}$ and $\phi^{-,\eps}$ be the corresponding flows. Arguing exactly as in the proof of Proposition \ref{P:maxmin}, appealing to Lemma \ref{L:approxid}, as $\eps \to 0$, $\phi^{+,\eps}_{\cdot,s} (x)\searrow \phi^+_{\cdot,s}(x)$ and $\phi^{-,\eps}_{\cdot,s}(x) \nearrow \phi^-_{\cdot,s}(x)$ uniformly on $[s,T]$, for any $s \in [0,T]$ and $x \in \RR^d$.

	Fix $0 < \eps < \delta$ and define $X^{\delta,\eps}(t) = \phi^{+,\eps}_{t,s}(x - 2\delta \mbf 1) + 2\delta \mbf 1$. Then, in view of Lemma \ref{L:approxid}\eqref{moll:shift}, $X^{\delta,\eps}$ satisfies
	\[
		\dot X^{\delta,\eps}(t) = b^\eps(t, X^{\delta,\eps}(t) - 2\delta \mbf 1) 
		\le b_\eps(t, X^{\delta,\eps}(t) )
		\quad \text{in } [s,T] \quad \text{and} \quad X^{\delta,\eps}(s) = x.
	\]
	We may thus appeal to Lemma \ref{L:comparison}, and find that, for all $t \in [s,T]$,
	\[
		\phi^{+,\eps}_{t,s}(x - 2\delta \mbf 1) \le \phi^{-,\eps}_{t,s}(x) - 2 \delta \mbf 1.
	\]
	Sending first $\eps \to 0$ and then $\delta \to 0$ gives $(\phi^+_{t,s})_* \le \phi^-_{t,s}$, as desired.
	
	We now prove the final statement. For $0 < \eps < \delta$, we have the inequalities, for $t \in [s,T]$ and $x \in \RR^d$,
	\[
		b^*(t, \phi^+_{t,s}(x)) \le b^*(t, \phi^{+,\eps}_{t,s}(x)) \le b^\eps(t, \phi^{+,\eps}_{t,s}(x)) \le b^\delta(t, \phi^{+,\eps}_{t,s}(x)).
	\]
	Since $b^\delta$ is Lipschitz continuous in the space variable, 
	\[
		\lim_{\eps \to 0} b^\delta(t, \phi^{+,\eps}_{t,s}(x)) = b^\delta(t, \phi^+_{t,s}(x)).  
	\]
	Therefore,
	\[
		b^*(t, \phi^{+}_{t,s}(x))
		\le \liminf_{\eps \to 0} b^\eps(t, \phi^{+,\eps}_{t,s}(x))
		\le \limsup_{\eps \to 0} b^\eps(t, \phi^{+,\eps}_{t,s}(x))
		\le b^\delta(t, \phi^+_{t,s}(x)),
	\]
	and so, sending $\delta \to 0$, it follows that
	\[
		\lim_{\eps \to 0} b^\eps(t, \phi^{+,\eps}_{t,s}(x)) = b^*(t, \phi^+_{t,s}(x)).
	\]
	For arbitrary $\eps > 0$, $0 \le s \le t \le T$, and $x \in \RR^d$,
	\[
		\phi^{\eps,+}_{t,s}(x) = x + \int_s^t b^\eps(r, \phi^{\eps,+}_{r,s}(x))dr.
	\]
	Sending $\eps \to 0$ and appealing to dominated convergence gives
	\[
		\phi^+_{t,s}(x) = x + \int_s^t b^*(r, \phi^+_{r,s}(x))dr,
	\]
	and a similar argument gives
	\[
		\phi^-_{t,s}(x) = x + \int_s^t b_*(r, \phi^-_{r,s}(x))dr.
	\]

\end{proof}

Recalling that increasing functions are continuous almost-everywhere, it now follows from Proposition \ref{P:shiftover} that $\phi^+_{t,s}$ and $\phi^-_{t,s}$ are equal almost everywhere, given $0 \le s \le t \le T$. We thus finally arrive at the almost-everywhere unique solvability of the ODE \eqref{ODE}, and the identification of the unique regular Lagrangian flow.

\begin{theorem}\label{T:regLag}
	For every $s \in [0,T]$ and for almost every $x \in \RR^d$, there exists a unique absolutely continuous solution $[s,T] \ni t \mapsto \phi_{t,s}(x)$ of the differential inclusion \eqref{ODE:inc}. Moreover, $\phi_{t,s}$ satisfies the regular Lagrangian property: the map $C_c(\RR^d) \ni f \mapsto f \circ \phi_{t,s}$ extends continuously to $f \in L^1_\loc(\RR^d)$, and there exists $C > 0$ depending only on $T$ such that, for all $R > 0$, $f \in L^1_\loc$, $t \in [s,T]$,
	\begin{equation}\label{regLag}
		\norm{f \circ \phi_{t,s}}_{L^1(B_R)}  \le e^{d (\omega_1(t) - \omega_1(s))} \norm{f}_{L^1(B_{R + C(1+R)(\omega_0(t) - \omega_0(s))})}.
	\end{equation}
	In particular,
	\[
		\left\{ [s,T] \times \RR^d \ni (t,x) \mapsto b(t, \phi_{t,s}(x)) \in L^1_\loc(\RR^d, L^1([s,T]) ) \right\},
	\]
	and for a.e. $x \in \RR^d$, $[s,T] \ni t \mapsto \phi_{s,t}(x)$ is the unique absolutely continuous solution of the integral equation
	\[
		\phi_{t,s}(x) = x + \int_s^t b(r, \phi_{r,s}(x))dr.
	\]
	Finally, if $0 \le r \le s \le t \le T$, then the composition $\phi_{s,t} \circ \phi_{r,s}$ is well-defined a.e. and is equal to $\phi_{r,t}$.
\end{theorem}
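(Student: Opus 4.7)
The plan is to deduce the theorem from the maximal/minimal flow construction of Propositions \ref{P:maxmin} and \ref{P:shiftover}, combined with the linear growth bound on $b$ and the divergence lower bound built into \eqref{A:b}. The argument has three interlocking parts: collapse $\phi^+$ and $\phi^-$ to a single flow $\phi$ a.e., prove the regular Lagrangian estimate \eqref{regLag} via smooth approximation, and use \eqref{regLag} to identify the integral equation with the pointwise value of $b$.

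For fixed $s \in [0,T]$, both $\phi^+_{t,s}$ and $\phi^-_{t,s}$ are increasing in $x$, and Proposition \ref{P:shiftover} gives $(\phi^+_{t,s})_* = \phi^-_{t,s}$. Since increasing functions are a.e.\ continuous (Lemma \ref{L:ABVcts}), this forces $\phi^+_{t,s}(x) = \phi^-_{t,s}(x)$ outside a Lebesgue-null set $N_{s,t}$. Choosing a countable dense $Q \subset [s,T]$, setting $N_s := \bigcup_{t \in Q} N_{s,t}$, and using absolute continuity of $t \mapsto \phi^\pm_{t,s}(x)$ upgrades this to $\phi^+_{t,s}(x) = \phi^-_{t,s}(x)$ for every $t \in [s,T]$ and every $x \notin N_s$. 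I define $\phi_{t,s}(x)$ to be this common value; the sandwich property in Proposition \ref{P:maxmin}\eqref{maxmin:sandwich} then forces every solution of the differential inclusion \eqref{ODE:inc} starting at such $x$ to coincide with $\phi_{t,s}(x)$ on $[s,T]$.

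For \eqref{regLag} I would work with smooth approximations $b^\eps$ (either the $\sup$-convolutions of Lemma \ref{L:infsup} or the one-sided mollifications of Lemma \ref{L:approxid}) and their classical flows $\phi^\eps_{t,s}$. Since the inequality $\del_{x_j} b^i \ge -C_1(t)\delta_{ij}$ is preserved by both regularizations, we get $\div b^\eps \ge -dC_1$, and Liouville's formula yields the Jacobian bound $\det D\phi^\eps_{t,s}(x) \ge e^{-d(\omega_1(t) - \omega_1(s))}$. Together with Lemma \ref{ODE:apriori}, which places $\phi^\eps_{t,s}(B_R)$ inside a ball of radius $R + C(1+R)(\omega_0(t) - \omega_0(s))$, the change of variables formula gives \eqref{regLag} for $\phi^\eps_{t,s}$ and $f \in C_c(\RR^d)$; dominated convergence using $\phi^\eps_{t,s} \to \phi_{t,s}$ off $N_s$, followed by an $L^1_\loc$-density argument, extends the bound to arbitrary $f \in L^1_\loc$.

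With \eqref{regLag} in hand the remaining claims close up. Applying \eqref{regLag} coordinate-wise to $b$ and invoking Fubini shows $(r,x) \mapsto b(r, \phi_{r,s}(x)) \in L^1_\loc(\RR^d; L^1([s,T]))$. For a.e.\ $r$ the discontinuity set $E_r := \{y : b_*(r,y) \neq b^*(r,y)\}$ is Lebesgue null by $BV_\loc$ regularity, and \eqref{regLag} applied to $\ind_{E_r}$ and integrated in $r$ forces $\{(r,x) : \phi_{r,s}(x) \in E_r\}$ to be null in $[s,T] \times \RR^d$. Hence $b^*(r,\phi_{r,s}(x)) = b_*(r,\phi_{r,s}(x))$ a.e., and the two integral representations supplied by Proposition \ref{P:shiftover} for $\phi^+$ and $\phi^-$ collapse into the single identity in the statement; uniqueness of the absolutely continuous solution of that integral equation again follows from Proposition \ref{P:maxmin}\eqref{maxmin:sandwich}, and the semigroup relation $\phi_{s,t} \circ \phi_{r,s} = \phi_{r,t}$ a.e.\ is inherited from Proposition \ref{P:maxmin}\eqref{maxmin:flow} after using \eqref{regLag} to ensure that $\phi_{r,s}$ pushes null sets to null sets. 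The main technical hurdle I anticipate is exactly this interlocking: \eqref{regLag} is what guarantees that the measure-zero discontinuity set of $b$ is a.s.\ avoided along the flow, yet the passage $\eps \to 0$ used to prove \eqref{regLag} relies on the a.e.-in-$t$-uniform collapse $\phi^+ = \phi^-$ established in the first paragraph, so the order in which these steps are carried out must be controlled carefully.
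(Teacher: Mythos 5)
Your proposal is correct and follows essentially the same route as the paper's proof: collapse $\phi^+$ and $\phi^-$ to a common flow on a full-measure set independent of $t$ (you use a countable dense set of times plus time-continuity; the paper uses dyadic time partitions plus the uniform modulus from Lemma \ref{ODE:apriori}—same idea), derive \eqref{regLag} from the Jacobian bound $\det D\phi^{\eps}_{t,s} \ge e^{-d(\omega_1(t)-\omega_1(s))}$ for the smoothed flows and pass to the limit, then close via Propositions \ref{P:shiftover} and \ref{P:maxmin}. Two small nits worth noting: the a.e.\ nullity of the discontinuity set $E_r$ should be attributed to the increasing structure of $b(r,\cdot) + C_1(r)\cdot$ via Lemma \ref{L:ABVcts} (not to generic $BV_\loc$ regularity, since a $BV$ function's topological discontinuity set is only null for structural reasons specific to the increasing case); and for the flow composition, what \eqref{regLag} actually delivers is that \emph{preimages} of null sets under $\phi_{r,s}$ are null (equivalently $(\phi_{r,s})_\#\mathrm{Leb}\ll\mathrm{Leb}$), which is the condition needed for $\phi_{s,t}\circ\phi_{r,s}$ to be well-defined on equivalence classes—your phrase "pushes null sets to null sets" should be read in that sense.
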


\begin{remark}\label{R:regLag}
	Taking $f = \ind_A$ in the estimate above, for some $A \subset \RR^d$ of finite measure, we find the regular Lagrange property
	\[
		\left| \phi^{-1}_{t,s}(A) \right| \le e^{d(\omega_1(t) - \omega_1(s))} |A|.
	\]
\end{remark}

\begin{proof}[Proof of Theorem \ref{T:regLag}]
	Let $s \in [0,T]$ be fixed and, for $N \in \NN$, define $t^N_n = s + \frac{n(T-s)}{N}$, $n = 0, 1, 2, \ldots, N$. Then, by Lemma \ref{L:ABVcts} and Proposition \ref{P:shiftover}, there exist sets $B^N_n \subset \RR^d$ of full measure such that $\phi^+_{t^N_n,s} = \phi^-_{t^N_n,s}$ on $B^N_n$. Define now the full measure set
	\[
		B = \bigcap_{N \in \NN} \bigcap_{n = 0}^N B^N_n.
	\]
	Let $(t,x) \in [s,T] \times B$, and, for any $N \in \NN$, let $n(t) = 0,1,2,\ldots, N$ be such that $|t^N_n - t|$ is minimized. Then, in view of the uniform continuity in time of $\phi^+$ and $\phi^-$, for some $\omega_N > 0$ with $\lim_{N \to \oo} \omega_N = 0$,
	\[
		|\phi^+_{t,s}(x) - \phi^-_{t,s}(x)| \le |\phi^+_{t^N_{n(t)},s}(x) - \phi^+_{t,s}(x)| + |\phi^-_{t^N_{n(t)},s}(x) - \phi^-_{t,s}(x)| \le \omega_N,
	\]
	and, since $N$ was arbitrary, we find that $\phi^+_{t,s}(x) = \phi^-_{t,s}(x)$. It follows that, up to a full measure set, $\phi^+_{\cdot,s}$ and $\phi^-_{\cdot,s}$ may be identified as absolutely continuous functions on $[s,T]$.
	
	For $f \in L^1(\RR^d) \cap C_c(\RR^d)$, let $\eps > 0$ and let $b^\eps$ and $\phi^{+,\eps}$ be as in the proof of Proposition \ref{P:maxmin}. Then
	\[
		\del_t \det(D_x \phi^{+,\eps}_{t,s}(x)) = \div b^\eps(t, \phi^{+,\eps}_{t,s}(x)) \det(D_x \phi^{+,\eps}_{t,s}(x)) \ge -dC_1 (t)\det(D_x \phi^{+,\eps}_{t,s}(x)) ,
	\]
	from which it follows that $\det(D_x \phi^{+,\eps}_{t,s}(x))  \ge \exp\left( - d\int_s^t C_1(r)dr \right)$. The change of variables formula then gives
	\[
		\int_{\RR^d} f(\phi^{+,\eps}_{t,s}(x))dx \le \exp\left(  d\int_s^t C_1(r)dr \right) \int_{\RR^d} f(x)dx.
	\]
	The set $(\phi^{+,\eps}_{t,s}(x))_{x \in \supp f, \, \eps > 0}$ is uniformly bounded in view of Lemma \ref{ODE:apriori}, and so the bounded convergence theorem implies, upon taking $\eps \to 0$, that
	\[
		\int_{\RR^d} f(\phi_{t,s}(x))dx \le \exp\left(  d\int_s^t C_1(r)dr \right)  \int_{\RR^d} f(x)dx.
	\]
	This implies that the map $f \mapsto f \circ \phi_{t,s}$ extends continuously to $L^1(\RR^d)$. The local statement follows from the finite speed of propagation implied by the a priori estimates in Lemma \ref{ODE:apriori}.
	
	It now follows easily that $b(\cdot, \phi_{\cdot,s})$ belongs to $L^1_\loc([0,T] \times \RR^d)$, and the uniqueness statement for absolutely continuous solutions of the integral equation follows from Proposition \ref{P:shiftover} and the fact that $\phi = \phi^+ = \phi^-$ a.e.
	
	Finally, the composition $\phi_{t,s} \circ \phi_{r,s}$ is justified because $\phi_{t,s} \in L^1_\loc$, and its equality to $\phi_{r,t}$ a.e. is a consequence of the a.e. uniqueness of the ODE and the flow properties in Proposition \ref{P:maxmin}\eqref{maxmin:flow}.
\end{proof}

We now demonstrate that any regularizations of $b$ lead to the flow $\phi$ in the limit, not only the one-sided regularizations used above.

\begin{theorem}\label{T:ODE:beps}
	Assume $(b^\eps)_{\eps > 0}$ is a family satisfying \eqref{A:b} uniformly in $\eps$, and, for every $\eps$, 
	\[
		\int_0^T \norm{\nabla b^\eps(t,\cdot)}_\oo dt < \oo.
	\]
	If $\phi^\eps$ is the unique flow corresponding to $b^\eps$, then, for all $s \in [0,T]$ and $p \in [1,\oo)$, as $\eps \to 0$, $\phi^\eps \to \phi$ in $C([s,T], L^p_\loc(\RR^d))$ and in $L^p_\loc(\RR^d, W^{1,1}([s,T]))$.
\end{theorem}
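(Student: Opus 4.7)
The plan is to establish precompactness of $\{\phi^\eps\}$ in $C([s,T]; L^p_\loc(\RR^d))$, identify every subsequential limit as the unique regular Lagrangian flow from Theorem \ref{T:regLag}, and finally upgrade the convergence to $L^p_\loc(\RR^d; W^{1,1}([s,T]))$. Throughout, the hypothesis that $b^\eps \to b$ in $L^1_\loc([0,T]\times\RR^d)$ is taken as implicit, since the result is otherwise false (e.g.\ $b^\eps \equiv 0$ satisfies \eqref{A:b} uniformly).

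From the uniform growth bound in \eqref{A:b} and Lemma \ref{ODE:apriori}, the family $\{\phi^\eps\}$ is uniformly bounded and uniformly absolutely continuous in $t$ on bounded sets of $x$. For spatial precompactness, I would introduce the rescaling $\psi^\eps_{t,s}(x) := e^{\omega_1(t)-\omega_1(s)}\phi^\eps_{t,s}(x)$, analogous to the one used in the proof of Proposition \ref{P:shiftover}; then $\psi^\eps$ is the flow of a vector field satisfying \eqref{A:b} with $C_1 \equiv 0$, i.e.\ coordinate-wise increasing in space. Lemma \ref{L:comparison} then forces $\psi^\eps_{t,s}$, and hence $\phi^\eps_{t,s}$, to be increasing in $x$; combined with the uniform $L^\infty_\loc$-bound, this yields a uniform $BV_\loc$-bound and precompactness in $L^p_\loc$ for each $p \in [1,\oo)$. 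Arzel\`a-Ascoli in the Banach space $L^p(B_R)$ then delivers a subsequence $\phi^{\eps_n} \to \bar\phi$ in $C([s,T]; L^p_\loc)$.

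To identify $\bar\phi$, I would first transfer the uniform regular Lagrangian estimate $|(\phi^\eps_{t,s})^{-1}(A)| \le e^{d(\omega_1(t)-\omega_1(s))}|A|$ (which follows from the uniform divergence lower bound) to $\bar\phi$, via the change-of-variable formula and dominated convergence. Combined with Fubini and the a.e.\ continuity of $b(r,\cdot)$ from Lemma \ref{L:ABVcts}, this guarantees that $\bar\phi_{r,s}(x)$ avoids the discontinuity set of $b(r,\cdot)$ for a.e.\ $(r,x)$. Passing to the limit in $\phi^\eps_{t,s}(x) = x + \int_s^t b^\eps(r, \phi^\eps_{r,s}(x))dr$ then proceeds by the decomposition
\[
    b^\eps(r, \phi^\eps_{r,s}(x)) - b(r, \bar\phi_{r,s}(x))
    = \bigl[b^\eps(r,\phi^\eps_{r,s}(x)) - b(r, \phi^\eps_{r,s}(x))\bigr] + \bigl[b(r,\phi^\eps_{r,s}(x)) - b(r,\bar\phi_{r,s}(x))\bigr];
\]
the first bracket, integrated in $x$ over $B_R$, is controlled by the uniform Lagrangian bound applied to $b^\eps-b$, which vanishes in $L^1_\loc$ by hypothesis, while the second converges to zero pointwise a.e.\ by continuity of $b(r,\cdot)$ at $\bar\phi_{r,s}(x)$. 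Both are uniformly dominated by $C_0(r)(1+|x|)$ times a constant depending on \eqref{A:b}, so dominated convergence yields $\bar\phi_{t,s}(x) = x + \int_s^t b(r,\bar\phi_{r,s}(x))dr$ for a.e.\ $x$. Theorem \ref{T:regLag} then forces $\bar\phi = \phi$, and since every subsequential limit agrees with $\phi$, the entire family converges.

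The convergence in $L^p_\loc(\RR^d; W^{1,1}([s,T]))$ follows by applying the same pointwise convergence and domination to the time derivatives $\partial_t\phi^\eps = b^\eps(\cdot,\phi^\eps)$: for a.e.\ $x$, $\partial_t\phi^\eps(\cdot,x) \to b(\cdot,\phi(\cdot,x))$ in $L^1([s,T])$, while the full $W^{1,1}([s,T])$-norm of $\phi^\eps(\cdot,x)$ stays uniformly bounded by $C(1+|x|)$, so dominated convergence in $x$ completes the argument. The main obstacle throughout is the passage to the limit in the composition $b^\eps(r, \phi^\eps_{r,s}(x))$, which requires the delicate combination of the $L^1_\loc$-approximation of $b$, the uniform regular Lagrangian property (preventing concentration near the null discontinuity set of $b$), and the a.e.\ continuity from Lemma \ref{L:ABVcts}.
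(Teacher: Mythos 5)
Your proposal is correct, and it diverges from the paper's argument in the key limit-identification step. Both proofs establish precompactness identically: $\phi^\eps_{t,s}$ is increasing by Lemma \ref{L:comparison}, hence uniformly $BV_\loc$ alongside the $L^p_\loc$ and time-equicontinuity bounds. Both proofs also silently require $b^\eps \to b$ in $L^1_\loc$, which you correctly flag as an unstated but necessary hypothesis (the paper's proof invokes it implicitly when sending $\|b^{\eps_n}(r,\cdot) - b^\delta(r,\cdot)\|_{L^p} \to \|b(r,\cdot)-b^\delta(r,\cdot)\|_{L^p}$). Where you differ is in passing to the limit in $\int_s^t b^\eps(r,\phi^\eps_{r,s})\,dr$. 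The paper inserts a Lipschitz intermediary $b^\delta$ and splits into three pieces, controlling $\|b^\eps - b^\delta\|$ and $\|b^\delta - b\|$ through the uniform regular Lagrangian estimate and $\|b^\delta(\cdot,\phi^\eps) - b^\delta(\cdot,\psi)\|$ by Lipschitz continuity; the parameters $\eps_n \to 0$ and $\delta \to 0$ are sent in that order. You instead use a two-term split $[b^\eps(\phi^\eps) - b(\phi^\eps)] + [b(\phi^\eps) - b(\bar\phi)]$, controlling the first by the Lagrangian bound applied to $b^\eps - b$, and the second by observing (via Lemma \ref{L:ABVcts} and the Lagrangian estimate transferred to $\bar\phi$) that $\bar\phi_{r,s}(x)$ lands outside the Lebesgue-null discontinuity set of $b(r,\cdot)$ for a.e.\ $(r,x)$, whence a.e.\ pointwise convergence plus domination closes the argument. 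Your route is slightly more direct and exploits the a.e.\ continuity of $ABV$ functions in a way the paper does not; the paper's route works entirely at the level of $L^p$-norms and never needs to argue that the flow avoids the jump set. Both are sound, and both feed cleanly into the $W^{1,1}$-upgrade (you by domination, the paper by Minkowski; these amount to the same thing here) and into the identification $\bar\phi = \phi$ via Theorem \ref{T:regLag}.
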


\begin{proof}
	The regular Lagrange property \eqref{regLag} in Theorem \ref{T:regLag} implies that, for all $p \in [1,\oo)$ and $R > 0$,
	\[
		\sup_{\eps > 0} \sup_{t \in [s,T]}\int_{B_R} |\phi^\eps_{t,s}(x)|^p dx < \oo.
	\]
	Moreover, for all $t \in [s,T]$, $\phi^\eps_{t,s}$ is increasing, and thus $BV$. This along with the a priori estimates in Lemma \ref{ODE:apriori} imply that $(\phi^\eps_{\cdot,s})_{\eps > 0}$ is precompact in $C([s,T], L^p_\loc(\RR^d))$, and therefore, for some subsequence $\eps_n \xrightarrow{n \to \oo} 0$, $\phi^{\eps_n}_{\cdot,s}$ converges as $n \to \oo$ in $C([s,T], L^p_\loc(\RR^d))$ to some $\psi_{\cdot,s}$, which then also satisfies the regular Lagrangian property \eqref{regLag}.
	
	We now use \eqref{regLag} to deduce that, for any $R > 0$, there exists $R' > 0$ independent of $\eps$ such that, for a.e. $r \in [s,T]$ and for any $\delta > \eps$,
	\begin{align*}
		&\norm{b^\eps(r, \phi^\eps_{r,s}) - b(r, \psi_{r,s}) }_{L^p(B_R)}\\
		&\le \norm{b^\eps(r,\phi^\eps_{r,s}) - b^\delta(r, \phi^\eps_{r,s})}_{L^p(B_R)}
		+ \norm{b^\delta(r, \phi^\eps_{r,s}) - b^\delta(r, \psi_{r,s} }_{L^p(B_R)}
		+ \norm{b^\delta(r, \psi_{r,s}) - b(r, \psi_{r,s})}_{L^p(B_R)}\\
		&\le e^{d(\omega_1(t) - \omega_1(s))} \left( \norm{b^\eps(r,\cdot) - b^\delta(r,\cdot)}_{L^p(B_{R'})}
		+ \norm{b^\delta(r, \cdot) - b(r,\cdot)}_{L^p(B_{R'})} \right) +  \norm{b^\delta(r, \phi^\eps_{r,s}) - b^\delta(r, \psi_{r,s} }_{L^p(B_R)}.
	\end{align*}
	Taking first $\eps_n \to 0$ (using the Lipschitz continuity of $b^\delta$) and then $\delta \to 0$, we find that the right-hand side above converges to zero. By Lemma \ref{ODE:apriori}, we may use the dominated convergence theorem to deduce that
	\[
		\lim_{n \to \oo \to 0} \int_s^T \norm{b^{\eps_n}(r, \phi^{\eps_n}_{r,s}) - b(r, \psi_{r,s}) }_{L^p(B_R)}dr = 0.
	\]
	Sending $n \to \oo$ in the integral equation
	\[
		\phi^{\eps_n}_{t,s}(x) = x + \int_s^t b^{\eps_n}(r, \phi^{\eps_n}_{r,s}(x))dr
	\]
	thus gives, in the distributional sense,
	\begin{equation}\label{psi?}
		\psi_{t,s}(x) = x + \int_s^t b(r,\psi_{r,s}(x))dr.
	\end{equation}
	We also have, by Minkowski's inequality,
	\[
		\norm{\del_t \phi^{\eps_n}_{s,\cdot} - \del_t \psi_{s,\cdot} }_{L^p(B_R, L^1([s,T])}
		\le \int_s^T \norm{b^{\eps_n}(r, \phi^{\eps_n}_{r,s}) - b(r, \psi_{r,s}) }_{L^p(B_R)}dr \xrightarrow{n \to \oo} 0,
	\]
	and thus \eqref{psi?} is satisfied in the integral sense for a.e. $x \in \RR^d$. By Theorem \ref{T:regLag}, $\psi = \phi$, and therefore the convergence statements hold for the full family $\eps \to 0$.
\end{proof}

Another means of regularization is through the addition of stochastic noise. In particular, if $s \in [0,T]$, $x \in \RR^d$, $\eps > 0$, and $W: \Omega \times [0,T] \to \RR^d$ is a Wiener process defined over a given probability space $(\Omega, \mcl F ,\mbb P)$, then there exists a unique strong solution of the SDE
\begin{equation}\label{SDE:eps}
	d_t \phi^\eps_{t,s}(x) = b(t, \phi^\eps_{t,s}(x))dt + \sqrt{2\eps} dW_t.
\end{equation}
This is true even if $b$ is merely locally bounded; see \cite{D07, V80}.

The following is then proved exactly as for Theorem \ref{T:ODE:beps}.

\begin{theorem}\label{T:ODE:epsW}
	Assume $b$ satisfies \eqref{A:b} and let $\phi^\eps$ be the solution of \eqref{SDE:eps}. Then, for all $s \in [0,T]$, with probability one, as $\eps \to 0$, $\phi^\eps_{s,\cdot}$ converges in $C([s,T], L^p_\loc(\RR^d))$ and $L^p_\loc(\RR^d, C([s,T]))$ to $\phi$.
\end{theorem}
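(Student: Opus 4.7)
The plan is to reduce pathwise to the ODE setting and essentially repeat the argument of Theorem \ref{T:ODE:beps}. Fix $s \in [0,T]$ and restrict throughout to the probability-one event on which the Brownian path $W_\cdot$ is continuous on $[s,T]$. For each $\eps > 0$, subtract the martingale part of \eqref{SDE:eps} and set
\[
	\tilde \phi^\eps_{t,s}(x) := \phi^\eps_{t,s}(x) - \sqrt{2\eps}(W_t - W_s), \qquad \tilde b^\eps(t,x) := b\big(t, x + \sqrt{2\eps}(W_t - W_s)\big).
\]
Then $\tilde \phi^\eps$ is pathwise absolutely continuous in $t$ and solves the random ODE $\del_t \tilde \phi^\eps_{t,s}(x) = \tilde b^\eps(t, \tilde \phi^\eps_{t,s}(x))$ with $\tilde \phi^\eps_{s,s}(x) = x$. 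Since the shift is constant in $x$, $\tilde b^\eps$ satisfies \eqref{A:b} pathwise with the same $C_1$ and with linear growth constant bounded uniformly in $\eps$ on the event in question.

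By Theorem \ref{T:regLag} applied pathwise to $\tilde b^\eps$, $\tilde \phi^\eps_{t,s}$ is the unique regular Lagrangian flow of $\tilde b^\eps$, it is increasing in $x$ in the sense of \eqref{order}, and it satisfies the area distortion estimate \eqref{regLag} with constants independent of $\eps$ along the chosen sample path. The monotonicity yields a uniform $BV_\loc$ bound in $x$, and Lemma \ref{ODE:apriori} yields a uniform modulus of continuity in $t$; together with the $L^p_\loc$-in-$x$ bound from \eqref{regLag}, these produce compactness of $(\tilde \phi^\eps_{\cdot,s})_{\eps > 0}$ in $C([s,T], L^p_\loc(\RR^d))$ for every $p \in [1,\oo)$. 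Extract a subsequence $\tilde \phi^{\eps_n} \to \tilde \psi$.

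To identify $\tilde \psi$, pass to the limit in
\[
	\tilde \phi^{\eps_n}_{t,s}(x) = x + \int_s^t b\big(r, \tilde \phi^{\eps_n}_{r,s}(x) + \sqrt{2\eps_n}(W_r - W_s)\big) dr
\]
exactly as in the proof of Theorem \ref{T:ODE:beps}: interpose a Lipschitz mollification $b^\delta$, split the resulting errors into $b - b^\delta$ terms evaluated along the flow (controlled using \eqref{regLag} uniformly in $n$) and a middle term $b^\delta(r, \tilde \phi^{\eps_n}_{r,s} + \sqrt{2\eps_n}(W_r - W_s)) - b^\delta(r,\tilde \psi_{r,s})$ (controlled by the Lipschitz continuity of $b^\delta$, the uniform smallness of $\sqrt{2\eps_n}(W_r - W_s)$, and the $L^p_\loc$ convergence $\tilde \phi^{\eps_n} \to \tilde \psi$), and close by taking $\eps_n \to 0$ then $\delta \to 0$. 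This identifies $\tilde \psi$ as an absolutely continuous solution of the ODE integral equation for $b$, whence $\tilde \psi = \phi$ by the uniqueness in Theorem \ref{T:regLag}. The full family therefore converges, and since $\sqrt{2\eps}(W_t - W_s) \to 0$ uniformly in $t$, we obtain $\phi^\eps_{s,\cdot} \to \phi_{s,\cdot}$ in $C([s,T], L^p_\loc(\RR^d))$. Convergence in $L^p_\loc(\RR^d, C([s,T]))$ then follows from the pointwise-in-$x$ equicontinuity in $t$ (Lemma \ref{ODE:apriori}), which combined with the uniform-in-$\eps$ linear growth bound and dominated convergence upgrades the $C_tL^p_x$ convergence to $L^p_x C_t$.

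The main obstacle is the limit passage inside $b$ at the stochastically perturbed argument $\tilde \phi^{\eps_n}_{r,s}(x) + \sqrt{2\eps_n}(W_r - W_s)$: neither is $b$ continuous nor is the Brownian shift $W^{1,1}$ in time (which explains why only $C$-in-time, not $W^{1,1}$-in-time, convergence is claimed here, in contrast to Theorem \ref{T:ODE:beps}). The mollifier-plus-regular-Lagrangian trick sidesteps both issues simultaneously and is the only genuinely new element compared to Theorem \ref{T:ODE:beps}.
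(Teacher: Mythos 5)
Your proof is correct and follows essentially the same route as the paper's (quite terse) argument: transform to $\tilde\phi^\eps_{t,s}(x) = \phi^\eps_{t,s}(x) - \sqrt{2\eps}(W_t - W_s)$ pathwise, observe that $\tilde\phi^\eps$ is the regular Lagrangian flow of the shifted drift satisfying \eqref{A:b} with uniform constants, and then re-run the compactness-plus-identification argument of Theorem \ref{T:ODE:beps}. One small imprecision: the upgrade from $C_t L^p_{x,\loc}$ to $L^p_{x,\loc} C_t$ is cleanest not via ``equicontinuity plus dominated convergence'' per se, but by noting that the Minkowski-inequality step in the proof of Theorem \ref{T:ODE:beps} already gives $\tilde\phi^{\eps_n} \to \phi$ in $L^p_{x,\loc} W^{1,1}_t$ (since $\tilde\phi^\eps$ is absolutely continuous in $t$), and $W^{1,1}_t \hookrightarrow C_t$; the Brownian shift is then removed since $\sqrt{2\eps}(W_\cdot - W_s) \to 0$ uniformly. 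This is exactly what the paper means by ``arguing as in the proof of Theorem \ref{T:ODE:beps}''.
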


\begin{proof}
	The transformation $\tilde \phi_{t,s}(x) = \phi_{t,s}(x) - \sqrt{2\eps}(W_t - W_s)$ leads to the equation
	\[
		\del_t \tilde \phi^\eps_{t,s}(x) = b(t, \tilde \phi^\eps_{t,s}(x) + \sqrt{2\eps}(W_t - W_s)).
	\]
	The exact same arguments as those above show that $\tilde \phi^\eps_{t,s}$ is increasing for all $0 \le s \le t \le T$, satisfies the regular Lagrange property \eqref{regLag}, and, for any fixed Brownian path $W$, the a priori estimates of Lemma \ref{ODE:apriori} may be applied. Arguing as in the proof of Theorem \ref{T:ODE:beps}, we may then extract a subsequence $\eps_n \xrightarrow{n \to \oo} 0$ such that $\tilde \phi^{\eps_n}$, and therefore $\phi^{\eps_n}$, converges in $C([s,T], L^p_\loc(\RR^d))$ and $L^p_\loc(\RR^d, C([s,T])$. The same arguments as in Theorem \ref{T:ODE:beps} may then be used to conclude that any such limit is the unique regular Lagrangian flow $\phi$ from Theorem \ref{T:regLag}.
\end{proof}

If $b$ is smooth, then $\del_s \phi_{t,s}(x) = - b(s, \phi_{t,s}(x))$. It is then straightforward to show that all the same theory above can be developed for the terminal value problem for the flow corresponding to $-b$.

\begin{theorem}\label{T:-b}
	For every $t \in [0,T]$ and $p \in [1,\oo)$, $\phi_{t,\cdot} \in C([0,t], L^p_\loc(\RR^d)) \cap L^p_\loc(\RR^d, W^{1,1}([0,t]))$, and, for almost every $x \in \RR^d$, $[0,t] \ni s \mapsto \phi_{t,s}(x)$ is the unique absolutely continuous solution of 
	\[
		\phi_{t,s}(x) = x + \int_s^t b(r, \phi_{t,r}(x))dr, \quad s \in [0,t].
	\]	
\end{theorem}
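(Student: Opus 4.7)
The plan is to mirror the forward-in-$t$ theory developed in Proposition \ref{P:shiftover} and Theorem \ref{T:ODE:beps}, this time tracking dependence on the initial time $s$ with the terminal time $t$ held fixed. Indeed, viewing the statement as the terminal-value problem for the flow of $-b$, one may alternatively apply the earlier theory to the rescaled vector field $\tilde b(r, y) = -b(t-r, -y)$, which is readily checked to satisfy \eqref{A:b}, and translate back.

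Taking the direct route, I would first regularize $b$ by the one-sided smooth mollifications $b^\eps$ of Lemma \ref{L:approxid}, yielding smooth flows $\phi^\eps$. For smooth $b^\eps$ the map $s \mapsto \phi^\eps_{t,s}(x)$ is classically differentiable, and the integral equation in the statement (with $b^\eps$ in place of $b$) can be verified by a direct calculation using the semigroup identity $\phi^\eps_{t,r} \circ \phi^\eps_{r,s} = \phi^\eps_{t,s}$. Uniform-in-$\eps$ bounds on $\{\phi^\eps_{t,\cdot}\}_\eps$ in $C([0,t], L^p_\loc(\RR^d)) \cap L^p_\loc(\RR^d, W^{1,1}([0,t]))$ then follow from the regular Lagrangian estimate \eqref{regLag}, the linear-growth bound of Lemma \ref{ODE:apriori}, and the $BV_\loc$-regularity in $x$ furnished by the monotonicity in Proposition \ref{P:maxmin}; equicontinuity in $s$ with values in $L^p_\loc$ comes from the semigroup identity combined with the Lagrangian bound.

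An Aubin--Lions-type compactness argument then extracts a subsequential limit $\psi_{t,\cdot}$. Applying Theorem \ref{T:ODE:beps} pointwise in $s$ identifies $\psi_{t,s} = \phi_{t,s}$ a.e., so the full family converges; convergence of the integrand $b^\eps(r, \phi^\eps_{t,r}(x)) \to b(r, \phi_{t,r}(x))$ in $L^1([0,t], L^p_\loc)$ is then obtained by the two-step mollification argument in the proof of Theorem \ref{T:ODE:beps}, allowing passage to the limit in the integral equation. The main obstacle I anticipate is the uniqueness of absolutely continuous solutions for a.e.\ $x$: a direct comparison via Lemma \ref{L:comparison} does not apply, since the vector field associated to the backward equation has off-diagonal signs opposite to those required in \eqref{B:localLip}. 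I expect to circumvent this either through the change of variables $\tilde b$ above (reducing to the forward a.e.\ uniqueness from Theorem \ref{T:regLag}) or by composing any candidate solution with the forward flow and again appealing to Theorem \ref{T:regLag}.
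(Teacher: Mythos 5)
Your proposal is correct, and it tracks the route the paper implicitly takes. The paper in fact gives no argument for Theorem \ref{T:-b} beyond the single observation preceding it (for smooth $b$, $\del_s\phi_{t,s}(x)=-b(s,\phi_{t,s}(x))$, so the terminal-value problem is the flow for $-b$), and your change-of-variables reduction is precisely the way to make that rigorous: after the substitution the forward theory of Proposition~\ref{P:shiftover} and Theorems~\ref{T:regLag}, \ref{T:ODE:beps} applies verbatim, settling existence, the $C([0,t],L^p_\loc)\cap L^p_\loc(W^{1,1})$ regularity, and a.e.\ uniqueness in one stroke. Two small remarks. First, the spatial flip in your $\tilde b(r,y)=-b(t-r,-y)$ is unnecessary: the plain time reversal $\tilde b(\tau,z)=b(t-\tau,z)$ already satisfies \eqref{A:b} (with $\tilde C_i(\tau)=C_i(t-\tau)$), and if $\tilde\phi$ is its forward regular Lagrangian flow then $\phi_{t,s}(x)=\tilde\phi_{t-s,0}(x)$ directly, avoiding the extra sign bookkeeping. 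Second, you correctly diagnose why the ``direct'' approach (treating $s\mapsto\phi_{t,s}$ as a forward problem in $s$ with drift $-b$) breaks Lemma~\ref{L:comparison}; this is exactly why the change of variables is not merely an alternative but the cleanest route, and once you invoke it the lengthy compactness/identification steps in your second paragraph are superfluous, since they are already contained in the forward theorems being cited. Your fallback idea of composing a candidate backward solution with the forward flow is more delicate than the change of variables (the forward flow is only defined a.e.\ in $x$), so it is best discarded in favor of the reduction you already identified.
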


\subsection{Some remarks on the backward flow} \label{ss:reverse?}
We discuss next the question of solvability of the backward flow: for fixed $s \in [0,T]$, this is the terminal value problem
\begin{equation}\label{ODE:backward}
	\del_t \phi_{t,s}(x) = b(t, \phi_{t,s}(x)) \quad t \in [0,s], \quad \phi_{s,s}(x) = x.
\end{equation}
Formally, the lower bound on $\div b$ suggests that the backward flow should concentrate positive measure sets to null sets. As an example when $d = 1$, if $b(t,x) = \sgn x$, then the backward flow is given by
\[
	\phi_{t,s}(x) = x - (\sgn x) (s-t) \quad \text{for } x \in \RR, \quad t < s,
\]
and so all trajectories eventually concentrate at $x = 0$. This situation can be generalized to multiple dimensions when $b$ satisfies the half-Lipschitz property
\begin{equation}\label{monotone}
	(b(t,x) - b(t,y)) \cdot (x-y) \ge -C_1(t) |x-y|^2, \quad C_1 \in L^1_+([0,T]),
\end{equation}
The condition \eqref{monotone} then implies the existence of a unique, Lipschitz, concentrating solution of the backward flow \eqref{ODE:backward}. This condition was used by Filippov \cite{Fil60} to build unique solutions of differential inclusions; see also \cite{Conway_67, Pe_Po_1d_99, Pe_Po_01} and the recent work of the authors \cite{LSother}.

The situation is very different when $b$ satisfies \eqref{A:b}. The differential inclusion corresponding to \eqref{ODE:backward} then takes the form
\begin{equation}\label{ODE:backwardinc}
	-\del_t \phi_{t,s}(x) \in \left[ -b^*(t, \cdot), - b_*(t, \cdot) \right]\left( \phi_{t,s}(x) \right).
\end{equation}
 As it turns out, there is no unique flow, and, in some cases, no Lipschitz flow.

\begin{example}\label{ex:line}
For $x = (x_1,x_2) \in \RR^2$, consider the vector field
\[
	b(x) = \sgn x_1 (1,1). 
\]
Since $b$ is independent of $t$, we formally write the flow $\phi_{t,s}$ as $\phi_{t-s}$. 

Observe that, for all $x_2 \in \RR$, $b_*(0,x_2) = -(1,1)$ and $b^*(0,x_2) = (1,1)$. It then follows that the direction in which $\phi_{-\tau}(0,x_2)$ moves for $\tau > 0$ lies in the box $[-1,1]^2$, and so any flow that solves \eqref{ODE:backwardinc} must take the form, for some $c \in [-1,1]$ and all $\tau \ge 0$,
\[
	\phi^{(c)}_{-\tau}(x) =
	\begin{dcases}
		x - (\sgn x_1)(1,1)\tau, & |x_1| \ge \tau\\
		(0,x_2 + c(\tau - |x_1|), & |x_1| < \tau.
	\end{dcases}
\]
There is therefore no unique flow in the strip $\{|x_1| < \tau\}$, even under the restriction that the flow be Lipschitz.

\end{example}

Recall that \eqref{ODE:backwardinc} is a weaker notion of solution than the classical Filippov formulation (see Remark \ref{R:F?}). In Example \ref{ex:line} above, this means $b(0,x_2)$ is taken to be the smallest convex set containing all limit points of $b(y)$ as $y \to (0,x_2)$, that is, the line segment connecting $(-1,-1)$ and $(1,1)$. The unique such solution is then the flow $\phi^{(0)}$. However, we can construct an example where even the Filippov flow is not unique. 

\begin{example}\label{ex:diagonal}
Taking again $d = 2$, we set
\[
	b(x_1,x_2) =
	\begin{dcases}
		(1,1), & x_1 > 0 \text{ and } x_2 > 0, \\
		(1,0), & x_1 < 0 \text{ and } x_2 > 0, \\
		(0,1), & x_1 > 0 \text{ and } x_2 < 0, \\
		(-1,-1), & x_1 < 0 \text{ and } x_2 < 0.
	\end{dcases}
\]
There are then three possible solution flows, even with the general Filippov formulation, when the starting point $(x_1,x_2)$ satisfies $x_1 = x_2$ and $\tau > |x_1| = |x_1|$. Indeed, the nonuniqueness occurs once trajectories reach the origin. Then \eqref{ODE:backwardinc} admits the three solutions
\[
	\phi^{(1)}_{-\tau}(0) = -\tau(1,0), \quad \phi^{(2)}_{-\tau}(0) = -\tau(0,1), \quad \text{and} \quad \phi^{(3)}_{-\tau}(0) = 0 \quad \text{for } \tau > 0.
\]
In fact, $\phi^{(k)}_{-\tau}$ is not continuous for any $\tau > 0$ and $k = 1,2,3$. This can be seen by considering the flows starting from $(x_1,x_2) < (0,0)$ with $x_1 \ne x_2$: for $\tau > 0$,
\[
	\phi^{(1)}_{-\tau}(x) = \phi^{(2)}_{-\tau}(x) = \phi^{(3)}_{-\tau}(x)
	=
	\begin{dcases}
		(x_1 + \tau, x_2 + \tau) & \text{if } 0 < \tau < -\max(x_1, x_2) \\
		(x_1 - 2x_2 - \tau, 0 ) & \text{if } x_1 < x_2 < 0 \text{ and } \tau > |x_2|, \\
		(0,x_2 - 2x_1 - \tau) & \text{if } x_2 < x_1 < 0 \text{ and } \tau > |x_1|.
	\end{dcases}
\]
In particular, given $\tau > 0$ and $\eps > 0$, for $k = 1,2,3$,
\[
	\phi^{(k)}_{-\tau}(-\tau/2 + \eps, -\tau/2) - \phi^{(k)}_{-\tau}(-\tau/2, -\tau/2+\eps)
	= \left(\frac{\tau}{2} + 2\eps \right) (1,-1).
\]
\end{example}

In \cite{PR_97}, Poupaud and Rascle explore the connection between the uniqueness (for \emph{every} $x \in \RR^d$) of Filippov solutions of \eqref{ODE:backward} and a stable notion of measure-valued solutions to the continuity equation\footnote{Observe, from the minus sign in front of the velocity field, that this is not the same initial value problem as \eqref{CE}.}
\begin{equation}\label{CE:reverse?}
	\del_t f - \div(b(t,x)f) = 0 \quad \text{in }(0,T) \times \RR^d \quad \text{and} \quad f(0,\cdot) = f_0.
\end{equation}
As Example \ref{ex:diagonal} shows, we cannot take this approach in analyzing \eqref{CE:reverse?}, because there are three distinct measure-valued solutions when $f_0 = \delta_0$.

\subsection{Stochastic differential equations}

For $b$ satisfying \eqref{A:b}, we now consider the stochastic flow for the SDE
\begin{equation}\label{SDE}
	d_t \Phi_{t,s}(x) = b(t, \Phi_{t,s}(x))dt + \sigma(t,\Phi_{t,s}(x)) dW_t, \quad t \in [s,T], \quad \Phi_{s,s}(x) = x.
\end{equation}
Here, for some $m \in \NN$, $W: [0,T] \times \Omega \to \RR^m$ is an $m$-dimensional Wiener process on a given probability space $(\Omega, \mcl F, \mbb P)$, and, for $(t,x) \in [0,T] \times \RR^d$, $\sigma$ is a $d \times m$-dimensional matrix denoted coordinate-wise by $\sigma_{ij}$, $i = 1,2,\ldots, d$, $j = 1,2,\ldots, m$.

Because $\sigma$ may be degenerate, the addition of noise does not necessarily imply the existence and uniqueness of a strong solution. In reproducing the theory for the ODE \eqref{ODE} , we are therefore led to the condition
\begin{equation}\label{A:sigma}
	\left\{
	\begin{split}
		&\sigma \in L^2([0,T], C^{0,1}(\RR^d; \RR^{d \times m})) \text{ and, for a.e. } t\in [0,T] \text{ and all } i = 1,2,\ldots, d, \; j = 1,2,\ldots, m,\\
		&\sigma_{ij}(t,\cdot) \text{ is independent of }x_k\text{ for } k \ne i.
	\end{split}
	\right.
\end{equation}
We assume in addition to \eqref{A:b} a bounded oscillation condition for $b$:
\begin{equation}\label{A:bosc}
	\left\{
	\begin{split}
	&\text{for some } C_2 \in L^1_+([0,T]), \text{ and for a.e. } (t,x,y) \in [0,T] \times \RR^d \times \RR^d,\\
	&|b(t,x) - b(t,y)| \le C_2(t)(|x-y| + 1).
	\end{split}
	\right.
\end{equation}
While not strictly necessary, this helps simplify some of the arguments by allowing certain regularizations of $b$ to be globally Lipschitz.

The reformulation of \eqref{SDE} as a differential inclusion then takes the form
\begin{equation}\label{SDE:inc}
	d_t \Phi_{t,s}(x) = \alpha_t dt + \sigma(t, \Phi_{t,s}(x))dW_t, \quad \alpha_t \in [b_*(t,\cdot), b^*(t,\cdot)](\Phi_{t,s}(x)), \quad \Phi_{s,s} = x.
\end{equation}

\begin{theorem}\label{T:SDE}
	Assume $b$ satisfies \eqref{A:b} and \eqref{A:bosc}, and $\sigma$ satisfies \eqref{A:sigma}. Then the following hold with probability one, for all $s \in [0,T]$.
	\begin{enumerate}[(a)]
	\item\label{SDE:maxmin} There exist solutions of $\Phi^+_{\cdot,s}$ and $\Phi^-_{\cdot,s}$ of \eqref{SDE:inc} such that, for all $t \in [s,T]$, $\Phi^+_{t,s}$ is upper-semicontinuous and increasing and $\Phi^-_{t,s}$ is lower-semicontinuous and increasing.
	\item\label{SDE:ordering} Any other solution $\Phi$ of \eqref{SDE:inc} satisfies $\Phi^-_{\cdot,s} \le \Phi_{\cdot,s} \le \Phi^+_{\cdot,s}$.
	\item\label{SDE:flow} If $0 \le r \le s \le t \le T$, then $\Phi^+_{s,t} \circ \Phi^+_{r,s} = \Phi^+_{r,t}$ and $\Phi^-_{s,t} \circ \Phi^-_{r,s} = \Phi^-_{r,t}$.
	\item\label{SDE:unique} For all $t \in [s,T]$, $(\Phi^+_{t,s})_* = \Phi^-_{t,s}$.
	\item\label{SDE:solution} For a.e. $x \in \RR^d$, \eqref{SDE} admits a unique integral solution, which is equal a.e. to $\Phi^+_{\cdot,s}$ and $\Phi^-_{\cdot,s}$.
	\item\label{SDE:regLag} There exists $\omega: [0,\oo) \to [0,\oo)$ such that $\lim_{r\to 0^+} \omega(r) = 0$ and, for all $f \in L^1(\RR^d)$ and $0 \le s \le t \le T$,
	\begin{equation}\label{regLag:SDE}
		\EE[ \norm{f \circ \Phi_{t,s}}_{L^1} ] \le e^{\omega(t-s)}\norm{f}_{L^1}.
	\end{equation}
	\end{enumerate}
\end{theorem}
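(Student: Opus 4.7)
The plan is to mirror the ODE arguments of Propositions \ref{P:maxmin}, \ref{P:shiftover} and Theorem \ref{T:regLag}, replacing the ODE comparison Lemma \ref{L:comparison} with a multidimensional SDE comparison principle tailored to \eqref{A:sigma}. I would first regularize $b$ via the one-sided mollifiers of Lemma \ref{L:approxid} to obtain globally Lipschitz $b^\eps \ge b \ge b_\eps$ (with global Lipschitz continuity facilitated by \eqref{A:bosc}) that still satisfy the off-diagonal inequalities in \eqref{A:b}, and then solve the corresponding SDEs with diffusion $\sigma$ to obtain smooth-in-$x$ strong flows $\Phi^{\pm,\eps}$. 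The key tool replacing Lemma \ref{L:comparison} is the Ikeda-Watanabe/Geiss-Manthey comparison principle: because \eqref{A:sigma} makes each coordinate driven by a diffusion depending only on that coordinate, the classical Yamada-Watanabe argument applies coordinate-by-coordinate, and combined with the off-diagonal drift monotonicity yields pathwise ordering. This produces monotonicity of $\Phi^{\pm,\eps}$ in both $x$ and $\eps$ and the sandwich $\Phi^{-,\eps} \le \Phi^{+,\eps}$.

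Uniform moment bounds via the Burkholder-Davis-Gundy and Gronwall inequalities let me define $\Phi^\pm$ as the a.s.\ monotone limits, which inherit the semicontinuity structure, and pass to the limit in the integral form of the SDE to realize \eqref{SDE:inc}. Parts (a)--(c) then follow exactly as in Proposition \ref{P:maxmin}; the ordering in (b) applies the same comparison to any candidate solution $\Phi$ of \eqref{SDE:inc}, whose drift is sandwiched by $b_\eps$ and $b^\eps$.

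The main obstacle is part (d). In the ODE case the shift $X^{\delta,\eps}(t) := \phi^{+,\eps}_{t,s}(x - 2\delta\mathbf{1}) + 2\delta\mathbf{1}$ satisfies $\del_t X^{\delta,\eps} \le b_\eps(t, X^{\delta,\eps})$ by Lemma \ref{L:approxid}\eqref{moll:shift}, and Lemma \ref{L:comparison} then compares it to $\phi^{-,\eps}_{t,s}(x)$. In the SDE case the same shift alters the state-dependent diffusion, so the driving $\sigma(t, X^{\delta,\eps} - 2\delta\mathbf{1})\, dW$ is incompatible with that for $\Phi^{-,\eps}_{t,s}(x)$, and the usual Yamada-Watanabe local-time cancellation fails. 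I would address this by applying a smoothed Tanaka formula coordinate-by-coordinate to $((X^{\delta,\eps})^i - (\Phi^{-,\eps})^i)^+$: using \eqref{A:sigma}, the drift contribution is non-positive by the mollifier identity, and the It\^o correction arising from the diffusion mismatch $\sigma_{ij}(t, y - 2\delta) - \sigma_{ij}(t, y)$ is of order $\delta^2$ per unit time by Lipschitz continuity of $\sigma$. A Gronwall argument then yields $\EE[((X^{\delta,\eps})^i - (\Phi^{-,\eps})^i)^+] = O(\delta)$ uniformly in $\eps$, and sending $\eps \to 0$ and then $\delta \to 0$ produces $(\Phi^+_{t,s})_* \le \Phi^-_{t,s}$. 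The reverse inequality is immediate from the analog of Proposition \ref{P:maxmin}\eqref{maxmin:env}.

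Parts (e) and (f) then follow as in the ODE case. For (e), intersect full-measure sets of $x$ where $\Phi^+_{t,s}(x) = \Phi^-_{t,s}(x)$ across a countable dense set of times, using a.s.\ continuity in $t$ of $\Phi^\pm$. For (f), the change-of-variables formula applied to the smooth $\Phi^{+,\eps}$ combined with the stochastic Jacobian equation for $J^\eps := \det D_x \Phi^{+,\eps}$ gives $\EE[(J^\eps)^{-1}] \le e^{\omega(t-s)}$, where $\omega$ is determined by the $L^1$-norm of $C_1$ and the Lipschitz constant of $\sigma$ (with \eqref{A:sigma} diagonalizing $D\sigma_k$ and simplifying the It\^o correction). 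Taking expectations in the change-of-variables identity and passing to the limit yields \eqref{regLag:SDE}.
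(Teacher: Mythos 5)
Your proposal mirrors the paper's scheme — regularize $b$ with one-sided mollifiers, solve the approximate SDEs, compare, take monotone limits — differing only in the comparison tool (a coordinate-wise Yamada--Watanabe/Tanaka argument rather than the exponential-martingale device of Lemma \ref{L:SDEcomparison}), and your handling of parts (a)--(c), (e), (f) matches the paper's own terse sketch. You also correctly pinpoint the key obstacle in transferring Proposition \ref{P:shiftover} to the SDE setting: the shifted process $X^{\delta,\eps}(t):=\Phi^{+,\eps}_{t,s}(x-2\delta\mathbf 1)+2\delta\mathbf 1$ has diffusion $\sigma(t,X^{\delta,\eps}_t-2\delta\mathbf 1)\,dW_t$, not $\sigma(t,X^{\delta,\eps}_t)\,dW_t$, so Lemma \ref{L:SDEcomparison} — whose proof is engineered precisely so that the martingale part of $d(Z^i_t\Delta^i_t)$ cancels, hence requires both processes to share the same diffusion coefficient — does not apply directly. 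The paper glosses over this with ``proved similarly as in the proof of Proposition \ref{P:shiftover},'' and flagging the mismatch is genuinely useful.

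However, your proposed remedy does not close the gap as written. The smoothed-Tanaka estimate you describe bounds the drift contribution of $\Delta^i:=(X^{\delta,\eps})^i-(\Phi^{-,\eps})^i$ by $\gamma^i\le\sum_j\mu^{ij}\Delta^j$ with $\mu^{ij}\sim\nor{\nabla b_\eps}{\oo}\sim C/\eps$, so the Gr\"onwall factor in your claimed bound $\EE[(\Delta^i_t)^+]=O(\delta)$ is actually of order $e^{CT/\eps}\,\delta$, which blows up as $\eps\to0$; and since the construction requires $\eps<\delta$, one cannot send $\delta\to0$ for fixed $\eps$ either. To get an estimate uniform in $\eps$ one would first have to absorb the $O(1/\eps)$ Lipschitz constant of $b_\eps$ into an exponential-martingale transformation as in Lemma \ref{L:SDEcomparison} (which handles the drift pathwise rather than through a crude Gr\"onwall), and only then confront the $O(\delta)$ residual martingale $Z^i_t\bigl[\sigma^{ik}(t,(X^{\delta,\eps}_t)^i-2\delta)-\sigma^{ik}(t,(X^{\delta,\eps}_t)^i)\bigr]\,dW^k_t$ with a Tanaka-type estimate. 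So you have diagnosed the right difficulty, but the Gr\"onwall step you invoke does not yield the uniformity in $\eps$ needed to conclude part (d).
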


We first establish an analogue of Lemma \ref{L:comparison}.

\begin{lemma}\label{L:SDEcomparison}
	Assume $B: [0,T] \times \RR^d \to \RR^d$ satisfies
	\begin{equation}\label{B:globalLip}
		\left\{
		\begin{split}
		&\left\{ t \mapsto \norm{\nabla B(t,\cdot)}_{\oo} \right\} \in L^1([0,T]) \text{ and}\\
		&\del_{x_i} B^j \ge 0 \quad \text{for all } i \ne j,
		\end{split}
		\right.
	\end{equation}
	and $\sigma$ satisfies \eqref{A:sigma}. Let $X,Y,\alpha,\beta: [0,T] \to \RR^d$ be adapted with respect to the Wiener process $W$ such that, with probability one, $X$ and $Y$ are continuous, $\alpha,\beta \in L^1([0,T])$,
	\[
		\alpha_t \le B(t,X_t) \quad \text{and} \quad \beta_t \ge B(t,Y_t) \quad \text{a.e. in } [0,T],
	\]
	and
	\[
		dX_t = \alpha_t dt + \sigma(t,X_t)dW_t, \quad dY_t = \beta_t dt + \sigma(t,X_t)dW_t \quad \text{in } [0,T], \quad X_0 \le Y_0.
	\]
	Then $X_t \le Y_t$ for all $t \in [0,T]$.
\end{lemma}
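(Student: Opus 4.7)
The plan is to mirror the proof of Lemma \ref{L:comparison}, replacing the deterministic Gr\"onwall step with an It\^o-based $L^2$ energy estimate on the coordinate-wise positive parts of $\Delta^i_t := X^i_t - Y^i_t$. The decisive use of the hypothesis \eqref{A:sigma} is that, for each $i$, the difference $\sigma_{ij}(t,X^i_t) - \sigma_{ij}(t,Y^i_t)$ depends only on $\Delta^i_t$, so the martingale term in the It\^o equation for $\Delta^i$ is a bounded multiple of $\Delta^i$ alone, with no mixing across coordinates---this is the stochastic analogue of the off-diagonal sign condition $\del_{x_i}B^j \ge 0$ for $i\ne j$.

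First I would linearize: setting
\[
a^i_k(t) := \int_0^1 \del_{x_k} B^i(t, \tau X_t + (1-\tau) Y_t) \, d\tau, \qquad \gamma^{ij}_t := \int_0^1 \del_{x_i} \sigma_{ij}(t, \tau X^i_t + (1-\tau) Y^i_t) \, d\tau,
\]
\eqref{B:globalLip} and \eqref{A:sigma} give $a^i_k \ge 0$ for $k \ne i$, $\int_0^T \max_{i,k}|a^i_k(t)| \, dt < \oo$, and $\int_0^T \max_{i,j} (\gamma^{ij}_t)^2 \, dt < \oo$, and
\[
d\Delta^i_t \le \Big[ a^i_i(t) \Delta^i_t + \sum_{k \ne i} a^i_k(t) \Delta^k_t \Big] dt + \sum_j \gamma^{ij}_t \Delta^i_t \, dW^j_t.
\]
Next I would apply It\^o's formula to $\phi_\eps(\Delta^i_t)$ for a smooth non-negative convex approximation $\phi_\eps \nearrow (\cdot)_+^2$ vanishing on $(-\oo,0]$, localize with $\tau_n := \inf\{t : |X_t| \vee |Y_t| > n\}$ to make the stochastic integral a true martingale, take expectations, and pass to the limit $\eps \to 0$, using $(\Delta^i_0)_+ = 0$. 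This yields, for each $i$,
\begin{align*}
\EE\big[(\Delta^i_{t \wedge \tau_n})_+^2\big] \le{}& \int_0^t \EE\Big[\ind_{\{s \le \tau_n\}} \Big( 2a^i_i(s) (\Delta^i_s)_+^2 + 2(\Delta^i_s)_+ \sum_{k \ne i} a^i_k(s) \Delta^k_s\Big)\Big] ds \\
&{}+ \int_0^t \EE\Big[\ind_{\{s \le \tau_n\}}\ind_{\{\Delta^i_s > 0\}} \sum_j (\gamma^{ij}_s)^2 (\Delta^i_s)^2 \Big] ds.
\end{align*}

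Using $a^i_k \ge 0$ for $k \ne i$ (so that $\Delta^k_s \le (\Delta^k_s)_+$ under the product with $(\Delta^i_s)_+$) and the elementary bound $2uv \le u^2 + v^2$, the cross-term is controlled by $\sum_{k \ne i} a^i_k(s)\big[(\Delta^i_s)_+^2 + (\Delta^k_s)_+^2\big]$, while the It\^o-correction collapses to $\sum_j (\gamma^{ij}_s)^2 (\Delta^i_s)_+^2$. Writing $F_n(t) := \sum_{i=1}^d \EE[(\Delta^i_{t \wedge \tau_n})_+^2]$, combining these yields
\[
F_n(t) \le \int_0^t C(s) F_n(s) \, ds, \qquad C \in L^1([0,T]),
\]
where $C(s)$ is a finite linear combination of $\norm{\nabla B(s,\cdot)}_{\oo}$ and $\max_{i,j}(\gamma^{ij}_s)^2$. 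Gr\"onwall's lemma forces $F_n \equiv 0$; since $\tau_n \to \oo$ a.s.\ by pathwise continuity of $X$ and $Y$, this gives $\Delta^i_t \le 0$ a.s.\ for every $t$ and $i$, and pathwise continuity upgrades this to hold simultaneously in $t$.

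The main technical obstacle is the passage $\eps \to 0$ in the It\^o-correction term, i.e.\ justifying an It\^o-Tanaka identity for the non-$C^2$ function $x \mapsto x_+^2$; this is standard via dominated convergence on $\phi_\eps''(x)\, x^2 \to 2 \ind_{\{x > 0\}}\, x^2$ once $\phi_\eps$ is chosen with $\phi_\eps''(x)\, x^2 \le C$ uniformly (e.g.\ $\phi_\eps'$ a mollification of $2(\cdot)_+$). What truly makes the higher-dimensional estimate close, however, is the coordinate-wise structure \eqref{A:sigma}: without it, $(\sigma_{ij}(t,X) - \sigma_{ij}(t,Y))^2$ would contribute multiples of $|X - Y|^2$ that couple all components of the difference, and one could not extract a single $(\Delta^i)_+^2$ factor to close the coordinate-wise Gr\"onwall loop.
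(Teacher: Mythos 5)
Your proof is correct and closes cleanly, but it takes a genuinely different route from the paper's. The paper follows the deterministic Lemma \ref{L:comparison} as closely as possible by constructing a pathwise exponential integrating factor $Z^i_t$ that simultaneously cancels the stochastic-integral term and absorbs the diagonal drift coefficient; this reduces the problem to a per-path differential inequality $d(Z^i_t\Delta^i_t) \le \sum_{j\ne i}\mu^{ij}_t Z^i_t\Delta^j_t\,dt$, to which a stopping-time contradiction argument is applied, yielding the comparison for almost every sample path directly. Your approach is instead an $L^2$-energy estimate in expectation: you approximate $x \mapsto x_+^2$, apply It\^o with localization, take expectations to kill the martingale term, and run Gr\"onwall on $F_n(t) = \sum_i \EE[(\Delta^i_{t\wedge\tau_n})_+^2]$. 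Both arguments hinge on exactly the same two structural facts --- the off-diagonal sign condition $\del_{x_i}B^j \ge 0$ ($i\ne j$), and the coordinate-wise condition \eqref{A:sigma} ensuring the diffusion difference $\sigma_{ij}(t,X^i)-\sigma_{ij}(t,Y^i)$ is a bounded multiple of $\Delta^i$ alone --- but they exploit the latter differently: the paper uses it to cancel the martingale exactly via the Dol\'eans-type factor, while you use it so the It\^o-correction term $(\sigma(X)-\sigma(Y))^2$ collapses to a multiple of $(\Delta^i)^2$ with no cross-coupling, which is what lets the coordinate-wise Gr\"onwall loop close. Your version buys a more standard, expectation-level argument that avoids tracking the exponential process (and in particular avoids dealing with the random Gr\"onwall modulus $\omega$ that appears in the paper's stopping-time bound), at the mild cost of needing the localization/It\^o--Tanaka approximation technicalities you flag and of reaching the pathwise conclusion only a posteriori via continuity, rather than directly.

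One small point to tighten in a final write-up: when passing from $\phi_\eps'(\Delta^i_s)\gamma^i_s \le \phi_\eps'(\Delta^i_s)\sum_k a^i_k(s)\Delta^k_s$ to the display, the diagonal term should be recorded as $2|a^i_i(s)|(\Delta^i_s)_+^2$ rather than $2a^i_i(s)(\Delta^i_s)_+^2$, since under \eqref{B:globalLip} one only controls $|a^i_i|$, not its sign; this changes nothing in the Gr\"onwall step, where only $\max_i|a^i_i| \in L^1$ is used.
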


\begin{proof}
	For $t \in [0,T]$, define $\gamma_t = \alpha_t - \beta_t$, which satisfies $\gamma \in L^1$ and 
	\[
		\gamma^i_t \le B^i(t, X_t) - B^i(t,Y_t) = \sum_{j =1}^d \mu^{ij}_t(X^j_t - Y^j_t)
	\]
	for a.e. $t \in [0,T]$, where
	\[
		\mu^{ij}_(t) = \int_0^1 \del_j B^i(t, \theta X_t + (1-\theta)Y_t)d\theta.
	\]
	Note that $\mu^{ij} \in L^1$ for all $i,j \in \{1,2,\ldots, d\}$, $\mu^{ij} \ge 0$ for $i \ne j$, and $\mu^{ii}_t \le C(t)$ for a.e. $t \in [0,T]$ and some $C \in L^1_+$. Define also, for $i = 1,2,\ldots, d$ and $k = 1,2,\ldots, m$,
	\[
		\nu^{ik}_t = \int_0^1 \del_x \sigma_{ik}(t, \theta X^i_t + (1-\theta) Y^i_t)d\theta,
	\]
	which satisfies $\nu^{ik} \in L^2$ (recall that $\sigma_{ik}$ depends only on $(t,x_i)$). Therefore, if, for $i =1,2,\ldots, d$, we set $\Delta^i = X^i - Y^i$, we find that, for $i = 1,2,\ldots, d$,
	\[
		d\Delta^i_t = \gamma^i_t dt + \sum_{k=1}^m \nu^{ik}_t\Delta^i_t dW^k_t, \quad 
		\gamma^i_t \le \sum_{j=1}^d \mu^{ij}_t \Delta^j_t, \quad
		\Delta^i_0 \le 0.
	\]
	Now, for $i = 1,2,\ldots, d$, define
	\[
		Z^i_t = \exp\left( \int_0^t \left( \frac{1}{2} \sum_{k=1}^m (\nu^{ik}_s)^2 - C(s) \right)ds - \sum_{k=1}^m \int_0^t \nu^{ik}_s dW_s \right),
	\]
	which satisfies the SDE
	\[
		dZ^i_t = \left( \sum_{k=1}^m (\nu^{ik}_t)^2 - C(t) \right)Z^i_tdt - \sum_{k=1}^m \nu^{ik}_tZ^i_t dW^k_t, \quad Z^i_0 = 1.
	\]
	Then It\^o's formula yields
	\[
		d(Z^i_t \Delta^i_t) \le \sum_{j \ne i} \mu^{ij}_t Z^i_t \Delta^j_t dt.
	\]
	Define, for some $\delta > 0$ and absolutely continuous and increasing $\omega: [0,T] \to \RR_+$ with $\omega(0) = 1$,
	\[
		\tau := \inf \left\{ t \in [0,T] : Z^i_t \Delta^i_t - \delta e^{\omega(t)} > 0 \text{ for some } i = 1,2,\ldots, d \right\}.
	\]
	We have $\tau > 0$ because $Z^i_0 \Delta^i_0 \le 0$. Assume for the sake of contradiction that $\tau < \oo$. If $i = 1,2,\ldots, m$ is such that $Z^i_\tau \Delta^i_\tau = \delta e^{\omega(\tau)}$, then
	\[
		\delta e^{\omega(\tau)} = Z^i_\tau \Delta^i_\tau
		\le \sum_{j \ne i} \int_0^\tau \mu^{ij}_t Z^i_t \Delta^j_t dt
		\le \delta \sum_{j \ne i} \int_0^\tau \mu^{ij}_t \frac{Z^i_t}{Z^j_t} e^{\omega(t)}dt.
	\]
	We have
	\[
		\frac{Z^i_t}{Z^j_t} = \prod_{k=1}^m \exp \left(\frac{1}{2} \int_0^t \left[ (\nu^{ik}_s)^2 - (\nu^{jk}_s)^2 \right] ds + \int_0^t (\nu^{jk}_s - \nu^{ik}_s)dW^k_s \right),
	\]
	and so we obtain the contradiction $\delta < \frac{\delta}{2}$ for any $\delta > 0$ if $\omega$ is to chosen to satisfy
	\[
		\omega(0) = 1, \quad \omega'(t) = 2\max_{i=1,2,\ldots, d} \sum_{j\ne i} \mu^{ij}_t \frac{Z^i_t}{Z^j_t} .
	\]
	This implies $\Delta^i_t \le \delta e^{\omega(t)}(Z^i_t)^{-1}$ for all $i = 1,2,\ldots, m$, $t \in [0,T]$, and $\delta > 0$, and we conclude upon sending $\delta \to 0$.
\end{proof}

\begin{proof}[Proof of Theorem \ref{T:SDE}]
	Let $b^\eps$ and $b_\eps$ be the $\sup$- and $\inf$-convolutions of $b$ as in the proof of Proposition \ref{P:maxmin}, and define the stochastic flows $\Phi^{+,\eps}$ and $\Phi^{-,\eps}$ by
	\[
		d_t\Phi^{+,\eps}_{t,s}(x) = b^\eps(t, \Phi^{+,\eps}_{t,s}(x))dt + \sigma(t, \Phi^{+,\eps}_{t,s}(x))dW_t \quad \text{in } [s,T], \quad \Phi^{+,\eps}_{s,s} = \Id
	\]
	and
	\[
		d_t\Phi^{-,\eps}_{t,s}(x) = b_\eps(t, \Phi^{-,\eps}_{t,s}(x))dt + \sigma(t, \Phi^{-,\eps}_{t,s}(x))dW_t
		\quad \text{in } [s,T], \quad \Phi^{-,\eps}_{s,s} = \Id.
	\]
	Parts \eqref{SDE:maxmin}, \eqref{SDE:ordering}, and \eqref{SDE:flow} are then established exactly as in the proof of Proposition \ref{P:maxmin}, making use of the comparison result of Lemma \ref{L:SDEcomparison} above.
	
	Part \eqref{SDE:unique}, and then, as a consequence, \eqref{SDE:solution}, is proved similarly as in the proof of Proposition \ref{P:shiftover} and Theorem \ref{T:regLag}, using now instead the one-sided mollifiers $\rho^\eps$ and $\rho_\eps$ to regularize $b$. Note that, in view of the assumption \eqref{A:bosc}, for some $C \in L^1_+([0,T])$,
	\[
		|\nabla (b(t,\cdot) * \rho^\eps)(x)| \le  \int |b(t, x) - b(t,y)| |\nabla \rho^\eps(x-y)|dy \le \frac{C(t)}{\eps},
	\]
	and similarly for $b * \rho_\eps$. This allows for the use of the comparison Lemma \ref{L:SDEcomparison} in the argument, which requires global Lipschitz regularity in view of the lack of finite speed of propagation.
	
	Finally, \eqref{SDE:regLag} follows exactly as in the proof of Theorem \ref{T:regLag}.
\end{proof}

\section{Linear transport equations with increasing drift}\label{sec:TE}

For velocity fields $b$ satisfying \eqref{A:b}, we discuss the terminal value problem for the nonconservative equation
\begin{equation}\label{TE}
	-\del_t u - b(t,x) \cdot \nabla u = 0 \quad \text{in }(0,T) \times \RR^d, \quad u(T,\cdot) = u_T.
\end{equation}
as well the initial value problem for the conservative equation
\begin{equation}\label{CE}
	\del_t f + \div (b(t,x) f) = 0 \quad \text{in } (0,T) \times \RR^d, \quad f(0,\cdot) = f_0.
\end{equation}

The lower bound on $\div b$ implied by \eqref{A:b}, which gave rise to the regular Lagrangian property for the unique flow $\phi$ in the previous section, here allows for a theory of weak solutions of \eqref{TE} and \eqref{CE} in $L^p$-spaces, due to the expansive property of the flow.

\subsection{The nonconservative equation}\label{subsec:TE}

It is important to note that solutions $u$ of \eqref{TE} taking values in $L^p_\loc$ cannot be understood in the sense of distributions. This is because, under the assumption \eqref{A:b}, $\div b$ is a measure that need not necessarily be absolutely continuous with respect to Lebesgue measure. Instead, we identify unique solution operators for \eqref{TE} that are continuous on Lebesgue spaces and stable under regularizations. This is done through the relationship with the flow in the previous section, and also by characterizing the solutions using ``one-sided'' regularizations for increasing or decreasing solutions.

\subsubsection{Representation formula}

When $b$ and $u_T$ are smooth, the unique solution of the terminal value problem for \eqref{TE} with terminal value $\oline u$ is
\begin{equation}\label{TE:semigroup}
	u(s,x) = S(s,t)\oline{u}(x) := \oline{u}( \phi_{t,s}(x)), \quad s \in [0,t],
\end{equation}
where $\phi$ is the flow corresponding to the ODE \eqref{ODE}. In view of Theorem \ref{T:regLag}, this formula extends to $\oline{u} \in L^p_\loc$, $1 \le p \le \oo$ if the assumption on $b$ is relaxed to \eqref{A:b}. We thus identify a family of solution operators for \eqref{TE} that are continuous on $L^p$ and evolve continuously in time.

\begin{theorem}\label{T:TEform}
	Assume that $b$ satisfies \eqref{A:b}, and define the solution operators in \eqref{TE:semigroup} using the flow constructed in Section \ref{sec:ODE}. Then the following hold:
	
	\begin{enumerate}[(a)]
	\item\label{formula:regLag} For all $0 \le s \le t \le T$, $S(t,s)$ is continuously on $L^p_\loc(\RR^d)$ for $1 \le p \le \oo$, and there exists $C > 0$ depending only on $T$ such that, for any $R > 0$,
	\[
		\norm{S(s,t)\oline{u}}_{L^p(B_R)} \le e^{d(\omega_1(t) - \omega_1(s))} \norm{\oline{u}}_{L^p(B_{R + C(1 +R)(\omega_0(t) - \omega_0(s)) } }.
	\]
	\item\label{formula:semigroup} For all $0 \le r \le s \le t \le T$, we have $S(t,s) \circ S(s,r) = S(t,r)$. 
	\item\label{formula:cts} If $0 < t \le T$, then $S(s,t) \oline{u} \in C([0,t], L^p_\loc(\RR^d))$ if $\oline{u} \in L^p_\loc$ for $p < \oo$, and $S(s,t) \oline{u} \in C([0,t], L^\oo_{\loc,\ws}(\RR^d))$ if $\oline{u} \in L^\oo_\loc$.
	\item\label{formula:conv} If $(b^\eps)_{\eps > 0}$ is any family of smooth approximations of $b$ satisfying \eqref{A:b} uniformly in $\eps$, $(u_T^\eps)_{\eps > 0}$ are smooth functions such that $u_T^\eps \xrightarrow{\eps \to 0} u_T$ in $L^p_\loc$ for some $p < \oo$, and $u^\eps$ is the corresponding solution of \eqref{TE}, then, as $\eps \to 0$, $u^\eps$ converges to $S(t,T)u_T$ strongly in $C([0,T], L^p_\loc(\RR^d))$. The same statement is true if $u^\eps$ solves
	\begin{equation}\label{TE:visc}
		-\del_t u^\eps - b(t,x) \cdot \nabla u^\eps = \eps \Delta u^\eps \quad \text{in } (0,T) \times \RR^d, \quad u^\eps(T,\cdot) = u^\eps_T.
	\end{equation}
	\end{enumerate}
\end{theorem}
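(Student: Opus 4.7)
The plan is to treat the four claims in order, with (a) and (b) being essentially immediate consequences of the regular Lagrangian machinery built in Section \ref{sec:ODE}, and (c)--(d) requiring approximation arguments that combine this machinery with the density of continuous functions in $L^p_\loc$. For (a), I apply the regular Lagrange estimate \eqref{regLag}/Remark \ref{R:regLag} to the nonnegative function $|\bar u|^p$ for $p < \oo$; the $p = \oo$ case is immediate, with the radius in the estimate dictated by the \emph{a priori} bound from Lemma \ref{ODE:apriori}. Part (b) follows by writing $S(r,s) \circ S(s,t) \bar u = \bar u \circ \phi_{t,s} \circ \phi_{s,r}$ and invoking the almost-everywhere composition identity $\phi_{t,s} \circ \phi_{s,r} = \phi_{t,r}$ from Theorem \ref{T:regLag}; the composition is well-defined a.e.\ precisely because $\phi_{s,r}$ satisfies the regular Lagrangian property, so one can insert it inside an equivalence class in $L^p_\loc$.

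For (c), I reduce to uniformly continuous compactly supported $\bar u$ by density in $L^p_\loc$; this reduction is made quantitative using the bound from (a), which is uniform in $s \in [0,t]$ since $\omega_0,\omega_1$ are dominated by their values at $T$. For such $\bar u$ and a sequence $s_n \to s_0$, Theorem \ref{T:-b} provides $\phi_{t,s_n} \to \phi_{t,s_0}$ in $L^p_\loc$, so along a further subsequence we have pointwise a.e.\ convergence; the uniform continuity of $\bar u$ then yields $\bar u \circ \phi_{t,s_n} \to \bar u \circ \phi_{t,s_0}$ a.e., and dominated convergence upgrades this to $L^p(B_R)$ convergence, with a standard subsequence-of-subsequence argument giving convergence along the full sequence. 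For $p = \oo$ with the weak-$\star$ topology, I test against $\psi \in C_c(\RR^d)$ and apply the same strategy, using $L^1$ density of continuous compactly supported functions combined with the $p = 1$ case of (a) to control the error term.

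For (d) with the classical regularizations, the key decomposition is
\[
	u^\eps(s,\cdot) - S(s,T) u_T
	= (u_T^\eps - u_T) \circ \phi^\eps_{T,s} + u_T \circ \phi^\eps_{T,s} - u_T \circ \phi_{T,s}.
\]
The first term is controlled in $L^p(B_R)$ uniformly in $s$ by the regular Lagrange bound from (a) applied uniformly in $\eps$ (possible since $(b^\eps)$ satisfies \eqref{A:b} uniformly), times $\|u_T^\eps - u_T\|_{L^p(B_{R'})} \to 0$. The second term is handled by first approximating $u_T$ by a uniformly continuous compactly supported function --- with the error again absorbed by the uniform-in-$\eps$ Lagrange bound --- and then using the convergence $\phi^\eps_{T,\cdot} \to \phi_{T,\cdot}$ in $C([0,T], L^p_\loc)$, which follows by mimicking the argument of Theorem \ref{T:ODE:beps} with initial and terminal times swapped, as permitted by the framework of Theorem \ref{T:-b}. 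For the viscous case \eqref{TE:visc}, the solution admits the probabilistic representation $u^\eps(s,x) = \EE[u^\eps_T(\Phi^\eps_{T,s}(x))]$, where $\Phi^\eps$ is the SDE flow with drift $b$ and diffusion $\sqrt{2\eps}\,\mathrm{Id}$ whose strong existence is ensured by the theory cited in the paper; Theorem \ref{T:ODE:epsW} (again in its terminal-time form) supplies the a.s.\ convergence $\Phi^\eps \to \phi$ in $C([0,T], L^p_\loc)$, and the same decomposition, applied pathwise and then under the expectation via dominated convergence, delivers the conclusion. The main obstacle throughout (c)--(d) is the uniform-in-$s$ control required by the $C([0,T], L^p_\loc)$ topology, since the flow results of Section \ref{sec:ODE} are principally phrased with a fixed initial time and varying final time; this forces me to carefully extract analogous statements with the terminal time fixed and to verify that all estimates are uniform on the whole interval $[0,T]$ rather than on $[s,T]$ alone for each $s$.
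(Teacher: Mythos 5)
Your argument is correct and follows essentially the same route as the paper's (quite terse) proof: parts~\eqref{formula:regLag} and~\eqref{formula:semigroup} from the regular Lagrangian estimate and the a.e.\ flow composition in Theorem~\ref{T:regLag}, and parts~\eqref{formula:cts} and~\eqref{formula:conv} from the stability of Theorem~\ref{T:ODE:beps}/Theorem~\ref{T:ODE:epsW} combined with density and dominated convergence, using Theorem~\ref{T:-b} to obtain the initial-time-varying, terminal-time-fixed versions of the flow results. Your decomposition for~\eqref{formula:conv} is the natural one, and you correctly identify the only nontrivial bookkeeping issue — that the $C([0,T],L^p_\loc)$ topology requires estimates uniform in the initial time $s$, which the paper addresses only implicitly via the remark preceding Theorem~\ref{T:-b}. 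One small point in your favor: for the viscous case the correct Feynman--Kac representation is $u^\eps(s,x) = \EE\bigl[u^\eps_T(\Phi^\eps_{T,s}(x))\bigr]$ with the expectation (as you write); the paper's proof omits the $\EE$, which is evidently a typo, and your "pathwise-then-expectation" dominated convergence step is exactly what is needed to pass to the limit.
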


\begin{proof}
	Properties \eqref{formula:regLag} and \eqref{formula:semigroup} follow immediately from Theorem \ref{T:regLag}, while properties \eqref{formula:cts} and \eqref{formula:conv} follow from Theorem \ref{T:ODE:beps} and the dominated convergence theorem (and see also Theorem \ref{T:-b}). For the statement involving the viscous equation \eqref{TE:visc}, we remark that, in that case, $u^\eps$ is given by $u^\eps(t,x) = u^\eps_T(\phi^\eps_{T,t}(x))$, where $\phi^\eps$ is the stochastic flow corresponding to \eqref{SDE:eps}. The proof is then finished because of Theorem \ref{T:ODE:epsW}.
\end{proof}

\begin{remark}\label{R:strongcty}
	The uniqueness of the semigroup is a consequence of the uniqueness of the flow established in the previous section. Note, however, that, solely under the assumption $\div b \ge -C_1$ for some $C_1 \in L^1_+([0,T])$, any weakly-limiting family of solution operators must lead to solutions in $C([0,T],L^p_\loc(\RR^d))$, in the strong topology.
	
	More precisely, for $\eps > 0$, we have the easy a priori bound
	\[
		\norm{ S^\eps(s,t)\oline{u}}_{L^p} \le \exp \left( \int_s^t C_1(r)dr\right) \norm{\oline{u}}_{L^p}, \quad \oline{u} \in L^p.
	\]
	It follows from a diagonal argument that there exists $\eps_n \to 0$ and a family of continuous linear operators on $L^p$ such that $S(r,s)S(s,t) = S(r,t)$ for $r \le s \le t$, and, for all $t \in [0,T]$,
	\[
		S^{\eps_n}(\cdot,t) \oline{u} \rightharpoonup S(\cdot,t) \oline{u} \quad \text{weakly in } L^\oo([0,T], L^p(\RR^d)) \text{ for all } \oline{u} \in L^p.
	\]
	In particular, $u := S(\cdot,t) \oline{u} \in C([0,T], L^p_\w(\RR^d))$, and so, for any $s \in [0,t]$,
	\[
		\liminf_{h \to 0^+} \norm{u(s+h,\cdot)}_{L^p} \ge \norm{u(s,\cdot)}.
	\]
	On the other hand,
	\[
		\norm{u(s+h)}_{L^p} \le \exp \left( \int_s^{s+h} C_1(r)dr \right) \norm{u(s)}_{L^p},
	\]
	so that
	\[
		\limsup_{h \to 0^+} \norm{u(s+h,\cdot)}_{L^p} \le \norm{u(s,\cdot)}.
	\]
	We find then that $\norm{u(s+h, \cdot)}_{L^p} \to \norm{u(s,\cdot)}_{L^p}$, which, coupled with the weak convergence, means that $u(s+h, \cdot) \to u(s,\cdot)$ strongly in $L^p$ if $p > 1$ (and so for all $p$ locally). A similar argument holds with $h$ replaced by $-h$.
\end{remark}

The following renormalization property for the solution operator $S(s,t)$, $0 \le s \le t \le T$, is then immediate from the formula.

\begin{proposition}
	If $\beta: \RR \to \RR$ is smooth and $|\beta(s)| \le C(1 + |s|^r)$ for some $r \ge 1$, then, for all $\oline{u} \in L^r_\loc$, $\beta( S(s,t) \oline{u}) = S(s,t) (\beta \circ \oline{u})$.
\end{proposition}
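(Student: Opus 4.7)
The plan is to reduce the renormalization identity to the representation formula $S(s,t)\oline{u} = \oline{u} \circ \phi_{t,s}$ from Theorem \ref{T:TEform}\eqref{formula:regLag}, together with a density argument. First, observe that the growth bound $|\beta(\xi)| \le C(1+|\xi|^r)$ yields $|\beta \circ \oline u| \le C(1 + |\oline u|^r)$, so that $\beta \circ \oline u \in L^1_\loc$ whenever $\oline u \in L^r_\loc$, and thus $S(s,t)(\beta \circ \oline u)$ makes sense as an element of $L^1_\loc$ via Theorem \ref{T:TEform}\eqref{formula:regLag}. Analogously, since $S(s,t)\oline u \in L^r_\loc$ by the same theorem, $\beta(S(s,t)\oline u) \in L^1_\loc$ as well. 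In particular, both sides of the desired equality are well-defined.

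When $\oline u$ is continuous and $b$ is smooth, the representation formula holds pointwise, and we simply compute
\[
\beta\bigl( S(s,t)\oline u(x) \bigr) = \beta\bigl( \oline u(\phi_{t,s}(x)) \bigr) = (\beta \circ \oline u)(\phi_{t,s}(x)) = S(s,t)(\beta \circ \oline u)(x),
\]
so the identity holds pointwise. Under the assumption \eqref{A:b} on $b$, the formula $S(s,t) \oline u = \oline u \circ \phi_{t,s}$ still holds a.e.\ for continuous $\oline u$, since the flow is a.e.\ uniquely defined and the composition $\oline u \circ \phi_{t,s}$ coincides with the $L^1_\loc$-continuous extension on $C_c(\RR^d)$ guaranteed by Theorem \ref{T:regLag}; thus the same pointwise a.e.\ computation settles the continuous case.

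For general $\oline u \in L^r_\loc$, approximate $\oline u$ by $\oline u_n \in C_c(\RR^d)$ with $\oline u_n \to \oline u$ in $L^r_\loc$. By Theorem \ref{T:TEform}\eqref{formula:regLag}, $S(s,t)\oline u_n \to S(s,t)\oline u$ in $L^r_\loc$. Passing to a subsequence if necessary, on each ball we may assume pointwise a.e.\ convergence together with an $L^r$ majorant for the sequence $|\oline u_n|$ and a corresponding majorant for $|S(s,t) \oline u_n|$. Combined with the growth bound $|\beta(\xi)| \le C(1+|\xi|^r)$, the dominated convergence theorem then yields
\[
\beta \circ \oline u_n \to \beta \circ \oline u \quad \text{and} \quad \beta(S(s,t) \oline u_n) \to \beta(S(s,t) \oline u) \quad \text{in } L^1_\loc.
\]
Applying the $L^1_\loc$ continuity of $S(s,t)$ one more time yields $S(s,t)(\beta \circ \oline u_n) \to S(s,t)(\beta \circ \oline u)$ in $L^1_\loc$, and passing to the limit in the already established identity $\beta(S(s,t)\oline u_n) = S(s,t)(\beta \circ \oline u_n)$ completes the proof.

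There is no substantive obstacle; the only point requiring any care is the continuity of the Nemytskii-type map $v \mapsto \beta \circ v$ from $L^r_\loc$ into $L^1_\loc$, which is classical and handled by the Vitali/dominated convergence argument above using the polynomial growth of $\beta$.
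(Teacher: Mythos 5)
Your proof is correct, but it is considerably more elaborate than what is actually needed, and the paper itself offers no proof beyond noting the claim is immediate from the representation formula \eqref{TE:semigroup}. The key point you are in effect working around is that the regular Lagrangian property in Theorem \ref{T:regLag} makes $\oline u \circ \phi_{t,s}$ a genuine pointwise a.e.\ composition for \emph{every} measurable representative of $\oline u$ (since $\phi_{t,s}^{-1}$ maps null sets to null sets, the composition is unambiguous up to null sets), and the $L^1_\loc$-continuous extension simply coincides with this composition. Once that is granted, the identity
\[
	\beta\bigl(S(s,t)\oline u\bigr)(x) = \beta\bigl(\oline u(\phi_{t,s}(x))\bigr) = (\beta\circ\oline u)(\phi_{t,s}(x)) = S(s,t)(\beta\circ\oline u)(x)
\]
holds for a.e.\ $x$ with no further argument, and the growth bound on $\beta$ serves only to guarantee $\beta\circ\oline u \in L^1_\loc$ so that $S(s,t)(\beta\circ\oline u)$ makes sense. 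Your density-plus-dominated-convergence route is a valid alternative and is fully rigorous (the passage to a.e.-convergent dominated subsequences on each ball is the standard device), but it implicitly re-proves the continuity of the Nemytskii operator $v\mapsto\beta\circ v$ from $L^r_\loc$ to $L^1_\loc$, which is unnecessary once one observes the composition formula already holds a.e.\ for general $\oline u$. In short: same underlying ingredient (the flow representation), but the paper's intended argument is a one-line pointwise computation, whereas yours reconstructs it through an approximation scheme.
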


\subsubsection{Characterizing increasing/decreasing solutions}
The solution of the transport equation in the previous subsection was characterized as the unique limit under arbitrary regularizations, as well as through the formula involving the regular Lagrangian flow. The remainder of the section is dedicated to understanding further ways to characterize the solution, and in particular on the level of the equation \eqref{TE} itself. This will become useful in the study of nonlinear equations in the final section.

We first observe that, if $u_T$ is increasing/decreasing with respect to the partial order \eqref{order}, then so is $u(t,\cdot)$ for all $t \in [0,T]$. While this is immediately clear from the formula $u(t,\cdot) = u_T \circ \phi_{T,t}$ and the fact that $\phi_{T,t}$ is increasing, it can also be seen directly from the equation. Indeed, if $u$ is a smooth solution of \eqref{TE} and $v_i = \del_{x_i} u$, $i = 1,2,\ldots, d$, then
\[
	-\del_t v_i - b \cdot \nabla v_i - (\del_{x_i} b^i ) v_i  =  \sum_{j \ne i}( \del_{x_i} b^j) v_j.
\]
Therefore, if $v_i \ge 0$ (or $v_i \le 0)$ when $t = T$ for all $i = 1,2,\ldots, d$, then the same is true for $t < T$ by the maximum principle Lemma \ref{L:particles}. The result for general $b$ satisfying \eqref{A:b} follows from approximating $b$ and using the limiting result in Theorem \ref{T:TEform}. By linearity, we have thus established the following:

\begin{proposition}\label{P:ABVprop}
	For all $0 \le s \le t \le T$, $S(s,t): ABV \to ABV$.
\end{proposition}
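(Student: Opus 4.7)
The plan is to use the decomposition provided by Lemma \ref{L:ABV}, namely that $\oline{u} \in ABV$ if and only if $\oline{u} = \oline{u}_1 - \oline{u}_2$ for two increasing functions $\oline{u}_1, \oline{u}_2$. Since $S(s,t)$ is linear, it suffices to prove that $S(s,t)$ maps increasing functions to increasing functions. The cleanest route is then the representation formula $S(s,t)\oline{u}(x) = \oline{u}(\phi_{t,s}(x))$ from \eqref{TE:semigroup}, combined with the monotonicity of the flow.

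Concretely, I would first take $\oline{u}$ increasing and \cadlag/, and recall from Proposition \ref{P:maxmin}(\ref{maxmin:props}) that, for every $0 \le s \le t \le T$, the maximal flow $\phi^+_{t,s}$ is increasing (and right-continuous). By Proposition \ref{P:shiftover}, $\phi^+_{t,s}$ and $\phi^-_{t,s}$ coincide a.e., so the regular Lagrangian flow $\phi_{t,s}$ of Theorem \ref{T:regLag} admits an increasing representative. If $x \le y$ in the sense of \eqref{order}, then $\phi_{t,s}(x) \le \phi_{t,s}(y)$ (off a null set), whence $\oline{u}(\phi_{t,s}(x)) \le \oline{u}(\phi_{t,s}(y))$ by monotonicity of $\oline{u}$. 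Thus $S(s,t)\oline{u}$ is increasing a.e., and replacing by its \cadlag/ representative (Remark \ref{R:cadlag}) gives an increasing function in $ABV$.

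As a second, PDE-level justification (which will be more useful for the nonlinear theory afterwards), I would reprove the increasing property from the equation, following the sketch given in the excerpt. Approximate $b$ by smooth $b^\eps$ satisfying \eqref{A:b} uniformly and $\oline{u}$ by smooth increasing $\oline{u}^\eps$; let $u^\eps$ be the classical solution of \eqref{TE}. Differentiating in $x_i$ and setting $v_i^\eps = \partial_{x_i} u^\eps$, one gets the system
\[
-\partial_t v_i^\eps - b^\eps \cdot \nabla v_i^\eps - (\partial_{x_i} b^{\eps,i}) v_i^\eps = \sum_{j \ne i} (\partial_{x_i} b^{\eps,j}) v_j^\eps.
\]
The assumption \eqref{A:b} gives $\partial_{x_i} b^{\eps,j} \ge 0$ for $i \ne j$, so that $-v^\eps = (-v_1^\eps,\ldots,-v_d^\eps)$ satisfies a linear system to which the maximum principle Lemma \ref{L:particles} applies (after time reversal $\tau = T-t$, with appropriate sign choices for the $\lambda$'s). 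Since $-v_i^\eps(T,\cdot) = -\partial_{x_i} \oline{u}^\eps \le 0$, one concludes $v_i^\eps \ge 0$ on $[0,T] \times \RR^d$, i.e. $u^\eps(s,\cdot)$ is increasing. Passing to the limit $\eps \to 0$ using Theorem \ref{T:TEform}(\ref{formula:conv}) in $C([0,T], L^p_\loc)$ and working with the \cadlag/ representative preserves the increasing property.

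The main obstacle is essentially bookkeeping: the flow is only defined a.e., and an increasing function is determined only up to its jump set, so one must be careful to select \cadlag/ representatives as in Remark \ref{R:cadlag} before asserting pointwise monotonicity. The PDE argument carries an analogous subtlety, namely verifying that the hypotheses of Lemma \ref{L:particles} (boundedness of coefficients and of $v_i^\eps$) hold uniformly for the regularized problem, which follows from the smoothness of $b^\eps$ and $\oline{u}^\eps$ on bounded time intervals. Once the monotone case is handled, the extension to $ABV$ is just linearity.
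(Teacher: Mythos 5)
Your proposal is correct and follows the paper's own argument closely: the paper likewise establishes that increasing (resp.\ decreasing) data give increasing (resp.\ decreasing) solutions, first via the flow formula $u(t,\cdot) = u_T \circ \phi_{T,t}$ and the monotonicity of $\phi_{T,t}$, and second by differentiating the equation, applying the vector-valued maximum principle Lemma~\ref{L:particles} to the system for $v_i = \partial_{x_i} u$, and passing to the limit through regularizations of $b$ via Theorem~\ref{T:TEform}; the extension to $ABV$ is then linearity and the decomposition in Lemma~\ref{L:ABV}, exactly as you indicate.
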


In particular, since $ABV$ is densely contained in $L^p_\loc(\RR^d)$ and $S(s,t)$ is continuous on $L^p_\loc$, Proposition \ref{P:ABVprop} implies that belonging to $ABV$ is a suitable criterion for the propagation of compactness. Notice for instance that this provides another proof of the fact that the convergence statements in Theorem \ref{T:TEform} are with respect to \emph{strong} convergence in $C([0,T], L^p_\loc)$ for $p < \oo$.

We now demonstrate how the propagation of the increasing or decreasing property leads to a method for characterizing solutions of \eqref{TE}, independently of the solution formula. The idea is to regularize $u$ in a one-sided manner, as in subsection \ref{ss:regs}.

Formally, if $u$ solves \eqref{TE} and, for $(t,x) \in [0,T] \times \RR^d$ (recall that $\omega_1$ is as in \eqref{acomega}),
\[
	\tilde u(t,x) = u\left( t, e^{\omega_1(T) - \omega_1(t)}x \right) \quad \text{and} \quad
	\tilde b(t,x) = e^{-(\omega_1(T) - \omega_1(t))} b\left( t, e^{\omega_1(T) - \omega_1(t)} x \right) + C_1(t)x,
\]
then $\tilde u$ solves \eqref{TE} with $b$ replaced by $\tilde b$, and $\tilde b$ satisfies \eqref{A:b} with $C_1 = 0$ and a possibly different $C_0$. We may therefore assume here, without loss of generality, that $b(t,\cdot)$ is increasing for a.e. $t \in [0,T]$.

Recall the definition of the one-sided mollifiers $\rho_\eps$ and $\rho^\eps$ defined in \eqref{onesidedmollifier}.

\begin{definition}\label{D:TEsubsuper}	
	A function $u: [0,T] \times \RR^d \to \RR$ is called a super (sub)solution of \eqref{TE} if, for all $t \in [0,T]$, $u(t,\cdot)$ is decreasing, and, if $u_\eps = u * \rho_\eps$ and $u^\eps = u * \rho^\eps$, then
	\begin{equation}\label{TE:ineq}
		-\del_t u_\eps - b(t,x) \cdot \nabla u_\eps \ge 0 \quad \left( \text{resp.} \quad -\del_t u^\eps - b(t,x) \cdot \nabla u^\eps \le 0  \right)\quad \text{for a.e. } (t,x) \in [0,T] \times \RR^d.
	\end{equation}
	A solution is both a sub- and supersolution.
\end{definition}

\begin{remark}
	A well-posed notion of sub- and supersolutions can be defined where $u$ is approximated using, instead of the one-sided mollifiers, the method of $\inf$- and $\sup$-convolution:
	\[
		u_\eps(t,x) = \inf_{y} \left\{ u(x-y) + \frac{|y|}{\eps} \right\} \quad \text{and} \quad \sup_{y} \left\{ u(x-y) - \frac{|y|}{\eps} \right\}.
	\]
\end{remark}

\begin{theorem}\label{T:TE:comparison}
	Assume $b$ satisfies \eqref{A:b} with $C_1 = 0$. Then, for all $R > 0$, there exists a modulus of continuity $\omega_R: [0,\oo) \to [0,\oo)$ such that, if $u,v:[0,T] \times \RR^d \to \RR$ are respectively a sub- and supersolution of \eqref{TE} in the sense of Definition \ref{D:TEsubsuper}, then, for all $t \in [0,T]$ and $p \in [1,\oo)$,
	\begin{equation}\label{TE:comparison}
		\int_{B_R} \left( u(t,x) - v(t,x)\right)_+^p dx \le \int_{B_{R + \omega_R(T-t)}} \left( u(T,x) - v(T,x) \right)_+^pdx.
	\end{equation}
	In particular, for any decreasing $u_T: \RR^d \to \RR^d$, there exists a unique solution $u$ of \eqref{TE} in the sense of Definition \ref{D:TEsubsuper}, which is given by $u(t,\cdot) = S(t,T) u_T$, and which is continuous a.e. in $[0,T] \times \RR^d$.
\end{theorem}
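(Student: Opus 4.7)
The plan is to reduce \eqref{TE:comparison} to the pointwise inequalities
\[
u(t,x) \le u(T, \phi_{T,t}(x)) \quad \text{and} \quad v(t,x) \ge v(T, \phi_{T,t}(x)) \quad \text{for a.e.\ } x,
\]
where $\phi$ is the regular Lagrangian flow from Theorem \ref{T:regLag}. Granting these, subtracting and taking positive parts gives $(u-v)_+(t,x) \le (u-v)_+(T, \phi_{T,t}(x))$ a.e., and raising to the $p$-th power and integrating over $B_R$ together with the Jacobian estimate \eqref{regLag} (which, since $C_1 = 0$, has Jacobian factor equal to one) yields \eqref{TE:comparison} with a modulus $\omega_R$ depending only on $R$ and $\omega_0$.

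To obtain the pointwise inequality for the subsolution, fix $\eps > 0$ and consider $u^\eps := u \ast \rho^\eps$, which is smooth in $x$ and satisfies $\del_s u^\eps + b \cdot \nabla u^\eps \ge 0$ almost everywhere by Definition \ref{D:TEsubsuper}. Formally, integrating along the forward characteristic $s \mapsto \phi_{s,t}(x)$ from $s = t$ to $s = T$ and applying the chain rule yields
\[
u^\eps(T, \phi_{T,t}(x)) - u^\eps(t,x) = \int_t^T \bigl(\del_s u^\eps + b \cdot \nabla u^\eps\bigr)(s, \phi_{s,t}(x))\,ds \ge 0.
\]
Sending $\eps \to 0$, Lemma \ref{L:approxid} applied to the increasing function $-u$ gives $u^\eps \nearrow u_*$ monotonically, and $u_* = u$ a.e.\ because $u(t,\cdot)$ is decreasing, hence a.e.\ continuous by Lemma \ref{L:ABVcts}. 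The a.e.\ convergence in $x$ transfers to $u^\eps(T, \phi_{T,t}(\cdot)) \to u(T, \phi_{T,t}(\cdot))$ because $\phi_{T,t}$ pulls null sets back to null sets, and the supersolution case is symmetric, using $v_\eps := v \ast \rho_\eps$, the reversed inequality, and $v_\eps \searrow v^* = v$ a.e.

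The main obstacle is making the chain-rule step rigorous, since $\del_s u^\eps$ is only defined almost everywhere and the trajectory $s \mapsto \phi_{s,t}(x)$ is only absolutely continuous for a.e.\ $x$. I would address this by approximating $b$ by smooth vector fields $b^n$ satisfying \eqref{A:b} uniformly (e.g.\ via the one-sided mollifications of Section \ref{ss:regs}), applying the classical chain rule along the smooth flows $\phi^n$, and passing to the limit $n \to \infty$. The substitution error $\int_t^T (b^n - b)(s, \phi^n_{s,t}(x)) \cdot \nabla u^\eps(s, \phi^n_{s,t}(x))\,ds$ is controlled in $L^1_x(B_R)$ by $\nor{\nabla u^\eps}{\oo}\cdot\nor{b^n - b}{L^1([0,T] \times B_{R'})}$ via the uniform Jacobian bound \eqref{regLag}, hence vanishes as $n \to \infty$; Theorem \ref{T:ODE:beps} provides convergence of the flows in $C([0,T], L^p_\loc)$, so a subsequence yields the desired inequality for a.e.\ $x$. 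A Fubini argument based on \eqref{regLag} shows in addition that $(s,x) \mapsto (s, \phi_{s,t}(x))$ preserves null sets, which legitimizes the a.e.\ evaluation of the subsolution inequality along characteristics.

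For the uniqueness and existence conclusion, I would verify that $U(t,x) := S(t,T) u_T(x) = u_T(\phi_{T,t}(x))$ is simultaneously a sub- and supersolution in the sense of Definition \ref{D:TEsubsuper}. Since $u_T$ is decreasing and $\phi_{T,t}$ is increasing, $U(t,\cdot)$ is decreasing, and Lemma \ref{L:ABVcts} yields a.e.\ continuity. For smooth approximations of $b$ and $u_T$ the identity
\[
\del_s U^\eps + b \cdot \nabla U^\eps = \int \bigl[b(s,x) - b(s,x-y)\bigr] \cdot \nabla U(s, x-y)\, \rho^\eps(y)\,dy
\]
has nonnegative integrand: on the support $y \in (-2\eps, 0)^d$ of $\rho^\eps$ one has $x - y \ge x$ coordinate-wise, so $b(s,x) - b(s,x-y) \le 0$ by monotonicity of $b$, while $\nabla U(s,x-y) \le 0$ by monotonicity of $U$, making each coordinate a product of two nonpositive factors. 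The analogous calculation with $\rho_\eps$ supported on $(0,2\eps)^d$ gives the supersolution inequality, and the structure passes to non-smooth $b$ by approximation together with Theorem \ref{T:TEform}. The comparison estimate \eqref{TE:comparison} then forces any other decreasing solution to coincide with $U$ almost everywhere.
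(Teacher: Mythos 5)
Your route to \eqref{TE:comparison} is genuinely different from the paper's. You prove the pointwise inequality $u(t,x)\le u(T,\phi_{T,t}(x))$ (and the reverse for $v$) by integrating the one-sided mollified inequality along approximate characteristics and passing to limits, after which the $L^p$ bound follows from the regular-Lagrangian estimate \eqref{regLag} with Jacobian factor $1$ when $C_1=0$. The paper's proof is Eulerian: it combines $u^\eps$ and $v_\eps$, renormalizes via $\beta(r)=r_+^p$, tests against a time-dependent cutoff $\hat\psi(|x|/R(t))$, and uses only the sign of $\div b$ and the linear growth of $b$; the flow $\phi$ never enters. This is deliberate --- the paper emphasizes a PDE-level characterization ``independently of the solution formula'' --- whereas your argument invokes the full machinery of Section~\ref{sec:ODE} (the uniform Jacobian control, the flow stability of Theorem~\ref{T:ODE:beps}) inside the comparison proof. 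Both routes are legitimate; yours yields the slightly stronger intermediate fact that subsolutions are dominated by their pullback through $\phi$, but it gives up the Eulerian self-containment. Your verification that $S(\cdot,T)u_T$ is a sub- and supersolution reproduces the paper's commutator sign computation (after the change of variable $y\mapsto x-y$), and the conclusion that uniqueness follows from \eqref{TE:comparison} is the same.

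There is one genuine gap. You assert that $U(t,\cdot)$ being decreasing in $x$, together with Lemma~\ref{L:ABVcts}, already gives a.e.\ continuity on $[0,T]\times\RR^d$. Monotonicity in $x$ for each fixed $t$ only gives a.e.\ continuity on each time slice; it does not yield joint a.e.\ continuity in $(t,x)$. The paper closes this with a separate shift argument: for fixed $R>0$ and a suitable increasing $\psi$, the function $(t,x)\mapsto u\big(t,\,x+\psi(t)\mbf 1\big)$ is shown to be monotone \emph{jointly} in $(t,x)$ on $[0,T]\times[-R,R]^d$, using the linear growth bound on $b$ and the fact that $\nabla u\le 0$; only then is Lemma~\ref{L:ABVcts} applied, and the shift preserves null sets. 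Some such device is necessary to obtain the joint a.e.\ continuity claimed in the statement; as written, your last sentence does not supply it.
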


\begin{proof}
	Set $u^\eps = u * \rho^\eps$ and $v_\eps = v * \rho_\eps$; then, combining the inequalities for $u^\eps$ and $v_\eps$ given by Definition \ref{D:TEsubsuper}, we obtain
	\[
		- \del_t (u^\eps - v_\eps) - b(t,x) \cdot \nabla (u^\eps - v_\eps) \le 0.
	\]
	Let $\beta: \RR \to \RR$ be smooth and increasing. Multiplying the above inequality by the positive term $\beta'( u^\eps - v_\eps)$ yields
	\[
		-\del_t \beta(u^\eps - v_\eps) - b(t,x) \cdot \nabla \beta(u^\eps - v_\eps) \le 0.
	\]
	We may then take $\beta(r) = r_+^p$ (arguing with an extra layer of regularizations if $p = 1$) and find that, in the sense of distributions,
	\[
		-\del_t (u^\eps - v_\eps)_+^p - b(t,x) \cdot \nabla (u^\eps - v_\eps)_+^p \le 0.
	\]
	Let $\hat \psi: [0,\oo) \to [0,\oo)$ be smooth and decreasing, such that $\hat \psi = 1$ on $[0,1]$ and $\hat \psi = 0$ on $[2,\oo)$, and set $\psi(t,x) := \hat \psi(|x|/R(t))$ for some $R: [0,T] \to [0,\oo)$ to be determined. Using $\psi$ as a test function, we discover
	\begin{align*}
		- \del_t \int_{\RR^d} (u^\eps(t,x) - v_\eps(t,x))_+^p \psi(t,x)dx
		&\le \left\langle -\del_t \psi(t,\cdot) - \div (b(t,\cdot) \psi(t,\cdot)) , (u^\eps(t,\cdot) - v_\eps(t,\cdot))_+^p  \right\rangle\\
		&\le \int_{\RR^d} \left( u^\eps(t,x) - v_\eps(t,x) \right)_+^p \left( -\del_t \psi(t,x) - b(t,x) \cdot \nabla \psi(t,x) \right) dx,	
	\end{align*}
	where in the last line we used the fact that $\div b \ge 0$ and $\psi \ge 0$. Using the fact that $\hat \psi' \le 0$, with $\hat \psi' \ne 0$ only if $r \in [1,2]$, we find that
	\[
		- \del_t \psi - b \cdot \nabla \psi
		= - \hat \psi'(x/R(t)) \left( \frac{\dot R(t)}{R(t)^2}- \frac{b(t,x) \cdot x}{R(t)|x|^2} \right)
		\ge - R(t)^{-1}  \hat \psi'(x/R(t)) \left( \dot R(t) - 2C_0(t)( 1 + 2 R(t) ) \right).
	\]
	For $t_0 \in [0,T]$, this is made nonnegative on $[t_0,T]$ by choosing, for any fixed $R > 0$,
	\[
		R(t) = R e^{4[\omega_0(t) - \omega_0(t_0) ] } + \frac{1}{2} \left( e^{4[\omega_0(t) - \omega_0(t_0) ] } - 1 \right),
	\]
	and so
	\[
		\int_{\RR^d} ( u^\eps(t_0,x) - v_\eps(t_0,x) )_+^p \hat \psi\left( \frac{|x|}{R(t_0)} \right) dx
		\le \int_{\RR^d} (u^\eps(T,x) - v_\eps(T,x) )_+^p \hat \psi \left( \frac{|x|}{R(T)} \right)dx.
	\]
	We may then choose functions $\hat \psi$ that approximate $\ind_{[0,1]}$ from above, and then, by the monotone convergence theorem,
	\[
		\int_{B_R} (u^\eps(t_0, x) - v_\eps(t_0,x))_+^p dx
		\le \int_{B_{R(T)}} (u^\eps(T,x) - v_\eps(T,x))_+^p dx.
	\]
	The proof of \eqref{TE:comparison} is finished upon sending $\eps \to 0$ and setting
	\[
		\omega_R(r) := \sup_{0 \le s \le t \le s + r \le T} \left\{ \left( R + \frac{1}{2} \right) \left( e^{4[\omega_0(t) - \omega_0(s)] } - 1 \right) \right\}.
	\]
	
	The comparison inequality \eqref{TE:comparison} implies uniqueness for a solution with terminal condition $u_T$, and so it remains to show that $S(t,T) u_T$ is a solution in the sense of Definition \ref{D:TEsubsuper}.
	
	Let $(b^\delta)_{\delta > 0}$ be a family of smooth approximations of $b$ satisfying \eqref{A:b} with $C_1 = 0$ and uniform $C_0$, and let $u^\delta_T = u_T* \gamma_\delta$ for a family of standard mollifiers $(\gamma_\delta)_{\delta > 0}$. Note then that, for $\delta > 0$, $u^\delta_T$ is decreasing, and, as $\delta \to 0$, $u^\delta_T \to u_T$ in $L^p_\loc$ for all $p < \oo$. Let $u^\delta$ be the corresponding solution of the terminal value problem \eqref{TE}. It follows that $u^\delta(t,\cdot)$ is decreasing for all $t \in [0,T]$.
	
	For any $\delta > 0$ and $\eps > 0$,
	\[
		-\del_t (u^\delta * \rho^\eps) - b^\delta(t,x) \cdot \nabla (u^\delta * \rho^\eps)
		= \int_{\RR^d} \left( b^\delta(t,y) - b^\delta(t,x) \right) \cdot \nabla u^\delta(t,y) \rho^\eps(x-y)dy
		\le 0,
	\]
	where the last inequality follows from the fact that $\rho^\eps(x-y) \ne 0$ only if $x \le y$, $b^\delta(t,\cdot)$ is increasing, and $\nabla u^\delta \le 0$. Sending $\delta \to 0$, it follows from Theorem \ref{T:TEform} that, in the sense of distributions, if $u = S(\cdot,T)u_T$ and $u^\eps = u * \rho^\eps$,
	\[
		-\del_t u^\eps - b \cdot \nabla u^\eps \le 0.
	\]
	It follows that $S(\cdot,T) u_T$ is a subsolution. A similar argument shows that it is a supersolution, and therefore the unique solution in the sense of Definition \ref{D:TEsubsuper}.
	
	Now, for some $M > 0$, define $\psi(t) = \exp(M \omega_1(t))$ and
	\[
		\tilde u^\delta(t,x) := u^\delta(t,x + \psi(t)\mbf 1).
	\]
	For any fixed $R > 0$, there exists $M > 0$ such that, in view of the linear growth of $b^\delta$ given by \eqref{A:b} uniformly in $\delta$, and the fact that $u^\delta$ is decreasing in the spatial variable, for any $x \in [-R,R]^d$,
	\[
		\frac{\del}{\del t} \tilde u^\delta(t,x) = (-\nabla u^\delta) \cdot \left( b^\delta(t,x + \psi(t) \mbf 1) - \psi'(t) \mbf 1 \right)
		\le C_1(t)(-\nabla u^\delta) \cdot \left( (1 + |x| + \psi(t)) - M \psi(t) \right) \le 0.
	\]
	Since $M$ is independent of $\delta$, we may send $\delta \to 0$ and conclude that $\tilde u(t,x) := u(t,x + \psi(t))$ is increasing on $[0,T] \times [-R,R]^d$, and therefore continuous a.e. in that set by Lemma \ref{L:ABVcts}. The transformation leading from $u$ to $\tilde u$ preserves null sets, and we conclude that $u$ is continuous almost everywhere.
\end{proof}

\begin{remark}
	The idea behind Definition \ref{D:TEsubsuper} is to establish a sign for the commutator between convolution and differentiation along irregular vector fields, as compared to the work of DiPerna and the first author \cite{DL89}, where the commutator is shown to be small for Sobolev vector fields. We must take convolution kernels with a specific one-sided structure in order to analyze the commutators; a less crude example of this idea is seen in the work of Ambrosio \cite{A04} for general $BV$ velocity fields.
		
	A different notion of sub and supersolutions, which also selects $S(\cdot,T) u_T$ as the unique solution of \eqref{TE}, can be obtained by instead regularizing $b$. Recall that we have assumed without loss of generality that $b$ is increasing. Then a decreasing function $u$ can be said to be a sub (super)solution if, for all $\eps > 0$, in the sense of distributions,
	\[
		-\del_t u - b_\eps \cdot \nabla u \le 0 \quad \left( \text{resp. } -\del_t u - b^\eps \cdot \nabla u \ge 0 \right),
	\]
	where $b_\eps \le b \le b^\eps$ are one-sided regularizations of $b$, for example the one-sided mollifiers or the $\inf$- and $\sup$-convolutions. The notion of sub and supersolution in Definition \ref{D:TEsubsuper} turns out to be more amenable to the study of the nonlinear systems in Section \ref{sec:NTE}.
\end{remark}

\begin{remark}
	Throughout this section, we have studied the setting where $u_T$, and therefore $u(t,\cdot)$ for $t < T$, are decreasing. The same analysis can be achieved for increasing solutions, in which case the inequalities in  \eqref{TE:ineq} are reversed. Note, in particular, that the solution flows $\phi_{t,s}(x)$ are vector-valued solutions in this sense; i.e.
	\[
		- \del_s \phi_{t,s}(x) - b(s,x) \cdot \nabla \phi_{t,s}(x) = 0, \quad \phi_{t,t}(x) = x.
	\] 
	We now observe that the full family of solution operators $S(s,t) : L^p_\loc \to L^p_\loc$, $0 \le s \le t \le T$, can be constructed independently of the ODE flows $\phi_{s,t}$, which can then be recovered with the theory of renormalization. Indeed, Definition \ref{D:TEsubsuper}, and its counterpart for increasing solutions, can be used to define $S(s,t) \bar u$ for any $\bar u \in ABV$. The density of $ABV$ in $L^p_\loc(\RR^d)$, and the $L^p$-continuity of $S(s,t)$, then allow the solution operators to be continuously extended to $L^p_\loc$. It is, however, not clear whether solutions of \eqref{TE} can be characterized for arbitrary $u_T \in L^p_\loc$, other than by the formula \eqref{TE:semigroup} or as limits of solutions to regularized equations.
\end{remark}

\subsubsection{Lower order terms}

We briefly explain how to extend the above results to equations with additional lower order terms, as in
\begin{equation}\label{eq:lowerorder}
	-\del_t u - b(t,x) \cdot \nabla u - c(t,x)u - d(t,x) = 0 \quad \text{in } [0,T] \times \RR^d , \quad u(T,\cdot) = u_T,
\end{equation}
for functions 
\begin{equation}\label{A:cd}
	c \in L^1([0,T], L^\oo_\loc) \quad \text{and} \quad d \in L^1([0,T], L^q_\loc) \quad \text{for some } q \in [1,\oo].
\end{equation}

\begin{theorem}\label{T:TE:lowerorder}
	Assume $p \in [1,\oo) \cap [1,q]$ and $u_T \in L^p_\loc$. Then there exists a unique function $u \in C([0,T], L^p_\loc)$ with the following properties:
	\begin{enumerate}[(a)]
	\item\label{lowerorder:Lp} There exists $C_2 \in L^1_+([0,T])$, and, for $R > 0$, a modulus $\omega_R: [0,T] \to [0,T]$, depending only on the assumptions in \eqref{A:b} and \eqref{A:cd} such that, for $R > 0$,
	\[
		\norm{u(t,\cdot)}_{L^p(B_R)} \le \exp\left( \int_t^T C_2(s)ds \right) \norm{u_T}_{L^p_{B_{R + \omega_R(T-t)}}} + \int_t^T C_2(s)ds.
	\]
	\item\label{lowerorder:smooth} Let $(b^\eps)_{\eps > 0}$, $(c^\eps)_{\eps > 0}$, and $(d^\eps)_{\eps > 0}$ be families satisfying \eqref{A:b} and \eqref{A:cd} uniformly in $\eps$, such that, as $\eps \to 0$, $(b^\eps,c^\eps, d^\eps) \to (b,c,d)$ a.e. Let $(u^\eps_T)_{\eps > 0}$ be a family of smooth functions approximating $u_T$ in $L^p_\loc$, and let $u^\eps$ be the corresponding solution of \eqref{eq:lowerorder}. Then, as $\eps \to 0$, $u^\eps$ converges strongly to $u$ in $C([0,T], L^p_\loc)$. The same statement is true if $u^\eps$ solves
	\begin{equation}\label{eq:visc:lowerorder}
		-\del_t u^\eps - b(t,x) \cdot \nabla u^\eps - c(t,x)u^\eps - d(t,x) = \eps \Delta u^\eps \quad \text{in } [0,T] \times \RR^d, \quad u^\eps(T,\cdot) = u^\eps_T.
	\end{equation}
	\item\label{lowerorder:formula} For $(t,x) \in [0,T] \times \RR^d$, $u$ has the formula
	\begin{equation}\label{formula:lowerorder}
		u(t,x) = u_T(\phi_{T,t}(x)) \exp\left( \int_t^T c(s, \phi_{s,t}(x))ds \right) + \int_t^T d(s, \phi_{s,t}(x))\exp\left( \int_t^s c(r, \phi_{r,t}(x))dr \right)ds.
	\end{equation}
	\end{enumerate}
	Analogous statements hold when $p = q = \oo$, in which case $u \in C([0,T], L^\oo_{\loc,\ws})$ and the convergence in part \eqref{lowerorder:smooth} is weak-$\star$ in $L^\oo$.
\end{theorem}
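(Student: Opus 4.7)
The plan is to construct $u$ by directly using the Duhamel-type formula \eqref{formula:lowerorder}, verify the three listed properties, and note that uniqueness of a $u$ satisfying all three is immediate from (c). The classical motivation is that, formally, along a characteristic $s \mapsto \phi_{s,t}(x)$, the equation \eqref{eq:lowerorder} reduces to the scalar ODE $-\frac{d}{ds}u(s,\phi_{s,t}(x)) = c(s,\phi_{s,t}(x))u(s,\phi_{s,t}(x)) + d(s,\phi_{s,t}(x))$ with terminal value $u_T(\phi_{T,t}(x))$, which integrates to \eqref{formula:lowerorder}. The main tools for making this rigorous are Theorem \ref{T:regLag} (so that compositions with $\phi$ make sense in Lebesgue spaces), Theorem \ref{T:-b} (for time continuity), and Theorems \ref{T:ODE:beps}, \ref{T:ODE:epsW} (for the stability in (b)).

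\textbf{Well-definedness, the $L^p$ bound, and continuity in time.} First I would verify that each term in \eqref{formula:lowerorder} defines an element of $C([0,T],L^p_\loc)$. Since $c \in L^1([0,T],L^\infty_\loc)$, the exponential weights appearing are uniformly bounded on any set of the form $\{x \in B_R\}$ by $\exp(\int_0^T C_2(s)\,ds)$ with $C_2(s) := \|c(s,\cdot)\|_{L^\infty(B_{R'})}$, where $R'$ is chosen via the a priori estimate of Lemma \ref{ODE:apriori} so that $\phi_{s,t}(B_R) \subset B_{R'}$. The regular Lagrangian estimate \eqref{regLag} then controls $u_T \circ \phi_{T,t}$ in $L^p(B_R)$ and $d(s,\cdot) \circ \phi_{s,t}$ in $L^q(B_R) \hookrightarrow L^p(B_R)$ for $p \le q$; Minkowski's inequality applied to the $ds$-integral gives the estimate in (a). Continuity in $t$ of $t \mapsto u_T \circ \phi_{T,t}$ in $L^p_\loc$ follows from Theorem \ref{T:-b}, and continuity of the Duhamel integral in $t$ follows by dominated convergence and the same composition bounds.

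\textbf{Stability and the formula.} For smooth approximations $(b^\eps,c^\eps,d^\eps,u_T^\eps)$ satisfying \eqref{A:b} and \eqref{A:cd} uniformly, the corresponding classical solutions $u^\eps$ of \eqref{eq:lowerorder} are given by the smooth version of \eqref{formula:lowerorder} via the method of characteristics, with $\phi$ replaced by the smooth flow $\phi^\eps$. Theorem \ref{T:ODE:beps} provides $\phi^\eps \to \phi$ in $C([0,T],L^p_\loc)$, and an argument modeled on the proof of that theorem — inserting an intermediate Lipschitz approximation $c^\delta$, using uniform $L^q$-control of the composition via \eqref{regLag}, then sending $\eps \to 0$ and $\delta \to 0$ — shows that $c^\eps(s,\phi^\eps_{s,t}(\cdot)) \to c(s,\phi_{s,t}(\cdot))$ in the appropriate composition sense, and likewise for $d^\eps$ and $u_T^\eps$. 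Passing to the limit in the formula with dominated convergence (for the bounded exponential factors) and Minkowski (for the time integral) yields $u^\eps \to u$ strongly in $C([0,T],L^p_\loc)$, establishing (b) and also proving that $u$ satisfies (c). The viscous case of (b) is identical, using Theorem \ref{T:ODE:epsW} in place of Theorem \ref{T:ODE:beps}. For the $p=q=\infty$ case, the same constructions go through with the convergence in $L^\infty_{\loc,\ws}$ rather than strong $L^p_\loc$, since the composition with $\phi$ is bounded but need not be continuous in strong topology.

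\textbf{Main obstacle.} The one genuinely nontrivial point is the joint convergence $c^\eps(s,\phi^\eps_{s,t}(x)) \to c(s,\phi_{s,t}(x))$ (and the analogous statement for $d^\eps$), where both the outer function and the flow are being approximated simultaneously with only weak regularity on the limit. This is handled exactly as in the stability proof of Theorem \ref{T:ODE:beps}: insert a Lipschitz intermediary $c^\delta$ for $\delta > \eps$, use the Lipschitz regularity to pass to the limit in $\eps$ for fixed $\delta$, and then invoke the regular Lagrangian bound \eqref{regLag} uniformly to send $\delta \to 0$. Every other step is routine given the framework already developed.
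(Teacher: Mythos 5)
Your proposal is correct and takes essentially the same route as the paper: construct $u$ via the Duhamel formula \eqref{formula:lowerorder}, start from the smooth characteristics representation for regularized data, and pass to the limit using the stability arguments from Theorem \ref{T:ODE:beps} (with the Lipschitz-intermediary trick for the compositions $c^\eps \circ \phi^\eps$ and $d^\eps \circ \phi^\eps$), the regular Lagrangian bound \eqref{regLag}, and dominated convergence, with Theorem \ref{T:ODE:epsW} substituted in for the viscous case. The one point you identify as the main obstacle — the joint convergence of $c^\eps(s,\phi^\eps_{s,t}(\cdot))$ — is exactly what the paper highlights and handles in the same way.
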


\begin{proof}
	For $b^\eps$, $c^\eps$, $d^\eps$, $u^\eps_T$, and $u^\eps$ as in the statement of part \eqref{lowerorder:smooth}, if $(\phi^\eps_{t,s}(x))_{s,t \in [0,T]}$ denotes the corresponding smooth flow, we have the formula
	\[
		u^\eps(t,x) = u^\eps_T(\phi^\eps_{T,t}(x)) \exp\left( \int_t^T c^\eps(s, \phi^\eps_{s,t}(x))ds \right) + \int_t^T d^\eps(s, \phi^\eps_{s,t}(x))\exp\left( \int_t^s c^\eps(r, \phi^\eps_{r,t}(x))dr \right)ds.
	\]
	The convergence statements, and thus the formula in part \eqref{lowerorder:formula}, are then proved just as in Theorem \ref{T:ODE:beps}. In particular, arguing just as in that proof, we have	
	\[
		\lim_{\eps \to 0}\left( \norm{u_T^\eps(\cdot,\phi^\eps_{\cdot,t}) - u_T(\cdot, \phi_{\cdot,t}) }_{L^1([t,T], L^p_\loc)}  + \norm{d^\eps(\cdot,\phi^\eps_{\cdot,t}) - d(\cdot, \phi_{\cdot,t}) }_{L^1([t,T], L^p_\loc)}  \right) = 0
	\]
	and, for all $r < \oo$,
	\begin{align*}
		\lim_{\eps \to 0}  \norm{c^\eps(\cdot,\phi^\eps_{\cdot,t}) - c(\cdot, \phi_{\cdot,t}) }_{L^1([t,T], L^r_\loc)} =0.
	\end{align*}
	In addition, $c^\eps(t, \phi^\eps_{t,s})$ is uniformly bounded in $L^1([0,T], L^\oo_\loc)$, in view of Lemma \ref{ODE:apriori}, and so we conclude parts \eqref{lowerorder:smooth} and \eqref{lowerorder:formula} by the dominated convergence theorem. The $L^p$-estimates in part \eqref{lowerorder:Lp} are proved just as before, either from the regularized equation itself or from \eqref{formula:lowerorder}, using the lower bound on the divergence of $b$.
\end{proof}

Sub and supersolutions can be characterized when $u_T$ is increasing/decreasing under the additional assumption that
\begin{equation}\label{A:cdextra}
	c = c(t) \in L^1_+([0,T]) \quad \text{and} \quad d(t,\cdot) \text{ is decreasing for all } t \in [0,T].
\end{equation}

The propagation result Proposition \ref{P:ABVprop} is then easily generalized (the proof is almost identical and so we omit it):

\begin{proposition}\label{P:ABVprop:lowerorder}
	Assume \eqref{A:b}, \eqref{A:cd}, and \eqref{A:cdextra}. If $u_T \in L^\oo_\loc$ is increasing (decreasing) and $u$ is the solution of \eqref{eq:lowerorder} specified by Theorem \ref{T:TE:lowerorder}, then, for all $t < T$, $u(t,\cdot)$ is increasing (decreasing).
\end{proposition}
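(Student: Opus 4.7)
The plan is to leverage the representation formula \eqref{formula:lowerorder} from Theorem \ref{T:TE:lowerorder}\eqref{lowerorder:formula}. Under \eqref{A:cdextra} the coefficient $c$ depends only on $t$, so the exponential factor in that formula is independent of $x$ and simplifies to
\[
    u(t,x) = u_T(\phi_{T,t}(x))\, E(t,T) + \int_t^T d(s,\phi_{s,t}(x))\, E(t,s)\, ds, \qquad E(t,s):=\exp\!\left(\int_t^s c(r)\,dr\right)>0.
\]
By Proposition \ref{P:maxmin}\eqref{maxmin:props} together with Theorem \ref{T:regLag}, for every $t \le s \le T$ the map $x \mapsto \phi_{s,t}(x)$ is increasing with respect to the partial order \eqref{order}. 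In the decreasing case, $u_T \circ \phi_{T,t}$ is then decreasing (a decreasing function composed with an increasing one), and for every $s$ the integrand $d(s,\cdot)\circ \phi_{s,t}$ is decreasing by \eqref{A:cdextra}; since $E>0$, $u(t,\cdot)$ is a nonnegative linear combination of decreasing functions, hence decreasing. The increasing case is symmetric once \eqref{A:cdextra} is read with the reversed sign on the monotonicity of $d$.

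A second route, more directly parallel to Proposition \ref{P:ABVprop}, is smooth approximation combined with Lemma \ref{L:particles}. One approximates $b,c,d,u_T$ by smooth data preserving the sign and monotonicity hypotheses (using the one-sided mollifiers of Lemma \ref{L:approxid} for $b$ and $d$, standard mollification for $c$, and a standard smoothing of $u_T$), and differentiates the smooth equation in $x_i$ to obtain, for $v_i^\eps := \del_{x_i} u^\eps$,
\[
    -\del_t v_i^\eps - b^\eps\!\cdot\!\nabla v_i^\eps - \bigl(\del_{x_i}(b^\eps)^i + c^\eps\bigr) v_i^\eps - \sum_{j\ne i}\!\bigl(\del_{x_i}(b^\eps)^j\bigr) v_j^\eps = \del_{x_i} d^\eps.
\]
After time reversal this fits the framework of Lemma \ref{L:particles}: the off-diagonal couplings $\del_{x_i}(b^\eps)^j$ for $j\ne i$ are nonnegative by \eqref{A:b}, the inhomogeneity $-\del_{x_i} d^\eps$ is nonnegative by \eqref{A:cdextra}, and the terminal datum $v_i^\eps(T,\cdot)\le 0$ follows from $u_T$ decreasing. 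The lemma then yields $v_i^\eps \le 0$, so $u^\eps(t,\cdot)$ is decreasing for every $\eps$; strong convergence in $C([0,T],L^p_\loc)$ from Theorem \ref{T:TE:lowerorder}\eqref{lowerorder:smooth} transfers the decreasing property to $u$.

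The main technical subtlety I anticipate is the $p=\oo$ case, where the convergence in Theorem \ref{T:TE:lowerorder} is only weak-$\star$. I would handle this by extracting an a.e.-convergent subsequence using the local $L^\oo$ bounds together with the a.e.-continuity of monotone functions from Lemma \ref{L:ABVcts}, so that the decreasing property passes first to the pointwise a.e. limit and then to every point via the c\`adl\`ag convention of Remark \ref{R:cadlag}.
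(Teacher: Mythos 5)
Your proof is correct and takes essentially the same two routes the paper sketches for Proposition \ref{P:ABVprop}, of which Proposition \ref{P:ABVprop:lowerorder} is declared an ``almost identical'' extension: a direct reading of the representation formula \eqref{formula:lowerorder} using that $\phi_{s,t}$ is increasing, and the differentiated-equation argument fed into the maximum principle Lemma \ref{L:particles} followed by approximation. One small simplification: the $p=\infty$ subtlety you raise at the end does not actually arise, since $L^\oo_\loc \subset L^p_\loc$ for every finite $p$, so one simply invokes the strong $C([0,T],L^p_\loc)$ convergence of Theorem \ref{T:TE:lowerorder}\eqref{lowerorder:smooth} for some finite $p$ and passes to an a.e.-convergent subsequence; no separate weak-$\star$ argument is needed. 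Your observation that the ``increasing'' parenthetical requires reading \eqref{A:cdextra} with the reversed monotonicity on $d$ is a correct and worthwhile clarification of the statement.
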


Definition \ref{D:TEsubsuper} is then generalized as follows:

\begin{definition}\label{D:TEsubsuper:lowerorder}
	A function $u: [0,T] \times \RR^d \to \RR$ is called a super (sub)solution of \eqref{eq:lowerorder} if, for all $t \in [0,T]$, $u(t,\cdot)$ is decreasing, and, if $u_\eps = u * \rho_\eps$ and $u^\eps = u * \rho^\eps$, then
	\begin{equation}\label{TE:ineq}
		\begin{split}
		&-\del_t u_\eps - b(t,x) \cdot \nabla u_\eps - c(t) u_\eps - d(t,x) \ge 0 \\
		& \left( \text{resp.} \quad -\del_t u^\eps - b(t,x) \cdot \nabla u^\eps - c(t) u^\eps - d(t,x) \le 0  \right)\quad \text{for a.e. } (t,x) \in [0,T] \times \RR^d.
		\end{split}
	\end{equation}
	A solution is both a sub- and supersolution.
\end{definition}

Finally, the following is proved almost identically to Theorem \ref{T:TE:comparison}.

\begin{theorem}\label{T:TE:comparison:lowerorder}
Assume $b$ satisfies \eqref{A:b} with $C_1 = 0$, and $c$ and $d$ satisfy \eqref{A:cd} and \eqref{A:cdextra}. Then, for all $R > 0$, there exist moduli of continuity $\omega,\omega_R: [0,\oo) \to [0,\oo)$ such that, if $u,v:[0,T] \times \RR^d \to \RR$ are respectively a sub- and supersolution of \eqref{eq:lowerorder} in the sense of Definition \ref{D:TEsubsuper:lowerorder}, then, for all $t \in [0,T]$ and $p \in [1,\oo)$,
	\begin{equation}\label{TE:comparison:lowerorder}
		\int_{B_R} \left( u(t,x) - v(t,x)\right)_+^p dx \le e^{\omega(T-t)} \int_{B_{R + \omega_R(T-t)}} \left( u(T,x) - v(T,x) \right)_+^pdx.
	\end{equation}
	In particular, for any decreasing $u_T \in L^p_\loc(\RR^d)$, there exists a unique solution $u$ of \eqref{eq:lowerorder} in the sense of Definition \ref{D:TEsubsuper:lowerorder}, which is given by \eqref{formula:lowerorder}, and which is continuous a.e. in $[0,T] \times \RR^d$.
\end{theorem}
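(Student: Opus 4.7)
My plan is to closely imitate the proof of Theorem \ref{T:TE:comparison}, tracking how the extra terms $c(t)u$ and $d(t,x)$ affect each step. The one change in the assumption that really matters is that $c = c(t)$ depends only on time and $d(t,\cdot)$ is decreasing. The first fact means the zero-order term is a multiplicative scalar, which interacts well with cutoff arguments and Gr\"onwall; the second will be precisely what is needed to give a sign to the commutator between convolution and the equation.

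For the comparison step, I would take a subsolution $u$ and a supersolution $v$, regularize to $u^\eps := u * \rho^\eps$ and $v_\eps := v * \rho_\eps$, and subtract the defining inequalities. The source terms $d(t,x)$ cancel and the $c(t)$ terms combine, giving, in the sense of distributions,
\[
	-\del_t(u^\eps - v_\eps) - b(t,x)\cdot\nabla(u^\eps - v_\eps) - c(t)(u^\eps - v_\eps) \le 0.
\]
Multiplying by $\beta'(u^\eps - v_\eps) \ge 0$ with $\beta(r) = r_+^p$ (mollified if $p = 1$) and using $\beta'(r)\,r = p\beta(r)$ turns this into $-\del_t w - b\cdot\nabla w - pc(t)w \le 0$ for $w := (u^\eps - v_\eps)_+^p$. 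Testing against the same expanding cutoff $\psi(t,x) = \hat\psi(|x|/R(t))$ used in Theorem \ref{T:TE:comparison}, with $R(t)$ chosen to make $-\del_t\psi - b\cdot\nabla\psi \ge 0$ (using $\div b \ge 0$ as before), gives
\[
	-\frac{d}{dt}\int w(t,x)\psi(t,x)\,dx \le p\,c(t)\int w(t,x)\psi(t,x)\,dx,
\]
and Gr\"onwall contributes the factor $\exp(p\int_t^T c(s)ds)$, which is the $e^{\omega(T-t)}$ in \eqref{TE:comparison:lowerorder}. Sending $\eps \to 0$ and $\hat\psi \to \ind_{[0,1]}$ finishes the inequality. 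Alternatively, one can first set $\tilde u := e^{-C(t)}u$, $C(t) := \int_t^T c(s)ds$, which transports away the $c$-term and reduces to the setting of Theorem \ref{T:TE:comparison} with a modified (still decreasing) source $\tilde d := e^{-C(t)}d$; but since $d$ does not appear in the difference, the argument is identical either way.

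For existence in the sense of Definition \ref{D:TEsubsuper:lowerorder}, I would verify that the function $u$ given by the formula \eqref{formula:lowerorder} in Theorem \ref{T:TE:lowerorder} qualifies. Approximate $(b,c,d,u_T)$ by smooth $(b^\delta, c^\delta, d^\delta, u_T^\delta)$ satisfying \eqref{A:b}, \eqref{A:cd}, \eqref{A:cdextra} uniformly, with $b^\delta(t,\cdot)$ increasing, $c^\delta = c^\delta(t) \ge 0$, $d^\delta(t,\cdot)$ decreasing, and $u_T^\delta$ decreasing. By Proposition \ref{P:ABVprop:lowerorder} the smooth solution $u^\delta$ is decreasing in $x$. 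Now compute, using $\del_t u^\delta = -b^\delta\cdot\nabla u^\delta - c^\delta u^\delta - d^\delta$,
\[
\begin{aligned}
&-\del_t(u^\delta * \rho^\eps)(x) - b^\delta(t,x)\cdot\nabla(u^\delta * \rho^\eps)(x) - c^\delta(t)(u^\delta * \rho^\eps)(x) - d^\delta(t,x)\\
&\quad = \int \bigl[b^\delta(t,y) - b^\delta(t,x)\bigr]\cdot\nabla u^\delta(t,y)\,\rho^\eps(x-y)\,dy + \int \bigl[d^\delta(t,y) - d^\delta(t,x)\bigr]\rho^\eps(x-y)\,dy.
\end{aligned}
\]
The one-sided support $\supp \rho^\eps \subset (-2\eps,0)^d$ forces $y \ge x$ in both integrals; combined with the monotonicity of $b^\delta$, $d^\delta$ and the sign of $\nabla u^\delta$, both integrands are $\le 0$, so the subsolution inequality holds for $u^\delta * \rho^\eps$. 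Theorem \ref{T:TE:lowerorder}\eqref{lowerorder:smooth} gives $u^\delta \to u$ strongly in $C([0,T], L^p_\loc)$, which is enough to pass the distributional inequality to $u * \rho^\eps$ in the limit. The analogous computation with $\rho_\eps$ (which is supported where $y \le x$) gives the supersolution inequality. Uniqueness then comes from the comparison in the first paragraph, so $u$ from \eqref{formula:lowerorder} is the unique solution in this sense.

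The main obstacle is the continuity-a.e. statement, since the source term $d$ breaks the pure monotone-in-$(t,x)$ shift argument used in Theorem \ref{T:TE:comparison}. My plan is to first absorb the $c$-term by passing to $\tilde u := e^{-C(t)}u$, which satisfies a transport equation with increasing drift $b$ and decreasing source $\tilde d := e^{-C(t)}d$. For smooth approximants $\tilde u^\delta$, set $\tilde{\tilde u}^\delta(t,x) := \tilde u^\delta(t,x + \psi(t)\mbf 1) - G^\delta(t,x)$, where $\psi$ is chosen as in Theorem \ref{T:TE:comparison} so that $(\psi'(t)\mbf 1 - b^\delta)\cdot\nabla \tilde u^\delta \le 0$ on $[-R,R]^d$, and $G^\delta(t,x) := -\int_t^T \tilde d^\delta(s, x + \psi(s)\mbf 1)\,ds$ is chosen so that the source cancels in $\del_t$, yielding $\del_t \tilde{\tilde u}^\delta \le 0$. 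Since $\tilde u^\delta(t,\cdot+\psi(t)\mbf 1)$ is decreasing in $x$ and, more delicately, $G^\delta(t,\cdot)$ is \emph{increasing} in $x$ (because $\tilde d^\delta(s,\cdot)$ is decreasing), the shift function $G^\delta$ must be handled coordinate-by-coordinate, but each one-dimensional slice of $\tilde{\tilde u}^\delta$ is $ABV$ in $x$ by Lemma \ref{L:ABV} and decreasing in $t$, so Lemma \ref{L:ABVcts} applied to each slice plus Fubini yields continuity a.e. on $[0,T] \times [-R,R]^d$. Passing to the limit $\delta\to 0$ and undoing the shift and scaling, which preserve null sets, transfers the continuity to $u$.
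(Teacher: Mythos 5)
The comparison inequality and the existence-via-approximation steps are correct and follow the template the paper explicitly intends (the paper states this theorem is ``proved almost identically to Theorem \ref{T:TE:comparison}''); in particular, the cancellation of the $d$-terms when subtracting the one-sided inequalities, the Gr\"onwall factor from the $c$-term, and the one-sided commutator computation in which the new integral $\int[d^\delta(t,y)-d^\delta(t,x)]\rho^\eps(x-y)\,dy$ is signed by the monotonicity of $d^\delta$ and the one-sided support of $\rho^\eps$, are all sound.

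The continuity-a.e.\ step, however, has a genuine gap. As written, $\tilde{\tilde u}^\delta := \tilde u^\delta(t,x+\psi(t)\mbf 1) - G^\delta(t,x)$ with $G^\delta := -\int_t^T\tilde d^\delta(s,x+\psi(s)\mbf 1)\,ds$ does \emph{not} cancel the source in $\del_t$: since $\del_t\tilde u^\delta(t,x+\psi(t)\mbf 1)$ contributes $-\tilde d^\delta$ and $-\del_t G^\delta$ also contributes $-\tilde d^\delta$, one gets $\del_t\tilde{\tilde u}^\delta = (\psi'(t)\mbf 1-b^\delta)\cdot\nabla\tilde u^\delta - 2\tilde d^\delta$, which has no sign. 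If you flip the sign so that the source does cancel (taking $G^\delta = +\int_t^T\tilde d^\delta$), then $G^\delta(t,\cdot)$ is decreasing and $\tilde{\tilde u}^\delta(t,\cdot)$ becomes a difference of two decreasing functions---$ABV$ but not monotone in $x$---so $\tilde{\tilde u}^\delta$ is no longer monotone in $(t,x)$ jointly and Lemma~\ref{L:ABVcts} cannot be applied as in Theorem~\ref{T:TE:comparison}. Neither sign choice produces a function in $ABV([0,T]\times[-R,R]^d)$. Moreover, the fallback ``Lemma~\ref{L:ABVcts} on each slice plus Fubini'' is not a valid deduction: Fubini shows that $\{(t,x): f(t,\cdot)\text{ disc.\ at }x\}$ and $\{(t,x): f(\cdot,x)\text{ disc.\ at }t\}$ are null, but a function can be jointly discontinuous at a point where both one-variable slices are continuous, so these two null sets do not cover the joint discontinuity set. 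An actual proof of a.e.\ continuity must either establish joint $ABV$ regularity in $(t,x)$ by a more careful decomposition (e.g.\ exploiting $d = d_+ - d_-$ with the different monotonicities of $d_\pm$), or argue differently; as written this step does not close.
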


\subsection{The conservative equation}

In contrast to the theory for the nonconservative equation, solutions to \eqref{CE} belonging to Lebesgue spaces can be made sense of in the sense of distributions. However, under the general assumption \eqref{A:b}, these are not in general unique, as the simple example $b(x) = \sgn x$ on $\RR$ shows. Drawing once more an analogy with the setting studied in \cite{BJ,BJM,LSother} of half-Lipschitz velocity fields, the ``good'' (stable) solution of \eqref{CE} is identified using a particular solution formula, and, in \cite{LSother}, this is shown to coincide with the pushforward by the regular Lagrangian flow of the initial density. As discussed in subsection \ref{ss:reverse?} above, the former strategy is unavailable; however, in view of the theory of the nonconservative equation and the forward regular Lagrangian flow that was built in the previous section, we may define solutions by duality.

\subsubsection{Duality solutions}
For $0 \le s \le t \le T$< denote by $S^*(t,s)$ the adjoint of the solution operator $S(s,t)$.

\begin{theorem}\label{T:CE}
	Let $1 \le p \le \oo$ and $f_0 \in L^p_\loc$, and define $f(t,x) = S^*(t,0) f_0$. Then $f \in C([0,T], L^p_{\loc}(\RR^d))$ if $p < \oo$ or $f \in C([0,T], L^\oo_{\loc,\ws})$ if $p = \oo$, and $f$ is a distributional solution of \eqref{CE} and 
	\begin{equation}\label{pushforward}
		S^*(t,0)f_0 = (\phi_{t,0})_\sharp f_0,
	\end{equation}
	where $\phi$ is the flow obtained in Section \ref{sec:ODE}. If $(b^\eps)_{\eps > 0}$ is a family of smooth approximations of $b$ satisfying \eqref{A:b} uniformly in $\eps > 0$, $(f_0^\eps)_{\eps > 0}$ is a family of approximations of $f_0$ in $L^p_{\loc}$ for $1 \le p < \oo$, and $f^\eps$ is the corresponding solution of \eqref{CE}, then, as $\eps \to 0$, $f^\eps \to f$ weakly in $C([0,T], L^p_{\loc}(\RR^d))$. The same is true if $f^\eps$ is taken to be the unique smooth solution of
	\[
		\del_t f^\eps + \div (b f^\eps) = \eps \Delta f^\eps \quad \text{in } [0,T] \times \RR^d, \quad f^\eps(0,\cdot) = f^\eps_0,
	\]
	and analogous convergence statements hold in the weak-$\star$ sense if $p = \oo$.
\end{theorem}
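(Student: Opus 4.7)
The plan is to build $f$ by duality using the dual of the nonconservative semigroup $S$ from the previous subsection, identify it with the pushforward through the representation $S(0,t)\varphi = \varphi\circ\phi_{t,0}$ of Theorem \ref{T:TEform}, and then obtain the distributional equation and the stability statements by approximation. For $\varphi \in L^{p'}(\RR^d)$ with support in some ball $B_R$, I would define
\[
    \int f(t,x)\varphi(x)\,dx := \int f_0(x)\,S(0,t)\varphi(x)\,dx = \int f_0(x)\,\varphi(\phi_{t,0}(x))\,dx.
\]
By Theorem \ref{T:TEform}\eqref{formula:regLag} together with the a priori ODE estimate Lemma \ref{ODE:apriori}, $S(0,t)\varphi$ is compactly supported in a ball $B_{R''}$ uniformly in $t$ and bounded in $L^{p'}_\loc$ by $\|\varphi\|_{L^{p'}(B_R)}$ times an exponential factor in $\omega_1$; this defines a bounded linear functional on $L^{p'}_c$ and yields $f(t,\cdot)\in L^p_\loc$ with a uniform-in-$t$ norm estimate. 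The right-most expression above is exactly $\int\varphi\,d\bigl((\phi_{t,0})_\sharp(f_0\,dx)\bigr)$, giving the pushforward formula \eqref{pushforward}, and Remark \ref{R:regLag} ensures this pushforward is absolutely continuous with $L^p_\loc$ density.

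\textbf{Continuity in time.} Weak continuity in $L^p_\loc$ (respectively weak-$\star$ for $p=\oo$) is immediate from dominated convergence applied to $\int f_0(x)\varphi(\phi_{t,0}(x))\,dx$ for $\varphi\in C_c$: $t\mapsto\phi_{t,0}(x)$ is continuous for a.e.\ $x$ by Theorem \ref{T:regLag}, and a uniform dominating function comes from Lemma \ref{ODE:apriori}. To upgrade to strong continuity in $L^p_\loc$ when $p<\oo$, I would reduce by truncation and by splitting $f_0 = f_0^+ - f_0^-$ to the case of a nonnegative, bounded, compactly supported $f_0$; in that regime the pushforward is uniformly bounded in $L^\oo$ (from the regular Lagrangian bound in Remark \ref{R:regLag}) and the total mass is preserved, and strong $L^1$ continuity in $t$ then follows from the weak continuity combined with mass preservation via a Scheff\'e-type equi-integrability argument, with $L^p_\loc$ continuity obtained by interpolation against the uniform $L^\oo_\loc$ bound.

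\textbf{Distributional equation and stability.} For smooth regularizations $b^\eps,f_0^\eps$ of $b$ and $f_0$, the classical solutions $f^\eps$ enjoy the same duality identity $\int f^\eps(t,\cdot)\varphi = \int f_0^\eps\,S^\eps(0,t)\varphi$, with $S^\eps$ the smooth semigroup for $b^\eps$ (or the stochastic semigroup of Theorem \ref{T:ODE:epsW} in the viscous case). Theorem \ref{T:TEform}\eqref{formula:conv} gives $S^\eps(0,t)\varphi \to S(0,t)\varphi$ strongly in $L^{p'}_\loc$ with uniformly compact support; combined with $f_0^\eps\to f_0$ strongly in $L^p_\loc$ and the uniform $L^\oo_t L^p_\loc$ bound on $f^\eps$, this yields pointwise-in-$t$ convergence of the pairing, and equicontinuity in $t$ uniform in $\eps$ follows from the regularized equation $\partial_t f^\eps + \div(b^\eps f^\eps)=0$, delivering the claimed weak convergence in $C([0,T], L^p_\loc)$. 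Finally, passing to the limit in this equation tested against $\psi\in C_c^\oo((0,T)\times\RR^d)$ gives the distributional equation for $f$: the only nontrivial point is that $b^\eps\nabla\psi\to b\nabla\psi$ strongly in $L^{p'}_\loc$, which holds because \eqref{A:b} forces $b$ to be locally bounded (increasing functions are locally bounded) and hence regularizations converge strongly in every $L^r_\loc$, $r<\oo$. The main obstacle I foresee is the strong time-continuity for $p<\oo$: this does not follow from the duality definition or weak continuity alone, and relies essentially on the pushforward structure together with the regular Lagrangian property to supply the equi-integrability needed in the Scheff\'e-type step.
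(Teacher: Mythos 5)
Your overall strategy matches the paper's: define $f$ by duality with $S(0,t)$, read off the pushforward formula and the $L^p_\loc$ membership from the regular Lagrangian property, pass to the limit in the regularized continuity equation to get the distributional equation, and use the dual pairing with solutions of \eqref{intro:TE} to identify the weak limit and upgrade the subsequential convergence to convergence of the full family. All of those steps are correct as you describe them.

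There is, however, a genuine gap in your argument for strong time-continuity in $L^p_\loc$ when $p<\oo$. You reduce to nonnegative bounded compactly supported $f_0$ and then claim that weak continuity together with conservation of mass and the uniform $L^\oo$ bound yields strong $L^1$ continuity ``via a Scheff\'e-type equi-integrability argument.'' This implication is false: Scheff\'e's lemma needs pointwise (a.e.) convergence, which you do not have, and weak $L^1$ convergence plus $L^1$-norm conservation plus nonnegativity plus a uniform $L^\oo$ bound does not force strong $L^1$ convergence --- consider $g_n(x)=1+\sin(nx)$ on a bounded interval, which satisfies all of those hypotheses but oscillates without converging strongly. Mass preservation only pins down the $L^1$ norm, which is precisely the exponent for which norm convergence does not combine with weak convergence to give strong convergence. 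The correct ingredient is $L^p$-norm continuity for $p>1$: the Jacobian bound underlying the regular Lagrangian property gives, as in \eqref{CE:apriori}, a constant $e^{d(p-1)(\omega_1(t+h)-\omega_1(t))}$ in front of the $L^p$ estimate, which tends to $1$ as $h\to 0$; combined with weak lower semicontinuity of the norm this forces $\|f(t+h,\cdot)\|_{L^p(B_R)} \to \|f(t,\cdot)\|_{L^p(B_R)}$, and then weak convergence plus norm convergence gives strong convergence by uniform convexity (Radon--Riesz) for $p>1$, with the $p=1$ case obtained locally. This is exactly the mechanism in Remark \ref{R:strongcty}, which the paper invokes at the end of its proof; you should replace the Scheff\'e step with that argument.
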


\begin{proof}
	The identity \eqref{pushforward} follows immediately from \eqref{TE:semigroup}; observe that it is well-defined for $f_0 \in L^p_\loc$ in view of the regular Lagrange property \eqref{regLag}.
	
	We now prove the convergence statements, and it suffices to prove the results for $p < \oo$, since $L^\oo_\loc \subset L^p_\loc$ for any $p < \oo$. Let $(b^\eps)_{\eps > 0}$, $(f^\eps_0)_{\eps > 0}$, $(f^\eps)_{\eps > 0}$ be as in the statement. In view of the lower bound on the divergence, it is straightforward to prove a priori $L^p$ bounds. Namely, for all $R > 0$, there exists a modulus of continuity $\omega_R: [0,\oo) \to [0,\oo)$ depending only on $T$ and $C_0$ from \eqref{A:b}, such that, for all $\eps > 0$ and $t \in [0,T]$,
	\begin{equation}\label{CE:apriori}
		\int_{B_R} |f^\eps(t,x)|^p dx \le e^{d(p-1)\omega_1(t)} \int_{B_{R + \omega_R(t)}} |f^\eps_0(x)|^p dx.
	\end{equation}
	It follows that there exists a subsequence $\eps_n \xrightarrow{n \to \oo} 0$ and $f \in L^\oo([0,T], L^p_\loc(\RR^d))$ such that $f^{\eps_n} \to f$ weakly in $L^\oo([0,T], L^p_\loc(\RR^d))$. Sending $n \to \oo$ for $\eps = \eps_n$ in the equation
	\[
		\del_t f^{\eps_n} + \div (b^{\eps_n} f^{\eps_n}) = 0,
	\]
	using the fact that $b^\eps$ converges strongly to $b$ in $L^p_\loc$ for any $p < \oo$, we find that $f$ is a distributional solution of \eqref{CE}, and thus, moreover, $f \in C([0,T], L^p_{\loc,\w}(\RR^d))$. The weak-$\star$ convergence statements when $p = \oo$ are proved analogously.
	
	Fix $\oline u \in C^1_c(\RR^d)$, and $t_0 \in [0,T]$, and let $u^\eps$ denote the solution of \eqref{TE} with terminal value $u^\eps(t_0,\cdot) = \oline{u}$; in view of the results of the previous subsection, $u^\eps$ has compact support in $[0,T] \times \RR^d$ uniformly in $\eps > 0$.
	
	Exploiting the duality of the equations and integrating by parts gives
	\[
		\int_{\RR^d} \oline{u}(x)f^\eps(t_0, x)dx = \int_{\RR^d} u^\eps(0,x) f_0^\eps(x) dx.
	\]
	Sending $n \to \oo$ for $\eps = \eps_n$ gives the identity
	\[
		\int_{\RR^d} \oline{u}(x) f(t_0,x)dx = \int_{\RR^d} u(0,x) f_0(x)dx.
	\]
	It follows that $f(t_0,\cdot) = S(t_0,0)f_0$, and we therefore have the full convergence for all $\eps \to 0$. An identical argument can be used to prove the convergence statement for the vanishing viscosity limit.
	
	Finally, the the fact that $f$ is continuous from $[0,T]$ into $L^p_\loc(\RR^d)$ with the strong topology is a consequence of the continuity of the upper bound in the $L^p$-estimate \eqref{CE:apriori} (see Remark \ref{R:strongcty}).
\end{proof}

\begin{remark}\label{R:Lpstrong}
	It is clear from the proof above that, when $1 < p < \oo$, the initial functions $f^\eps_0$ need only converge weakly in $L^p_\loc$ to $f_0$ as $\eps \to 0$.
\end{remark}

\begin{remark}\label{R:strong?}
	It is not clear whether the convergence of regularizations $f^\eps$ to the duality solution $f$ of \eqref{CE} can be upgraded to strong convergence, except when $d = 1$, in which case \eqref{A:b} coincides with a half-Lipschitz condition on $b$ (see \cite{LSother}).
\end{remark}

\subsubsection{Nonnegative solutions and renormalization}

It is by now standard in the theory of the continuity equation \eqref{CE} that there is uniqueness of nonnegative solutions when the measure $f_0$ is concentrated on sets where the ODE \eqref{ODE} has a unique solution; see \cite{A04, ACetraro}. In view of Theorem \ref{T:regLag}, we then have the following:

\begin{theorem}\label{T:CEunique}
	If $1 \le p \le \oo$, $f_0 \in L^p_\loc(\RR^d)$, and $f \ge 0$, then $f := S^*(\cdot,0) f_0$ is the unique nonnegative distributional solution of \eqref{CE}.
\end{theorem}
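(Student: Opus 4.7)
The existence of a nonnegative distributional solution is already contained in Theorem \ref{T:CE}. Indeed, $f := S^*(\cdot,0) f_0$ is shown there to solve \eqref{CE} distributionally, and the identification \eqref{pushforward} writes $f(t,\cdot) = (\phi_{t,0})_\sharp f_0$, which is manifestly nonnegative whenever $f_0$ is. Thus everything reduces to proving uniqueness.

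For uniqueness, the plan is to invoke the superposition principle of Ambrosio \cite{A04} (see also the surveys \cite{ACetraro, Am_survey_17}). Let $g \in C([0,T], L^p_\loc)$ be any nonnegative distributional solution of \eqref{CE} with $g(0,\cdot) = f_0$. The superposition principle produces a nonnegative (locally finite) measure $\eta$ on the space of absolutely continuous curves $C([0,T]; \RR^d)$ which is concentrated on integral solutions of the ODE
\[
	\dot \gamma(t) = b(t, \gamma(t)) \quad \text{for a.e. } t \in [0,T],
\]
and which represents $g$ via its time-marginals: $g(t,\cdot)\, dx = (e_t)_\sharp \eta$, where $e_t(\gamma) := \gamma(t)$.

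To conclude, I would combine this representation with the almost-everywhere uniqueness of the ODE \eqref{ODE} from Theorem \ref{T:regLag}. Let $N \subset \RR^d$ be the Lebesgue-null set outside of which \eqref{ODE} admits a unique absolutely continuous solution $t \mapsto \phi_{t,0}(x)$. Since $f_0 \in L^p_\loc$, the measure $f_0\, dx = (e_0)_\sharp \eta$ is absolutely continuous with respect to Lebesgue measure, so $(e_0)_\sharp \eta(N) = 0$; equivalently, $\eta$-almost every curve $\gamma$ starts in $\RR^d \setminus N$ and therefore coincides with $\phi_{\cdot,0}(\gamma(0))$. Pushing forward to arbitrary time $t \in [0,T]$,
\[
	g(t,\cdot)\, dx = (e_t)_\sharp \eta = (\phi_{t,0})_\sharp (e_0)_\sharp \eta = (\phi_{t,0})_\sharp f_0\, dx = f(t,\cdot)\, dx,
\]
which gives $g = f$.

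The only real obstacle I foresee is making the superposition principle applicable in the setting $g, f_0 \in L^p_\loc$, since the classical statement is phrased for finite probability measures. I would appeal to the localized version in the surveys cited above, whose sole additional input is the local integrability bound $\int_0^T \int_{B_R} |b(t,x)|\, g(t,x)\, dx\, dt < \infty$ for every $R > 0$. This is immediate from the linear growth assumption $|b(t,x)| \le C_0(t)(1+|x|)$ in \eqref{A:b} together with $g \in L^\infty_t L^p_{x,\loc}$; alternatively one can cut off $g$ by a compactly supported weight adapted to the characteristics and pass to the limit.
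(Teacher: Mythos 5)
Your argument is correct, but it takes a different route from the one written out in the paper. In fact, the paper explicitly acknowledges your approach in the remark preceding the theorem: ``It is by now standard in the theory of the continuity equation \eqref{CE} that there is uniqueness of nonnegative solutions when the measure $f_0$ is concentrated on sets where the ODE \eqref{ODE} has a unique solution; see \cite{A04, ACetraro}. In view of Theorem \ref{T:regLag}, we then have the following.'' So the superposition-principle argument you give is exactly the ``standard'' one the authors have in mind, and your handling of the details is sound: you correctly reduce the localized integrability hypothesis to the linear-growth bound in \eqref{A:b}, and you correctly use the fact that $(e_0)_\sharp \eta = f_0\,dx$ is absolutely continuous so that $\eta$-a.e.\ curve starts at a point of a.e.-uniqueness for \eqref{ODE:inc} (note that any integral solution of the ODE, for a Borel representative $b_* \le b \le b^*$, is automatically a solution of the differential inclusion, so Theorem \ref{T:regLag} applies). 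The paper, however, deliberately gives a different proof: it tests $f$ against the one-sided mollifications $u_\eps = u*\rho_\eps$, $u^\eps = u*\rho^\eps$ of the duality solution $u = S(\cdot,t_0)\oline{u}$, exploits the sub/supersolution inequalities from Definition \ref{D:TEsubsuper} together with $f,\psi\ge 0$ to squeeze out the duality identity $\int f(t_0,\cdot)\oline{u} = \int f_0\, S(0,t_0)\oline{u}$, and then extends by linearity and density. The authors do this precisely to ``emphasize again that the theory in this section can be developed independently of the analysis of the ODE \eqref{ODE} in the previous section,'' so the two arguments differ in spirit: yours leans on the Lagrangian/superposition machinery (which requires Theorem \ref{T:regLag} and the push-forward identification \eqref{pushforward}), whereas the paper's proof is self-contained within the PDE duality framework and the one-sided commutator structure. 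Your proof is shorter where the ODE uniqueness can be taken as given; the paper's proof is more informative about the structure of the duality, which is what feeds into the nonlinear fixed-point arguments of Section \ref{sec:NTE}.
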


We present here an alternative proof using the characterization of $f$ as the duality solution, in order to emphasize again that the theory in this section can be developed independently of the analysis of the ODE \eqref{ODE} in the previous section.

\begin{proof}[Proof of Theorem \ref{T:CEunique}]
	By Theorem \ref{T:CE}, $f  = S^*(\cdot,0)f_0$ is a distributional solution, and its nonnegativity can be seen through an approximation argument, since weak convergence preserves nonnegativity.
	
	Assume now that $f \in C([0,T], L^p_\loc(\RR^d))$ is a nonnegative distributional solution of \eqref{CE}. Fix $t_0 \in [0,T]$ and a decreasing function $\uline{u}: \RR^d \to \RR$, and let $u = S(\cdot,t_0)\oline{u}$. Then $u(t,\cdot)$ is decreasing for $t \in [0,t_0]$, and is a solution of \eqref{TE} in the sense of Definition \ref{D:TEsubsuper}. In particular, $u_\eps = u * \rho_\eps$ and $u^\eps = u * \rho^\eps$ satisfy respectively
	\[
		-\del_t u_\eps - b \cdot \nabla u_\eps \ge 0 \quad \text{and} \quad -\del_t u^\eps - b \cdot \nabla u^\eps \le 0 \quad \text{in } [0,t_0] \times \RR^d.
	\]
	Let $\psi \in C^1_c(\RR^d)$ with $\psi \ge 0$. Using $u_\eps(t,x) \psi(x)$ as a test function for $f$, we find that
	\begin{align*}
		\int_{\RR^d} &f(t_0,x) (\oline{u} * \rho_\eps)(x)\psi(x)dx - \int_{\RR^d} f_0(x) u_\eps(0,x)\psi(x)dx\\
		&= \int_0^{t_0} \int_{\RR^d} f(s,x) \left[ \del_s (u_\eps(s,x) \psi(x)) + b(s,x) \cdot \nabla (u_\eps(s,x) \psi(x) ) \right]dxds\\
		&\le \int_0^{t_0} \int_{\RR^d} f(s,x) u_\eps(s,x) b(s,x) \cdot \nabla \psi(x)dxds,
	\end{align*}
	where we have used the nonnegativity of $\psi$ and $f$. Similarly, 
	\[
		\int_{\RR^d} f(t_0,x) (\oline{u} * \rho^\eps)(x)\psi(x) dx - \int_{\RR^d} f_0(x) u^\eps(0,x)\psi(x)dx
		\ge \int_0^{t_0} \int_{\RR^d} f(s,x) u^\eps(s,x) b(s,x) \cdot \nabla \psi(x)dxds.
	\]
	Sending $\eps \to 0$, we conclude that
	\[
		\int_{\RR^d} f(t_0,x) \oline{u}(x) \psi(x) dx - \int_{\RR^d} f_0(x) S(0,t_0)\oline{u}(x)\psi(x) dx
		= \int_0^{t_0} \int_{\RR^d} f(s,x) u(s,x) b(s,x) \cdot \nabla \psi(x)dx.
	\]
	By linearity, the same is true for all increasing $\oline{u}$ as well, and therefore, by a density argument, for all $\oline{u} \in L^{p'}_\loc(\RR^d)$. In particular, we take $\oline{u} \in C_c(\RR^d)$, in which case $S(\cdot,t_0)\oline{u}$ is supported in $[0,T] \times B_R$ for some $R > 0$, by the finite speed of propagation property. To conclude, we may then take $\psi \in C^1_c(\RR^d)$ such that $\psi \equiv 1$ in $B_R$, and therefore also $\nabla \psi = 0$ in $B_R$.
\end{proof}

\begin{corollary}\label{C:criterion}
	Assume that $f$ and $|f|$ are both distributional solutions of \eqref{CE}. Then $f$ is the unique duality solution of \eqref{CE}; that is, $f(t,\cdot) = S(t,0) f(0,\cdot)$.
\end{corollary}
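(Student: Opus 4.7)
The proof is a short linearity argument built on top of Theorem \ref{T:CEunique}. My plan is to split $f$ into its positive and negative parts and apply the uniqueness of nonnegative distributional solutions to each separately.

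Concretely, I define
\[
	f_+ := \frac{|f| + f}{2} \quad \text{and} \quad f_- := \frac{|f| - f}{2},
\]
so that $f_\pm \ge 0$ and $f = f_+ - f_-$. The first observation is that, because both $f$ and $|f|$ are, by hypothesis, distributional solutions of \eqref{CE}, and because the equation \eqref{CE} is linear in the unknown, the functions $f_+$ and $f_-$ are also distributional solutions of \eqref{CE} (with initial data $(f(0,\cdot))_+$ and $(f(0,\cdot))_-$ respectively). Since $f \in C([0,T], L^p_\loc)$ and $|f| \in C([0,T], L^p_\loc)$, the same is true of $f_\pm$, so each falls within the scope of Theorem \ref{T:CEunique}.

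Applying Theorem \ref{T:CEunique} to the nonnegative distributional solutions $f_+$ and $f_-$ yields, for every $t \in [0,T]$,
\[
	f_+(t,\cdot) = S^*(t,0) f_+(0,\cdot) \quad \text{and} \quad f_-(t,\cdot) = S^*(t,0) f_-(0,\cdot).
\]
Subtracting these two identities and using the linearity of the adjoint solution operator $S^*(t,0)$, I obtain
\[
	f(t,\cdot) = f_+(t,\cdot) - f_-(t,\cdot) = S^*(t,0)\bigl( f_+(0,\cdot) - f_-(0,\cdot) \bigr) = S^*(t,0) f(0,\cdot),
\]
which is exactly the claim that $f$ coincides with its duality solution. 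There is essentially no obstacle here: the only point requiring any care is to verify that $f_+$ and $f_-$ inherit the distributional solution property from $f$ and $|f|$, which is immediate from linearity, and to confirm that Theorem \ref{T:CEunique} indeed applies in the appropriate $L^p_\loc$ class, which follows because $|f| \in C([0,T], L^p_\loc)$ whenever $f$ is.
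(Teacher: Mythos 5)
Your proof is correct and is precisely the linearity argument the paper intends (the corollary is stated without proof, immediately following Theorem \ref{T:CEunique} and the remark that this is its ``consequence''): decompose $f$ via $f_\pm = \tfrac{1}{2}(|f| \pm f)$, note both are nonnegative distributional solutions, apply the uniqueness of Theorem \ref{T:CEunique} to each, and subtract using linearity of $S^*(t,0)$.
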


Corollary \ref{C:criterion} gives a sufficient criterion for a distributional solution to be the correct duality solution. However, we do not know whether this condition is necessary. In other words, it is an open question whether $|S^*(t,s)\oline{f}| = S^*(t,s) |\oline f|$ for any $\oline f \in L^p_\loc$. This renormalization property is equivalent to a kind of injectivity for the forward flow $\phi_{t,s}$, which we describe with the next result.

\begin{proposition}\label{P:renorm?}
	Let $0 \le s \le t \le T$, $1 \le p \le \oo$, and $\oline{f} \in L^p_\loc(\RR^d)$. Then the following statements are equivalent:
	\begin{enumerate}[(a)]
	\item\label{renorm} $|S^*(t,s)\oline{f}| = S^*(t,s) |\oline f|$.
	\item\label{inject} If $A_+ := \left\{ \oline{f} > 0 \right\}$ and $A_- = \left\{ \oline{f} < 0 \right\}$, then
	\[
		\int_{\RR^d} \phi_{t,s}^\sharp \ind_{A_+}(x) \phi_{t,s}^\sharp \ind_{A_-}(x)dx = 0.
	\]
	\end{enumerate}
\end{proposition}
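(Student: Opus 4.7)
The plan is to decompose $\oline f$ into its positive and negative parts $\oline f_{\pm} := (\pm \oline f)\vee 0$, which are nonnegative and supported respectively on $A_+$ and $A_-$. Since $S^*(t,s) = (\phi_{t,s})_\sharp$ is linear and order-preserving on nonnegative functions (both properties immediate from the representation \eqref{pushforward}), one has $S^*(t,s)\oline f = S^*(t,s)\oline f_+ - S^*(t,s)\oline f_-$ and $S^*(t,s)|\oline f| = S^*(t,s)\oline f_+ + S^*(t,s)\oline f_-$, with both summands nonnegative a.e. Using the elementary fact that for $a,b \ge 0$ one has $|a - b| = a + b$ if and only if $ab = 0$, statement \eqref{renorm} is equivalent to $S^*(t,s)\oline f_+ \cdot S^*(t,s)\oline f_- = 0$ a.e., and hence, since both factors are nonnegative, to
\[
	\int_{\RR^d} S^*(t,s)\oline f_+(x) \cdot S^*(t,s)\oline f_-(x) \, dx = 0.
\]

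The central technical step is to show that for any nonnegative $g \in L^p_\loc$, the sets $\{S^*(t,s) g > 0\}$ and $\{S^*(t,s) \ind_{\{g > 0\}} > 0\}$ coincide modulo Lebesgue-null sets. This will be done with a two-sided sandwich. On the one hand, the truncations $g_k := g \ind_{\{g \le k\}}$ satisfy $g_k \le k \ind_{\{g > 0\}}$, so by positivity $S^*(t,s) g_k \le k\, S^*(t,s)\ind_{\{g > 0\}}$ and hence $\{S^*(t,s) g_k > 0\} \subset \{S^*(t,s) \ind_{\{g > 0\}} > 0\}$. On the other hand, the level sets $B_\eps := \{g > \eps\}$ satisfy $\eps \ind_{B_\eps} \le g$, giving $S^*(t,s) \ind_{B_\eps} \le \eps^{-1} S^*(t,s) g$ and therefore $\{S^*(t,s)\ind_{B_\eps} > 0\} \subset \{S^*(t,s) g > 0\}$. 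The $L^1_\loc$-continuity of $S^*(t,s)$ established in Theorem \ref{T:CE}, together with the monotonicity of the sequences $(S^*(t,s) g_k)_k$ and $(S^*(t,s)\ind_{B_\eps})_\eps$ (which forces their pointwise a.e.\ limits to agree with their $L^1_\loc$-limits), yields $\bigcup_k \{S^*(t,s) g_k > 0\} = \{S^*(t,s) g > 0\}$ and $\bigcup_\eps \{S^*(t,s)\ind_{B_\eps} > 0\} = \{S^*(t,s)\ind_{\{g > 0\}} > 0\}$ up to null sets. Taking unions in the two inclusions above gives the claimed equality of supports.

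Applying this to $g = \oline f_\pm$, the functions $S^*(t,s)\oline f_+$ and $S^*(t,s)\ind_{A_+}$ share the same support modulo null sets, as do $S^*(t,s)\oline f_-$ and $S^*(t,s)\ind_{A_-}$. Since two nonnegative measurable functions have a vanishing product integral if and only if their supports intersect in a null set, the displayed condition of the first paragraph is equivalent to
\[
	\int_{\RR^d} (\phi_{t,s})_\sharp \ind_{A_+}(x) \cdot (\phi_{t,s})_\sharp \ind_{A_-}(x) \, dx = 0,
\]
which is \eqref{inject}, completing the chain of equivalences. The main obstacle is the argument of the second paragraph: one must verify that the monotone pointwise a.e.\ limit of $S^*(t,s) g_k$ truly coincides with $S^*(t,s) g$, a property which rests crucially on the order-preserving pushforward structure of $S^*(t,s)$ and is not automatic for a general $L^1_\loc$-continuous operator.
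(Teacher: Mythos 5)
Your proof is correct, and it follows the same overall strategy as the paper's: reduce both \eqref{renorm} and \eqref{inject} to the statement that the essential supports of $S^*(t,s)\oline f_+$ and $S^*(t,s)\oline f_-$ are disjoint modulo null sets, after first establishing the key lemma that $\{S^*(t,s)g > 0\}$ and $\{S^*(t,s)\ind_{\{g>0\}} > 0\}$ coincide up to a null set for nonnegative $g$. Where you differ is the proof of that lemma. The paper uses the pushforward representation directly: for any measurable $B$,
\[
\int_B S^*(t,s)\rho\,dx = \int \rho\,\ind\{\phi_{t,s}\in B\}\,dx
\quad\text{and}\quad
\int_B S^*(t,s)\ind_A\,dx = |\{x\in A : \phi_{t,s}(x)\in B\}|,
\]
so that vanishing of one is manifestly equivalent to vanishing of the other; this is a one-line argument once the identities are written. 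Your sandwich via the truncations $g\,\ind_{\{g\le k\}}$ and the level sets $\ind_{\{g>\eps\}}$, combined with order preservation, $L^1_\loc$-continuity, and the compatibility of monotone a.e.\ limits with $L^1_\loc$-limits, is more elaborate but relies only on abstract properties of the operator (positivity and continuity on $L^1_\loc$) rather than the explicit integral formula. Both arguments are sound. One small stylistic difference: you pass through the identity $|a-b|=a+b \Leftrightarrow ab=0$ for $a,b\ge 0$, whereas the paper passes through $(S^*\oline f)_\pm = S^*\oline f_\pm$; these are equivalent reformulations.
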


\begin{proof}
	Let $\rho \in L^p_\loc(\RR^d)$ with $\rho \ge 0$, and set $A := \left\{ x : \rho(x) > 0 \right\}$. We first claim that
	\begin{equation}\label{supports}
		\left| \left\{ x : S(t,s)\rho(x) > 0 \right\} \triangle \left\{ x : S(t,s) \ind_A(x) > 0 \right\} \right| = 0.
	\end{equation}
	Let $B \subset \RR^d$ be measurable. Then by Theorem \ref{T:CE},
	\[
		\int_B S^*(t,s) \rho(x) dx = \int \rho(x) \ind\{ \phi_{t,s}(x) \in B \}dx
		\quad \text{and} \quad
		\int_B S^*(t,s) \ind_A(x)dx = |  \left\{x \in A : \phi_{t,s}(x) \in B \right\} |.
	\]
	It follows that $(S^*(t,x) \rho)\ind_B = 0$ a.e. if and only if $(S^*(t,x) \ind_A) \ind_B = 0$ a.e., whence \eqref{supports}.

	It now follows that \eqref{inject} is equivalent to
	\begin{equation}
		\left| \left \{ S^*(t,s) \oline f_+ \ne 0 \right\} \cap \left\{ S^*(t,x) \oline f_- \ne 0 \right\} \right| = 0.
	\end{equation}
	By linearity, 
	\[
		S^*(t,s)\oline f = S^*(t,s)\oline f_+ - S^*(t,x) \oline f_-,
	\]
	and so this is equivalent to $\left(S^*(t,x)\oline f \right)_\pm = S^*(t,s) \oline f_{\pm}$, and thus \eqref{renorm}.
\end{proof}

Either of the two renormalization properties would follow from the strong convergence of regularizations in Theorem \ref{T:CE}. However, we do not know at this time whether the strong convergence actually holds; see Remark \ref{R:strong?}.

The characterization in part \eqref{inject} of Proposition \ref{P:renorm?} is a reformulation of the renormalization property in terms of the injectivity of the flow. For instance, we have the following.

\begin{lemma}\label{L:inject?}
	Assume that $b$ satisfies \eqref{A:b}, let $0 \le s \le t \le T$ and $\oline{f} \in L^p_\loc$, and suppose that
	\[
		\left| \left\{ x \in \RR^d : \phi_{t,s}^{-1}(\{x\}) \cap \{ \oline f > 0 \} \ne \emptyset \text{ and } \phi_{t,s}^{-1}(\{x\}) \cap \{ \oline f < 0 \} \ne \emptyset \right\} \right| = 0.
	\]
	Then the renormalization property in Proposition \ref{P:renorm?} is satisfied.
\end{lemma}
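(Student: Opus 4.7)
The plan is to verify criterion (b) of Proposition~\ref{P:renorm?}, that is, to show $\int g_+ g_- \, dx = 0$, where $g_\pm := \phi_{t,s}^\sharp \ind_{A_\pm} \in L^1_\loc$ are the pushforward densities, well-defined in view of the regular Lagrangian property of Theorem~\ref{T:regLag}. The strategy is to establish that $\{g_\pm > 0\}$ is contained, up to a null set, in the image $\phi_{t,s}(A_\pm)$, so that the pointwise hypothesis translates directly into the supports of $g_+$ and $g_-$ being essentially disjoint.

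To give the images a precise measure-theoretic meaning, I will work with the maximal flow $\phi_{t,s}^+$ from Proposition~\ref{P:maxmin} as the representative of $\phi_{t,s}$. Since $\phi_{t,s}^+$ is upper-semicontinuous it is Borel measurable, and by Proposition~\ref{P:shiftover} it coincides with $\phi_{t,s}$ almost everywhere. The advantage is that the Borel images $\phi_{t,s}^+(A_+)$ and $\phi_{t,s}^+(A_-)$ are analytic sets, and hence Lebesgue-measurable, so the hypothesis of the lemma takes the clean form $|\phi_{t,s}^+(A_+) \cap \phi_{t,s}^+(A_-)| = 0$. The key step is then the inclusion $\{g_- > 0\} \subset \phi_{t,s}^+(A_-)$ up to a null set: if $B := \{g_- > 0\} \setminus \phi_{t,s}^+(A_-)$, then $B$ is measurable and disjoint from $\phi_{t,s}^+(A_-)$, so no $y \in A_-$ can satisfy $\phi_{t,s}^+(y) \in B$, and the pushforward identity from Theorem~\ref{T:CE} yields $\int_B g_- = |A_- \cap (\phi_{t,s}^+)^{-1}(B)| = 0$, forcing $|B| = 0$. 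The symmetric containment $\{g_+ > 0\} \subset \phi_{t,s}^+(A_+)$ up to null sets follows identically.

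Combining the two, $\{g_+ > 0\} \cap \{g_- > 0\} \subset \phi_{t,s}^+(A_+) \cap \phi_{t,s}^+(A_-)$ modulo a null set, and the right-hand side has Lebesgue measure zero by hypothesis. Hence $g_+ g_- = 0$ almost everywhere, the integral vanishes, and Proposition~\ref{P:renorm?}(b) is verified. The only delicate point is the measurability of the images $\phi_{t,s}^+(A_\pm)$; this is not automatic for a merely a.e.-defined map, and is the reason for passing to the Borel representative $\phi_{t,s}^+$, after which the classical fact that Borel maps send Borel sets to analytic (hence Lebesgue-measurable) sets settles the matter.
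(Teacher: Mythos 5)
Your proof follows the same route as the paper's: via the pushforward identity $\int_B \phi^\sharp_{t,s}\ind_{A_\pm}\,dx = |A_\pm \cap \phi_{t,s}^{-1}(B)|$ from Theorem \ref{T:CE}, show that the (measurable) support of $\phi^\sharp_{t,s}\ind_{A_\pm}$ is contained, up to a null set, in the image of $A_\pm$ under the flow. The paper phrases this with the complementary sets $B_\pm := \{y : \phi_{t,s}^{-1}(\{y\}) \cap A_\pm = \emptyset\}$, so the two arguments are really the same modulo taking complements ($B_\pm^c = \phi_{t,s}^+(A_\pm)$ for the chosen representative). What you add is an explicit justification that the images $\phi^+_{t,s}(A_\pm)$ are Lebesgue measurable, via the upper-semicontinuous (hence Borel) representative $\phi^+_{t,s}$ and the classical fact that Borel images of Borel sets are analytic. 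The paper sidesteps this by testing against arbitrary finite-measure measurable subsets $B \subset B_+$, but then asserts ``$\phi^\sharp_{t,s}\ind_{A_+} = 0$ a.e. in $B_+$'' without verifying that $B_+$ is measurable, so your observation genuinely tightens the argument.

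One small point worth repairing: $A_\pm = \{\pm\oline f > 0\}$ are Lebesgue measurable but need not be Borel, since $\oline f$ is only an $L^p_\loc$ function. Hence $\phi^+_{t,s}(A_\pm)$ is not literally the Borel image of a Borel set, and the analytic-set fact does not apply directly. The fix is routine and does not disturb your argument: choose Borel sets $A_\pm' \subset A_\pm$ with $|A_\pm \setminus A_\pm'| = 0$. Then $\phi^+_{t,s}(A_\pm')$ is analytic, hence measurable, and your computation shows $\{g_\pm > 0\} \subset \phi^+_{t,s}(A_\pm')$ up to a null set, because in the key step $|A_- \cap (\phi^+_{t,s})^{-1}(B)| \le |A_- \setminus A_-'| = 0$ whenever $B$ is disjoint from $\phi^+_{t,s}(A_-')$. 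Since $\phi^+_{t,s}(A_\pm') \subset \phi^+_{t,s}(A_\pm)$, the intersection $\phi^+_{t,s}(A_+') \cap \phi^+_{t,s}(A_-')$ is contained in the hypothesis set and hence null, and the conclusion follows as you wrote.
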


\begin{proof}
	It suffices to establish part \eqref{inject} in Proposition \ref{P:renorm?}. Let $A_\pm = \{ \pm \oline f > 0\}$. Then, for any measurable $B \subset \RR^d$,
	\[
		\int_B \phi_{t,s}^\sharp \ind_{A_\pm}(x)dx = |A_\pm \cap \phi^{-1}_{t,s}(B) |.
	\]
	Let $B$ be any finite-measure set contained in 
	\[
		B_+ := \left\{ y \in \RR^d: \phi^{-1}_{t,s}(\{y\}) \subset A_+^c \right\}.
	\]
	Then
	\[
		\int_B \phi_{t,s}^{\sharp} \ind_{A_+}dx = |A_+ \cap \phi^{-1}_{t,s}(B) | = 0.
	\]
	It follows that $\phi_{t,s}^\sharp \ind_{A_+} = 0$ a.e. in $B_+$. Similarly, $\phi_{t,s}^\sharp \ind_{A_-} = 0$ a.e. in 
	\[
		B_- := \left\{ y \in \RR^d: \phi^{-1}_{t,s}(\{y\}) \subset A_-^c \right\}.
	\]
	We conclude that $\phi_{t,s}^\sharp \ind_{A_+}(y)$ and $\phi_{t,s}^\sharp \ind_{A_-}(y)$ are both positive only if $\phi_{t,s}^{-1}(\{y\})$ intersects both $A_+$ and $A_-$. In view of the assumption of the lemma, the set of such $y$ has measure $0$. This establishes property \eqref{inject} of Proposition \ref{P:renorm?}.
\end{proof}

\begin{remark}
	Observe that, when $d = 1$, $\phi_{t,s}$ is actually injective for any $0 \le s \le t \le T$. Notice that, for the drifts introduced in Examples \ref{ex:line} or \ref{ex:diagonal}, the corresponding flow has the property that $\phi_{t,s}^{-1}(\{y\})$ is at most a singleton for any $0 \le s \le t \le T$ and a.e. $y \in \RR^d$.
\end{remark}

In view of the regular Lagrangian property, a kind of injectivity of the flow can be seen for particular ordered sets. Suppose that $x,y \in \RR^d$, $x \le y$, and $\phi_{t,s}(x) = \phi_{t,s}(y)$. Then it cannot be true that $x_i < y_i$ for all $i = 1,2,\ldots, d$. If this were the case, then $\phi_{t,s}$ would be constant on the cube $[x,y]$, which violates the regular Lagrange property. The following then follows from Lemma \ref{L:inject?}.

\begin{proposition}\label{P:orderedf}
	Assume that $\oline f \in L^p_\loc(\RR^d)$ and, for a.e. $x,y \in \RR^d$ such that $\oline{f}(x) > 0$ and $\oline{f}(y) < 0$, either $x < y$ or $y < x$. Then renormalization is satisfied for $S^*(t,s) \oline{f}$ for all $ 0 \le s \le t \le T$.
\end{proposition}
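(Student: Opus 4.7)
The plan is to verify, via Lemma \ref{L:inject?}, that the bad set
\[
E := \{ z \in \RR^d : \phi_{t,s}^{-1}(\{z\}) \cap A_+ \ne \emptyset \text{ and } \phi_{t,s}^{-1}(\{z\}) \cap A_- \ne \emptyset \}
\]
is Lebesgue null, where $A_\pm := \{\pm \oline{f} > 0\}$. Throughout, write $\phi = \phi_{t,s}$ and take its monotone representative $\phi = \phi^+$ from Proposition \ref{P:maxmin}, so $\phi$ is everywhere increasing with respect to the partial order \eqref{order}.

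For each $z \in E$, I would extract measurably a pair $x_z \in A_+$ and $y_z \in A_-$ satisfying $\phi(x_z) = \phi(y_z) = z$. By the hypothesis, the set of incomparable pairs in $A_+ \times A_-$ is $\RR^{2d}$-null; combined with a density argument using the absolute continuity of the pushforwards $\phi_\sharp \ind_{A_\pm}$ (guaranteed by the regular Lagrangian estimate \eqref{regLag}), this should allow one to arrange, for almost every $z$ in a positive-measure subset of $E$, that the chosen pair $(x_z, y_z)$ is comparable. Splitting $E$ according to the direction of comparability and invoking symmetry, one may assume $y_z \le x_z$.

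The key observation preceding the proposition now enters: since $\phi$ is increasing and $\phi(y_z) = \phi(x_z)$, the map $\phi$ must be constant on the cube $[y_z, x_z]$, and the regular Lagrangian estimate forces $|\phi^{-1}(\{z\})| = 0$, so the strict inequalities $y_{z,i} < x_{z,i}$ cannot hold for every $i$. Hence there exists a coordinate $j = j(z)$ with $y_{z,j} = x_{z,j}$, and one may partition $E = \bigcup_{j=1}^d E_j$ accordingly. I would then show $|E_j| = 0$ for each $j$ by an iterated slicing argument: on $E_j$, the selected pairs lie in the coordinate hyperplane $\{w_j = c\}$ (for $c = x_{z,j} = y_{z,j}$), so the restriction of $\phi$ to such hyperplanes inherits a $(d-1)$-dimensional monotone structure on which the same reasoning can be applied inductively, with the regular Lagrangian estimate absorbing lower-dimensional contributions.

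The main obstacle will be the selection of comparable pairs in the second step: the hypothesis is stated a.e.\ on $A_+ \times A_-$ as a subset of $\RR^{2d}$, whereas the relevant pairs live on the codimension-$d$ subvariety $\{(x,y) : \phi(x) = \phi(y)\}$, which is itself $\RR^{2d}$-null. Bridging this mismatch rigorously will require combining the Lebesgue density theorem for $A_\pm$, the absolute continuity of $\phi_\sharp \ind_{A_\pm}$, and the monotonicity of $\phi$ in a coordinated way so that the a.e.\ comparability genuinely transfers to the lower-dimensional set controlling $E$.
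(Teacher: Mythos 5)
Your choice to verify the hypothesis of Lemma \ref{L:inject?} is the right strategy, and you are correct that the ``a.e.'' qualifier is the subtle point: the hypothesis is a condition on almost every pair in $\RR^{2d}$, while the pairs that would produce bad values of $z$ live on the $\RR^{2d}$-null set $\{\phi_{t,s}(x)=\phi_{t,s}(y)\}$, so the transfer is not automatic. However, the way to bridge this is a Lebesgue density argument that you do not carry out, and the route you take instead is both unnecessary and, at the final step, unsupported. Replace $A_\pm$ by the subsets $A_\pm^0$ of their Lebesgue density points (a null modification that does not change $\phi_{t,s}^{\sharp}\ind_{A_\pm}$, hence does not affect the conclusion of Lemma \ref{L:inject?}). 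If $x_0\in A_+^0$ and $y_0\in A_-^0$ are not \emph{strictly} comparable in the sense $x<y$ of \eqref{order} (strict inequality in \emph{every} coordinate), then there exist indices $j,k$ with $x_{0,j}\ge y_{0,j}$ and $x_{0,k}\le y_{0,k}$, and $x_0\ne y_0$ since $A_+^0\cap A_-^0=\emptyset$; choosing perturbations $(x_0+\delta,y_0+\eta)$ with $\delta_j>\eta_j$ produces, with positive probability, strictly incomparable pairs, and since $x_0,y_0$ are density points this yields a positive-measure set of incomparable pairs in $A_+\times A_-$, contradicting the hypothesis. Hence every pair in $A_+^0\times A_-^0$ is strictly comparable. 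Now the one-line argument in the paragraph preceding the proposition applies to every $z$ with no measurable selection needed: if $\phi_{t,s}^{-1}(\{z\})$ meets $A_+^0$ at $x$ and $A_-^0$ at $y$, then either $x<y$ or $y<x$, the resulting cube is nondegenerate, the increasing map $\phi_{t,s}$ is constant on it, and thus $|\phi_{t,s}^{-1}(\{z\})|>0$, which contradicts Remark \ref{R:regLag}. The bad set is therefore empty (not merely null) and Lemma \ref{L:inject?} applies.

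Once this is in place, your subsequent steps do not hold up. The decomposition $E=\bigcup_j E_j$ according to a coordinate of equality never arises, because the density argument delivers strict comparability in every coordinate; the case $y_{z,j}=x_{z,j}$ is excluded, which is exactly the content of the condition $x<y$ needed for the cube argument. More seriously, the inductive ``slicing'' step has no foundation: the regular Lagrangian estimate \eqref{regLag} controls $d$-dimensional preimages and gives no information about the restriction of $\phi_{t,s}$ to a coordinate hyperplane, so there is no $(d-1)$-dimensional regular Lagrangian structure on slices, and the proposed induction cannot close.
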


The condition on $\oline{f}$ in Proposition \ref{P:orderedf} is satisfied if there exist cubes $(Q_n)_{n \in \ZZ}$ such that $Q_n < Q_{n+1}$\footnote{In other words, for all $x \in Q_n$ and $y \in Q_{n+1}$, we have $x < y$.}, $\{ \oline{f} > 0\} \subset \bigcup_{n \in \ZZ} Q_{2n}$, and $\{\oline{f} < 0\} \subset \bigcup_{n \in \AA} Q_{2n+1}$.

\subsection{Some remarks on ``time-reversed'' equations}\label{ss:timereversed?}

As discussed in subsection \ref{ss:reverse?}, for velocity fields $b$ satisfying \eqref{A:b}, there is not a satisfactory notion of reverse flow for the ODE \eqref{ODE}. Nevertheless, we can indirectly make sense of the backward Jacobian $\det(D_x \phi_{0,t}(x))$, which, formally, should be the solution of \eqref{CE} with $f_0 = 1$, that is,
\begin{equation}\label{J}
	J(t,\cdot) := S^*(t,0)1.
\end{equation}

\begin{proposition}\label{P:J}
	Let $J$ be defined by \eqref{J}.
	\begin{enumerate}[(a)]
	\item\label{Jreg} For all $p \in [1,\oo)$, $J \in L^\oo \cap C([0,T], L^p_\loc(\RR^d))$.
	\item\label{Jconv} If $(b^\eps)_{\eps > 0}$ are smooth, satisfy \eqref{A:b} uniformly in $\eps$, and converge a.e. to $b$ as $\eps \to 0$, and if $\phi^\eps_{0,t}$ is the solution of
	\[
		\del_t \phi^\eps_{0,t}(x) = - b^\eps(t, \phi^\eps_{0,t}(x)) \quad \text{in } [0,T], \quad \phi^\eps_{0,0}(x) = x,
	\]
	then, as $\eps \to 0$, $\det(D_x \phi^\eps_{0,\cdot})$ converges weakly in $C([0,T], L^p_\loc(\RR^d))$ to $J$.
	\item\label{Jcontrol} For all $f_0 \in L^\oo$ and $(t,x) \in [0,T] \times \RR^d$,
	\[
		\left| S^*(t,0)f_0(x) \right| \le \norm{f}_\oo J(t,x).
	\]
	\end{enumerate}
\end{proposition}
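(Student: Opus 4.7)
The plan is to begin from smooth approximations where the Jacobian admits an explicit representation via characteristics, extract a uniform pointwise bound from the divergence lower bound in \eqref{A:b}, and then transfer both the bound and the identification of the weak limit as $J$ through Theorem \ref{T:CE}.

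First I would fix a family $(b^\eps)_{\eps>0}$ of smooth approximations of $b$ satisfying \eqref{A:b} uniformly, with corresponding smooth flow $\phi^\eps$. Since $\phi^\eps_{t,0}$ is a diffeomorphism, the classical solution of \eqref{CE} with constant initial datum $f^\eps(0,\cdot)\equiv 1$ is exactly $f^\eps(t,\cdot)=\det(D_x\phi^\eps_{0,t})$ by change of variables. Differentiating along the forward trajectory $t\mapsto\phi^\eps_{t,0}(x)$ gives
\[
    f^\eps(t,\phi^\eps_{t,0}(x)) = \exp\Bigl(-\int_0^t \div b^\eps(s,\phi^\eps_{s,0}(x))\,ds\Bigr),
\]
and the divergence lower bound implied by \eqref{A:b} yields the uniform pointwise estimate $0\le f^\eps(t,x)\le e^{d\omega_1(t)}$. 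Theorem \ref{T:CE} applied with $f_0\equiv 1$ (the trivial approximation $f^\eps_0\equiv 1$ satisfies its hypotheses) then gives $f^\eps\to J$ weakly in $C([0,T],L^p_{\loc})$ for every $p<\oo$; lower semicontinuity of the $L^\oo$ norm under weak convergence transfers the bound to $J$, which together with the strong $C([0,T],L^p_{\loc})$ continuity already furnished by Theorem \ref{T:CE} establishes parts \eqref{Jreg} and \eqref{Jconv}.

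For \eqref{Jcontrol}, I would select smooth $f^\eps_0\in C^1_c(\RR^d)$ with $\norm{f^\eps_0}_\oo\le\norm{f_0}_\oo$ converging to $f_0$ in every $L^p_{\loc}$. Writing $g^\eps$ for the classical solution of \eqref{CE} with drift $b^\eps$ and initial datum $f^\eps_0$, change of variables gives
\[
    |g^\eps(t,x)| = |f^\eps_0(\phi^\eps_{0,t}(x))|\det(D_x\phi^\eps_{0,t}(x)) \le \norm{f_0}_\oo f^\eps(t,x),
\]
i.e.\ $-\norm{f_0}_\oo f^\eps \le g^\eps \le \norm{f_0}_\oo f^\eps$. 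Since $g^\eps\rightharpoonup S^*(\cdot,0)f_0$ and $f^\eps\rightharpoonup J$ weakly in $C([0,T],L^p_{\loc})$ by Theorem \ref{T:CE} (Remark \ref{R:Lpstrong} permits merely weak convergence of $f^\eps_0$), testing this two-sided inequality against an arbitrary nonnegative $\psi\in C_c(\RR^d)$ and sending $\eps\to 0$ preserves the inequality, yielding $|S^*(t,0)f_0|\le\norm{f_0}_\oo J$ a.e.

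The only delicate step I anticipate is the passage of the pointwise domination $|g^\eps|\le\norm{f_0}_\oo f^\eps$ to the weak limit; this goes through via testing against nonnegative functions (which detects the almost-everywhere sign of a weak limit) and requires no strong compactness beyond what Theorem \ref{T:CE} already provides. The rest is bookkeeping.
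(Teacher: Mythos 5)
Your argument is correct and follows the paper's proof essentially verbatim: parts \eqref{Jreg} and \eqref{Jconv} come from Theorem \ref{T:CE} together with the change-of-variables identity for smooth flows, and part \eqref{Jcontrol} comes from the pointwise bound $|f^\eps(t,x)|=|f^\eps_0(\phi^\eps_{0,t}(x))|\det(D_x\phi^\eps_{0,t}(x))\le\norm{f_0}_\oo\det(D_x\phi^\eps_{0,t}(x))$ passed through the weak $C([0,T],L^p_\loc)$ limit. Your added details (the explicit $L^\oo$ estimate via the Jacobi formula and the testing-against-nonnegative-$\psi$ step) simply flesh out parts the paper leaves implicit.
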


\begin{proof}
	Items \eqref{Jreg} and \eqref{Jconv} follow immediately from Theorem \ref{CE}. To prove \eqref{Jcontrol}, let $b^\eps$ and $\phi^\eps_{0,t}$ be as in the statement, let $f^\eps$ be the solution of \eqref{CE} with drift $b^\eps$ and initial condition $f^\eps_0 = f * \rho_\eps$, where $\rho_\eps$ is a standard mollifier. Then, for $(t,x) \in [0,T] \times \RR^d$,
	\[
		|f^\eps(t,x)| = |f^\eps_0(\phi^\eps_{0,t}(x))| \det(D_x \phi^\eps_{0,t}(x)).
	\]
	The statement follows upon sending $\eps \to 0$ and appealing to the weak convergence of $f^\eps$ and $\det(D_x \phi^\eps_{0,t})$.
\end{proof}

Continuing the formal discussion from above, note that, if $f_0$ and $b$ are smooth, then $v(t,x) = f_0(\phi_{0,t}(x))$ solves the initial value problem for the transport equation
\begin{equation}\label{TE:wrong}
	\del_t v + b(t,x) \cdot \nabla v = 0 \quad \text{in } [0,T] \times \RR^d, \quad v(0,\cdot) = f_0.
\end{equation}
The time direction for \eqref{TE:wrong} is forward, in contrast to \eqref{TE}, where it is backward. We therefore cannot appeal to the theory for that equation. Nevertheless, if $f_0 \in L^\oo$ and $b$ satisfies \eqref{A:b}, then a candidate for the solution of \eqref{TE:wrong} is
\begin{equation}\label{TE:wrong:solution?}
	v(t,x) := \frac{J(t,x)}{S^*(t,0)f_0(x)}, \quad (t,x) \in [0,T] \times \RR^d,
\end{equation}
which, by Proposition \ref{P:J}\eqref{Jcontrol}, is a well-defined bounded function. Note, however, that studying the stability properties of the formula \eqref{TE:wrong:solution?} is complicated by the fact that $J$ and $S^*(t,0)f_0$ are stable only under weak convergence in $C([0,T], L^p_\loc)$.

To complement \eqref{TE:wrong}, we also consider the terminal value problem for the continuity equation:
\begin{equation}\label{CE:wrong}
	\del_t g + \div(b(t,x) g) = 0 \quad \text{in }(0,T) \times \RR^d \quad \text{and} \quad g(T,\cdot) = g_T. 
\end{equation}
The formula in this case should be
\begin{equation}\label{CE:wrong:formula?}
	g(t,x) = g_T(\phi_{T,t}(x)) \det(D_x \phi_{T,t}(x)).
\end{equation}
In fact, both terms in the product have meaning: $u(t,x) := g_T(\phi_{T,t}(x))$ is the solution of \eqref{TE} with terminal value $g_T$, and $\det(D_x \phi_{T,t}(x))$ is well-defined almost-everywhere by Lemma \ref{L:ABVcts} and the fact that $\phi_{T,t}$ is increasing. Furthermore, by regularizing $b$ and taking weak distributional limits, it turns out that $\det(D_x \phi_{T,t}(x))$ is a measure bounded from below. However, $u$ is not continuous in general, and so it is not possible to make sense of the product in \eqref{CE:wrong:formula?}. This is exactly what leads to multiple measure-valued solutions of the equation in general; see the discussion of Example \ref{ex:diagonal} above.

\subsection{Second-order equations}

We finish this section by briefly demonstrating how the first-order results can be extended to the second-order equations
\begin{equation}\label{TE:2}
	-\del_t u - b(t,x) \cdot \nabla u - \tr[ a(t,x) \nabla^2 u] = 0\quad \text{in } (0,T) \times \RR^d \quad \text{and} \quad u(T,\cdot) = u_T
\end{equation}
and
\begin{equation}\label{CE:2}
	\del_t f + \div (b(t,x) f) - \nabla^2 \cdot (a(t,x) f) = 0 \quad \text{in }(0,T) \times \RR^d \quad \text{and} \quad f(0,\cdot) = f_0,
\end{equation}
where $b$ satisfies \eqref{A:b} and \eqref{A:bosc}, and $a: [0,T] \times \RR^d \to \mbb S^d$ is given by $a(t,x) = \frac{1}{2} \sigma(t,x) \sigma(t,x)^\tau$ and $\sigma$ is a matrix-valued function satisfying \eqref{A:sigma}. Observe that this means that
\begin{equation}\label{aconseq}
	a_{ij}(t,x) \quad \text{depends only on }(t,x_i,x_j) \in [0,T] \times \RR^2 \quad \text{for all }i,j = 1,2,\ldots, d.
\end{equation}

\begin{theorem}\label{T:TE:2}
	For all $u_T \in L^p(\RR^d)$, $1 \le p < \oo$, there exists a unique $u \in C([0,T], L^p)$ with the following properties:
	\begin{enumerate}[(a)]
	\item\label{2TE:Lp} There exists a modulus $\omega: [0,T] \to \RR_+$ such that
	\[
		\max_{t \in [0,T]} \norm{u(t,\cdot)}_{L^p} \le e^{\omega(T-t)} \norm{u_T}_{L^p}.
	\]
	\item\label{2TE:reg} If $(b^\eps)_{\eps > 0}$ is any family of smooth functions satisfying \eqref{A:b}, \eqref{A:bosc} uniformly in $\eps$, converging a.e. to $b$ as $\eps \to 0$, and $(u^\eps_T)_{\eps > 0}$ is a family of smooth functions converging in $L^p$ to $u_T$, then, as $\eps \to 0$, the unique solution $u^\eps$ of \eqref{TE:2} converges in $C([0,T] , L^p)$ to $u$. The same is true for vanishing viscosity limits.
	\item\label{2TE:form} For any $(t,x) \in [0,T] \times \RR^d)$, 
	\begin{equation}\label{TE:2:rep}
		u(t,x) = \EE[ \Phi_{T,t}(x)],
	\end{equation}
	where $\Phi$ denotes the stochastic flow from Theorem \ref{T:SDE}.
	\end{enumerate}
\end{theorem}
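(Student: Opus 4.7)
The plan is to define $u$ directly via the stochastic representation
\[
u(t,x) := \EE\bigl[ u_T(\Phi_{T,t}(x)) \bigr],
\]
where $\Phi$ is the SDE flow provided by Theorem \ref{T:SDE}, and then verify (a), (b), and (c) in turn. Part (a) is immediate: Jensen's inequality combined with the SDE regular Lagrange estimate \eqref{regLag:SDE} yields
\[
\norm{u(t,\cdot)}_{L^p}^p \le \EE \int_{\RR^d} |u_T(\Phi_{T,t}(x))|^p dx \le e^{\omega(T-t)} \norm{u_T}_{L^p}^p,
\]
while temporal continuity $u \in C([0,T],L^p)$ follows by density from the case $u_T \in C_c(\RR^d)$, using the a.s.\ continuity of $t \mapsto \Phi_{T,t}(x)$ and dominated convergence on $\RR^d \times \Omega$.

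The substance is (b), which also establishes the formula (c). I would first prove an SDE analogue of Theorem \ref{T:ODE:beps}: if $(b^\eps)_{\eps > 0}$ is a family of smooth drifts satisfying \eqref{A:b} and \eqref{A:bosc} uniformly in $\eps$ with $b^\eps \to b$ a.e., and $\Phi^\eps$ is the classical SDE flow for $(b^\eps,\sigma)$ driven by a fixed Wiener process $W$, then, with probability one, for every $s \in [0,T]$ and $p \in [1,\oo)$,
\[
\Phi^\eps_{\cdot,s} \xrightarrow{\eps \to 0} \Phi_{\cdot,s} \quad \text{in } C([s,T], L^p_\loc(\RR^d)).
\]
The proof mirrors that of Theorem \ref{T:ODE:beps}: pathwise monotonicity of $x \mapsto \Phi^\eps_{t,s}(x,\omega)$ (an application of Lemma \ref{L:SDEcomparison}) together with a.s.\ H\"older continuity in $t$ via Kolmogorov's criterion and the linear growth in \eqref{A:b} give compactness of $(\Phi^\eps)$ in $C([s,T], L^p_\loc)$ a.s.; the limit is identified by passing to the limit in the integral equation, where the It\^o term $\int_s^t \sigma(r,\Phi^\eps_{r,s})dW_r$ converges in $L^2(\Omega)$ by the It\^o isometry, the Lipschitz regularity of $\sigma$ from \eqref{A:sigma}, and the a.s.\ $L^p_\loc$ convergence of $\Phi^\eps_{r,s}$; the uniqueness in Theorem \ref{T:SDE}\eqref{SDE:solution} then pins down the full limit. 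Given this, and writing the unique classical solution of \eqref{TE:2} with data $(b^\eps, u_T^\eps)$ as $u^\eps(t,x) = \EE u_T^\eps(\Phi^\eps_{T,t}(x))$, a Vitali argument based on the regular Lagrange bound \eqref{regLag:SDE} and the $L^p$-convergence of $u_T^\eps$ yields $u^\eps \to u$ in $C([0,T], L^p)$.

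For the vanishing viscosity portion of (b), I would augment the probability space with an independent $d$-dimensional Brownian motion $W'$ and observe that the unique classical solution of the viscous equation coincides with $u^\eps(t,x) = \EE u_T^\eps(\widetilde\Phi^\eps_{T,t}(x))$, where $\widetilde\Phi^\eps$ solves the SDE with drift $b^\eps$ and diffusion $\tilde\sigma := (\sigma, \sqrt{2\eps}\,I)$. Since $\tilde\sigma$ still satisfies \eqref{A:sigma} (the constant block adds no cross-dependence) and \eqref{A:bosc} is preserved, the same stability argument applies. Uniqueness in the class of $C([0,T],L^p)$ functions satisfying (b) is then automatic, as any such $u$ must equal the common limit of arbitrary regularizations, which is precisely the representation (c).

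The main obstacle is the SDE stability statement $\Phi^\eps \to \Phi$. Unlike the ODE case of Theorem \ref{T:ODE:beps}, the argument cannot be executed purely pathwise, since the stochastic integral must be handled in $L^2(\Omega)$; yet the a.s.\ pathwise monotonicity and a priori estimates are what furnish local $L^p$-compactness of $\Phi^\eps_{\cdot,s}(\cdot,\omega)$. Reconciling the two requires extracting a.s.-convergent subsequences, controlling the stochastic integral using the structural condition \eqref{A:sigma} (without which the comparison Lemma \ref{L:SDEcomparison} — and hence monotonicity — would fail), and invoking Theorem \ref{T:SDE}\eqref{SDE:solution} to identify the limit and thereby promote subsequential convergence to full convergence.
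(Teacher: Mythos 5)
Your proposal is correct and follows essentially the same approach as the paper, which simply says the argument mirrors the first-order case (Theorem \ref{T:TEform}) using the stability and uniqueness results of Theorem \ref{T:SDE}. The one place where you go beyond the paper's (terse) proof is that you correctly identify that an explicit SDE analogue of Theorem \ref{T:ODE:beps} is needed for part \eqref{2TE:reg} with arbitrary regularizations $b^\eps$, and that this lemma is never actually stated in the paper; your sketch of it (pathwise monotonicity from Lemma \ref{L:SDEcomparison}, a priori compactness from \eqref{regLag:SDE} and Kolmogorov, identification of the limit through the integral equation and Theorem \ref{T:SDE}\eqref{SDE:solution}, and the product-space trick $\tilde\sigma = (\sigma,\sqrt{2\eps}\,I)$ for the vanishing-viscosity statement) is the natural filling-in of that gap and is consistent with how the authors handle the ODE case.
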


\begin{proof}
	The argument follows almost exactly as in the first order case (Theorem \ref{T:TEform}), using the stability and uniqueness results in Theorem \ref{T:SDE} for the SDE \eqref{SDE} (recall that we are assuming the bounded-oscillation condition \eqref{A:bosc} in addition to \eqref{A:b}). Upon regularizing the velocity field $b$, the formal a priori $L^p$ estimate in part \eqref{2TE:Lp} can be made rigorous, which, in particular, gives boundedness of the solution operator on $L^p$ for all $p \in [1,\oo)$, uniformly in $\eps> 0$, so that the initial datum $u_0$ can always be assumed to belong to $C_c(\RR^d)$ without loss of generality. The existence and uniqueness of the strong limit and its identification with the formula in part \eqref{2TE:form} are then a consequence of Theorem \ref{T:SDE}.
\end{proof}

\begin{remark}
	If $u_T$ is increasing (decreasing), then the same is true for $u(t,\cdot)$ for all $t \in [0,T]$, which, again, can be checked with the representation formula \eqref{TE:2:rep}, or by the differentiating the equation and using \eqref{aconseq}. For now, we do not discuss the question of characterizing solutions in a PDE sense.
\end{remark}

\begin{remark}
	Similar results can be obtained for the equation with lower order terms
	\begin{equation}\label{TE:allterms}
		-\del_t u - \tr[a(t,x) \nabla^2 u] - b(t,x) \cdot \nabla u - c(t,x)u - d(t,x) = 0 \quad \text{in }(0,T) \times \RR^d, \quad u(T,\cdot) = u_T.
	\end{equation}
\end{remark}

\begin{theorem}\label{T:CE:2}
	For every $f_0 \in L^p$, $1 \le p < \oo$, there exists a distributional solution of \eqref{CE:2} with the following properties:
	\begin{enumerate}[(a)]
	\item\label{2CE:stab} $f$ is obtained uniquely from weak limits in $C([0,T], L^p)$ upon replacing $b$ with a regularization, or from vanishing viscosity limits, and the resulting solution operator is bounded on $L^p$, with bound depending only on the assumptions for $b$, for all $p \in [1,\oo)$.
	\item\label{2CE:duality} If $p \in [1,\oo)$, $t \in [0,T]$ and $\oline{u} \in L^{p'}$, and if $u$ is the solution identified in Theorem \ref{T:TE:2} of \eqref{TE:2} in $[0,t] \times \RR^d$ with terminal value $u(t,\cdot) = \oline{u}$, then
	\[
		\int f(t,x)\oline{u}(x)dx = \int f_0(x) u(0,x)dx.
	\]
	\item\label{2CE:form} For all $t \in [0,T]$,
	\begin{equation}\label{CE:2:form}
		f(t,\cdot) = \EE (\Phi_{t,0})_\# f_0,
	\end{equation}
	where $\Phi_{t,0}$ is the stochastic flow from Theorem \ref{T:SDE}. Thus, if $f_0$ is a probability density and $X_0$ is a random variable independent of the Wiener process with density function $f_0$, it follows that $f(t,\cdot)$ is the probability density function of $\Phi_{t,0}(X_0)$ (which is absolutely continuous in view of the regular Lagrange property).
	\item\label{2CE:unique} If $f_0 \ge 0$ and $g \ge 0$ is a nonnegative distributional solution of \eqref{CE:2}, then $f = g$.
	\end{enumerate}
\end{theorem}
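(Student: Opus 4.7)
The plan is to mirror the proofs of Theorems \ref{T:CE} and \ref{T:CEunique} for the first-order equation, using now the stochastic flow of Theorem \ref{T:SDE} and the second-order nonconservative theory of Theorem \ref{T:TE:2}. The solution $f$ will be built by regularization and identified simultaneously as the duality solution against \eqref{TE:2} and as the stochastic pushforward \eqref{CE:2:form}, with uniqueness of nonnegative distributional solutions established last.

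First, I would take smooth approximations $(b^\eps, \sigma^\eps, f_0^\eps)_{\eps>0}$ satisfying \eqref{A:b}, \eqref{A:bosc}, and \eqref{A:sigma} uniformly in $\eps$, with $(b^\eps,\sigma^\eps) \to (b,\sigma)$ a.e.\ and $f_0^\eps \to f_0$ in $L^p$. The unique classical solution $f^\eps$ of the regularized equation admits the representation $f^\eps(t,\cdot) = \EE (\Phi^\eps_{t,0})_\sharp f_0^\eps$, and, via the Jacobian bound $\det D\Phi^\eps \ge e^{-d\omega_1(t)}$ that follows from the lower bound on $\div b^\eps$ (together with the regular Lagrange estimate \eqref{regLag:SDE}), satisfies uniform $L^p$ bounds. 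Weak compactness yields a subsequential limit $f^{\eps_n} \rightharpoonup f$ in $L^\infty([0,T],L^p)$, and strong $L^p_\loc$ convergence of $b^\eps$ and $\sigma^\eps$ lets me pass to the limit in the distributional equation.

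To identify the limit uniquely and establish parts \eqref{2CE:stab}, \eqref{2CE:duality}, and \eqref{2CE:form}, I would pair $f^\eps$ against the regularized solution $u^\eps$ of \eqref{TE:2} with terminal value $\bar u \in L^{p'}$ at time $t$. Classical integration by parts yields
\[
\int f^\eps(t,x)\bar u(x)\,dx = \int f_0^\eps(x) u^\eps(0,x)\,dx,
\]
and sending $\eps \to 0$ using Theorem \ref{T:TE:2} gives the duality identity in \eqref{2CE:duality}. This pins down $f$ uniquely, so the entire family $f^\eps$ converges; the same argument with $u^\eps$ replaced by its viscous analogue handles the vanishing viscosity limit. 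The pushforward formula \eqref{CE:2:form} then follows by passing to the limit in $f^\eps(t,\cdot) = \EE(\Phi^\eps_{t,0})_\sharp f_0^\eps$, invoking Theorem \ref{T:SDE}\eqref{SDE:solution} and dominated convergence.

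The principal obstacle is part \eqref{2CE:unique}, uniqueness of nonnegative distributional solutions, since the one-sided sub/supersolution framework of Definition \ref{D:TEsubsuper} was developed only for the first-order problem. The cleanest route is via the superposition principle for SDEs of Figalli \cite{Fig} and Trevisan \cite{Trevisan_16}: any nonnegative distributional solution $g$ is the one-dimensional time marginal of a probability measure on path space concentrated on solutions of the SDE with drift $b$ and diffusion $\sigma$, and Theorem \ref{T:SDE}\eqref{SDE:solution} forces this measure to equal the law of $\Phi$ started with density $f_0$, giving $g = f$. A direct PDE alternative would replicate the proof of Theorem \ref{T:CEunique}: fix $t_0$ and a decreasing $\bar u$, let $u$ be the solution of \eqref{TE:2} on $[0,t_0]$ with terminal value $\bar u$, and test $g$ against the one-sided convolutions $u*\rho_\eps$ and $u*\rho^\eps$ with a cutoff. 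The extra challenge compared with the first-order case is to verify that the commutator between one-sided convolution and $\tr[a \nabla^2 \cdot]$ carries the correct sign, which should follow from the separation property \eqref{aconseq} and the propagation of the decreasing property for solutions of \eqref{TE:2} (by differentiating the equation and applying Lemma \ref{L:particles}); density of monotone functions in $L^{p'}_\loc$ then concludes $g(t,\cdot) = f(t,\cdot)$.
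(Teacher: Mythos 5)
Your proof follows essentially the same approach as the paper: parts (a)--(c) by regularization, uniform $L^p$ bounds from the lower divergence bound and the regular Lagrange estimate, weak compactness, and identification of the limit through the duality pairing with solutions of \eqref{TE:2}; and part (d) by the Figalli--Trevisan superposition principle combined with the a.e.\ pathwise uniqueness from Theorem \ref{T:SDE}\eqref{SDE:solution}, which is exactly the paper's argument. Your sketched PDE alternative for part (d) via one-sided convolutions is a reasonable direction but, as you note yourself, the sign of the second-order commutator under \eqref{aconseq} is not established in the paper and would require additional work; since your primary route already suffices, this is not a gap.
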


\begin{proof}
	The proofs of parts \eqref{2CE:stab}-\eqref{2CE:form} proceed similarly to the proof of Theorem \ref{T:CE}, by first regularizing $b$, proving uniform $L^p$-estimates, and passing to the limit, exploiting the uniqueness results for \eqref{SDE}. The uniqueness of nonnegative solutions in part \eqref{2CE:unique} now follows from the Ambrosio-Figalli-Trevisan superposition principle; see for instance \cite{Fig, Trevisan_16}.
\end{proof}

\section{Nonlinear transport systems}\label{sec:NTE}

We now turn to the study of the nonlinear transport systems discussed in the introduction, that is
\begin{equation}\label{eq:nonlinear}
	\del_t u + f(t,x,u) \cdot \nabla _x u + g(t,x,u) = 0 \quad \text{in } (0,T) \times \RR^d, \quad u(T,\cdot) = u_T,
\end{equation}
where, for some integer $m \ge 1$, $u: [0,T] \times \RR^d \to \RR^m$, $f:[0,T] \times \RR^d \times \RR^m \to \RR^d$, and $g: [0,T] \times \RR^d \times \RR^m \to \RR^m$. We also consider the associated forward-backward system of characteristics posed for fixed $(t,x) \in [0,T] \times \RR^d$ with $s \in [t,T]$ by
\begin{equation}\label{chars}
	\begin{dcases}
		-\del_s U_{s,t}(x) = g(s,X_{s,t}(x),U_{s,t}(x)) & U_{T,t}(x) = u_T( X_{T,t}(x)), \\
		\del_s X_{s,t}(x) = f(s,X_{s,t}(x), U_{s,t}(x)) & X_{t,t}(x) = x.
	\end{dcases}
\end{equation}

\subsection{Weak solutions}
We will introduce assumptions on $f$, $g$, and $u_T$, that, freezing $u$, make the equation \eqref{eq:nonlinear} exactly of the form of those nonconservative linear equations studied in the previous section. This then leads to a natural notion of weak solution via a fixed point operator.

Assume 
\begin{equation}\label{A:FG1}
	\left\{
	\begin{split}
	&(f,g) :[0,T] \times \RR^d \times \RR^m \to \RR^d \times \RR^m, \quad f(t,x,\cdot), g(t,x,\cdot) \text{ are continuous for }(t,x) \in [0,T] \times \RR^d,\\
	&\int_0^T \sup_{(x,u) \in \RR^d \times \RR^m} \left( \frac{ |f(t,x,u)|}{1+|x|} + \frac{ |g(t,x,u)|}{1 + |u|} \right)dt < \oo,\\
	&\text{and, for some } C_0 \in L^1_+([0,T]), \text{ a.e. } t \in [0,T], \text{ and all } i,j \in \{1,2,\ldots ,d\} \text{ and } k,\ell \in \{1,2,\ldots, m \},\\
	&\del_{x_i} f^j(t,\cdot,\cdot) \ge - C_0(t) \delta_{ij}, \quad \del_{u_k} g^\ell(t,\cdot,\cdot) \ge -C_0(t) \delta_{k\ell},\\
	&\del_{x_i} g^\ell \le 0, \quad \text{and} \quad \del_{u_k} f^j \le 0.
	\end{split}
	\right.
\end{equation}

Under these assumptions, any solution operator for \eqref{eq:nonlinear} should preserve the decreasing property of solutions, which we show formally assuming the data are smooth.

\begin{proposition}\label{P:NL:dec}
	Assume $f$, $g$, and $u_T$ are smooth with uniformly bounded derivatives, $f$ and $g$ satisfy \eqref{A:FG1}, and $u_T:\RR^d \to \RR^m$ is decreasing with respect to \eqref{order}. If $u$ is a smooth solution of the terminal value problem \eqref{eq:nonlinear}, then, for all $t \in [0,T]$, $u(t,\cdot)$ is decreasing.
\end{proposition}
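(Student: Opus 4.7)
The plan is to differentiate the equation \eqref{eq:nonlinear} in each spatial variable and invoke the vector-valued maximum principle of Lemma \ref{L:particles}; the sign conditions in \eqref{A:FG1} are designed precisely so that the resulting $(md)$-component system fits the hypotheses of that lemma.

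I first reverse time by setting $\tilde u(s,x) := u(T-s,x)$, so that $\tilde u(0,\cdot) = u_T$ and $\partial_s \tilde u^k = \sum_j f^j(T-s,x,\tilde u)\,\partial_{x_j}\tilde u^k + g^k(T-s,x,\tilde u)$. Differentiating in $x_i$ and setting $V^{(k,i)} := \partial_{x_i}\tilde u^k$, a direct computation yields
\begin{align*}
\partial_s V^{(k,i)} - \sum_{j} f^j\,\partial_{x_j} V^{(k,i)} &- \bigl(\partial_{x_i}f^i + \partial_{u_k}g^k\bigr)V^{(k,i)} - \partial_{x_i}g^k \\
&= \sum_{j\ne i}(\partial_{x_i}f^j)\,V^{(k,j)} + \sum_{\ell\ne k}(\partial_{u_\ell}g^k)\,V^{(\ell,i)} + \sum_{j,\ell}(\partial_{u_\ell}f^j)\,V^{(\ell,i)}V^{(k,j)},
\end{align*}
with all coefficients evaluated at $(T-s,x,\tilde u(s,x))$.

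I then match this against Lemma \ref{L:particles} with multi-index $I=(k,i)$ and no diffusion: the drift $b^I = -f$ is locally Lipschitz, the zeroth-order coefficient $c^I = -\partial_{x_i}f^i - \partial_{u_k}g^k$ is bounded, and the source $d^I = -\partial_{x_i}g^k \ge 0$ thanks to the assumption $\partial_{x_i}g^k \le 0$ in \eqref{A:FG1}. By the same assumption, the off-diagonal linear coefficients $\partial_{x_i}f^j$ (for $j\ne i$) and $\partial_{u_\ell}g^k$ (for $\ell\ne k$) are nonnegative, while each quadratic coefficient $\partial_{u_\ell}f^j$ is nonpositive. Since $V^{(k,i)}(0,\cdot) = \partial_{x_i}u_T^k \le 0$ as $u_T$ is decreasing, the lemma then gives $V^{(k,i)}(s,x) \le 0$ throughout $[0,T]\times\RR^d$, which translates back to the assertion that $u(t,\cdot)$ is decreasing.

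The main technical point I expect to have to address is that the quadratic sum $\sum_{j,\ell}(\partial_{u_\ell}f^j)V^{(\ell,i)}V^{(k,j)}$ contains terms in which one of the two factors coincides with $V^{(k,i)}$ itself (namely whenever $\ell = k$ or $j = i$), whereas Lemma \ref{L:particles} is formulated so that both indices in its quadratic sum differ from the equation's index $I$. Since $u$ is assumed smooth with uniformly bounded derivatives, each such offending term has the shape $\alpha(s,x)\,V^{(k,i)}$ with $\alpha$ bounded on $[0,T]\times\RR^d$, and can be absorbed into $c^I$ without altering any of the sign conditions above. After this absorption the system is literally of the form required by Lemma \ref{L:particles}, and the proof concludes.
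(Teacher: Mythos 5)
Your proof follows the same strategy as the paper: differentiate \eqref{eq:nonlinear} in each spatial variable, reverse time, and apply the maximum principle of Lemma \ref{L:particles} to the resulting system of size $md$, using the sign conditions in \eqref{A:FG1} to verify the hypotheses. Your explicit absorption of the diagonal quadratic terms (those with $\ell = k$ or $j = i$) into the bounded zeroth-order coefficient $c^I$ is a detail that the paper's proof glosses over when it states the system is ``of the form in \eqref{E:Vsystem}''; your treatment of this point is correct and tightens the argument.
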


\begin{proof}
	Let $i \in \{1,2,\ldots,m\}$ and $k \in \{1,2,\ldots, d\}$ be fixed, and define $v_{ik} := \del_k u^i$. Taking the derivative in $x_k$ of the $i$-component of \eqref{eq:nonlinear} gives the system
	\begin{align*}
	&\del_t v_{ik} + f(t,x,u)\cdot \nabla v_{ik} + \left( \del_{x_k} f^k(t,x,u) + \del_{u_i} g^i(t,x,u) \right) v_{ik} + \del_{x_k} g^i(t,x,u)\\
	&= - \sum_{j \ne k}\del_{x_k} f^j(t,x,u) v_{ij} - \sum_{\ell \ne i} \del_{u_\ell} g^i(t,x,u) v_{\ell k} - \sum_{j=1}^d \sum_{\ell=1}^m \del_{u_\ell} f^j(t,x,u)v_{\ell k} v_{i j}.
	\end{align*}
	In view of \eqref{A:FG1}, the system satisfied by $\left\{ v_{ik} : i = 1,2,\ldots, m, \; k = 1,2,\ldots, d \right\}$, after reversing time, is of the form in \eqref{E:Vsystem}, and so the result follows from Lemma \ref{L:particles}.
\end{proof}

We now make the connection with the linear transport equation theory of the previous sections. In particular, note that if $u(t,\cdot)$ is decreasing for all $t \in [0,T]$, then $b(t,x) := f(t, x, u(t,x))$ satisfies the assumptions of \eqref{A:b}, while $c(t) = C_0(t)$ and $d(t,x) = g(t,x,u(t,x)) + C_0(t) u(t,x)$ satisfy \eqref{A:cdextra}. 

\begin{definition}\label{D:NL:transport}
	Assume $f$ and $g$ satisfy \eqref{A:FG1} and $u_T$ is decreasing. Then a locally bounded function $u: [0,T] \times \RR^d \to \RR^m$ that is decreasing in the $\RR^d$-variable and satisfies
	\begin{equation}\label{solutionbound}
		\left\{ (t,x) \mapsto \frac{u(t,x)}{1 + |x|}\right\} \in L^\oo
	\end{equation}
	 is called a solution of \eqref{eq:nonlinear} if $u$ is a solution of the linear equation \eqref{eq:lowerorder} with $b(t,x) = f(t,x,u(t,x))$, $c(t) = C_0(t)$, and $d(t,x) = g(t,x,u(t,x)) + C_0(t) u(t,x)$.
\end{definition}

Solving \eqref{eq:nonlinear} in the sense of Definition \ref{D:NL:transport} thus amounts to solving a fixed point problem. We similarly give a weak sense to the system \eqref{chars} by using the properties of the flow from Section \ref{sec:ODE}.

\begin{proposition}\label{P:chars}
	Assume that $u_T \in L^\oo_\loc$ and $u$ is a solution of \eqref{eq:nonlinear} in the sense of Definition \ref{D:NL:transport}. Let $\phi$ be the forward regular Lagrangian flow as in Section \ref{sec:ODE} corresponding to
	\[
		\del_t \phi_{t,s}(x) = f(t, \phi_{t,s}(x), u(t, \phi_{t,s}(x))) \quad t \in [s,T], \quad \phi_{s,s}(x) = x,
	\]
	and define, for $0 \le s \le t \le T$ and $x \in \RR^d$,
	\[
		X_{t,s}(x) = \phi_{t,s}(x) \quad \text{and} \quad U_{t,s}(x) := u(t, \phi_{t,s}(x)). 
	\]
	Then, for all $1 \le p < \oo$, $X_{\cdot,s}, U_{\cdot,s} \in C([s,T], L^p_\loc) \cap L^p_\loc(\RR^d, W^{1,1}([0,T]))$, and, for a.e. $x \in \RR^d$, \eqref{chars} is satisfied in the integral sense.
\end{proposition}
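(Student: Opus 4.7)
The plan is to derive the two integral equations in \eqref{chars} from two separate ingredients already at hand: the forward ODE theory of Section \ref{sec:ODE} handles $X$, while the representation formula from Theorem \ref{T:TE:lowerorder} handles $U$ after composition with the flow.

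First I would verify that $b(t,x) := f(t,x,u(t,x))$ satisfies \eqref{A:b}. The linear growth is immediate from \eqref{A:FG1} and \eqref{solutionbound}, and the lower bound on $\del_{x_j} b^i$ follows by writing $\del_{x_j} b^i = \del_{x_j} f^i + \sum_k \del_{u_k} f^i \, \del_{x_j} u^k$ and observing that $\del_{u_k} f^i \le 0$ and $\del_{x_j} u^k \le 0$ (the latter because $u(t,\cdot)$ is decreasing), so that $\del_{x_j} b^i \ge \del_{x_j} f^i \ge -C_0(t) \delta_{ij}$. Theorem \ref{T:regLag} then applies to the flow $X_{t,s}(x) = \phi_{t,s}(x)$, yielding the integral equation $X_{t,s}(x) = x + \int_s^t f(r, X_{r,s}(x), U_{r,s}(x)) \, dr$ for a.e.\ $x$, and Theorem \ref{T:ODE:beps} gives the regularity $X_{\cdot,s} \in C([s,T], L^p_\loc) \cap L^p_\loc(\RR^d, W^{1,1}([s,T]))$.

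For the backward equation I would apply Theorem \ref{T:TE:lowerorder}\eqref{lowerorder:formula} to $u$, which by Definition \ref{D:NL:transport} solves the linear equation \eqref{eq:lowerorder} with the prescribed $b$, $c$, $d$. The resulting representation gives, for every $t \in [0,T]$ and a.e.\ $y \in \RR^d$, $u(t,y)$ as an explicit expression involving $u_T$, $c$, $d$, and the flow $\phi_{\cdot,t}$ evaluated at $y$. Substituting $y = X_{t,s}(x)$ and invoking the semigroup identity $\phi_{\tau,t} \circ \phi_{t,s} = \phi_{\tau,s}$ a.e.\ (Theorem \ref{T:regLag}) converts this into an integral equation for $U_{t,s}(x)$ involving $X_{\tau,s}(x)$ and $U_{\tau,s}(x)$ for $\tau \in [t,T]$. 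Simplifying using the specific forms of $c$ and $d$ collapses the expression to $U_{t,s}(x) = u_T(X_{T,s}(x)) + \int_t^T g(\tau, X_{\tau,s}(x), U_{\tau,s}(x)) \, d\tau$, which is exactly the backward integral equation in \eqref{chars}. The regularity of $U_{\cdot,s}$ follows at once from this representation: the local $L^p$ bounds come from $u \in L^\oo_\loc$ coupled with the regular Lagrangian property of $\phi_{t,s}$, continuity in $t$ valued in $L^p_\loc$ follows by dominated convergence applied to the integral expression, and $W^{1,1}$-regularity in $t$ for a.e.\ $x$ is automatic from the fundamental theorem of calculus.

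The main technical point to be careful about is transferring the ``a.e.\ $y$'' assertion in the representation formula to ``a.e.\ $x$'' after the substitution $y = X_{t,s}(x)$, as well as justifying the composition $\phi_{\tau,t} \circ \phi_{t,s}$ on a full-measure set of $x$; both are handled by the regular Lagrangian property (Remark \ref{R:regLag}), which ensures that $\phi_{t,s}^{-1}$ preserves null sets, so exceptional $y$-sets pull back to exceptional $x$-sets.
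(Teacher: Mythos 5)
Your proposal is correct and follows essentially the same route as the paper: identify $b(t,x) = f(t,x,u(t,x))$ as a velocity field satisfying \eqref{A:b} so that Theorem \ref{T:regLag} yields the integral equation for $X$, then invoke the representation formula of Theorem \ref{T:TE:lowerorder}\eqref{lowerorder:formula} for $u$ along the flow to obtain the backward integral equation for $U$, and finally read off the regularity from the two integral identities together with the regular Lagrangian property. The only cosmetic difference is that the paper applies the representation formula on the subinterval $[s,t]$ (via the semigroup property of the solution operator), writing $U_{s,s}(x) = U_{t,s}(x) + \int_s^t g(r,X_{r,s},U_{r,s})\,dr$, whereas you apply it on $[t,T]$ and then compose with $\phi_{t,s}$ using the flow property; both handle the a.e. bookkeeping via Remark \ref{R:regLag}, as you correctly note at the end.
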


\begin{proof}
	The regularity properties of $X$ are seen from Theorem \ref{T:regLag}, as well as the fact that, for a.e. $x \in \RR^d$, 
	\[
		X_{t,s}(x) = x + \int_s^t f(r, X_{r,s}(x), u(r, X_{r,s}(x))dr = x + \int_s^t f(r, X_{r,s}(x), U_{r,s}(x))dr.
	\]
	Theorems \ref{T:regLag} and \ref{T:TE:lowerorder} together imply that $U_{\cdot,s} \in C([s,T], L^p_\loc)$. Also, in view of Theorem \ref{T:TE:lowerorder}\eqref{lowerorder:formula}, $u$ satisfies, for any $0 \le s \le t \le T$,
	\[
		U_{s,s}(x) = u(s,x) = u(t, X_{t,s}(x))+ \int_s^t g(r, X_{r,s}(x), u(r, \phi_{r,s}(x)) ) dr
		= U_{t,s}(x) + \int_s^t g(r, X_{r,s}(x), U_{r,s}(x))dr.
	\]
	It follows that, in the distributional sense,
	\[
		\del_t U_{t,s}(x) = g(t, X_{t,s}(x), U_{t,x}(x)).
	\]
	Arguing as in the proof of Theorem \ref{T:regLag}, the right-hand side belongs to $L^p_\loc(\RR^d, L^1([0,T]))$, and we conclude as in that theorem.
\end{proof}

\subsection{Minimal and maximal solutions}

In this section, we show that the assumptions \eqref{A:FG1} give an increasing structure to the equation \eqref{eq:nonlinear}, which allows for the identification of a unique minimal and maximal solution.

\begin{theorem}\label{T:NL:maxmin}
	Assume $f$ and $g$ satisfy \eqref{A:FG1}, $u_T$ is decreasing, and $u_t (1 + |\cdot|)^{-1} \in L^\oo$. Then there exist two decreasing solutions $u^+$, $u^-$ of \eqref{eq:nonlinear} in the sense of Definition \ref{D:NL:transport} with the following properties:
	\begin{enumerate}[(a)]
	\item For all $t \in [0,T]$, $u^+(t,\cdot)$ is upper semicontinuous, $u^-(t,\cdot)$ is lower semicontinuous, and both $u^+$ and $u^-$ are continuous a.e. in $[0,T] \times \RR^d$.
	\item If $u$ is any other solution, then $u^- \le u \le u^+$.
	\end{enumerate}
\end{theorem}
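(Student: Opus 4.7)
My plan is to apply Tarski's fixed point theorem to the operator that sends a candidate $u$ to the solution of the linearized problem in Definition \ref{D:NL:transport}. For $M > 0$ sufficiently large, let $\mathcal{K}_M$ be the set of measurable $u:[0,T] \times \RR^d \to \RR^m$ such that, for every $t \in [0,T]$, $u(t,\cdot)$ is componentwise decreasing in the sense of \eqref{order} (identifying each with its c\`adl\`ag representative, as in Remark \ref{R:cadlag}), and \eqref{solutionbound} holds with constant $M$. Equipped with the pointwise componentwise partial order, $\mathcal{K}_M$ is a complete lattice, since the pointwise supremum or infimum of any family of decreasing functions remains decreasing. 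Define $\Phi : \mathcal{K}_M \to \mathcal{K}_M$ coordinatewise: $v = \Phi(u)$ is the $L^\oo_\loc$-solution from Theorem \ref{T:TE:lowerorder}, applied to each component, of the linear terminal value problem \eqref{eq:lowerorder} with velocity field $b_u(t,x) := f(t,x,u(t,x))$, zeroth-order coefficient $c(t) := C_0(t)$, and source $d_u^i(t,x) := g^i(t,x,u(t,x)) + C_0(t) u^i(t,x)$. Solutions of \eqref{eq:nonlinear} in the sense of Definition \ref{D:NL:transport} are then exactly the fixed points of $\Phi$.

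That $\Phi$ maps $\mathcal{K}_M$ into itself is checked by the chain rule together with the sign conditions in \eqref{A:FG1}: the combination $\del_{x_i} f^j \ge -C_0 \delta_{ij}$, $\del_{u_k} f^j \le 0$, and the decrease of $u$ imply that $b_u$ satisfies \eqref{A:b}, while $\del_{x_i} g^\ell \le 0$, $\del_{u_k} g^\ell \ge 0$ for $k \ne \ell$, and $\del_{u_\ell} g^\ell + C_0 \ge 0$ imply that $d_u(t,\cdot)$ is decreasing, so that \eqref{A:cdextra} holds. Proposition \ref{P:ABVprop:lowerorder} then ensures that $v(t,\cdot)$ is decreasing for each $t$, and the representation formula \eqref{formula:lowerorder}, together with the a priori bound Lemma \ref{ODE:apriori}, controls the linear growth of $v$ by a constant depending on $T$, the growth bounds in \eqref{A:FG1}, and the linear growth of $u_T$; choosing $M$ large enough yields $v \in \mathcal{K}_M$.

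The heart of the argument is the monotonicity of $\Phi$. If $u_1 \le u_2$ in $\mathcal{K}_M$, then $b_{u_1} \ge b_{u_2}$ (since $f$ is decreasing in $u$), while the componentwise structure of $\del_u g$ yields $d_{u_1} \le d_{u_2}$. Writing $v_j := \Phi(u_j)$ and $w^i := v_1^i - v_2^i$, subtracting the two linear equations and adding and subtracting $b_{u_1} \cdot \nabla v_2^i$ produces
\begin{equation*}
-\del_t w^i - b_{u_1} \cdot \nabla w^i - C_0 w^i = (d_{u_1}^i - d_{u_2}^i) + (b_{u_1} - b_{u_2}) \cdot \nabla v_2^i,
\end{equation*}
with $w^i(T,\cdot) = 0$. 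The right-hand side is nonpositive because $d_{u_1} \le d_{u_2}$, $b_{u_1} \ge b_{u_2}$, and $\nabla v_2^i \le 0$ (since $v_2^i$ is decreasing in $x$). After regularizing $f$, $g$, $u_T$, and the $u_j$'s so as to preserve their monotonicity and the signs in \eqref{A:FG1}, the smooth approximations satisfy $v_1^\eps \le v_2^\eps$ by the classical method of characteristics applied to the smooth analogue of the above equation; using the strong stability in Theorem \ref{T:TE:lowerorder}, this inequality passes to the limit, giving $\Phi(u_1) \le \Phi(u_2)$.

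Tarski's theorem then produces the largest and smallest fixed points $u^+$ and $u^-$ of $\Phi$ in $\mathcal{K}_M$, which sandwich every other fixed point, hence every other solution of \eqref{eq:nonlinear} in the sense of Definition \ref{D:NL:transport}. A.e.-continuity on $[0,T] \times \RR^d$ is inherited from Theorem \ref{T:TE:comparison:lowerorder}, and the upper (respectively lower) semicontinuity of $u^+(t,\cdot)$ (respectively $u^-(t,\cdot)$) is obtained by replacing them with their u.s.c./l.s.c.\ envelopes within the class of decreasing functions, which remain fixed points of $\Phi$ and so, by maximality/minimality, must coincide with $u^\pm$. The main obstacle is the monotonicity step, since $w$ is neither increasing nor decreasing and so the one-sided comparison machinery of Definition \ref{D:TEsubsuper:lowerorder} does not apply directly; the regularize-and-pass-to-the-limit strategy above is the natural workaround, but requires a careful choice of regularizations of $f$ and $g$ that respect both the ordering of $u_1, u_2$ and all the signs in \eqref{A:FG1}.
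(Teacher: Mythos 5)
Your overall strategy is the same as the paper's: Tarski's fixed point theorem on a complete lattice of decreasing functions with uniform linear growth, with the fixed-point map $\Phi$ defined via Theorem \ref{T:TE:lowerorder}. The invariance of the lattice, the completeness under pointwise sup/inf, and the final Tarski/semicontinuity step all match. The place where your argument falls short is exactly where you flag it yourself: the monotonicity of $\Phi$.

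You subtract the two linear equations and study the difference $w = v_1 - v_2$, which is neither increasing nor decreasing, so the one-sided sub/supersolution machinery of Definition \ref{D:TEsubsuper:lowerorder} genuinely does not apply, and your proposed fix --- regularize $f$, $g$, $u_1$, $u_2$, run the classical maximum principle, pass to the limit via Theorem \ref{T:TE:lowerorder} --- is left as a sketch. The paper's fix is simpler and sidesteps the difference equation entirely. After first performing the change of variables that reduces to $C_0 \equiv 0$ (this reduction is not optional: Theorem \ref{T:TE:comparison:lowerorder} is stated for velocity fields satisfying \eqref{A:b} with $C_1 = 0$, so with nonzero $C_0$ you cannot invoke it directly), one observes that $v_1 = \Phi(u_1)$ is by definition a solution, hence a subsolution, of the linear problem with coefficients $b_1 := f(\cdot,\cdot,u_1)$ and $d_1 := g(\cdot,\cdot,u_1)$. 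Writing the subsolution inequality for $v_1^\eps = v_1 * \rho^\eps$ and using that $u_1 \le u_2$ forces $b_1 \ge b_2 := f(\cdot,\cdot,u_2)$ and $d_1 \le d_2 := g(\cdot,\cdot,u_2)$, together with $\nabla v_1^\eps \le 0$ (since $v_1$ is decreasing), one gets
\[
-\del_t v_1^\eps - b_2 \cdot \nabla v_1^\eps - d_2
= \bigl[-\del_t v_1^\eps - b_1 \cdot \nabla v_1^\eps - d_1\bigr] + (b_1 - b_2)\cdot\nabla v_1^\eps + (d_1 - d_2) \le 0,
\]
so $v_1$ is already a subsolution of $v_2$'s own linear equation, and since $v_1(T,\cdot) = v_2(T,\cdot) = u_T$, Theorem \ref{T:TE:comparison:lowerorder} yields $v_1 \le v_2$ directly, without any regularization of $f$ or $g$. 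This is the observation missing from your proposal; with it the monotonicity step closes in two lines and the rest of your argument goes through as you describe.
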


Throughout this subsection, we may assume, without loss of generality, that the term $C_0 \in L^1_+$ in \eqref{A:FG1} is $0$. Indeed, the function
\[
	\tilde u(t,x) = \exp\left( \int_t^T C_0(s)ds\right) u\left( t, \exp\left( \int_t^TC_0(s)ds \right)x \right),
\]
is decreasing in $x$, satisfies \eqref{solutionbound}, and, formally, solves the equation \eqref{eq:nonlinear} with $f$ and $g$ replaced by
\[
	\tilde f(t,x,u) = \exp\left( -\int_t^T C_0(s)ds\right) f\left( t, \exp\left( \int_t^T C_0(s)ds\right) x, \exp\left( -\int_t^T C_0(s)ds\right) u \right) + C_0(t) x
\]
and
\[
	\tilde g(t,x,u) = \exp\left( \int_t^T C_0(s)ds\right) g \left( t, \exp\left( \int_t^T C_0(s)ds\right) x, \exp\left( -\int_t^T C_0(s)ds\right) u \right) + C_0(t) u.
\]

\begin{proof}[Proof of Theorem \ref{T:NL:maxmin}]
	As described above, we assume without loss of generality that $C_0 \equiv 0$. For some $\oline C > 0$ to be determined, set	
	\begin{equation}\label{lattice}
	\begin{split}
	\mcl L := \Big\{ u: [0,T] \times \RR^d \to \RR^m : \; &u(T,\cdot) = u_T, \; u(t,\cdot) \text{ is decreasing for all }t \in [0,T], \text{ and}\\
	&|u(t,x)| \le \oline{C}(1 + |x|) \text{ for all } (t,x) \in [0,T] \times \RR^d \Big\}.
	\end{split}
	\end{equation}
	We define a map $\mcl S$ on $\mcl L$ as follows: for $u \in \mcl L$, let $v := \mcl S(u)$ be the solution, as in Theorem \ref{T:TE:lowerorder}, of the linear transport equation 
	\[
		\del_t v + f(t,x, u(t,x)) \cdot \nabla v + g(t,x,u(t,x)) = 0 \quad \text{in }(0,T) \times \RR^d, \quad v(T,\cdot) = u_T.
	\]
	Then, in view of the solution formula \eqref{formula:lowerorder} and the bounds \eqref{A:FG1} on $f$ and $g$, there exists a sufficient large $\oline{C}$, depending on $u_T$, such that $\mcl S$ maps $\mcl L$ into $\mcl L$.
	
	We now note that $\mcl L$ forms a \emph{complete lattice} under the partial order
	\begin{equation}\label{fnorder}
		u \le \tilde u \quad \Leftrightarrow \quad u^i(t,x) \le \tilde u^i(t,x) \quad \text{for all } (t,x) \in [0,T] \times \RR^d, \, i = 1,2,\ldots, m;
	\end{equation}
	that is, every subset of $\mcl L$ has a greatest lower bound and least upper bound, which is a consequence of the uniformly bounded linear growth of solutions in $\mcl L$.
	
	Suppose now that $u, \tilde u \in \mcl L$ satisfy $u \le \tilde u$ under the order \eqref{fnorder}, and set
	\[
		b(t,x) := f(t,x, u(t,x)) \quad \text{and} \quad d(t,x) := g(t,x,u(t,x)).
	\]
	Then \eqref{A:FG1} with $C_0 \equiv 0$ implies that $b(t,x) \ge f(t,x,\tilde u(t,x))$ and $d(t,x) \le g(t,x,\tilde u(t,x))$, and so, in particular, $v := \mcl S(u)$ and $\tilde v := \mcl S(\tilde u)$ are respectively a sub and supersolution of the linear equation \eqref{eq:lowerorder} with $c(t) \equiv 0$. It follows from Theorem \ref{T:TE:comparison:lowerorder} that $v \le \tilde v$, and therefore $\mcl S$ is increasing on the complete lattice $\mcl L$ with respect to the partial order \eqref{fnorder}. The existence of a unique maximal and minimal solution are now a consequence of the Tarski lattice-theoretical fixed point theorem \cite{Tarski}.
	
	The continuity a.e. in $[0,T] \times \RR^d$ of $u^+$ and $u^-$ now follows from Theorem \ref{T:TE:comparison:lowerorder}. Observe now that any version of the maximal solution $u^+$ is a solution in the sense of Definition \ref{D:NL:transport}. Because $u^+(t,\cdot)$ is decreasing, it is continuous a.e., and its maximal version is upper semicontinuous, which is therefore also the unique maximal solution in the \emph{everywhere}-pointwise sense. A similar argument shows that the minimal everywhere-pointwise solution $u^-$ is lower-semicontinuous in the spatial variable, and we conclude.
\end{proof}

The fixed point theorem of \cite{Tarski} used above further characterizes $u^+$ and $u^-$ as 
\[
	u^+ = \sup\left\{ u \in \mcl L: \mcl S(u) \ge u \right\} 
	\quad \text{and} 
	\quad u^- = \inf \left\{ u \in \mcl L: \mcl S(u) \le u \right\},
\]
where the $\sup$ and $\inf$ are understood with respect to the partial order \eqref{fnorder}. We alternatively characterize the maximal and minimal solutions in terms of appropriately defined sub and supersolutions of the equation \eqref{eq:nonlinear}.

Recall that $\rho^\eps$ and $\rho_\eps$ are the one-sided mollifying functions from subsection \ref{ss:regs}. We then define a notion of sub and supersolution.

\begin{definition}\label{D:NL:subsuper}
	Assume $f$ and $g$ satisfy \eqref{A:FG1} with $C_0 \equiv 0$ and $u_T$ is decreasing. A function $u:[0,T] \times \RR^d \to \RR^m$ that is decreasing in $\RR^d$ is called a subsolution (resp. supersolution) of \eqref{eq:nonlinear} if $u(T,\cdot) \le u_T$ (resp. $u(T,\cdot) \ge u_T$) and $u^\eps := u * \rho^\eps$ (resp. $u_\eps := u * \rho_\eps$ satisfies
	\begin{align*}
		&-\del_t u^\eps - f(t,x, u(t,x)) \cdot \nabla u^\eps - g(t,x,u(t,x)) \le 0\\
		& \left( \text{resp. } -\del_t u_\eps - f(t,x,u(t,x)) \cdot \nabla u_\eps - g(t,x,u(t,x)) \ge 0 \right)
		\quad \text{a.e. in } [0,T] \times \RR^d.
	\end{align*}
\end{definition}

In other words, sub and supersolutions in the sense of \eqref{D:NL:subsuper} are sub and supersolutions of the corresponding linear equations identified in Definition \ref{D:NL:transport}. It follows that a function which is both a sub and supersolution is in fact a solution.

\begin{lemma}\label{L:maxmin}
	Under the conditions of Definition \ref{D:NL:subsuper}, assume that $u$ and $v$ are two subsolutions (resp. supersolutions) of \eqref{eq:nonlinear}. Then $u \vee v$ (resp. $u \wedge v$) is also a subsolution (resp. supersolution).
\end{lemma}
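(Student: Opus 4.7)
The plan is to reduce to $C_0 \equiv 0$ in \eqref{A:FG1} via the rescaling used at the start of the proof of Theorem~\ref{T:NL:maxmin}. Under this reduction, $f$ is componentwise increasing in $x$ and decreasing in $u$, while $g$ is decreasing in $x$ and increasing in $u$. Set $w := u \vee v$; it is decreasing and satisfies $w(T,\cdot) \le u_T$, so the task is to verify the mollified PDE inequality in Definition~\ref{D:NL:subsuper}.

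My model argument is the case of smooth $u, v$, where the subsolution inequalities hold pointwise a.e. At a.e.\ $(t,y)$ the components of $w$ agree with either those of $u$ or of $v$ together with their derivatives, and the componentwise inequality $w \ge u, v$ combined with $\nabla u^i, \nabla v^i \le 0$ and the monotonicity of $f, g$ in $u$ yields the pointwise a.e.\ inequality
\[
	-\del_t w^i(t,y) - f(t,y,w(t,y))\cdot\nabla w^i(t,y) - g^i(t,y,w(t,y)) \le 0.
\]
Integrating against $\rho^\eps(x-y)$ and rewriting in terms of $w^{i,\eps}$ produces a commutator correction with a definite nonpositive sign: on $\supp \rho^\eps(x-\cdot) \subset \{y : y \ge x\}$ one has $w(t,y) \le w(t,x)$ because $w$ is decreasing, and then the monotonicity of $f$ (increasing in $x$, decreasing in $u$) and of $g$ (decreasing in $x$, increasing in $u$) force $f(t,y,w(t,y)) \ge f(t,x,w(t,x))$ and $g(t,y,w(t,y)) \le g(t,x,w(t,x))$; the former, dotted with $\nabla w^i(t,y) \le 0$, contributes nonpositively.

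For general $u, v$ only satisfying Definition~\ref{D:NL:subsuper}, pointwise subsolution inequalities are unavailable, so I would work at the mollified level via the decomposition
\[
	w^{i,\eps} = u^{i,\eps} + \bigl((v^i - u^i)_+\bigr) * \rho^\eps.
\]
Expanding $-\del_t w^{i,\eps} - f(t,x,w)\cdot\nabla w^{i,\eps} - g^i(t,x,w)$ yields three pieces: the known nonpositive subsolution expression $-\del_t u^{i,\eps} - f(t,x,u)\cdot\nabla u^{i,\eps} - g^i(t,x,u) \le 0$ for $u$; coefficient-correction terms $[f(t,x,u)-f(t,x,w)]\cdot\nabla u^{i,\eps}$ and $g^i(t,x,u)-g^i(t,x,w)$, which are nonpositive by the same monotonicity arguments as in the smooth case; and a residual transport-type term $-\del_t \eta^{i,\eps} - f(t,x,w)\cdot \nabla \eta^{i,\eps}$ with $\eta^i = (v^i-u^i)_+$.

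The main obstacle is precisely to control this residual. Since $(v^i - u^i)_+$ is not decreasing, its mollified gradient has no definite sign, and the bare monotonicity argument does not apply. My strategy is to average this decomposition with its symmetric counterpart (with $u$ and $v$ interchanged) and to invoke the Vol'pert-type chain rule for the maximum of two $BV$ functions, identifying the distributional derivatives of $w^i$ with those of $u^i$ (resp.\ $v^i$) on the set $\{u^i > v^i\}$ (resp.\ $\{v^i > u^i\}$), so that the smooth-case commutator argument can be carried through directly at the mollified level. The supersolution statement follows by the symmetric argument, with $\rho^\eps$ replaced by $\rho_\eps$ and $\vee$ by $\wedge$.
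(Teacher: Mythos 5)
Your overall framing is right: the reduction to $C_0 \equiv 0$, the smooth model argument (including the one-sided commutator sign coming from $\supp\rho^\eps(x-\cdot) \subset \{y \ge x\}$, $w$ decreasing, and $f$ increasing in $x$ and decreasing in $u$), and the identification of the residual transport term as the obstruction are all correct. But the fix you propose does not close that gap, and the paper takes a structurally different route.

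The difficulty with the decomposition $w^{i,\eps} = u^{i,\eps} + \bigl((v^i-u^i)_+\bigr)*\rho^\eps$ is exactly that it is not a convex combination: you are forced to carry a residual $\eta^{i,\eps}$ whose transport part has no sign. Averaging with the symmetric decomposition only replaces $\eta^i = (v^i-u^i)_+$ by $\tfrac12|u^i-v^i|$, which is still not decreasing, so $\nabla\eta^{i,\eps}$ is still sign-indefinite. The Vol'pert chain rule likewise does not help here: even granting the identification of the $BV$ measure $\nabla w^i$ with $\nabla u^i$ on $\{u^i>v^i\}$ and $\nabla v^i$ on $\{v^i>u^i\}$, that pointwise statement about the unmollified measures does not descend to any useful identification of $\nabla w^{i,\eps}(x)$ in terms of $\nabla u^{i,\eps}(x)$ or $\nabla v^{i,\eps}(x)$, because the convolution mixes the two level sets and the coefficient $f(t,x,w(t,x))$ is frozen at the base point $x$. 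More fundamentally, Definition~\ref{D:NL:subsuper} only gives inequalities for the mollified quantities $u^{i,\eps}, v^{i,\eps}$; there is no pointwise a.e.\ inequality for $u$ or $v$ themselves, so there is no ``smooth-case commutator argument'' to carry through ``at the mollified level.''

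The paper instead replaces your additive decomposition by a \emph{convex} one. Choose a smooth, coordinate-wise increasing $\Psi$ with $\Psi(p,q) \ge p\vee q$ and $\del_{p_k}\Psi^k + \del_{q_k}\Psi^k = 1$ (e.g.\ $\Psi^k(p,q)=\tfrac{p_k+q_k}{2}+\tfrac12\sqrt{(p_k-q_k)^2+\eta^2}$), and consider $\Psi(u^\eps,v^\eps)$. The chain rule then expresses $\del_t\Psi(u^\eps,v^\eps) + f(t,x,u\vee v)\cdot\nabla\Psi(u^\eps,v^\eps)$ as a convex combination of the two subsolution expressions for $u^\eps$ and $v^\eps$, after using $f(t,x,u)\wedge f(t,x,v)\ge f(t,x,u\vee v)$, $g(t,x,u)\vee g(t,x,v)\le g(t,x,u\vee v)$, and $\nabla u^\eps, \nabla v^\eps \le 0$ to pass to the common coefficients. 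No residual appears. This yields a distributional inequality for $u^\eps\vee v^\eps$ (note: not $(u\vee v)*\rho^\eps$), after which a second one-sided mollification at a separate scale $\delta$, a one-sided commutator estimate using $f(t,\cdot,(u\vee v)(t,\cdot))\in BV$, and the limit $\eps\to 0$ produce the required inequality for $(u\vee v)*\rho^\delta$. The missing idea in your proposal is precisely this convex-combination structure (and the resulting two-scale mollification), which eliminates the residual rather than trying to control it.
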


\begin{proof}
	We prove only the subsolution statement, as the other one is analogously proved. It is clear that $\max\{ u(T,\cdot) , v(T,\cdot) \} \le u_T$ and $u \vee v$ is decreasing in $\RR^d$.
	
	Set $u^\eps = u * \rho^\eps$ and $v^\eps = v * \rho^\eps$. Let $\Psi: \RR^m \times \RR^m \to \RR^m$ be smooth, increasing, and satisfy $\Psi(u,v) \ge u \vee v$ for all $u,v \in \RR^m$. Note that, using \eqref{A:FG1} with $C_0 \equiv 0$,
	\begin{align*}
		&f(t,x,u(t,x)) \wedge f(t,x,v(t,x)) \ge f(t,x, u(t,x) \vee v(t,x)) \quad \text{and} \\
		&g(t,x,u(t,x)) \vee g(t,x, v(t,x)) \le g(t,x,u(t,x) \vee v(t,x)),
	\end{align*}
	and so a simple application of the chain rule gives
	\[
		\del_t \Psi(u^\eps, v^\eps) + f(t,x, u \vee v) \cdot \nabla \Psi(u^\eps, v^\eps) + g(t,x,u \vee v) \ge 0.
	\]
	In particular, for any smooth positive test function $\phi \in C_c^\oo((0,T) \times \RR^d)$,
	\[
		\int_0^T \int_{\RR^d} \left( \Psi(u^\eps,v^\eps)(t,x) \del_t \phi(t,x) - g(t,x,u \vee v)\phi(t,x) \right) dxdt+ \langle \nabla (f(\cdot,\cdot, u \vee v)\phi) , \Psi(u^\eps, v^\eps) \rangle \le 0,
	\]
	where the last term is understand as the pairing between locally finite measures and continuous functions, because $f(t,x,(u \vee v)(t,x))$ is $BV$ in $x$. We may then approximate $\max(\cdot,\cdot)$ with such functions $\Psi$ and determine that, in the distributional sense,
	\[
		\del_t (u^\eps \vee v^\eps) + f(t,x, u \vee v) \cdot \nabla (u^\eps \vee v^\eps) + g(t,x,u \vee v) \ge 0.
	\]
	For $\delta > 0$, we convolve both sides of the above inequality with the one-sided mollifier $\rho^\delta$ and obtain, in view of \eqref{A:FG1},
	\[
		\del_t (u^\eps \vee v^\eps) * \rho^\delta + f(t,x,u \vee v) \cdot \nabla \left[ (u^\eps \vee v^\eps) * \rho^\delta \right] + g(t,x,u \vee v) \ge 0.
	\]
	For fixed $\delta$, as $\eps \to 0$, $(u^\eps \vee v^\eps) * \rho^\delta \to (u \vee v) * \rho^\delta$ locally uniformly, and we may therefore send $\eps \to 0$ in the above inequality, again using $f \in BV$, to obtain the desired subsolution inequality for $(u \vee v) * \rho^\delta$.
\end{proof}

\begin{lemma}\label{L:ctsae}
	Assume $f$ and $g$ satisfy \eqref{A:FG1} with $C_0 \equiv 0$, $u_T$ is decreasing, and $u$ is a subsolution (supersolution) of \eqref{eq:nonlinear} in the sense of Definition \ref{D:NL:subsuper}. Then there exists a subsolution (supersolution) $\tilde u$ such that $u \le \tilde u$ ($u \ge \tilde u$) and $\tilde u$ is continuous a.e. in $[0,T] \times \RR^d$.
\end{lemma}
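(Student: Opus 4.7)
The plan is to take $\tilde u := \mcl S(u)$, where $\mcl S$ is the fixed-point map introduced in the proof of Theorem~\ref{T:NL:maxmin}: $\tilde u$ is the unique solution (in the sense of Definition~\ref{D:TEsubsuper:lowerorder}) of the frozen linear transport problem
\begin{equation*}
    -\del_t \tilde u - f(t,x,u(t,x))\cdot\nabla \tilde u - g(t,x,u(t,x)) = 0 \quad \text{in } (0,T)\times\RR^d, \quad \tilde u(T,\cdot) = u_T.
\end{equation*}
Since $u$ is decreasing in $x$ and $(f,g)$ satisfies \eqref{A:FG1} with $C_0\equiv 0$, an easy chain rule shows that the coefficients $b(t,x):=f(t,x,u(t,x))$ and $d(t,x):=g(t,x,u(t,x))$ satisfy respectively \eqref{A:b} with $C_1\equiv 0$ and \eqref{A:cdextra} with $c\equiv 0$. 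Thus Theorem~\ref{T:TE:lowerorder} produces $\tilde u$, Proposition~\ref{P:ABVprop:lowerorder} shows that $\tilde u(t,\cdot)$ is decreasing for every $t\in[0,T]$, and the final assertion of Theorem~\ref{T:TE:comparison:lowerorder} yields the required a.e.\ continuity of $\tilde u$ on $[0,T]\times\RR^d$.

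To see that $\tilde u \ge u$, note that the subsolution inequality of Definition~\ref{D:NL:subsuper} is exactly the linear subsolution inequality of Definition~\ref{D:TEsubsuper:lowerorder} for the frozen equation displayed above, while $\tilde u$ is a (sub- and) supersolution of that same equation, and $u(T,\cdot)\le u_T = \tilde u(T,\cdot)$. The comparison estimate \eqref{TE:comparison:lowerorder} then gives $u\le\tilde u$.

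It remains to verify that $\tilde u$ is itself a subsolution of the nonlinear problem. Writing $\tilde u^\eps := \tilde u * \rho^\eps$, the linear solution property gives
\begin{equation*}
    -\del_t \tilde u^\eps - f(t,x,u(t,x))\cdot\nabla \tilde u^\eps - g(t,x, u(t,x)) \le 0 \quad \text{a.e. in }[0,T]\times\RR^d,
\end{equation*}
so it suffices to control the commutator
\begin{equation*}
    [f(t,x,u) - f(t,x,\tilde u)]\cdot\nabla \tilde u^\eps + [g(t,x,u) - g(t,x,\tilde u)].
\end{equation*}
Since $\tilde u$ is decreasing and $\rho^\eps\ge 0$, we have $\nabla \tilde u^\eps \le 0$ componentwise; since $C_0\equiv 0$, \eqref{A:FG1} gives $\del_{u_k} f^j \le 0$ and $\del_{u_k} g^\ell \ge 0$ for all indices, so $\tilde u\ge u$ forces $f(t,x,u) - f(t,x,\tilde u)\ge 0$ and $g(t,x,u) - g(t,x,\tilde u)\le 0$ componentwise. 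Both commutator terms are therefore nonpositive, and $\tilde u$ satisfies the nonlinear subsolution inequality. The supersolution case is handled symmetrically by setting $\tilde u := \mcl S(u)$ (for which the reversed comparison gives $\tilde u \le u$) and convolving against $\rho_\eps$ in place of $\rho^\eps$. The ``hard part'' is in fact rather light here, being just the careful sign bookkeeping in the last paragraph; both the a.e.\ continuity of $\tilde u$ and the monotone comparison $u\le \tilde u$ are directly inherited from the linear theory of Section~\ref{sec:TE}.
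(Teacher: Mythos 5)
Your proof is correct and follows essentially the same route as the paper: take $\tilde u = \mathcal{S}(u)$ to be the solution of the linear equation with frozen coefficients $b = f(\cdot,\cdot,u)$, $d = g(\cdot,\cdot,u)$, invoke Theorem~\ref{T:TE:comparison:lowerorder} for both $u \le \tilde u$ and the a.e.\ continuity, and then use the monotonicity of $f$ and $g$ in $u$ together with $\nabla \tilde u^\eps \le 0$ to transfer the subsolution inequality from the frozen-coefficient equation to the nonlinear one. The paper records only the coefficient inequalities $f(t,x,u)\ge f(t,x,\tilde u)$, $g(t,x,u)\le g(t,x,\tilde u)$ and leaves the last step implicit, whereas you spell out the commutator bookkeeping, but this is the same argument.
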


\begin{proof}
	For such a subsolution $u$, let $\tilde u$ be the solution of the linear transport equation
	\[
		\del_t \tilde u + f(t,x,u(t,x)) \cdot \nabla \tilde u + g(t,x,u(t,x)) = 0 \quad \text{in }(0,T) \times \RR^d, \quad \tilde u(T,\cdot) = u_T.
	\]
	Then, by Theorem \ref{T:TE:comparison:lowerorder}, $u \le \tilde u$, and $\tilde u$ is continuous a.e. in $[0,T] \times \RR^d$. By \eqref{A:FG1},
	\[
		f(t,x,u(t,x)) \ge f(t,x,\tilde u(t,x)) \quad \text{and} \quad g(t,x,u(t,x)) \le g(t,x,\tilde u(t,x)),
	\]
	and it follows that $\tilde u$ is a subsolution of \eqref{eq:nonlinear}. The argument for supersolutions is identical.
\end{proof}

\begin{proposition}\label{P:Perron}
	Let $f$ and $g$ satisfy \eqref{A:FG1} with $C_0 \equiv 0$, and assume $u_T$ has linear growth and is decreasing. Let $u^+$ and $u^-$ be the maximal and minimal solution from Theorem \ref{T:NL:maxmin}. Then, in the sense of Definition \ref{D:NL:subsuper}, $u^+$ is the pointwise maximum of all subsolutions, and $u^-$ is the pointwise minimum of all supersolutions.
\end{proposition}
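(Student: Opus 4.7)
The plan is to exploit the Tarski fixed-point characterization implicit in the proof of Theorem~\ref{T:NL:maxmin}: on the complete lattice $\mcl L$ from \eqref{lattice}, the monotone operator $\mcl S$ sending $u$ to the unique solution of the frozen-coefficient linear transport equation \eqref{eq:lowerorder} (with $b = f(\cdot,\cdot,u)$, $c \equiv 0$, $d = g(\cdot,\cdot,u)$, and terminal value $u_T$) admits greatest and least fixed points with the characterizations $u^+ = \sup\{u \in \mcl L : u \le \mcl S(u)\}$ and $u^- = \inf\{u \in \mcl L : u \ge \mcl S(u)\}$.

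The key structural observation is that Definition~\ref{D:NL:subsuper} of a sub/supersolution of \eqref{eq:nonlinear} is, by construction, identical to Definition~\ref{D:TEsubsuper:lowerorder} for the linear problem \eqref{eq:lowerorder} once the coefficients are frozen at the candidate sub/supersolution itself: the one-sided mollifications, differential inequalities, decreasing property, and terminal inequality all line up. Consequently, Theorem~\ref{T:TE:comparison:lowerorder} gives, for every subsolution $v$ of \eqref{eq:nonlinear}, the pointwise bound $v \le \mcl S(v)$, and dually $v \ge \mcl S(v)$ for every supersolution.

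For the subsolution case, set $w := \mcl S(v)$. Proposition~\ref{P:ABVprop:lowerorder} ensures $w(t,\cdot)$ is decreasing; the representation \eqref{formula:lowerorder} together with the linear growth assumptions on $u_T$, $f$, and $g$ controls the growth of $w$; and $w(T,\cdot) = u_T$. Hence $w \in \mcl L$, after possibly enlarging the constant $\oline{C}$ in \eqref{lattice}, a modification which does not affect the argument of Theorem~\ref{T:NL:maxmin}. The monotonicity of $\mcl S$ applied to $v \le w$ then yields $w = \mcl S(v) \le \mcl S(w)$, so $w$ is an admissible competitor in the Tarski supremum: $w \le u^+$, whence $v \le w \le u^+$. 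Since $u^+$ is itself a solution of \eqref{eq:nonlinear} and therefore also a subsolution in the sense of Definition~\ref{D:NL:subsuper}, the supremum is attained. The argument for $u^-$ is entirely parallel, replacing subsolutions by supersolutions and the supremum by the infimum.

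The main obstacle will be the growth bookkeeping needed to place $w = \mcl S(v)$ into $\mcl L$, that is, to ensure the integrand $g(s,\phi_{s,t}(x),v(s,\phi_{s,t}(x)))$ in \eqref{formula:lowerorder} remains of at most linear growth in $x$. Provided the natural class of sub/supersolutions is tacitly restricted so that $v$ satisfies \eqref{solutionbound}, this is immediate after enlarging $\oline{C}$; without such a restriction one must first reduce to the linear-growth case, exploiting that $v$ is decreasing and that $v(T,\cdot) \le u_T$ has linear growth, before invoking the Tarski comparison.
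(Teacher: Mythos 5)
Your proof is correct and takes essentially the same route as the paper: freeze the coefficients at a given sub/supersolution, invoke the linear comparison principle (Theorem~\ref{T:TE:comparison:lowerorder}) to compare it with its image under $\mcl S$, and then exhibit an element of the lattice $\mcl L$ that is a Tarski competitor dominating the original sub/supersolution. The only cosmetic difference is that the paper identifies $\mcl S(u)$ itself as the Tarski competitor, whereas you iterate once more to $\mcl S(\mcl S(u))$; your explicit flagging of the growth bookkeeping needed to keep the iterate inside $\mcl L$ is a point the paper treats briskly, and your resolution of it is sound.
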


\begin{remark}
	In view of Lemma \ref{L:ctsae}, the maximum/minimum in Proposition \ref{P:Perron} may be restricted to sub/supersolutions that are continuous a.e. in $[0,T] \times \RR^d$.
\end{remark}

\begin{proof}[Proof of Proposition \ref{P:Perron}]
	We prove only the statement for $u^+$, since the proof is analogous for $u^-$. Let
	\[
		\tilde u^+(t,x) := \sup\left\{ u(t,x) : u \text{ is a subsolution} \right\}.
	\]
	Because $u^+$ is itself a subsolution, we clearly have $u^+ \le \tilde u^+$. Suppose now that $u$ is a subsolution of \eqref{eq:nonlinear}, and let $v$ be the solution of 
	\[
		\del_t v + f(t,x,u) \cdot \nabla v + g(t,x,u) = 0 \quad \text{in }(0,T) \times \RR^d \quad \text{and} \quad v(T,\cdot) = u_T.
	\]
	It then holds that $u$ and $v$ are respectively a  subsolution and solution of the linear equation \eqref{eq:lowerorder} with $b(t,x) = f(t,x,u(t,x))$, $c(t,x) = 0$, and $d(t,x,) = g(t,x,u(t,x))$. Then Theorem \ref{T:TE:comparison:lowerorder} implies $u \le v$. Note also that $v$ belongs to the lattice $\mcl L$ from the proof of Theorem \ref{T:NL:maxmin}, because $v(T,\cdot) = u_T$. If $\mcl S$ is the fixed-point map from the proof of that theorem, we then set $w = \mcl S(v)$, which, by a similar argument, satisfies $w \in \mcl L$ and $S(v) = w \ge v$. By the characterization of $u^+$ by the Tarski fixed point theorem, we must have $u \le u^+$, and therefore $\tilde u^+ \le u^+$.
\end{proof}

\subsection{Continuous solutions}

We now investigate when the maximal and minimal solution $u^+$ and $u^-$ identified in the previous subsection coincide. In this subsection, we prove a comparison principle for \emph{continuous} sub and supersolutions, so that, in particular, $u^+ = u^-$ if both are continuous. As we will see by example in the forthcoming subsections, this can fail in general if the assumption of continuity is dropped.

We will first introduce a different but equivalent notion of solution for continuous solutions, using the theory of viscosity solutions. Let us assume throughout this subsection, in addition to \eqref{A:FG1}, that 
\begin{equation}\label{A:FGcts}
	f \text{ and } g \text{ are uniformly Lipschitz continuous and bounded, and } C_0 \ge 0 \text{ is constant.}
\end{equation}

\begin{definition}\label{D:nonlinear:viscosity}
	A function $u: [0,T] \times \RR^d \to  \RR^m$ is a viscosity subsolution (supersolution) of \eqref{eq:nonlinear} if $u(t,\cdot)$ is decreasing for all $t \in [0,T]$, and, whenever $\phi \in C^1([0,T] \times \RR^d, \RR)$, $i = 1,2,\ldots, m$, and $u^{i,*}(t,x) - \phi(t,x)$ (resp. $u^i_*(t,x) - \phi(t,x)$) attains a local maximum (resp. minimum) at $(t_0,x_0)$, 
	\[
		-\phi_t(t_0,x_0) - f(t_0,x_0, u^*(t_0,x_0)) \cdot \nabla \phi(t_0,x_0) - g(t_0,x_0,u^*(t_0,x_0)) \le 0
	\]
	(resp.
	\[
		-\phi_t(t_0,x_0) - f(t_0,x_0, u_*(t_0,x_0)) \cdot \nabla \phi(t_0,x_0) - g(t_0,x_0,u_*(t_0,x_0)) \ge 0).
	\]
	A viscosity solution is both a sub and supersolution.
\end{definition}

\begin{lemma}\label{L:equivdef}
	Assume \eqref{A:FG1} and \eqref{A:FGcts}. If $u$ is a sub and supersolutions in the sense of Definition \ref{D:NL:transport} and is almost-everywhere continuous in $[0,T] \times \RR^d$, then $u$ is a viscosity sub (super) solution in the sense of Definition \ref{D:nonlinear:viscosity}.
\end{lemma}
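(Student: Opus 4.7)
The argument is symmetric in sub- and supersolutions, so I focus on the subsolution case: given $u$ a subsolution of \eqref{eq:nonlinear} in the sense of Definition \ref{D:NL:subsuper} that is almost-everywhere continuous, I will show $u$ is a viscosity subsolution. As in the reduction used in the proof of Theorem \ref{T:NL:maxmin}, I may assume $C_0 = 0$ in \eqref{A:FG1}. Fix $i \in \{1, \dots, m\}$, $\phi \in C^1$, and let $(t_0, x_0)$ be a local maximum of $u^{i,*} - \phi$. Adding $\eta(|x-x_0|^2 + (t-t_0)^2)$ to $\phi$, I may take this maximum to be strict on a closed box $K$ around $(t_0, x_0)$; it suffices to establish the viscosity inequality for the perturbed test function and send $\eta \to 0$ at the end.

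The central device is the shift identity $u^i_\eps(t,x) = u^{i,\eps}(t, x - 2\eps \mbf{1})$ of Lemma \ref{L:approxid}\eqref{moll:shift}. Since $u^i$ is decreasing, Lemma \ref{L:approxid}\eqref{moll:conf} gives $u^i_\eps \searrow u^{i,*}$ pointwise. By the strict maximum, the maxima $(t_\eps, x_\eps)$ of $u^i_\eps - \phi$ on $K$ satisfy $(t_\eps, x_\eps) \to (t_0, x_0)$. Via the shift identity, setting $y_\eps := x_\eps - 2\eps \mbf{1}$, the point $(t_\eps, y_\eps)$ is a local maximum of $(t,y) \mapsto u^{i,\eps}(t,y) - \phi(t, y + 2\eps \mbf{1})$. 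This is the key step: the viscosity test on the upper envelope $u^{i,*}$ is converted into a maximum condition on the mollified function $u^{i,\eps}$, which is precisely the object on which Definition \ref{D:NL:subsuper} provides an almost-everywhere differential inequality.

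To extract a classical pointwise inequality at $(t_\eps, y_\eps)$, I introduce an additional mollification in time, $u^{i,\eps,\delta} := \eta_\delta *_t u^{i,\eps}$, which is smooth in both variables. Using that $u \in C([0,T], L^p_{\loc})$ inherited from the linear transport theory of Theorem \ref{T:TE:lowerorder}, convolving the a.e. inequality of Definition \ref{D:NL:subsuper} against $\eta_\delta$ in $t$ and passing $\delta \to 0$ yields the pointwise relation
\[
	-\del_t \phi(t_\eps, x_\eps) - f(t_\eps, y_\eps, u(t_\eps, y_\eps)) \cdot \nabla \phi(t_\eps, x_\eps) - g^i(t_\eps, y_\eps, u(t_\eps, y_\eps)) \le 0
\]
at the interior maximum point, the shift $2\eps \mbf{1}$ in the test function being absorbed since it does not affect its derivatives.

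Finally, I send $\eps \to 0$. Since $y_\eps = x_\eps - 2\eps \mbf{1} < x_0$ strictly in the partial order \eqref{order} for small $\eps$, the analog of Lemma \ref{L:increasing} for decreasing functions (characterizing $u^{i,*}(t_0,x_0)$ as the one-sided limit of $u^i$ approached strictly from below in the partial order), combined with the almost-everywhere continuity of $u$, gives $u(t_\eps, y_\eps) \to u^*(t_0, x_0)$. Continuity of $f$ and $g$ in $(x, u)$ then yields the viscosity subsolution inequality with $\phi_\eta$, and sending $\eta \to 0$ concludes the proof. The main technical obstacle is the penultimate step: promoting the a.e. distributional inequality on the Lipschitz-in-space function $u^{i,\eps}$ to a pointwise inequality at the specific max point $(t_\eps, y_\eps)$, in the presence of the discontinuous coefficients $f(t,y,u(t,y))$ and $g^i(t,y,u(t,y))$. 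This is handled by the time-mollification argument above and crucially relies on the $C_t L^p_x$ regularity of $u$ established in Section \ref{sec:TE}.
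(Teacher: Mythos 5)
Your overall strategy mirrors the paper's: approximate the upper envelope $u^{i,*}$ by a one-sided mollification, locate approximate maxima, use the shift identity of Lemma \ref{L:approxid}\eqref{moll:shift} to land on the object $u^{i,\eps}$ where Definition \ref{D:NL:subsuper} gives a differential inequality, and pass $\eps\to 0$ using the sandwich $u^{\eps}\le u\le u_\eps$ and upper semicontinuity of $u^*$. Your choice to build the approximate maxima out of $u^i_\eps = u^i*\rho_\eps$ (which indeed satisfies $u^i_\eps\searrow u^{i,*}$ for decreasing $u$) rather than $u^{i,\eps}=u^i*\rho^\eps$, and then shift back, is a defensible variant of the paper's argument and arguably cleaner for the monotone convergence step. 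Your identification of the limit $u(t_\eps,y_\eps)\to u^*(t_0,x_0)$ is correct, although the mechanism is the two-sided squeeze ($u(t_\eps,y_\eps)\ge u^{i,\eps}(t_\eps,y_\eps)=u^i_\eps(t_\eps,x_\eps)\to u^{i,*}(t_0,x_0)$ from below, upper semicontinuity of $u^*$ from above) rather than a one-sided-limit characterization per se.

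The genuine gap is in your ``time-mollification'' step. Definition \ref{D:NL:subsuper} furnishes only an almost-everywhere inequality, and $(t_\eps,y_\eps)$ is a single point; you need the inequality to hold there with the test-function derivatives $\del_t\phi(t_\eps,x_\eps)$ and $\nabla\phi(t_\eps,x_\eps)$ in place of $\del_t u^{i,\eps}$ and $\nabla u^{i,\eps}$. Convolving in $t$ alone does not resolve this: passing $\delta\to 0$ in the convolved inequality at the frozen point $(t_\eps,y_\eps)$ requires $(t_\eps,y_\eps)$ to be a Lebesgue point of $t\mapsto f(t,y_\eps,u(t,y_\eps))\cdot\nabla u^{i,\eps}(t,y_\eps)$ and of $t\mapsto g^i(t,y_\eps,u(t,y_\eps))$, which is not guaranteed; moreover the maximizer of the $\delta$-smoothed function moves with $\delta$, and the coefficients $f(t,y,u(t,y))$, $g^i(t,y,u(t,y))$ are merely $L^\infty$ in $y$ (since $u$ is only $BV$), so their values at the moving max points need not stabilize. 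The paper finesses this by folding the monotone structure of $f$ and $g$ in $u$ together with $\nabla u^{i,\eps}\le 0$ into the ``standard maximum principle considerations,'' so that the argument of $f,g$ in the limit is controlled by $u^{i,\eps}(t_\eps,x_\eps)$, which it separately shows converges to $u^{i,*}(t_0,x_0)$; that monotonicity device does not appear in your proposal. You would need either to supply a rigorous substitute for it, or to argue via a sequence of a.e.-differentiability points $(s_n,z_n)\to(t_\eps,y_\eps)$ chosen so that both $D u^{i,\eps}(s_n,z_n)\to D\phi(t_\eps,x_\eps)$ and the coefficient values $u(s_n,z_n)$ converge appropriately, which again calls on the very monotonicity and semicontinuity structure you set aside.
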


\begin{remark}
	In view of Lemma \ref{L:ctsae}, no generality is lost in considering sub or supersolutions that are continuous a.e. in $[0,T] \times \RR^d$.
\end{remark}

\begin{proof}[Proof of Lemma \ref{L:equivdef}]
	We prove only the subsolution property. Note that, because $u$ is decreasing, we have $u(t,\cdot)^* = u(t,\cdot)$ a.e. Then $u^\eps = u * \rho^\eps$ is a (classical) subsolution of
	\[
		-\del_t u^\eps - f(t,x, u^*(t,x)) \cdot \nabla u^\eps - g(t,x, u^*(t,x)) \le 0,
	\]
	and, as $\eps \to 0$, $u^\eps \searrow u^*$.
	
	Assume that $u^*(t,x) - \phi(t,x)$ attains a maximum at $(t_0,x_0)$, which we may assume to be strict by adding a quadratic penalization to $\phi$. For $\eps > 0$, let $(t_\eps,x_\eps)$ denote the maximum point of $u^\eps(t,x) - \phi(t,x)$ on $[0,T] \times \oline{B_1(x_0)}$, and let $(s,y)$ be a limit point of $(t_\eps,x_\eps)_{\eps > 0}$ along some subsequence $(\eps_n)_{n \in \NN}$. By Lemma \ref{L:approxid}\eqref{moll:shift}, 
	\[
		u^\eps(t_\eps,x_\eps) \le u(t_\eps, x_\eps - 2\eps \mbf 1) + \eps \mbf 1,
	\]
	and so
	\[
		\limsup_{n \to \oo} u^{\eps_n}(t_{\eps_n}, x_{\eps_n}) \le u^*(s,y).
	\]
	Let now $(s_n, y_n)_{n \in \in\NN}\subset [0,T] \times \oline{B_1(x_0)}$ be such that 
	\[
		(s_n,y_n) \xrightarrow{n \to \oo} (t_0,x_0) \quad \text{and} \quad u^{\eps_n}(s_n,y_n) \xrightarrow{n \to \oo} u^*(t_0,x_0).
	\]
	Taking $n \to \oo$ in the inequality $u^{\eps_n}(s_n,y_n) - \phi(s_n,y_n) \le u^{\eps_n}(t_{\eps_n}, x_{\eps_n}) - \phi(t_{\eps_n}, x_{\eps_n})$ yields $u^*(t_0,y_0) - \phi(t_0,y_0) \le u^*(s,y) - \phi(s,y)$. This implies that $(s,y) = (t_0,x_0)$, the limit holds along the full family $\eps \to 0$, and $\lim_{\eps \to 0} u^\eps(t_\eps,x_\eps) = u^*(t_0,x_0)$.
	
	By standard maximum principle considerations, and the fact that $-f$ and $g$ are increasing in $u$ and $\nabla u^\eps \le 0$,
	\[
		-\del_t u^\eps(t_\eps,x_0) - f(t_\eps,x_\eps, u^\eps(t_\eps,x_\eps)) \cdot \nabla u^\eps(t_\eps,x_\eps) - g(t_\eps,x_\eps, u^\eps(t_\eps,x_\eps)) \le 0.
	\]
	Sending $\eps \to 0$ gives the desired subsolution inequality.
\end{proof}

\begin{theorem}\label{T:NL:cts}
	Assume that $f$ and $g$ satisfy \eqref{A:FG1} and \eqref{A:FGcts}, and let $u$ and $v$ be respectively a bounded sub and supersolution of \eqref{eq:nonlinear} in the sense of Definition \ref{D:nonlinear:viscosity} such that either $u$ is continuous and $v$ is lower-semicontinuous, or $u$ is upper-semicontinuous and $v$ is continuous. If $u(T,\cdot) \le v(T,\cdot)$, then, for all $t \in [0,T]$, $u(t,\cdot) \le v(t,\cdot)$.
\end{theorem}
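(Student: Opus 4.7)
The plan is to prove comparison by the doubling-of-variables argument for viscosity solutions, adapted to the system via the quasi-monotone structure in \eqref{A:FG1}. As in the proof of Theorem \ref{T:NL:maxmin}, an exponential rescaling in $(t, x, u)$ reduces the problem to the case $C_0 = 0$, so that each $g^\ell$ is nondecreasing in every $u_k$, each $f^j$ is nondecreasing in every $x_i$, while $f$ remains nonincreasing in $u$ and $g$ nonincreasing in $x$. Combined with \eqref{A:FGcts}, this gives the Hamiltonian $H^i(t, x, u, p) := -f(t, x, u) \cdot p - g^i(t, x, u)$ a quasi-monotone Lipschitz dependence on $u$ of the kind required for a viscosity comparison.

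Arguing by contradiction, suppose $\sup_{i, t, x}(u^i - v^i)(t, x) > 0$. For small parameters $\eps, \beta > 0$ introduce the penalty
\[
	\Phi^i(t, s, x, y) := u^i(t, x) - v^i(s, y) - \frac{|x - y|^2}{2\eps} - \frac{(t - s)^2}{2\eps} - \beta\bigl(\sqrt{1 + |x|^2} + \sqrt{1 + |y|^2}\bigr),
\]
let $i^* \in \{1, 2, \ldots, m\}$ attain $\max_i \sup \Phi^i$, and let $(\hat t, \hat s, \hat x, \hat y)$ realize the maximum of $\Phi^{i^*}$; existence is ensured by the semi-continuity hypothesis on whichever of $u, v$ is not continuous together with the growth penalty, and standard penalty estimates give $\hat t, \hat s < T$ along with $|\hat x - \hat y|^2/\eps, |\hat t - \hat s|^2/\eps \to 0$ as $\eps \to 0$. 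A key point is that, since $u^{i^*}$ and $v^{i^*}$ are decreasing in $x$, the test-function gradient $p := (\hat x - \hat y)/\eps$, lying in the superdifferential of $u^{i^*}$ at $(\hat t, \hat x)$ and the subdifferential of $v^{i^*}$ at $(\hat s, \hat y)$, satisfies $p \le 0$ componentwise up to an $O(\beta)$ error. Plugging the penalty-generated test functions into Definition \ref{D:nonlinear:viscosity} applied to $u^{i^*}$ at $(\hat t, \hat x)$ and to $v^{i^*}$ at $(\hat s, \hat y)$, and subtracting, yields
\[
	\bigl[f(\hat t, \hat x, u(\hat t, \hat x)) - f(\hat s, \hat y, v(\hat s, \hat y))\bigr] \cdot p + \bigl[g^{i^*}(\hat t, \hat x, u(\hat t, \hat x)) - g^{i^*}(\hat s, \hat y, v(\hat s, \hat y))\bigr] \ge -C\beta.
\]

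The crucial ingredient is the componentwise ordering $w^j \le w^{i^*}$ for every $j$, where $w := u(\hat t, \hat x) - v(\hat s, \hat y)$, obtained from the maximality of $i^*$. I would then split each bracketed difference into a Lipschitz-in-$(t, x)$ part, of size $O(|\hat t - \hat s| + |\hat x - \hat y|)$ via \eqref{A:FGcts}, plus a $u$-dependence part; using the signs $\del_{u_k} g^{i^*} \ge 0$ and $\del_{u_k} f^j \le 0$ together with $w^j \le w^{i^*}$, the $g^{i^*}$ contribution is bounded above by $C(|\hat t - \hat s| + |\hat x - \hat y|) + C w^{i^*}$, and the $f \cdot p$ contribution by a combination of $(|\hat t - \hat s| + |\hat x - \hat y|)|p|$ and $w^{i^*} |p|$. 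The main obstacle, where the sign $p \le 0$ becomes essential, is the $f \cdot p$ term: the spatial Lipschitz estimate alone allows $|p|$ to blow up as $\eps \to 0$, but exploiting instead the monotonicity $\del_{x_i} f^j \ge 0$ with $p_j \le 0$ (so that $\hat x \le \hat y$ componentwise gives $[f^j(\hat t, \hat x, u) - f^j(\hat t, \hat y, u)] p_j$ has a favorable sign), together with the penalty identities $|\hat x - \hat y| |p| = |\hat x - \hat y|^2/\eps \to 0$ and $|\hat t - \hat s| |p| \le (|\hat t - \hat s|^2/\eps)^{1/2}(|\hat x - \hat y|^2/\eps)^{1/2} \to 0$, reduces the remaining $w^{i^*} |p|$ residue to something that can be absorbed by introducing an exponential time weight $e^{-\lambda t}$ with $\lambda$ large in terms of the Lipschitz constants, which generates an extra $\lambda w^{i^*}$ on the left side. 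Passing $\eps \to 0$ and then $\beta \to 0$ in the resulting inequality contradicts the assumption $\sup_{i, t, x}(u^i - v^i) > 0$ and yields $u \le v$.
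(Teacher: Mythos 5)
Your proposal correctly identifies the main structure of the argument: doubling of variables, selecting the maximizing component $i^*$ so that $w^j \le w^{i^*}$, using the decreasing property of $u,v$ to obtain the sign $p\le 0$ of the penalty gradient, and splitting the Hamiltonian difference into Lipschitz and order-preserving pieces. The paper's proof does the same things. However, there is a genuine gap in the way you propose to close the argument, and it occurs precisely at the step the paper handles with an unusual device.

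You bound the problematic contribution from the $u$-dependence of $f$ by $w^{i^*}|p|$ (correct), and you then assert that this can be absorbed by an exponential time weight $e^{-\lambda t}$ with $\lambda$ ``large in terms of the Lipschitz constants.'' This does not work. The penalty gradient has magnitude $|p| = |\hat x - \hat y|/\eps$, and the only a priori estimate available (from, e.g., [CIL, Lemma~3.1], using merely the boundedness of $u$ and $v$) is $|p| \le 2^{3/2}M^{1/2}\eps^{-1/2}$. There is no uniform bound on $|p|$ under the hypotheses of the theorem (one of $u,v$ is merely continuous, not Lipschitz), so the term $w^{i^*}|p|$ grows like $\eps^{-1/2}$ as the penalty is sharpened, and a fixed weight exponent $\lambda$ cannot dominate it. This is precisely why the paper chooses the penalty-dependent weight $e^{-C\lambda^{1/2}(T-t)}$ (in the paper's notation the penalty strength is $\lambda$): at the contact point $\hat t$ one has $\phi(\hat t)=0$, i.e., $u^{i^*}-v^{i^*}$ equals the penalty terms, and the coefficient $C\lambda^{1/2}$ is tuned so that $C\lambda^{1/2}(u^{i^*}-v^{i^*})$ dominates $\nor{\nabla_u f}{\oo}\,\lambda|\hat x - \hat y|\,(u^{i^*}-v^{i^*})$, which also scales like $\lambda^{1/2}$. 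Without this coupling between the weight exponent and the penalty parameter, the contradiction is not reached.

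Two smaller remarks. First, the rescaling to $C_0=0$ you invoke at the start introduces an additive linear term $C_0 x$ into $f$ and $C_0 u$ into $g$, which breaks the boundedness required in \eqref{A:FGcts}; the paper's proof works directly with constant $C_0\ge 0$ by using the off-diagonal sign conditions from \eqref{A:FG1} together with Lipschitz bounds for the diagonal. Second, the worry you raise about the $x$-dependence of $f$ (``the spatial Lipschitz estimate alone allows $|p|$ to blow up'') is not actually an issue: the term $[f^j(\hat t,\hat x,\cdot)-f^j(\hat t,\hat y,\cdot)]p_j$ is bounded by $\nor{\nabla_x f}{\oo}\,|\hat x-\hat y|\,|p| = \nor{\nabla_x f}{\oo}\,|\hat x-\hat y|^2/\eps \to 0$, so the $x$-monotonicity of $f$ is not needed here (and indeed the paper uses only Lipschitz in $x$ for this term). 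The $x$-monotonicity and the sign $p\le 0$ are genuinely needed elsewhere, namely to substitute $u(\hat t,\hat x) \le v(\hat t,\hat y)+w^{i^*}\mbf 1$ into $f$ with the correct sense of the inequality.
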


\begin{proof}
	The proofs of both statements are almost identical, so we prove only the statement when $u$ is continuous and $v$ is lower-semicontinuous.

	We first prove the result under the additional assumption that, for some $\delta > 0$, $u$ is a sub-solution of
	\begin{equation}\label{E:deltaperturb}
		-\del_t u - \left[ f(t,x,u) + \delta \mbf 1\right]\cdot \nabla u - g(t,x,u) \le - \delta \mbf 1
	\end{equation}
	and $u(T,\cdot) - v(T,\cdot) \le -\delta \mbf 1$. Standard arguments from the theory of viscosity solutions then imply that $w(t,x,y) := u(t,x) - v(t,y)$ is a subsolution of
	\begin{equation}\label{doubled}
		-\del_t w - \left[ f(t,x,u) + \delta \mbf 1 \right] \cdot \nabla_x w -  f(t,x,v) \cdot \nabla_y w - g(t,x,u) + g(t,x,v) \le - \delta \mbf 1.
	\end{equation}

	Let $M > 0$ be such that $\nor{u}{\oo} \le M$ and $\nor{v}{\oo} \le M$, fix $\lambda \ge 1$ and $\beta > 0$, set
	\[
		[0,T] \ni t \mapsto \phi(t) := \max_{x,y \in \RR^d} \max_{i=1,2,\ldots,m} \left( u^{i}(t,x) - v^i(t,y) - \frac{\lambda}{2} |x-y|^2 - \frac{\beta}{2}(|x|^2 + |y|^2) \right),
	\]
	and, for some constant $C > 0$ to be chosen, define
	\[
		\hat t := \sup\left\{ t \in (-\oo,T] : e^{-C\lambda^{1/2} (T-t)} \phi(t) > 0 \right\}.
	\]
	As $\lambda \to \oo$ and $\beta \to 0$,
	\[
		\phi(T) = \max_{x,y,i} \left( u^{i}(T,x) - v^i(T,y) - \frac{\lambda}{2} |x-y|^2 - \frac{\beta}{2} (|x|^2 + |y|^2) \right) \le -\delta + o(1),
	\]
	and so, if $\lambda$ is sufficiently large and $\beta$ is sufficiently small, depending on $\delta$, then $\phi(T) < 0$, and therefore $\hat t  < T$.
	
	We next claim that $\hat t  < 0$ for all sufficiently large $\lambda$ and small $\beta$, which implies $\phi(t) \le 0$ for all $t \in [0,T]$, and, hence, gives the result. If this were not true and $0 \le \hat t < T$, then choose $i, \hat x,\hat y$ such that
	\[
		0 = \phi(\hat t) = u^{i}(\hat t,\hat x) - v^i(\hat t,\hat y) - \frac{\lambda}{2} |\hat x - \hat y|^2 - \frac{\beta}{2} (|\hat x|^2 + |\hat y|^2).
	\]
	We then have
	\begin{equation}\label{jiordering}
		u^{j}(\hat t,\hat x) - v^j(\hat t,\hat y) \le u^{i}(\hat t,\hat x) - v^i(\hat t,\hat y) \quad \text{for all } j =1,2,\ldots, m,
	\end{equation}
	and, from the fact that $u$ and $v$ are decreasing,
	\begin{equation}\label{testfnmon}
		\lambda(\hat x_j - \hat y_j)  + \beta \hat x_j \le 0 \quad \text{and} \quad \lambda(\hat x_j - \hat y_j) - \beta \hat y_j \le 0 \quad \text{for all } j = 1,2,\ldots, d.
	\end{equation}
	Moreover, in view of the boundedness of $u$ and $v$, arguing as in for instance \cite[Lemma 3.1]{CIL},
	\begin{equation}\label{Lipapprox}
		\lambda|\hat x - \hat y| \le 2^{3/2} M^{1/2} \lambda^{1/2},
	\end{equation}
	\begin{equation}\label{Lipapprox2}
		\max(|\hat x|, |\hat y|) \le 2^{3/2} M^{1/2} \beta^{-1/2},
	\end{equation}
	and, for some $\delta_\lambda$ and $\eps_\beta$ satisfying $\lim_{\lambda \to \oo} \delta_\lambda = \lim_{\beta \to 0} \eps_\beta = 0$,
	\begin{equation}\label{vanishingpenal}
		\lambda|\hat x - \hat y|^2 \le \delta_\lambda \quad \text{and} \quad \beta(|\hat x|^2 + |\hat y|^2 \le \eps_\beta.
	\end{equation}
	On $[\hat t,T] \times \RR^d \times \RR^d$, the function
	\[
		u^{i}(t,x) - v^i(t,y) - \frac{\lambda}{2} |x-y|^2  - \frac{\beta}{2} (|x|^2 + |y|^2) - e^{-C \lambda^{1/2}(t - \hat t)} \phi(\hat t)
	\]
	achieves a maximum at $(\hat t, \hat x,\hat y)$, and so, applying Definition \ref{D:nonlinear:viscosity} to the doubled equation \eqref{doubled},
	\begin{equation}\label{applydef}
	\begin{split}
		-\delta \ge C \lambda^{1/2} \phi(\hat t)
		&- \sum_{j = 1}^d (f^j(\hat t, \hat x, u(\hat t, \hat x)) + \delta)\left( \lambda (\hat x_j - \hat y_j) + \beta \hat x_j\right) - g^i(\hat t, \hat x, u(\hat t, \hat x)) \\
		&+ \sum_{j=1}^d f^j(\hat t, \hat y, v(\hat t, \hat y))\left( \lambda (\hat x_j - \hat y_j) - \beta \hat y_j \right)  + g^i(\hat t, \hat y, v(\hat t, \hat y)).
	\end{split}
	\end{equation}	
	Because of \eqref{A:FG1}, \eqref{A:FGcts}, \eqref{jiordering}, and \eqref{testfnmon}, we have
	\begin{align*}
		&\sum_{j=1}^d \left[ f^j(\hat t, \hat x,u(\hat t,\hat x)) + \delta \right] \left(\lambda(\hat x_j - \hat y_j) + \beta \hat x_j \right)\\
		&\le \sum_{j=1}^d \left[ f^j(\hat t, \hat x,v(\hat t,\hat y) + (u^{i}(\hat t,\hat x) - v^i(\hat t,\hat y))\mbf 1 ) - \delta \right]\left( \lambda( \hat x_j - \hat y_j) + \beta \hat x_j \right)\\
		&\le -\frac{1}{d^{1/2}} \delta \lambda|\hat x - \hat y| +  \sum_{j=1}^d f^j(\hat t, \hat y,v(\hat t, \hat y)) \left( \lambda (\hat x_j - \hat y_j) - \beta \hat y_j \right) + \nor{\nabla_u f}{\oo} \lambda|\hat x - \hat y|(u^{i}(\hat t,\hat x) - v^i(\hat t, \hat y)) \\
		&+ \nor{\nabla_x f}{\oo}\lambda|\hat x - \hat y|^2 + \left( \frac{\delta}{d^{1/2}} + \nor{f}{\oo} \right) \beta |\hat x| + \nor{f}{\oo} \beta |\hat y|.
	\end{align*}
	Similarly, using the fact that $g^i$ is nondecreasing in the $u_j$-variable for all $j \ne i$, we have
	\[
		g^i(\hat t, \hat x, u(\hat t, \hat x))
		\le g(\hat t, \hat y, v(\hat t, \hat y)) + \norm{\nabla_u g}_\oo ( u^i(\hat t, \hat x) - v^i(\hat t, \hat y)) 
		+ \norm{\nabla_x g}_\oo |\hat x - \hat y|.
	\]
	Therefore, \eqref{applydef} becomes, using \eqref{Lipapprox}, \eqref{Lipapprox2}, and \eqref{vanishingpenal},
	\begin{equation}\label{contradiction}
	\begin{split}
		-\delta &\ge \left( C\lambda^{1/2} - \nor{\nabla_u f}{\oo} \lambda|\hat x - \hat y| - \nor{\nabla_u g}{\oo} \right)(u^i(\hat t,\hat x) - v^i(\hat t,\hat y)) \\
		& + \lambda|\hat x - \hat y| \left( \frac{1}{d^{1/2}} \delta  - \frac{C \delta_\lambda^{1/2}}{2} \right)\\
		& - \nor{\nabla_x f}{\oo} \delta_\lambda - 2^{3/2} M^{1/2} \nor{\nabla_x g}{\oo} \lambda^{-1/2}\\
		&  - 2^{3/2} M^{1/2} \left( \frac{\delta}{d^{1/2}} + \nor{f}{\oo} \right)\beta^{1/2} - 2^{3/2} M^{1/2}\nor{f}{\oo} \beta^{1/2} - 2\frac{C\lambda^{1/2}}{2} \eps_\beta.
	\end{split}
	\end{equation}
	Once again using \eqref{Lipapprox},
	\[
		C\lambda^{1/2} - \nor{\nabla_u f}{\oo} \lambda|\hat x - \hat y| - \nor{\nabla_u g}{\oo}
		\ge C \lambda^{1/2} - 2^{3/2} \nor{\nabla_u f}{\oo} \lambda^{1/2}M^{1/2} - \nor{\nabla_u g}{\oo} \ge 0,
	\]
	provided $C$ is large enough, depending only $f$, $g$ and $M$. Therefore, since $u^i(\hat t, \hat x) \ge v^i(\hat t, \hat y)$, the first two lines of \eqref{contradiction} are nonnegative if $\lambda$ is sufficiently large, depending on $\delta$. Taking first $\lambda$ sufficiently large and then $\beta$ sufficiently small, we conclude that the right-hand side of \eqref{contradiction} is strictly larger than $-\delta$, which is the desired contradiction.
	
	We now prove the general statement. For $\delta > 0$ and $(t,x) \in [0,T] \times \RR^d$, set
	\[
		\tilde u(t,x) = u(t,x + \delta \psi_1(t)\mbf 1) - \delta \psi_2(t)\mbf 1,
	\]
	where $\psi_1$ and $\psi_2$ are two nonnegative scalar functions satisfying $\psi_1(T) = 0$ and $\psi_2(T) = 1$. Then
	\[
		\tilde u(T,\cdot) = u(T,\cdot) - \delta\mbf 1,
	\]
	and so, using the the fact that $u$ is nonincreasing, we formally compute\footnote{The computations can be made rigorous using test functions and Definition \ref{D:nonlinear:viscosity}. Note that the argument for $u$ above is always $(t, x + \delta \psi_1(t) \mbf 1)$.}
	\begin{equation}\label{ineqtocheck}
	\begin{split}
		-\del_t \tilde u &- \left[ f(t,x,\tilde u) - \delta \right]\nabla \tilde u - g(t,x, \tilde u)\\
		&= \del_t u - \delta \nabla_x u \cdot \mbf 1 \dot \psi_1 + \delta \dot \psi_2 \mbf 1
		- \left[ f(t, x + \delta \psi_1\mbf 1, u - \delta \psi_2\mbf 1) + \delta \right] \nabla u - g(x + \delta \psi_1\mbf 1, u - \delta \psi_2\mbf 1) + \delta \mbf 1\\
		&\le \delta (-\mbf 1 \cdot \nabla u) \left( \dot \psi_1 + \norm{\nabla_u f}_\oo \psi_2 + 1\right)
		+ \delta\left( \dot \psi_2 + \norm{\nabla_u g}_\oo \psi_2  + 1 \right) \mbf 1.
	\end{split}
	\end{equation}
	We then choose $\psi_1$ and $\psi_2$ so as to satisfy
	\[
		\dot \psi_1 = - \norm{\nabla_u f}_\oo\psi_2 - 1 \quad \text{and} \quad
		\dot \psi_2 = - \norm{\nabla _u g}_\oo\psi_2  - 1,
	\]
	that is,
	\begin{align*}
		\psi_1(t) &= \frac{ \norm{\nabla_u f}_\oo ( \norm{\nabla_u g}_\oo + 1) }{\norm{\nabla_u g}_\oo^2} ( e^{\norm{\nabla_u g}_\oo(T-t)} - 1) + \left(1 - \frac{\norm{\nabla_u f}_\oo}{\norm{ \nabla_u g}_\oo} \right) (T-t)
		\quad \text{and} \\
		\psi_2(t) &= \left( 1 + \frac{1}{ \norm{\nabla_u g}_\oo} \right)e^{\norm{\nabla_u g}_\oo(T-t)} - \frac{1}{\norm{\nabla_u g}_\oo}.
	\end{align*}
	Then \eqref{ineqtocheck} becomes exactly \eqref{E:deltaperturb}, and so, for $(t,x) \in [0,T] \times \RR^d$,
	\[
		u(t,x + \delta\psi_1(t)) - \delta \psi_2(t) \le v(t,x).
	\]
	We conclude upon sending $\delta \to 0$ and appealing to the continuity of $u$.
	\end{proof}
	
Let us note the following corollary of Theorem \ref{T:NL:maxmin}, Lemma \ref{L:equivdef}, and Theorem \ref{T:NL:cts}.

\begin{corollary}
	Under the same conditions on $f$ and $g$ as in Theorem \ref{T:NL:cts}, let $u_T$ be bounded and decreasing and let $u^+$ and $u^-$ be the maximal and minimal solutions identified in Theorem \ref{T:NL:maxmin}. Then
	\[
		u^+(t,x) = \inf\left\{ u(t,x) : u \text{ is a continuous viscosity supersolution with } u(T,\cdot) \ge u_T \right\}
	\]
	and
	\[
		u^-(t,x) = \sup \left\{ u(t,x) : u \text{ is a continuous viscosity subsolution with } u(T,\cdot) \le u_T \right\}.
	\]
\end{corollary}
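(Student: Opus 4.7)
I focus on the characterization of $u^+$; the claim for $u^-$ follows by a symmetric argument.

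For the inequality $u^+ \le \inf\{v\}$, my plan is to apply Theorem \ref{T:NL:cts} directly. By Theorem \ref{T:NL:maxmin}, $u^+$ is upper-semicontinuous in $x$, continuous almost everywhere in $[0,T] \times \RR^d$, and satisfies $u^+(T, \cdot) = u_T$. As a solution in the sense of Definition \ref{D:NL:transport}, $u^+$ is in particular both a sub- and supersolution in the sense of Definition \ref{D:NL:subsuper}, and Lemma \ref{L:equivdef} then shows that $u^+$ is a viscosity subsolution in the sense of Definition \ref{D:nonlinear:viscosity}. For any continuous viscosity supersolution $v$ with $v(T, \cdot) \ge u_T = u^+(T, \cdot)$, Theorem \ref{T:NL:cts} applied to the pair $(u^+, v)$ yields $u^+ \le v$ on $[0, T] \times \RR^d$, whence $u^+ \le \inf\{v\}$.

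For the reverse inequality $u^+ \ge \inf\{v\}$, I plan to construct a family of continuous viscosity supersolutions converging to $u^+$ from above. The central observation is a monotonicity principle: if smooth data $f^\eps, g^\eps, u_T^\eps$ satisfy \eqref{A:FG1} and \eqref{A:FGcts} uniformly, with $f^\eps \le f$ and $g^\eps \ge g$ coordinatewise and $u_T^\eps \ge u_T$, then any continuous decreasing solution $v^\eps$ of
\[
-\del_t v^\eps - f^\eps(t, x, v^\eps) \cdot \nabla v^\eps - g^\eps(t, x, v^\eps) = 0, \quad v^\eps(T, \cdot) = u_T^\eps,
\]
is automatically a viscosity supersolution of the original equation, since $(f^\eps - f)(t, x, v^\eps) \cdot \nabla v^\eps + (g^\eps - g)(t, x, v^\eps) \ge 0$ coordinatewise (using that $\nabla \phi \le 0$ for any $C^1$ test function touching $v^\eps$ from below, by the decreasing property). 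Combined with $v^\eps(T, \cdot) \ge u_T$, each such $v^\eps$ lies in the class whose infimum defines the right-hand side. Arranging $f^\eps \nearrow f$, $g^\eps \searrow g$, and $u_T^\eps \searrow u_T$ as $\eps \to 0$, a stability argument for the fixed-point construction of Theorem \ref{T:NL:maxmin} shows that the associated maximal solutions $v^{+, \eps} \ge u^+$ converge to $u^+$ pointwise at continuity points of $u^+$, which by the upper-semicontinuity and a.e.-continuity of $u^+$ gives $\inf\{v\} \le u^+$ everywhere.

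The main obstacle is the production of \emph{continuous} approximants: the maximal solutions provided by Theorem \ref{T:NL:maxmin} are only upper-semicontinuous, and shocks can form in finite time even for smooth data. I plan to circumvent this by further regularizing with vanishing viscosity $\eta \Delta v^{\eps, \eta}$, obtaining smooth global solutions $v^{\eps, \eta}$ of a parabolic equation, and offsetting the viscous term by enlarging $g^\eps$ by a quantity dominating $\eta \nor{\Delta v^{\eps, \eta}}{\oo}$ so that $v^{\eps, \eta}$ remains a continuous viscosity supersolution of the original equation; the double limit $\eta \to 0$, $\eps \to 0$ must then be tuned so that both the viscous correction and the data perturbations vanish in the final limit.
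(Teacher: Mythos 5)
The first inequality, $u^+ \le \inf\{v\}$, is handled correctly: $u^+$ is bounded, upper semicontinuous, a.e.\ continuous, and a viscosity subsolution by Lemma \ref{L:equivdef}, so Theorem \ref{T:NL:cts} (in the usc subsolution vs.\ continuous supersolution case) applies and gives $u^+ \le v$ for every admissible $v$. The monotone perturbation lemma you state as the ``central observation'' for the reverse inequality is also correct as far as it goes: if $f^\eps \le f$, $g^\eps \ge g$, and $v^\eps$ is a continuous decreasing solution of the perturbed system, then $(g^\eps - g)(t,x,v^\eps) \ge 0$ and $(f^\eps - f)(t,x,v^\eps)\cdot\nabla v^\eps \ge 0$ (since $f^\eps - f \le 0$ and $\nabla v^\eps \le 0$), so $v^\eps$ is a viscosity supersolution of the original equation.

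The gap, which you yourself flag, is real and your proposed repair does not close it. Perturbing the data monotonically does not prevent shock formation — the maximal solutions $v^{+,\eps}$ furnished by Theorem \ref{T:NL:maxmin} are still only upper semicontinuous, so they are not elements of the admissible class. Adding viscosity $\eta\Delta v^{\eps,\eta}$ restores smoothness, but the offset you propose — enlarging $g^\eps$ by a constant dominating $\eta\|\Delta v^{\eps,\eta}\|_\infty$ — is circular (the required enlargement depends on the solution it defines) and, more fundamentally, cannot be sent to zero: near a forming shock, classical viscous profiles have second derivatives of size $\eta^{-2}$, so $\eta\|\Delta v^{\eps,\eta}\|_\infty$ is of order $\eta^{-1}$ and blows up. No tuning of the double limit can make both the viscous correction and the data perturbation vanish. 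In addition, the intermediate claim that ``a stability argument for the fixed-point construction of Theorem \ref{T:NL:maxmin}'' yields pointwise convergence $v^{+,\eps} \to u^+$ at continuity points needs a proof; Tarski's theorem has no built-in stability, and you would have to argue this from the comparison structure. As it stands, the proposal establishes the easy inequality but leaves the approximation of $u^+$ from above by \emph{continuous} supersolutions unproved. A more promising route is to work directly with the one-sided mollifications $u^+_\eps := u^+ * \rho_\eps$ (which are continuous, decreasing, $\ge u^+$, and satisfy $u^+_\eps(T,\cdot) = u_T * \rho_\eps \ge u_T$): Definition \ref{D:TEsubsuper:lowerorder} gives the supersolution inequality with the frozen coefficient $f(t,x,u^+)$, and one must then show that the one-sided commutator gain compensates the error incurred upon replacing $u^+$ by $u^+_\eps$ in the coefficients; in the Burgers model this cancellation is exact.
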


	The comparison principle with continuous sub and supersolutions also implies a conditional uniqueness and stability statement.
		
	\begin{theorem}
		Assume $f$ and $g$ satisfy \eqref{A:FG1} and \eqref{A:FGcts} and $u_T$ is bounded, continuous, and decreasing. If there exists a bounded and continuous viscosity solution $u$ of \eqref{eq:nonlinear}, then it is the unique viscosity solution. Moreover, if $u^\eps$ is the unique classical solution of 
		\begin{equation}\label{eq:nonlinear:visc}
			\del_t u^\eps + f(t,x,u^\eps) \cdot \nabla u^\eps + g(t,x,u^\eps) + \eps \Delta u^\eps = 0 \quad \text{in } (0,T) \times \RR^d, \quad u^\eps(T,\cdot) = u_T,
		\end{equation}
		then, as $\eps \to 0$, $u^\eps$ converges locally uniformly to $u$.
	\end{theorem}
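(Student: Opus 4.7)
The plan is twofold: first establish uniqueness using the comparison principle Theorem \ref{T:NL:cts}, then prove vanishing viscosity convergence by the Barles-Perthame method of half-relaxed limits.

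For uniqueness, suppose $v$ is any other bounded viscosity solution. Its upper- and lower-semicontinuous envelopes $v^*$ and $v_*$ are respectively a subsolution and a supersolution in the sense of Definition \ref{D:nonlinear:viscosity}, both with the same terminal data $u_T$ (by continuity of $u_T$). Pairing the continuous solution $u$ against the lower-semicontinuous supersolution $v_*$ in Theorem \ref{T:NL:cts} yields $u \le v_*$, while pairing the upper-semicontinuous subsolution $v^*$ against the continuous supersolution $u$ yields $v^* \le u$. Since $v_* \le v \le v^*$, this forces $v = u$.

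For the vanishing viscosity convergence, for each $\eps > 0$ the unique bounded classical solution $u^\eps$ of \eqref{eq:nonlinear:visc} exists by standard uniformly parabolic theory. Differentiating the equation in $x_k$ exactly as in the proof of Proposition \ref{P:NL:dec}, the resulting system for $(v_{ik} = \partial_{x_k} u^{\eps,i})$ acquires only an extra $\eps\Delta v_{ik}$ term and therefore falls into the framework of Lemma \ref{L:particles} (with $a^i = \eps I \ge 0$); this shows $u^\eps(t,\cdot)$ is decreasing for every $t$. A uniform bound $\|u^\eps\|_\infty \le \|u_T\|_\infty + T\|g\|_\infty$ follows from the classical maximum principle. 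Introduce the half-relaxed limits
\[
\bar u(t,x) := \limsup_{\substack{(s,y)\to(t,x)\\ \eps\to 0}} u^\eps(s,y),
\qquad
\underline u(t,x) := \liminf_{\substack{(s,y)\to(t,x)\\ \eps\to 0}} u^\eps(s,y),
\]
which are bounded, respectively upper- and lower-semicontinuous, and decreasing in $x$ as monotone envelopes of decreasing functions. The standard Barles-Perthame stability argument (testing $u^\eps$, a classical sub/supersolution of the viscous equation, against smooth test functions and noting that $\eps\Delta\phi$ vanishes in the limit) yields that $\bar u$ and $\underline u$ are respectively a viscosity subsolution and supersolution of \eqref{eq:nonlinear} in the sense of Definition \ref{D:nonlinear:viscosity}. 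The continuous terminal datum $u_T$ together with the explicit barriers $u_T(x) \pm K(T-t)$ for $K = \|g\|_\infty$ force $\bar u(T,\cdot) = \underline u(T,\cdot) = u_T$.

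Applying Theorem \ref{T:NL:cts} twice, with $u$ continuous and $\underline u$ lower-semicontinuous supersolution, then with $\bar u$ upper-semicontinuous subsolution and $u$ continuous, gives $u \le \underline u$ and $\bar u \le u$. Since the trivial inequality $\underline u \le \bar u$ always holds, we conclude $\underline u = \bar u = u$, which is equivalent to local uniform convergence of $u^\eps$ to $u$. The main technical delicacy lies in the Barles-Perthame step: one must verify that Definition \ref{D:nonlinear:viscosity}, which builds the coordinate-wise monotonicity constraint into the notion of solution via the envelopes $u^*$ and $u_*$, is stable under half-relaxed limits, and that the coordinate-by-coordinate structure of the maximum principle argument (inherited from Lemma \ref{L:particles}) survives the limit $\eps \to 0$ intact.
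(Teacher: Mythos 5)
Your proof is correct and follows the same route as the paper's own (very terse) argument: the comparison principle of Theorem \ref{T:NL:cts} for uniqueness, and Barles--Perthame half-relaxed limits combined with that same comparison principle for the vanishing viscosity convergence; you simply fill in the standard details (envelopes, propagation of monotonicity via Lemma \ref{L:particles}, uniform bounds) that the paper leaves implicit. The one small imprecision is the barrier $u_T(x) \pm K(T-t)$ with $K = \|g\|_\infty$: since $u_T$ is only assumed continuous (not Lipschitz, let alone $C^2$), this is not a classical super/subsolution of the viscous equation because $f\cdot\nabla u_T$ and $\eps\Delta u_T$ are not controlled; one should instead use a localized, penalized barrier of the form $u_T(x_0) + \delta + C_\delta|x-x_0|^2 + K_\delta(T-t)$, which is routine but ought to be stated correctly.
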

	
	\begin{proof}
		The uniqueness is a consequence of the comparison principle in Theorem \ref{T:NL:cts}, since $u$ is both a sub and supersolution. The local uniform convergence as $\eps \to 0$ of $u^\eps$ to the unique continuous viscosity solution $u$ is then proved with standard stability arguments from the theory of viscosity solutions, arguing with half-relaxed limits and applying the comparison principle, Theorem \ref{T:NL:cts}.
	\end{proof}

\subsection{A one-dimensional example}

For the rest of the paper, we assume $d = m= 1$ and consider the example of \eqref{eq:nonlinear} with $f(t,x,u) = -u$ and $g(t,x,u) = 0$:
\begin{equation}\label{eq:burgers}
	-\del_t u + u \del_x u = 0 \quad \text{in } (0,T) \times \RR, \quad u(T,\cdot) = u_T.
\end{equation}
Observe that \eqref{eq:burgers} can be formally written in conservative form, where it becomes the Burgers equation with flux $\frac{u^2}{2}$. When $d > 1$, it is not necessarily the case that \eqref{eq:nonlinear} can be written as a conservation law, and the product of $f(t,x,u)$ with $\nabla u$ cannot be understood by integrating by parts. 

If the decreasing function $u_T$ is Lipschitz continuous, then the system of characteristics \eqref{chars} can be solved in some maximal time interval $[T-\tau, T]$ depending on the Lipschitz constant for $u_T$. This gives rise to a Lipschitz continuous solution of \eqref{eq:burgers}, which can easily be checked to be a viscosity solution in the sense of Definition \ref{D:nonlinear:viscosity} and is therefore unique. However, for $t < \tau$, the system of characteristics fails to be solvable on $[t,T]$, due to the formation of shocks. This is in contrast to the case where $u_T$ is Lipschitz and \textit{increasing}, in which case \eqref{eq:burgers} has a Lipschitz continuous solution on $(-\oo, T]$. We therefore see that the situation where a continuous solution exists is not typical on an arbitrary time horizon, even if the function $u_T$ is smooth.

We therefore study in this subsection a simple example of a decreasing and discontinuous terminal data, namely
\begin{equation}\label{Heaviside}
	u_T(x) = \ind\{x \le 0 \}.
\end{equation}
Viewed as a scalar conservation law with flux $\frac{u^2}{2}$, \eqref{eq:burgers} is a Riemann problem whose solvability is resolved with the theory of entropy solutions. Indeed, the unique entropy solution is given by
\begin{equation}\label{entropy}
	u(t,x) = \ind\left\{ x \le \frac{T-t}{2} \right\},
\end{equation}
describing a shock wave moving with constant speed $\frac{1}{2}$ (in reverse time), the constant $\frac{1}{2}$ being uniquely determined from the Rankine-Hugoniot condition.

\subsubsection{Nonuniqueness of discontinuous solutions}

When viewed as a nonlinear transport equation, there is a strong failure of uniqueness for this problem.

\begin{proposition}
	Let $c: [0,T] \to \RR$ satisfy $c(	T) = 0$ and $-c' \in [0,1]$ a.e. Then
	\begin{equation}\label{csolution}
		u_c(t,x) := \ind\left\{ x \le c(t) \right\}
	\end{equation}
	is a solution, in the sense of Definition \ref{D:NL:transport}, of \eqref{eq:burgers} with terminal data \eqref{Heaviside}.
\end{proposition}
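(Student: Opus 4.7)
The plan is to unwrap Definition \ref{D:NL:transport} and verify directly that $u_c$ is both a sub- and supersolution of the frozen linear transport equation in the sense of Definition \ref{D:TEsubsuper}. Under the present assumptions, \eqref{A:FG1} holds with $C_0 \equiv 0$, so Definition \ref{D:NL:transport} reduces to the requirement that the bounded decreasing function $u_c$ satisfy $-\del_t u - b(t,x)\del_x u = 0$ in the one-sided mollified sense, with drift $b(t,x) = -u_c(t,x) = -\ind\{x \le c(t)\}$. This $b$ is bounded and nondecreasing in $x$, so \eqref{A:b} holds with $C_1 = 0$, and $u_c(T,\cdot) = u_T$ since $c(T) = 0$.

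The first step is to compute the one-sided mollifications of $u_c$ explicitly. Since $u_c(t,x)$ depends on $x$ only through $x - c(t)$, so do its convolutions: setting $F^\eps(s) := \int_s^\oo \rho^\eps(y)\,dy$ and $F_\eps(s) := \int_s^\oo \rho_\eps(y)\,dy$, one has $(u_c)^\eps(t,x) = F^\eps(x - c(t))$ and $(u_c)_\eps(t,x) = F_\eps(x - c(t))$, with $(F^\eps)' = -\rho^\eps$ supported in $[-2\eps,0]$ and $(F_\eps)' = -\rho_\eps$ supported in $[0,2\eps]$. The chain rule then gives
\[
	-\del_t (u_c)^\eps - b(t,x)\,\del_x (u_c)^\eps = \rho^\eps(x - c(t))\bigl[b(t,x) - c'(t)\bigr],
\]
and the analogous identity for $(u_c)_\eps$ in place of $(u_c)^\eps$.

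The key observation is that each one-sided kernel is supported on exactly the side of the shock curve $\{x = c(t)\}$ on which $b$ takes the value most favorable for the required inequality. For the subsolution bound, $\rho^\eps(x - c(t))$ is supported in $x \in [c(t) - 2\eps, c(t)]$, where $b(t,x) = -1$ a.e., so $b(t,x) - c'(t) = -1 - c'(t) \le 0$ since $-c'(t) \le 1$. For the supersolution bound, $\rho_\eps(x - c(t))$ is supported in $x \in [c(t), c(t) + 2\eps]$, where $b(t,x) = 0$ a.e., so $b(t,x) - c'(t) = -c'(t) \ge 0$. The admissible range $-c' \in [0,1]$ is precisely the Rankine-Hugoniot interval between $u_c(t, c(t)+) = 0$ and $u_c(t, c(t)-) = 1$, which is why this bookkeeping closes. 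I do not anticipate any genuine obstacle: the argument is a direct computation, and the main task is just to verify which side of the shock each one-sided mollifier lives on.
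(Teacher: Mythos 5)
Your proposal is correct and takes essentially the same route as the paper: both compute the one-sided mollifications of $u_c$ explicitly, note that $\rho^\eps(x-c(t))$ (resp.\ $\rho_\eps(x-c(t))$) is supported where $x<c(t)$ (resp.\ $x>c(t)$) so that the frozen drift equals $-1$ (resp.\ $0$) there, and reduce the sub- (resp.\ super-)solution inequality to $-c'(t)\le 1$ (resp.\ $-c'(t)\ge 0$). The paper only writes out the subsolution half and remarks that the other is similar, so your version is just a slightly fuller exposition of the same argument.
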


\begin{proof}
	Define $u_c^\eps = u_c * \rho^\eps$. Then
	\[
		u_c^\eps(t,x) = \int_{-\oo}^{c(t)} \rho^\eps(x-y)dy,
	\]
	and so
	\[
		\del_t u_c^\eps(t,x) = \rho^\eps(x-c(t)) c'(t)
		\quad \text{and} \quad
		\del_x u_c^\eps(t,x) = - \rho^\eps(x - c(t)).
	\]
	Thus,
	\[
		-\del_t u^\eps_c(t,x) + u_c(t,x) \del_x u^\eps_c(t,x) = \rho^\eps(x - c(t))\left[ -c'(t) - \ind\{ x \le c(t) \} \right].
	\]
	Recall that $\supp \rho^\eps \in [-2\eps, 0]$, and so the above expression is nonzero only if $x \le c(t)$. In that case, because $-c' \le 1$,
	\[
		-c'(t) - \ind\{x \le c(t) \} =  -c'(t) - 1 \le 0,
	\]
	and we conclude that $u^\eps_c$ is subsolution. The proof of the supersolution property is similar, and uses the fact that $-c' \ge 0$.
\end{proof}

\begin{remark}
	Note that $u_c$ is continuous a.e. in $[0,T] \times \RR^d$, and, therefore, by Lemma \ref{L:equivdef}, is also a viscosity solution in the sense of Definition \ref{D:nonlinear:viscosity} 
\end{remark}

The system of characteristics \eqref{chars} becomes, for $(t,x) \in [0,T] \times \RR^d$,
\begin{equation}\label{burgerschars}
	\begin{dcases}
		- \del_s U_{s,t}(x) = 0, & U_{T,t}(x) = \ind\left\{ X_{T,t}(x) \le 0 \right\},\\
		\del_s X_{s,t}(x) = -U_{s,t}(x), & X_{t,t}(x) = x.
	\end{dcases}
\end{equation}
Recall that the system, and, in particular, the equation for $X$, is viewed as a differential inclusion, where, for all $s \in [t,T]$,
\[
	U_{s,t}(x) = U_{T,t}(x) =
	\begin{dcases}
		\{1\} & \text{if } X_{T,t}(x) < 0,\\
		\{0 \} & \text{if } X_{T,t}(x) > 0, \text{ and}\\
		[0,1] & \text{if } X_{T,t}(x) = 0.
	\end{dcases}
\]
By Proposition \ref{P:chars}, each solution $u_c$ corresponds to a solution $(X^c,U^c)$ of \eqref{burgerschars}, which we can compute explicitly: namely, for $x \in \RR$ and $0 \le s \le t \le T$,
\[
	U^c_{s,t}(x) = u_c(t,x) = \ind\left\{ x \le c(t) \right\}
\]
and
\begin{equation}\label{Xc}
	X^c_{s,t}(x) = 
	\begin{dcases}
		x - s + t, & x \le c(t), \\
		x - c(t) \frac{ s-t}{T-t}, & x = c(t), \\
		x, & x \ge c(t).
	\end{dcases}
\end{equation}
We note that, for any solution $(U,X)$, we must have $X_{s,t}(x) = x - s + t$ for $x < 0$ and $X_{s,t}(x) = x$ for $x \ge T-t$. However, for $x \in [0,T-t]$, there is ambiguity in the speed at which the $X_{s,t}(x)$ travels: it can move with speed $-1$, $0$, or anything in between, where in the latter case the characteristic is constrained to end at $X(T,t,x) = 0$. The precise value $x \in [0,1]$ for which $X_{T,t}(x) = 0$ thus encodes the choice of the shock-wave speed $c(t)$ in the definition of $u_c$. 

\subsubsection{Stochastic selection}

An important feature in the theory of entropy solutions of scalar conservation laws is the stability under regularizations, and in particular under vanishing viscosity limits. In the above context, the entropy solution \eqref{entropy} of \eqref{eq:burgers} arises as the strong limit in $C([0,T], L^1_\loc(\RR^d))$, as $\eps \to 0$, of the unique smooth solution $u^\eps$ of
\begin{equation}\label{eq:epsburgers}
	-\del_t u^\eps + u^\eps\del_x u^\eps = \eps \del_x^2 u^\eps, \quad u^\eps(T,\cdot) = u^\eps_T,
\end{equation}
where
\begin{equation}\label{uTregs}
	u^\eps_T: \RR^d \to \RR^d \text{ is smooth and } \lim_{\eps \to 0} u^\eps_T = \ind_{(-\oo,0)} \text{ in }  L^1_\loc.
\end{equation}

By contrast, we show here that any solution $u_c$ can arise as a limit from suitably regularized equations. 
\begin{theorem}\label{T:burgers:selection}
	Fix any $c \in W^{2,1}([0,T], \RR)$ satisfying $c(T) = 0$ and $-c' \in (0,1)$. Then there exists $\theta_c \in L^1([0,T])$ such that, if $u^\eps_T$ is as in \eqref{uTregs} and $u^\eps$ is the unique classical solution of 
	\begin{equation}\label{eq:NL:epsburgers}
		-\del_t u^\eps + u^\eps\del_x u^\eps = \eps  \left( \del_x^2 u^\eps + \theta_c(t) |\del_x u^\eps|^2 \right), \quad u^\eps(T,\cdot) = u^\eps_T,
	\end{equation}
	then, for all $1 \le p < \oo$, as $\eps \to 0$, $u^\eps \to u_c$ strongly in $C([0,T], L^p_\loc)$.
\end{theorem}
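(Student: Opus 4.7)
The first step is to identify $\theta_c$ through a traveling-wave analysis. For a prescribed speed $v = -c'(t) \in (0,1)$, I seek $\theta = \theta(v) \in \RR$ and a strictly decreasing profile $U_v\colon \RR \to [0,1]$ with $U_v(-\oo)=1$, $U_v(+\oo)=0$ solving the traveling-wave ODE $(U - v)U' = U'' + \theta |U'|^2$. Writing $V(U) := U'(\xi)$ as a function of $U$, the relation $V_\xi = V\,V_U$ reduces this to the linear ODE $V_U + \theta V = U - v$, which, integrated with the heteroclinic condition $V(0)=V(1)=0$, is equivalent to the scalar compatibility
\[
0 = \int_0^1 (U - v)e^{\theta U}\, dU \quad \iff \quad v = \frac{1}{1 - e^{-\theta}} - \frac{1}{\theta}.
\]
The right-hand side is a smooth, strictly increasing bijection $\theta \in \RR \mapsto v \in (0,1)$ sending $\theta = 0$ to $v = 1/2$, and the resulting $V$ is strictly negative on $(0,1)$, so $U_v$ exists and is unique up to translation; normalize $U_v(0) = 1/2$. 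Define $\theta_c(t) := \theta(-c'(t))$. The embedding $W^{2,1}([0,T]) \hookrightarrow C^1([0,T])$ and the continuity of $c'$ force $-c'([0,T])$ to be a compact subset of $(0,1)$, hence $\theta_c \in C([0,T]) \subset L^\oo([0,T]) \subset L^1([0,T])$.

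\paragraph{Shifted traveling-wave sub- and supersolutions.} Writing $U_t := U_{-c'(t)}$ and letting $\delta_\eps^\pm(t) \ge 0$ be smooth shifts to be chosen, set
\[
u^\eps_\pm(t,x) := U_t\!\left(\frac{x - c(t) \pm \delta_\eps^\pm(t)}{\eps}\right).
\]
Using that $U_t$ satisfies the traveling-wave ODE with parameters $(c'(t), \theta_c(t))$, a direct computation gives
\[
-\del_t u^\eps_\pm + u^\eps_\pm \del_x u^\eps_\pm - \eps\del_x^2 u^\eps_\pm - \eps \theta_c(t)|\del_x u^\eps_\pm|^2 = \mp \frac{(\delta_\eps^\pm)'(t)}{\eps}\, U_t'(\hat\xi_\pm) - \del_t U_t(\hat\xi_\pm),
\]
where $\hat\xi_\pm := (x - c(t) \pm \delta_\eps^\pm(t))/\eps$. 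Differentiating the explicit Riccati representation of $V$ in the parameters $(c',\theta)$ one shows that there exists $K \in L^1([0,T])$, depending on $c''$ and $\theta_c'$, and a compact effective support of $U_t'$ outside of which $U_t'$ and $\del_t U_t$ both vanish (after a negligible truncation of the profile away from the shock), such that $|\del_t U_t(\xi)| \le K(t)|U_t'(\xi)|$ pointwise. Taking $\delta_\eps^\pm(t) := \eps\bigl(1 + \int_t^T K(s)\,ds\bigr) = O(\eps)$ then renders $u^\eps_+$ a classical supersolution and $u^\eps_-$ a classical subsolution; a small additive correction $\pm \mu_\eps$, $\mu_\eps \to 0$, ensures that the terminal traces $u^\eps_\pm(T,\cdot)$ sandwich $u^\eps_T$.

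\paragraph{Comparison principle and convergence.} The regularized equation \eqref{eq:NL:epsburgers} is uniformly parabolic, and $u^\eps$ remains in $[0,1]$ and is decreasing in $x$ by propagation of the analogous properties of $u^\eps_T$ (using Lemma \ref{L:particles} on the system differentiated in $x$, as in Proposition \ref{P:NL:dec}). On this range the nonlinearities $u u_x$ and $\eps\theta_c u_x^2$ are locally Lipschitz in $(u, u_x)$, so a standard parabolic maximum principle yields $u^\eps_- \le u^\eps \le u^\eps_+$ on $[0,T] \times \RR$. Since $\delta_\eps^\pm(t) \to 0$ uniformly in $t$ and the profile $U_t$ has width $1$, $u^\eps_\pm(t,x) \to \ind\{x \le c(t)\} = u_c(t,x)$ pointwise for $x \ne c(t)$, and the dominated convergence theorem (using $0 \le u^\eps_\pm \le 1$) promotes this to convergence in $C([0,T], L^p_\loc(\RR))$ for every $p \in [1,\oo)$; sandwiching gives the same limit for $u^\eps$. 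The main technical obstacle is the pointwise bound $|\del_t U_t(\xi)| \le K(t)|U_t'(\xi)|$: it requires differentiating the explicit Riccati representation of the profile in $(c', \theta)$, controlling the asymptotic rates of $U_t'$ and $\del_t U_t$ as $|\xi| \to \oo$, and, when $c \in W^{2,1}$ only, a preliminary approximation of $c$ by smooth curves $c_n$ with uniform control of $\theta_{c_n}$ and of the associated shifts.
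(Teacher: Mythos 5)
Your identification of $\theta_c$ via the traveling-wave compatibility condition is correct and, in fact, illuminating: integrating $V_U + \theta V = U - v$ with $V(0)=V(1)=0$ gives $v = \frac{1}{1-e^{-\theta}} - \frac{1}{\theta}$, which matches (up to a typo in the sign of the ``$-1$'') the function $C(\theta)$ that the paper extracts from the Rankine--Hugoniot condition. Your route to that formula is different from the paper's: rather than constructing traveling waves, the paper performs a change of unknown $u^\eps = f(t,v^\eps)$ with $f(t,v) = \theta(t)^{-1}\log(\theta(t)v+1)$, which converts \eqref{eq:NL:epsburgers} into a genuinely viscous scalar conservation law (with a lower-order source), and then simply invokes Kruzhkov's stability theory for entropy solutions to pass $\eps\to 0$; the limiting shock speed is read off from the Rankine--Hugoniot condition applied to the transformed Riemann problem, and matching it to $-c'$ yields $\theta_c$. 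That approach sidesteps entirely the construction of sub/supersolutions.

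The traveling-wave comparison argument, as written, has a genuine gap in the pointwise estimate $|\del_t U_t(\xi)|\le K(t)|U_t'(\xi)|$. Since $\theta_c$ is determined by $v = -c'$, the profile $U_t = U_{v(t)}$ depends only on $v(t)$, and $\del_t U_t = -c''(t)\,\partial_v U_v$. As $\xi\to+\infty$ one has $U_v(\xi)\sim A(v)e^{-v\xi}$, hence $U_v'(\xi)\sim -vA(v)e^{-v\xi}$ while $\partial_v U_v(\xi)\sim\bigl(A'(v)-A(v)\xi\bigr)e^{-v\xi}$; the ratio $\partial_v U_v / U_v'$ therefore grows linearly in $|\xi|$ (and similarly as $\xi\to-\infty$). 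Thus the claimed bound fails whenever $c''\not\equiv 0$, which is precisely the case of interest. You flag this (``after a negligible truncation of the profile away from the shock''), but a truncated profile is no longer an exact traveling-wave solution, and it is not clear how a constant-in-$\xi$ time-dependent shift $\delta^\pm_\eps(t)$ alone can absorb a tail error that is of order $|\hat\xi|\,|U_t'(\hat\xi)|$ while $\hat\xi$ ranges over all of $\RR$. A repair would likely require a genuine corrector in the ansatz (matching the changing exponential decay rates of the profile in the tails), not just a shift; or approximating $c$ by piecewise-linear speeds on a fine partition and stitching the resulting constant-speed traveling waves together. Either way this is a substantial missing step, and it is exactly the technical issue the paper's change-of-variable argument is designed to avoid. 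The sign conventions for the shifts $\pm\delta^\pm_\eps$ also appear inverted (a positive shift in $\xi$ pushes the decreasing profile \emph{down}, not up), though that is easily fixed.
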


\begin{proof}
	For $\theta: [0,T] \to \RR$ to be determined, define, for $t \in [0,T]$,
	\[
		f(t,v) = 
		\begin{dcases}
			\frac{\log(\theta(t) v + 1)}{\theta(t)} & \text{if } \theta(t) \ne 0 \text{ and } \theta(t) v > -1,\\
			v & \text{if }\theta(t) = 0
		\end{dcases}
	\]
	and note that
	\[
		f(t,\cdot)^{-1}(u) = \frac{ e^{\theta(t) u} - 1}{\theta(t)}.
	\]
	For $\eps > 0$, let $v^\eps$ be the classical solution of
	\begin{equation}\label{eq:veps}
		-\del_t v^\eps + f(t,v^\eps)v^\eps_x = \frac{\del_t f(t, v^\eps)}{\del_v f(t, v^\eps)} + \eps \del_x^2 v^\eps \quad \text{in } [0,T] \times \RR \quad \text{and} \quad v^\eps(T,\cdot) = f(T,\cdot)^{-1}( u^\eps_T)).
	\end{equation}
	By the maximum principle, it is easily checked that the values of $v^\eps$ fall within the domain of $f$ and its derivatives, and the solution of \eqref{eq:NL:epsburgers} is given exactly by $u^\eps(t,x) = f(t, v^\eps(t,x))$.

	 Standard stability results \cite{Kr_70} yield that, as $\eps \to 0$, $v^\eps$ converges strongly in $C([0,T], L^p_\loc)$ for all $p \in [1,\oo)$ to the unique entropy solution $v$ of
	\begin{equation}\label{eq:v}
		-\del_t v + f(t,v)\del_x v = \frac{\del_t f(t,v)}{\del_v f(t,v)} \quad \text{in } [0,T] \times \RR \quad \text{and} \quad v(T,\cdot) = f^{-1}(T, \cdot)(1)\ind_{x<0} + f^{-1}(T,\cdot)(0) \ind_{x > 0}.
	\end{equation}
	We then set
	\[
		F(t,v) = \frac{(\theta(t)v + 1)\log(\theta(t) v + 1) - \theta(t) v}{\theta(t)^2},
	\]
	which satisfies $\del_v F(t,v) = f(t,v)$, so that the equation \eqref{eq:v} is equivalently written as
	\begin{equation}\label{eq:v}
		-\del_t v + \del_x F(t,v) = \frac{\del_t f(t, v)}{\del_v f(t, v)}  \quad \text{in } [0,T] \times \RR \quad \text{and} \quad v(T,\cdot) = f^{-1}(T, \cdot)(1)\ind_{x<0} + f^{-1}(T,\cdot)(0) \ind_{x > 0}.
	\end{equation}
	The unique entropy solution $v$ of the conservative equation \eqref{eq:v} is then given by
	\[
		v(t,x) = 
		\begin{dcases}
			v^-(t), & x < c(t), \\
			v^+(t), & x > c(t),
		\end{dcases}
	\]
	where, for $0 \le t \le T$, $v_{t}^+$ and $v_{t}^-$ are defined by
	\[
		-\del_t v^\pm_{t} = \frac{\del_t f(t, v_{t}^\pm)}{\del_v f(t, v_t^\pm)} \quad t \in [0,T], \quad v^-_T = f^{-1}(T,\cdot)(0), \quad v^+_T = f^{-1}(T, \cdot)(1),
	\]
	and $c(t)$ is determined by the Rankine-Hugoniot condition
	\[
		c(T) = 0 \quad \text{and} \quad -c'(t) = \frac{ F(t, v^-(t)) - F(t, v^+(t))}{v^+(t) - v^-(t)}.
	\]
	Observe that $t \mapsto f(t, v^\pm(t))$ is constant, and therefore 
	\[
		-c'(t) = \frac{ F(t, f(t,\cdot)^{-1}(1)) - F(t, f(t, \cdot)^{-1}(0)) }{f(t, \cdot)^{-1}(1) - f(t, \cdot^{-1}(0)}
		C(\theta(t)),
	\]
	where, for $\theta \in \RR$,
	\[ 
		C(\theta) = \frac{ \theta e^\theta - e^\theta - 1}{\theta(e^\theta - 1)}.
	\]
	We observe that $C' > 0$, $\lim_{\theta \to -\oo} C(\theta) = 0$, and $\lim_{\theta \to +\oo} C(\theta) = 1$. Thus, letting $c: [0,T] \to \RR$ with $-c' \in (0,1)$ be as in the statement of the theorem, we finally choose $\theta_c(t) := C^{-1}(-c'(t))$, so that $u^\eps$ converges strongly in $C([0,T], L^p_\loc)$, $p \in [1,\oo)$, to $f^{-1}(t,\cdot)(v(t,x)) = u_c(t,x)$.
\end{proof}

\begin{remark}
	The restrictions that $c'' \in L^1$ and $c'$ lie strictly within $(-1,0)$ are put in place to ensure that $\theta_c \in W^{1,1}$ and $- \oo < \theta_c < \oo$, so that the equation for $v^\eps$ is well-posed. Achieving speeds $c$ that are only Lipschitz, and where $c'$ is allowed to be either $0$ or $1$, is possible by letting $\theta_c = \theta_c^\eps$ depend suitably on $\eps$.
\end{remark}

The selection principle in Theorem \ref{T:burgers:selection} can be recast in terms of the nonunique generalized characteristics problem \eqref{burgerschars}. Namely, we fix a filtered probability space $(\Omega, \mbb F = (\mbb F_t)_{t \in [0,T]}, \mbb P)$ with a complete, right-continuous filtration $\mbb F$ and a Wiener process $W: [0,T] \times \Omega \to \RR$ progressively measurable with respect to $\mbb F$, and, for $\eps > 0$ and $(t,x) \in [0,T] \times \RR$, we consider the forward-backward SDE (FBSDE) on the interval $s \in [t,T]$:
\begin{equation}\label{FBSDE}
	\begin{dcases}
	- d_s U^\eps_{s,t}(x) = Z^\eps_{s,t}(x)dW_s - \frac{1}{2} \theta_c(s) Z^\eps_{s,t}(x)^2 ds, & U^\eps_{T,t}(x) = u^\eps_T(X^\eps_{T,t}(x)), \\
	d_s X^\eps_{s,t}(x) = - U^\eps_{s,t}(x)ds + \sqrt{2\eps} dW_s, & X^\eps_{t,t}(x) = x.
	\end{dcases}
\end{equation}

\begin{theorem}\label{T:chars:selection}
	Let $c$ and $\theta_c$ be as in Theorem \ref{T:burgers:selection}. For every $\eps > 0$ and $(t,x) \in [0,T] \times \RR$, there exists a unique strong solution $(X^\eps_{\cdot,t}(x), U^\eps_{\cdot,t}(x), Z^\eps_{\cdot,t}(x))$ to the FBSDE \eqref{FBSDE}. Moreover, with probability one, as $\eps \to 0$, $X^\eps_{\cdot,t} \to X^c_{\cdot,t}$ in $C([t,T], L^p_\loc(\RR))$ for all $p \in [1,\oo)$, where $X^c$ is as in \eqref{Xc}.
\end{theorem}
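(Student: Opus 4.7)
The plan is to construct the FBSDE solution by Markovian decoupling through the semilinear PDE \eqref{eq:NL:epsburgers} supplied by Theorem \ref{T:burgers:selection}, and then to pass to the vanishing-viscosity limit pathwise by combining a Doss--Sussmann transform with the stability theory for regular Lagrangian flows from Section \ref{sec:ODE}.

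\emph{Existence and uniqueness of the strong FBSDE solution.} For each $\eps > 0$, let $u^\eps$ denote the classical solution of \eqref{eq:NL:epsburgers} from Theorem \ref{T:burgers:selection}. The maximum principle gives $0 \le u^\eps \le 1$, and the logarithmic change of variables that appears in the proof of Theorem \ref{T:burgers:selection} reduces the equation to a linear parabolic problem, so $u^\eps$ is smooth in space with spatial derivatives that are bounded (with constants that may blow up as $\eps \to 0$). The forward SDE $dX^\eps_{s,t}(x) = -u^\eps(s, X^\eps_{s,t}(x))\,ds + \sqrt{2\eps}\,dW_s$ with $X^\eps_{t,t}(x) = x$ therefore admits a unique strong solution by classical It\^o theory. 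Setting $U^\eps_{s,t}(x) := u^\eps(s, X^\eps_{s,t}(x))$ and $Z^\eps_{s,t}(x) := -\sqrt{2\eps}\,\del_x u^\eps(s, X^\eps_{s,t}(x))$, a direct application of It\^o's formula, together with the PDE satisfied by $u^\eps$, shows that $(X^\eps, U^\eps, Z^\eps)$ solves \eqref{FBSDE}. Uniqueness of the strong solution is standard for such Markovian FBSDE, since any strong solution must coincide with this decoupling by the Markov property and by uniqueness for the smooth semilinear PDE.

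\emph{Uniform drift estimates.} To prepare for the limit, set $b^\eps(s,y) := -u^\eps(s,y)$. The bound $|b^\eps| \le 1$ is uniform in $\eps$, and $\del_y b^\eps \ge 0$ because $u^\eps(s,\cdot)$ is decreasing for every $s$: $u^\eps_T$ can be chosen decreasing in \eqref{uTregs}, and the scalar PDE satisfied by $w^\eps := \del_x u^\eps$ preserves the sign $w^\eps \le 0$ via a standard maximum principle. Hence $b^\eps$ satisfies \eqref{A:b} uniformly in $\eps$ with $C_0 \equiv 1$ and $C_1 \equiv 0$, and by Theorem \ref{T:burgers:selection} we have $b^\eps \to -u_c$ strongly in $C([0,T], L^p_\loc(\RR))$ for every $p \in [1,\oo)$. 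Theorem \ref{T:regLag} applied to the limiting drift $-u_c$ identifies its regular Lagrangian flow with the explicit formula \eqref{Xc}, so it remains to show $X^\eps_{\cdot,t} \to X^c_{\cdot,t}$ in $C([t,T], L^p_\loc(\RR))$ almost surely.

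\emph{Pathwise compactness.} I would fix a full-probability event on which $s \mapsto W_s$ is continuous and introduce the Doss--Sussmann transform $\tilde X^\eps_{s,t}(x) := X^\eps_{s,t}(x) - \sqrt{2\eps}(W_s - W_t)$, which satisfies the random ODE
\[
\tilde X^\eps_{s,t}(x) = x - \int_t^s u^\eps\!\left(r,\, \tilde X^\eps_{r,t}(x) + \sqrt{2\eps}(W_r - W_t)\right) dr, \qquad s \in [t, T].
\]
The shifted drift remains nondecreasing in the spatial variable, so the one-dimensional form of Lemma \ref{L:comparison} forces $x \mapsto \tilde X^\eps_{s,t}(x)$ to be nondecreasing; repeating the Jacobian computation in the proof of Theorem \ref{T:regLag} (with $C_1 \equiv 0$) renders the map measure-preserving, so the regular Lagrangian bound \eqref{regLag} holds uniformly in $\eps$. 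Combined with the uniform time modulus of Lemma \ref{ODE:apriori}, these bounds yield pathwise compactness of $(\tilde X^\eps_{\cdot,t})_{\eps > 0}$ in $C([t,T], L^p_\loc(\RR))$ for every $p \in [1,\oo)$, and since $\sqrt{2\eps}(W_s - W_t) \to 0$ uniformly, the same compactness holds for $X^\eps_{\cdot,t}$. Along a subsequence $\eps_n \to 0$, $X^{\eps_n}_{\cdot,t}$ thus converges almost surely to some $X_{\cdot,t}$ in this topology.

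\emph{Identification of the limit and main obstacle.} To show $X = X^c$, I would mimic the argument of Theorem \ref{T:ODE:beps} in the coupled setting via the decomposition
\[
\norm{b^\eps(r, X^\eps_{r,t}) - b(r, X_{r,t})}_{L^p(B_R)} \le \norm{(b^\eps - b)(r, X^\eps_{r,t})}_{L^p(B_R)} + \norm{b(r, X^\eps_{r,t}) - b(r, X_{r,t})}_{L^p(B_R)}.
\]
The first term is controlled by the regular Lagrangian bound \eqref{regLag} for $X^\eps$ together with the $L^p_\loc$ convergence $b^\eps \to -u_c$; the second is handled by dominated convergence and the strong $L^p_\loc$ convergence of $X^\eps$. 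Passing to the limit in the integral equation then gives that $X_{\cdot,t}$ is an integral solution of $\del_s X_{s,t}(x) = -u_c(s, X_{s,t}(x))$, $X_{t,t}(x) = x$; by Theorem \ref{T:regLag}, $X$ coincides a.e. with the unique regular Lagrangian flow of $-u_c$, which from \eqref{Xc} is $X^c$, and uniqueness of the limit promotes the subsequential convergence to convergence of the full family. The principal obstacle is the second piece of the decomposition: since $u_c$ is discontinuous across the moving shock $\{x = c(r)\}$, the map $y \mapsto b(r,y)$ is merely continuous almost everywhere, and the convergence of compositions is not automatic at points whose limiting trajectory lies on the shock. The uniform regular Lagrangian bound for the limit $X$ is what saves the argument: it forces $\big|X_{r,t}^{-1}(\{c(r)\})\big| = 0$, so for a.e. $x$ the convergence $X^\eps_{r,t}(x) \to X_{r,t}(x)$ in $\RR$ does imply $b(r, X^\eps_{r,t}(x)) \to b(r, X_{r,t}(x))$, and dominated convergence closes the argument.
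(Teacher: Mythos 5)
Your proposal is correct and follows the same broad strategy as the paper: decouple the FBSDE through the classical solution $u^\eps$ of \eqref{eq:NL:epsburgers}, recognize $X^\eps$ as the SDE flow for the drift $-u^\eps$ with noise intensity $\sqrt{2\eps}$, and then pass to the vanishing-noise limit using the stability theory for regular Lagrangian flows. The paper's proof is much terser: it cites \cite{PT_99} for well-posedness of the FBSDE and then simply invokes Theorems \ref{T:ODE:epsW} and \ref{T:burgers:selection} to conclude. Strictly speaking, Theorem \ref{T:ODE:epsW} is stated for a fixed drift $b$ and varying noise $\sqrt{2\eps}$, whereas here both the drift $-u^\eps$ and the noise vary with $\eps$; what is actually required is the hybrid of Theorem \ref{T:ODE:epsW} (Doss--Sussmann reduction to a random ODE) with Theorem \ref{T:ODE:beps} (stability under strong $L^p_\loc$ approximation of the drift). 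Your proposal spells this combination out in full, so it is the more self-contained argument. One secondary point of departure: in identifying the limit you pass directly to the limit in $b(r,X^\eps_{r,t})$ by using the regular Lagrangian property of the limiting flow to ensure $|X_{r,t}^{-1}(\{c(r)\})|=0$, whereas the paper's proof of Theorem \ref{T:ODE:beps} inserts an intermediate regularization $b^\delta$ and never appeals to pointwise continuity of $b$; both are valid, and your device is tailored to the single moving shock of $u_c$ while the paper's applies to arbitrary drifts satisfying \eqref{A:b}. Two small slips, neither of which affects the argument: the Jacobian computation with $C_1\equiv 0$ gives $\det D_x\tilde X^\eps_{s,t}\ge 1$, hence a \emph{non-concentrating} (not measure-preserving) map; and your sign $Z^\eps=-\sqrt{2\eps}\,\del_x u^\eps$ differs from the paper's $Z^\eps=\sqrt{2\eps}\,\del_x u^\eps$, but a direct It\^o computation shows the displayed FBSDE \eqref{FBSDE} has, with either convention, a sign typo on the $\tfrac12\theta_c Z^2$ term, and the sign of $Z$ is in any case immaterial to the conclusion.
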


\begin{proof}
	The existence of a unique solution is a consequence of the nondegeneracy of the noise; see \cite{PT_99}. Moreover, if $u^\eps$ solves \eqref{eq:NL:epsburgers}, then
	\[
		U^\eps_{s,t}(x) = u^\eps(s, X^\eps_{s,t}(x)) \quad \text{and} \quad Z^\eps_{s,t}(x) = \sqrt{2\eps} (\del_x u^\eps)(s, X^\eps_{s,t}(x)).
	\]
	It follows that $X^\eps_{\cdot,t}$ is the solution of the SDE \eqref{SDE:eps} with $b(t,x) = -u^\eps(t,x)$, and so, by Theorems \ref{T:ODE:epsW} and \ref{T:burgers:selection}, as $\eps \to 0$ with probability one, $X^\eps_{\cdot,t}$ converges in $C([t,T], L^p_\loc(\RR))$ to the regular Lagrangian flow corresponding to the drift $-u_c$. This is exactly $X^c$, and we conclude.
\end{proof}

\bibliography{transportbib}{}
\bibliographystyle{acm}

\end{document}